\documentclass{amsart}
\usepackage{amsthm, amsfonts, mathtools, tikz-cd, amssymb}
\usepackage{enumitem, comment}

\usepackage[pagebackref,pdfborder={0 0 0},urlcolor=black]{hyperref}

\usepackage[english]{babel}
\usepackage[utf8]{inputenc}
\usepackage{lmodern}
\usepackage[babel,protrusion=true,expansion=true,tracking=true]{microtype}
\SetTracking{encoding={*}, shape=sc}{20} 
\AddToHook{cmd/tableofcontents/before}{%
    \microtypesetup{protrusion=false}
}
\AddToHook{cmd/tableofcontents/after}{%
    \microtypesetup{protrusion=true}
}
\usepackage{amsrefs}
\usepackage[thmtools-compat]{keytheorems}
\usepackage{nameref}
\usepackage{fnpct}
\usepackage[shortcuts]{extdash}
\usepackage{nth}

\usepackage{needspace}
\newcommand*{\AvoidPageBreak}{\Needspace*{0pt}}

\hypersetup{
    hypertexnames=false,
    linktoc=all
}
\usepackage{zref-clever}
\zcsetup{
    cap,
    nameinlink,
    abbrev=true,
    countertype={
        defn=definition,
        thm=theorem,
        thmalpha=theorem,
        prop=proposition,
        lem=lemma,
        cor=corollary,
        rem=remark,
        conj=conjecture,
        condenumi=item
    }
}
\zcRefTypeSetup{conjecture}{
  Name-sg = Conjecture ,
  name-sg = conjecture ,
  Name-pl = Conjectures ,
  name-pl = conjectures ,
}

\zcRefTypeSetup{question}{
    Name-sg = Question ,
    name-sg = question ,
    Name-pl = Questions ,
    name-pl = questions ,
}

\newcommand{\cref}[2][]{\zcref[#1]{#2}}
\newcommand{\cnumberref}[2][]{\zcref[noname,#1]{#2}}
\newcommand{\Cref}[2][]{\zcref[S,#1]{#2}}
\newcommand{\lcnamecref}[2][]{\zcref[noref,nameinlink=false,cap=false,#1]{#2}}

\makeatletter
\RenewDocumentCommand{\item}{o}{%
  \@inmatherr\item
  \IfNoValueTF{#1}{%
    \@noitemargtrue\@item[\@itemlabel]%
  }{%
    \def\@currentlabel{#1}\@item[#1]\MakeLinkTarget{}%
  }%
}
\makeatother

\newlist{condenum}{enumerate}{1}
\setlist[condenum]{label=\upshape(\roman*)}

\setlist[enumerate]{font=\upshape}
\newlist{refenum}{enumerate}{2}
\zcsetup{
    counterresetby = {
        refenumii  = refenumi ,
        refenumiii = refenumii ,
        refenumiv  = refenumiii ,
    }
}
\newcommand{\RefEnumSetCounter}[1]{%
    \setlist[refenum,1]{label=\upshape(\arabic*),ref=\csname the#1\endcsname \,\upshape(\arabic*)}%
    \setlist[refenum,2]{label=\upshape(\alph*), ref=\csname the#1\endcsname \,\upshape(\arabic{refenumi}-\alph*)}%
}
\newcommand{\RefEnumPatchCounterType}[1]{%
    \expanded{%
        \noexpand\zcsetup{%
            countertype = {
                refenumi   = #1 ,
                refenumii  = #1 ,
                refenumiii = #1 ,
                refenumiv  = #1 ,
            }}%
    }%
}

\ExplSyntaxOn
\NewDocumentCommand \IfRefenumCounter { m m m }{
    \str_case:enTF {#1} {
        {refenumi} {}
        {refenumii} {}
        {refenumiii} {}
        {refenumiv} {}
    }{#2}{#3}
}
\ExplSyntaxOff

\makeatletter
\AddToHook{env/refenum/begin}[refenumPatch]{%
    \IfRefenumCounter{\@currentcounter}{%
    }{%
        \edef\refenum@parentcounter{\@currentcounter}%
        \edef\refenum@parenttype{\zref@getcurrent{zc@type}}%
        \RefEnumSetCounter{\refenum@parentcounter}%
        \RefEnumPatchCounterType{\refenum@parenttype}%
    }
}
\DeclareHookRule{env/refenum/begin}{refenumPatch}{before}{*}
\makeatother

\ExplSyntaxOn
\cs_new_protected:Nn \gaussler_extract_subref:n {
    \__gaussler_extract_last:n #1
}
\cs_new:Nn \__gaussler_extract_last:n {
    \__gaussler_extract_last:w #1 () \q_stop
}
\cs_generate_variant:Nn \gaussler_extract_subref:n {e}
\cs_new:Npn \__gaussler_extract_last:w #1 (#2) #3 \q_stop {
    \tl_if_empty:nTF {#2} {\textbf{??}} {(#2)}
}
\NewDocumentCommand{\localref}{m}{
    \hyperref[#1]{ \textup { \gaussler_extract_subref:e { \getrefnumber{#1} } } }
}
\ExplSyntaxOff

\usepackage{quiver}
\usetikzlibrary{decorations.markings}
\tikzset{double line with arrow/.style args={#1,#2}{decorate,decoration={markings,%
                    mark=at position 0 with {\coordinate (ta-base-1) at (0,1pt);
                            \coordinate (ta-base-2) at (0,-1pt);},
                    mark=at position 1 with {\draw[#1] (ta-base-1) -- (0,1pt);
                            \draw[#2] (ta-base-2) -- (0,-1pt);
                        }}}}
\tikzset{Equal/.style={-,double line with arrow={-,-}}}

\theoremstyle{definition}
\newtheorem{defn}{Definition}[section]
\newtheorem{example}[defn]{Example}
\newtheorem{question}[defn]{Question}

\newtheorem{rem}[defn]{Remark}
\theoremstyle{plain}
\newtheorem{thm}[defn]{Theorem}
\newtheorem{conj}[defn]{Conjecture}
\newtheorem{lem}[defn]{Lemma}
\newtheorem{prop}[defn]{Proposition}
\newtheorem{cor}[defn]{Corollary}

\newtheorem{thmalpha}{Theorem}

\newtheorem*{coralpha}{Corollary}

\numberwithin{equation}{section}

\newcommand{\bbN}{{\mathbb N}}
\newcommand{\bbQ}{{\mathbb Q}}
\newcommand{\bbR}{{\mathbb R}}

\newcommand{\bbZ}{{\mathbb Z}}
\newcommand{\bbF}{{\mathbb F}}

\newcommand{\bfG}{{\mathbf G}}

\newcommand{\calG}{\mathcal{G}}

\newcommand{\calS}{\mathcal{S}}
\newcommand{\calT}{\mathcal{T}}

\newcommand{\calZ}{\mathcal{Z}}
\newcommand{\calQ}{\mathcal{Q}}
\newcommand{\calO}{\mathcal{O}}

\newcommand{\sfB}{\ensuremath\mathsf{B}}

\newcommand{\sfA}{\ensuremath\mathsf{A}}

\newcommand{\sfE}{\ensuremath\mathsf{E}}

\newcommand{\chain}{\operatorname{\mathsf{Ch}}}
\newcommand{\module}{\mathsf{Mod}}
\newcommand{\modGamma}{\module(\coeffZ[\Gamma])}
\newcommand{\modQGamma}{\module(\coeffQ[\Gamma])}

\newcommand{\modQLambda}{\module(\coeffQ[\Lambda])}

\newcommand{\coeffZ}{\calZ}
\newcommand{\coeffQ}{\calQ}
\newcommand{\excat}[1]{\sfE_{#1}}
\newcommand{\Qmap}[1]{{#1}_{\coeffQ}}

\newcommand{\born}{\ensuremath\mathsf{Born}}
\newcommand{\abelian}{\ensuremath\mathsf{Ab}}

\newcommand{\modQ}{\module(\coeffQ)}
\newcommand{\modZ}{\module(\coeffZ)}
\newcommand{\bornQ}{\born(\module(\coeffQ))}
\newcommand{\bornR}{\born(\module(R))}

\makeatletter
\newcommand{\ind@capheight}[2]{%
    \begingroup
        \settoheight{\dimen0}{$#1#2$}
        \settoheight{\dimen4}{$#1M$}
        \dimen4=0.75\dimen4
        \ifdim\dimen0>\dimen4 \dimen0=\dimen4 \fi
        \raisebox{0pt}[\dimen0][0pt]{\smash{$#1#2$}}%
    \endgroup
}

\NewDocumentCommand{\induction}{s E{^_}{{}}}{%
  \ensuremath{\mathsf{Ind}%
    \IfValueT{#2}{%
      ^{\IfBooleanTF{#1}%
          {\mathpalette\ind@capheight{#2}}%
          {#2}%
      }}%
    \IfValueT{#3}{_{#3}}%
  }%
}
\RenewDocumentCommand{\restriction}{s E{^_}{{}}}{%
  \ensuremath{\mathsf{Res}%
    \IfValueT{#2}{%
      ^{\IfBooleanTF{#1}%
          {\mathpalette\ind@capheight{#2}}%
          {#2}%
      }}%
    \IfValueT{#3}{_{#3}}%
  }%
}
\makeatother

\newcommand{\hocat}{\mathsf{K}}
\newcommand{\acyclic}{\mathsf{Ac}}

\newcommand{\injectarr}{\hookrightarrow}
\newcommand{\onto}{\ensuremath\twoheadrightarrow}
\DeclareRobustCommand\longtwoheadrightarrow{\relbar\joinrel\twoheadrightarrow}

\newcommand{\cone}{\operatorname*{cone}}

\newcommand{\colim}{\operatorname*{colim}}

\newcommand{\defq}{\coloneqq}
\newcommand{\id}{\operatorname{id}}
\newcommand{\im}{\operatorname{im}}

\newcommand{\SL}{\operatorname{SL}}

\NewDocumentCommand{\FP}{E{_}{{}}}{%
    \mathrm{FP}\IfNoValueTF{#1}{}{_{\mkern-2mu#1}}%
}
\NewDocumentCommand{\F}{E{_}{{}}}{%
    \mathrm{F}\IfNoValueTF{#1}{}{_{\mkern-2mu#1}}%
}
\newcommand{\FL}{\operatorname{FL}}

\newcommand{\Nth}[1]{%
    \if\relax\detokenize\expandafter{\romannumeral-0#1}\relax
        \nth{#1}
    \else
        \ensuremath{#1}-th%
    \fi%
}

\newcommand{\rank}{\operatorname{rank}}

\newcommand{\CAT}[1]{\ensuremath{\operatorname{CAT}(#1)}}

\makeatletter
\ExplSyntaxOn
\newcommand{\getslant}[1]{\strip@pt\fontdimen1 #1}
\cs_new_protected:Npn \__fk_fill_kern:n #1
  {
    \group_begin:
    \exp_args:Ne \token_if_cs:NTF { \tl_head:n {#1} }
      {
        \exp_args:Ne \token_case_meaning:NnTF { \tl_head:n {#1} }
          {
            \coeffZ { \mkern-3mu }
          }
          { }
          { \mkern-0.5mu }
      }
      {
        \setbox\usefulbox=\hbox{$\m@th\scriptstyle #1$}%
        \dimen@ \getslant\the\scriptfont\symletters \ht\usefulbox%
        \kern-\dimen@%
      }
    #1
    \group_end:
  }
\NewDocumentCommand \fillKern { m } { \__fk_fill_kern:n {#1} }

\ExplSyntaxOff
\makeatother

\NewDocumentCommand{\filling}{E{^_}{{}}}{%
    \operatorname{FV}%
        \IfValueT{#1}{^{#1}}%
        \IfNoValueTF{#2}{}{_{\fillKern{#2}}}%
}
\NewDocumentCommand{\weightedfilling}{E{^_}{{}}}{%
    \operatorname{wFV}%
        \IfValueT{#1}{^{#1}}%
        \IfNoValueTF{#2}{}{_{\fillKern{#2}}}%
}
\NewDocumentCommand{\geomweightedfilling}{E{^_}{{}}}{%
    \geomweight\!\operatorname{FV}%
        \IfValueT{#1}{^{#1}}%
        \IfNoValueTF{#2}{}{_{\fillKern{#2}}}%
}
\NewDocumentCommand{\combfilling}{E{^_}{{}}}{%
    \operatorname{cFV}%
        \IfValueT{#1}{^{#1}}%
        \IfNoValueTF{#2}{}{_{\fillKern{#2}}}%
}

\newcommand{\verythinspace}{%
    \ifmmode%
        \mskip0.5\thinmuskip%
    \else%
        \ifhmode%
            \kern0.08334em%
        \fi%
    \fi%
}
\newcommand{\delimiterSpace}{%
    \mathchoice{\verythinspace}{}{}{}%
}
\usepackage{mleftright}
\makeatletter
\newcommand*{\norm}[1]{%
    \if@display%
        \@norm{\displaystyle}{\delimiterSpace #1 \delimiterSpace}{\Bigl}{\Bigr}%
    \else%
        \@norm{}{\delimiterSpace #1 \delimiterSpace}{\bigl}{\bigr}%
    \fi%
}
\newcommand*{\@norm}[4]{%
    \@autodelimiterpair{#1}{#2}{\lVert}{\rVert}{#3}{#4}%
}
\newcommand*{\abs}[1]{%
    \if@display%
        \mathinner{\@abs{}{#1}{\Bigl}{\Bigr}}%
    \else%
        \mathinner{\@abs{}{#1}{\bigl}{\bigr}}%
    \fi%
}
\newcommand*{\@abs}[4]{%
    \@autodelimiterpair{#1}{#2}{\lvert}{\rvert}{#3}{#4}%
}
\newcommand*{\@autodelimiterpair}[6]{%
    \sbox0{$#1\vcenter{}$}%
    \dimen0=\ht0 %
    \sbox0{$#1#2$}%
    \dimen2=\dimexpr\ht0-\dimen0\relax
    \dimen4=\dimexpr\dp0+\dimen0\relax
    \ifdim\dimen4>\dimen2 %
        \dimen2=\dimen4 %
    \fi
    \dimen4 = \dimen2 %
    \divide\dimen4 by 500 %
    \dimen4=\delimiterfactor\dimen4 %
    \dimen6=\dimexpr 2\dimen2 - \delimitershortfall\relax
    \ifdim\dimen6>\dimen4 %
        \dimen4=\dimen6 %
    \fi
    \sbox0{$#1#5#3#6#4$}%
    \ifdim\dimexpr\ht0+\dp0\relax>\dimen4 %
        \mleft#3#2\mright#4
    \else
        #5#3#2#6#4
    \fi
}
\makeatother
\newbox\usefulbox
\makeatletter
\let\oldoverline\overline
\def\getslant #1{\strip@pt\fontdimen1 #1}

\def\skoverline #1{\mathchoice
 {{\setbox\usefulbox=\hbox{$\m@th\displaystyle #1$}%
    \dimen@ \getslant\the\textfont\symletters \ht\usefulbox
    \divide\dimen@ \tw@ 
    \kern\dimen@ 
    \oldoverline{\kern-\dimen@ \box\usefulbox\kern\dimen@ }\kern-\dimen@ }}
 {{\setbox\usefulbox=\hbox{$\m@th\textstyle #1$}%
    \dimen@ \getslant\the\textfont\symletters \ht\usefulbox
    \divide\dimen@ \tw@ 
    \kern\dimen@ 
    \oldoverline{\kern-\dimen@ \box\usefulbox\kern\dimen@ }\kern-\dimen@ }}
 {{\setbox\usefulbox=\hbox{$\m@th\scriptstyle #1$}%
    \dimen@ \getslant\the\scriptfont\symletters \ht\usefulbox
    \divide\dimen@ \tw@ 
    \kern\dimen@ 
    \oldoverline{\kern-\dimen@ \box\usefulbox\kern\dimen@ }\kern-\dimen@ }}
 {{\setbox\usefulbox=\hbox{$\m@th\scriptscriptstyle #1$}%
    \dimen@ \getslant\the\scriptscriptfont\symletters \ht\usefulbox
    \divide\dimen@ \tw@ 
    \kern\dimen@ 
    \oldoverline{\kern-\dimen@ \box\usefulbox\kern\dimen@ }\kern-\dimen@ }}%
 {}}
\makeatother
\renewcommand{\overline}[1]{\skoverline{#1}}
\makeatletter
\newsavebox{\@brx}
\newcommand{\llangle}[1][]{\savebox{\@brx}{\(\m@th{#1\langle}\)}%
  \mathopen{\copy\@brx\kern-0.5\wd\@brx\usebox{\@brx}}}
\newcommand{\rrangle}[1][]{\savebox{\@brx}{\(\m@th{#1\rangle}\)}%
  \mathclose{\copy\@brx\kern-0.5\wd\@brx\usebox{\@brx}}}
\makeatother

\newcommand{\size}[1]{\abs{#1}}
\newcommand{\placeholder}{\mskip0.5\thinmuskip\_\mskip0.5\thinmuskip}

\newcommand{\commutes}{\text{\Large$\circlearrowleft$}}

\newcommand{\algweight}{w_\mathrm{a}}
\newcommand{\geomweight}{w_\mathrm{g}}

\hypersetup{
    pdfauthor={Roman Sauer, Jannis Weis},%
    pdftitle={Polynomial homological Dehn functions from non-proper actions}
}

\title[Polynomial homological Dehn functions]{Polynomial homological Dehn functions \mbox{from non-proper actions}}

\author{Roman Sauer}
\address{Fakultät für Mathematik, Karlsruher Institut für Technologie, 76131 Karlsruhe, Germany}
\email{roman.sauer@kit.edu}

\author{Jannis Weis}
\address{Fakultät für Mathematik, Karlsruher Institut für Technologie, 76131 Karlsruhe, Germany}
\email{jannis.weis@kit.edu}
\date{\today}

\keywords{Dehn functions, filling functions}
\subjclass[2020]{Primary 20F65; Secondary 20J05, 20F69, 20E08, 20G30}

\begin{document}

\begin{abstract}
    We establish a homological algebra framework for proving polynomiality of higher homological Dehn functions of groups. As an application, we show a combination theorem for polynomial Dehn functions, which is reminiscent of a theorem of Brown for finiteness properties.
\end{abstract}

\maketitle


\section{Introduction and statement of results}

The \emph{\Nth{n} homological Dehn function} $\filling^n_{\bbZ, Y}$ of a CW-complex~$Y$ measures the difficulty of filling integral cellular $(n-1)$-cycles in $Y$ by $n$-chains with respect to the natural $\ell^1$-norm on the integral cellular chain complex.
Let $\Gamma$ be a group of type~$\F_\infty$, that is, $\Gamma$ admits a model~$Y$ of its classifying space $E\Gamma$ that has cocompact skeleta.
Up to a natural equivalence relation, the function $\filling^n_{\bbZ, Y}$ does not depend on the choice of~$Y$, and we refer to it as the \emph{\Nth{n} homological Dehn function} of the group~$\Gamma$, denoted by $\filling^n_{\bbZ, \Gamma}$. For an overview of the comparisons of $\filling^n_{\bbZ, Y}$ to its homotopical and Riemannian analogs we refer to~\cite{abrams_etal}*{Section~2.1} and the references therein.
We also consider homological Dehn functions $\filling^n_{\coeffZ, \Gamma}$ for more general coefficients $\coeffZ$ (see~\cref{sec: coefficients}) under the weaker assumption that $\Gamma$ is of type~$\FP_n(\coeffZ)$ (see~\cref{def: general def of homological Dehn function}).

The main questions we deal with in this paper are: Is $\filling^n_{\coeffZ, \Gamma}$ polynomial? How can we prove it?
We introduce a homological-algebra framework that allows quite elegant proofs of polynomiality in various cases. Conceptually, we view the polynomiality of $\filling^i_{\coeffZ, \Gamma}$ in a certain range $2\le i\le n$ as a strong form of property $\FP_n(\coeffZ)$. The motivation for introducing the homological-algebra framework is to make proofs of the polynomiality of $\filling^i_{\coeffZ, \Gamma}$ in a range $2\le i\le n$ similar to proofs of the finiteness property $\FP_n(\coeffZ)$. Before we say more about the framework, we present some applications below.
\subsection{Results and applications}
\cref{thm: intro thm a la Brown} below is~\cref{thm: geometric brown theorem} in the text. The statement in brackets is a consequence of~\cref{cor: exactness from geometric filling}. We regard~\cref{thm: intro thm a la Brown} as
a quantitative version of a theorem of Brown~\cite{brown_finiteness}*{Proposition~1.1} on finiteness properties. The motivation for this paper was to find a framework that makes a proof along Brown's lines possible.

\pagebreak
\begin{thmalpha}\label{thm: intro thm a la Brown}
    Let $\Gamma$ be a finitely generated group. Let $X$ be a $(d-1)$\=/$\coeffZ$\=/acyclic $\Gamma$\=/simplicial complex with cocompact $d$\=/skeleton.

    We assume that the cellular chain complex $C_\ast(X;\coeffZ) \onto \coeffZ$ is $(d-1)$\=/$\excat{\Gamma}$\=/connected in the sense of~\cref{sec: chain complexes in exact category} (for example, this is the case if $X^{(d)}$ is locally finite, $\coeffZ=\bbZ$ and $\filling^i_{\bbZ, X}$ is polynomial for every $2\le i\le d$).

    Further, we assume for each stabilizer $\Gamma_\sigma$ of a $k$-cell $\sigma$ that
    \begin{condenum}
        \item $\Gamma_\sigma$ is of type $\FP_{d-k}(\coeffZ)$;
        \item $\filling_{\coeffZ,\Gamma_\sigma}^i$ is polynomial for $2 \le i \leq d - k$;
        \item $\Gamma_\sigma$ is finitely generated and at most polynomially distorted in $\Gamma$.
    \end{condenum}
    Then $\filling_{\coeffZ,\Gamma}^i$ is polynomial for $2 \le i \leq d$.
\end{thmalpha}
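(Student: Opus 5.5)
We follow Brown's proof of \cite{brown_finiteness}*{Proposition~1.1}, but carried out in the exact category $\excat{\Gamma}$ of bornological $\coeffZ[\Gamma]$-modules. In this category polynomial bounds on norms play the role that finite generation plays in the classical argument. Throughout, we use the characterization (from the framework developed in the text) that $\filling^i_{\coeffZ,\Gamma}$ is polynomial for $2\le i\le d$ if and only if $\Gamma$ is of type $\FP_d(\coeffZ)$ and some (equivalently, every) projective resolution of finite type $P_\ast\onto\coeffZ$, equipped with its $\ell^1$-bornology, is $(d-1)$-$\excat{\Gamma}$-connected.

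\textbf{Step 1: the rows.} Each chain module splits as
\[
C_k(X;\coeffZ)\cong\bigoplus_{\sigma}\induction_{\Gamma_\sigma}^{\Gamma}\coeffZ_\sigma ,
\]
where $\sigma$ runs over finitely many orbit representatives of $k$-cells for $k\le d$, and $\coeffZ_\sigma$ is $\coeffZ$ twisted by the orientation character. For each $\sigma$ we choose a finite-type resolution $Q^\sigma_\ast\onto\coeffZ_\sigma$ over $\Gamma_\sigma$. Hypotheses (i) and (ii) say exactly that $Q^\sigma_\ast$ is $(d-k-1)$-$\excat{\Gamma_\sigma}$-connected, after the harmless twist by the character.

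We then apply $\induction_{\Gamma_\sigma}^{\Gamma}$. The key lemma here is that induction from a finitely generated, at most polynomially distorted subgroup carries $\excat{\Gamma_\sigma}$-connected complexes to $\excat{\Gamma}$-connected ones. The reason is that word lengths in $\Gamma_\sigma$ are bounded polynomially by word lengths in $\Gamma$, so the bornologies agree up to polynomial reparametrization. This is where hypothesis (iii) enters. The outcome is resolutions $P_{k,\ast}\onto C_k(X;\coeffZ)$ by finitely generated free $\coeffZ[\Gamma]$-modules that are $(d-k-1)$-$\excat{\Gamma}$-connected.

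\textbf{Step 2: the double complex.} Using the comparison theorem, we lift the boundary maps of $C_\ast(X)$ to chain maps between these resolutions. This gives a double complex $P_{\ast,\ast}$ with total complex $T_\ast$. The lifts can be chosen bounded because the modules are finitely generated free, so they are morphisms in $\excat{\Gamma}$. As in Brown's argument, the standard spectral-sequence or filtration argument shows that $T_\ast\onto\coeffZ$ is a resolution in degrees $\le d$, of finite type there.

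\textbf{Step 3: connectivity of the total complex.} We must check that $T_\ast$ is $(d-1)$-$\excat{\Gamma}$-connected. We argue by induction on the column filtration, which uses only finitely many columns up to degree $d$. Two ingredients feed the induction: the rows are connected by Step 1, and the augmented complex $C_\ast(X;\coeffZ)\onto\coeffZ$ is $(d-1)$-$\excat{\Gamma}$-connected by hypothesis. What is needed is a version of ``extensions and cones of connected complexes are connected'' in $\excat{\Gamma}$. Concretely, this amounts to a mapping-cone lemma: a filtered complex whose associated graded pieces are suitably connected is itself connected, with polynomial bounds composing at each stage.

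The polynomial bounds compose, since compositions and sums of polynomially bounded maps are polynomially bounded. The degree of the final polynomial may therefore grow, but it remains finite.

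\textbf{Main obstacle.} The delicate point is Step 3. In an exact category, exactness is not detected on homology, so the diagram-chase in Brown's spectral-sequence argument has to be replaced by explicit constructions of bounded contracting data or by the two-out-of-three and cone lemmas for $\excat{\Gamma}$-connectedness established earlier in the text. To handle this, we would first prove the statement for a two-column double complex via a cone, and then iterate along the column filtration.
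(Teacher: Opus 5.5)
Step~1 of your proposal is fine. It amounts to \cref{prop: FPn induction} combined with \cref{thm: polynomial fillings from FP_n(E)} and \cref{prop: group polynomial equivalence}. With the paper's convention that stabilizers fix their simplices pointwise, $C_k(X;\coeffZ)$ is an honest permutation module, so no orientation twist arises.

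The genuine gap is Step~2. Lifting each $\partial_k$ by the comparison theorem to a chain map $f_k\colon P_{k,\ast}\to P_{k-1,\ast}$ does not produce a double complex. The composite $f_{k-1}\circ f_k$ lifts $\partial_{k-1}\circ\partial_k=0$, so it is only null-homotopic, and with the columns prescribed you cannot in general make it vanish. Already over $\bbZ$ with trivial group this fails:
\begin{itemize}
\item take $C_2=C_1=\bbZ$, $C_0=\bbZ/2$, $\partial_2=2$, $\partial_1$ the projection, and columns $\bbZ$, $\bbZ$, $(\bbZ\xrightarrow{2}\bbZ)$;
\item the lift of $\partial_2$ is forced to be multiplication by $2$;
\item every lift of $\partial_1$ is multiplication by some odd $u$;
\item hence $f_1f_2=2u\neq 0$.
\end{itemize}
So your $T_\ast$ is not a chain complex. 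The null-homotopies, and higher ones, must enter the differential as extra components $P_{k,q}\to P_{k-j,q+j-1}$ for $j\ge 2$.

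The correct object is built by iterated mapping cones, $T^{[k]}=\cone\bigl(\Sigma^{k-1}P_{k,\ast}\to T^{[k-1]}\bigr)$. Here the attaching map is a lift of $\partial_k$ into all of $T^{[k-1]}$, not into the single column $P_{k-1,\ast}$. Such a lift exists by \cref{lem:lifting-map-up-to-homotopy}, because $P_{k,\ast}$ consists of projectives and $T^{[k-1]}\to C^{(k-1)}$ is an ordinary quasi-isomorphism. This is precisely \cref{thm: blow-up}.

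Step~3 then has to be made relative. The graded pieces $\Sigma^kP_{k,\ast}$ are not connected, since they carry the homology $C_k$. So a principle of the form ``a filtered complex with connected graded pieces is connected'' cannot be applied to $T\onto\coeffZ$ directly. What you should prove, by induction on $k$, is that the comparison maps $T^{[k]}\to C^{(k)}$ are $(d-1)$-$\excat{\Gamma}$-connected. This follows from \cref{lem: cones connectivity induced map}, using that the maps $\Sigma^{k-1}(P_{k,\ast}\onto C_k)$ are $(d-2)$-connected. Then \cref{lem: connectivity of composition}, together with the hypothesis on $C_\ast(X;\coeffZ)\onto\coeffZ$, shows that $T\onto\coeffZ$ is $(d-1)$-connected, i.e.\ $\coeffZ$ is of type $\FP_d(\excat{\Gamma})$.

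With this repair your argument goes through; it is the blow-up route the paper mentions as an alternative. The paper itself avoids any total complex. Once each $C_k(X;\coeffZ)$ is known to be of type $\FP_{d-k}(\excat{\Gamma})$, it invokes \cref{thm: blow-up FPnE}. That proof splits $C_\ast(X;\coeffZ)\onto\coeffZ$ into admissible short exact sequences $K_i\hookrightarrow C_i\onto K_{i-1}$ and applies \cref{prop:2 out of 3:quotient} inductively from the top degree down. \cref{thm: polynomial fillings from FP_n(E)} and \cref{prop: group polynomial equivalence} then finish.
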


The notion of $\Gamma$-simplicial complex follows the convention of~\cite{tomdieck}*{II.1}. This means that if a group element maps a simplex to itself, then it acts as the identity on the simplex.

In combination with~\cref{cor: CAT(0)} we obtain the following.

\begin{coralpha}
    Let $X$ be a locally finite \CAT{0}-simplicial complex on which a finitely generated group~$\Gamma$ acts simplicially and cocompactly.
    We assume for each stabilizer $\Gamma_\sigma$ of a $k$-simplex $\sigma$ that
    \begin{condenum}
        \item $\filling_{\bbZ,\Gamma_\sigma}^i$ is polynomial for $2 \le i \leq d - k$;
        \item $\Gamma_\sigma$ is finitely generated and at most polynomially distorted in $\Gamma$.
    \end{condenum}
    Then $\filling_{\bbZ,\Gamma}^i$ is polynomial for $2 \le i \leq d$.
\end{coralpha}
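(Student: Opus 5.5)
The corollary follows from \cref{thm: intro thm a la Brown} with $\coeffZ=\bbZ$, once we check its hypotheses for the $\CAT{0}$ complex $X$. The plan has four steps.

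\textbf{Step 1: acyclicity and cocompactness.} A $\CAT{0}$ space is contractible, so $X$ is $\bbZ$-acyclic in every degree, in particular $(d-1)$-acyclic. The action is cocompact, so the $d$-skeleton is cocompact. Here we may take $d$ arbitrary; the interesting range is $d$ up to $\dim X$, which is finite by local finiteness and cocompactness.

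\textbf{Step 2: the connectivity hypothesis.} We need $C_\ast(X;\bbZ)\onto\bbZ$ to be $(d-1)$-$\excat{\Gamma}$-connected. By the parenthetical criterion in \cref{thm: intro thm a la Brown} (that is, \cref{cor: exactness from geometric filling}), it suffices that $X^{(d)}$ is locally finite and that $\filling^i_{\bbZ,X}$ is polynomial for $2\le i\le d$. Local finiteness is assumed. Polynomial filling in $\CAT{0}$ simplicial complexes should be exactly the content of \cref{cor: CAT(0)}, which we invoke.

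If one had to argue it directly, the approach would be the following. Cone off a cellular cycle geodesically from a vertex, using the geodesic contraction. Then push the resulting Lipschitz chain back to a simplicial chain via a cellular (Federer–Fleming style) deformation. Local finiteness plus cocompactness gives finitely many shapes of simplices and hence uniform constants. The filling volume is then bounded by diameter times mass, and the diameter is at most linear in the mass. This gives a polynomial (indeed degree $i/(i-1)$-type or at worst quadratic) bound. This transfer between geometric and combinatorial fillings is the main technical obstacle, and it is presumably what \cref{cor: CAT(0)} packages.

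\textbf{Step 3: finiteness of stabilizers.} Condition (i) of \cref{thm: intro thm a la Brown} asks that each $\Gamma_\sigma$ be of type $\FP_{d-k}(\bbZ)$. In the corollary this is built in: the definition of $\filling^i_{\bbZ,\Gamma_\sigma}$ being polynomial for $2\le i\le d-k$ presupposes type $\FP_{d-k}(\bbZ)$. Together with finite generation from (ii) this settles (i), with the low case $d-k\le 1$ covered by finite generation.

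\textbf{Step 4: remaining hypotheses and conclusion.} Conditions (ii) and (iii) of \cref{thm: intro thm a la Brown} are verbatim the corollary's (i) and (ii). The simplicial action can be assumed to satisfy the $\Gamma$-simplicial convention, namely that setwise stabilizers fix simplices pointwise. If it does not, pass to the barycentric subdivision. This keeps the space $\CAT{0}$, locally finite and cocompact. The stabilizers of the new simplices are finite-index subgroups of stabilizers of old simplices, which preserves polynomial filling, finite generation and polynomial distortion. Applying \cref{thm: intro thm a la Brown} then yields that $\filling^i_{\bbZ,\Gamma}$ is polynomial for $2\le i\le d$.
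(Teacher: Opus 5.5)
Your main line is exactly the paper's derivation. \cref{cor: CAT(0)} gives that $C_\ast(X;\bbZ)$ is an $\excat{\Gamma}$-resolution of $\bbZ$, and this is fed into \cref{thm: intro thm a la Brown} with $\coeffZ=\bbZ$. Your remark that condition (i) there is implicit in the polynomiality hypothesis, with finite generation covering $d-k\le 1$, is also right. (If you ever write out the direct sketch of Step 2, cone off each connected component of the support separately. For a cycle with far-apart components, the diameter is not controlled by the mass.)

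The step that fails as written is the barycentric-subdivision fallback in Step 4. The hypotheses are graded by dimension, and subdivision lowers the dimension at which a given stabilizer appears. A $k$-simplex $\tau$ of the subdivision is a flag $\sigma_0<\dots<\sigma_k$ of old simplices, and $\Gamma_\tau$ has finite index in $\Gamma_{\sigma_k}$, where $\dim\sigma_k\ge k$ can be strictly larger than $k$. For example, the barycenter of an old edge $e$ is a new vertex with stabilizer $\Gamma_e$. About $\Gamma_e$ you only know type $\FP_{d-1}(\bbZ)$ and polynomiality of $\filling^i_{\bbZ,\Gamma_e}$ for $2\le i\le d-1$. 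But \cref{thm: intro thm a la Brown}, applied to the subdivision, demands $\FP_d(\bbZ)$ and polynomiality for $2\le i\le d$. So ``finite index in an old stabilizer'' does not verify the hypotheses.

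The repair uses local finiteness. For a vertex $v$ of $\sigma_0$, the group $\Gamma_v$ permutes the finitely many simplices containing $v$. Hence the pointwise stabilizer of $\sigma_k$ has finite index in $\Gamma_v$ as well as in $\Gamma_\tau$. So every stabilizer of the subdivision is commensurable with a vertex stabilizer of $X$, whose hypotheses hold in the full range $2\le i\le d$. Commensurability preserves type $\FP_d(\bbZ)$, polynomiality of the filling functions (by quasi-isometry invariance), finite generation and polynomial distortion.

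Alternatively, the paper reads ``acts simplicially'' in the $\Gamma$-simplicial sense of tom Dieck, fixed just before the corollary. Under that reading no subdivision is needed, and the paper's proof is simply the direct combination you give in Steps 1--3.
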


An important example of $\CAT{0}$-simplicial complexes with group actions are Bass-Serre trees, which are rarely locally finite. In this case, however, we can drop the assumption of being locally finite.
One obtains as a direct consequence of~\cref{thm: intro thm a la Brown}, or~\cref{thm: geometric brown theorem} respectively, and~\cref{thm: bass serre tree exact} the following.

\begin{thmalpha}
    Let $d\ge 2$ be an integer. Let $\Gamma$ be the fundamental group of a graph of groups with finitely generated vertex and edge groups such that
    \begin{condenum}
        \item for each vertex group $\Gamma_v$ the function $\filling_{\coeffZ,\Gamma_v}^i$ is polynomial for $2 \le i \leq d$;
        \item for each edge group $\Gamma_e$ the function $\filling_{\coeffZ,\Gamma_e}^i$ is polynomial for $2 \le i \leq d-1$;
        \item all vertex and edge groups are at most polynomially distorted in~$\Gamma$.
    \end{condenum}
    Then $\filling^i_{\coeffZ, \Gamma}$ is polynomial for $2\le i\le d$.
\end{thmalpha}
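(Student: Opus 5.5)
The plan is to apply \cref{thm: intro thm a la Brown} to the Bass--Serre tree $T$ of the graph of groups, regarded as a $\Gamma$-simplicial complex of dimension one. Two preliminary points need care. Elements of $\Gamma$ may invert edges of $T$. To avoid this I pass to the barycentric subdivision, or equivalently assume from the start that the graph of groups has no inversions. Vertex stabilizers are then conjugates of the vertex groups $\Gamma_v$, edge stabilizers are conjugates of the edge groups $\Gamma_e$, and the action is simplicial in the sense of~\cite{tomdieck}. Second, conjugation preserves the homological Dehn functions and the polynomial distortion of stabilizers. So hypotheses (i)--(iii) are needed only for the finitely many representatives $\Gamma_v,\Gamma_e$.

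I will take $d$ as in the statement and view $T$ as a complex whose $d$-skeleton is $T$ itself. It is cocompact because the underlying graph is finite, which I assume, as is implicit in the finite generation of $\Gamma$ and in applying \cref{thm: intro thm a la Brown}. Since $T$ is a tree, it is $\coeffZ$-acyclic, so in particular $(d-1)$-$\coeffZ$-acyclic. The required $(d-1)$-$\excat{\Gamma}$-connectedness of $C_\ast(T;\coeffZ)\onto\coeffZ$ is exactly what \cref{thm: bass serre tree exact} supplies. This matters because $T$ is generally not locally finite, so the parenthetical criterion of \cref{thm: intro thm a la Brown} does not apply.

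Next I check the stabilizer conditions. A vertex ($k=0$) needs polynomial $\filling^i_{\coeffZ,\Gamma_v}$ for $2\le i\le d$, which is hypothesis (i). An edge ($k=1$) needs polynomiality for $2\le i\le d-1$, which is hypothesis (ii). Finite generation and polynomial distortion come from (iii) and the standing assumption. The finiteness condition $\FP_{d-k}(\coeffZ)$ is implicit: finite generation gives $\FP_1$, and I intend the meaning of "$\filling^i$ polynomial for $2\le i\le m$" to include that the function is defined, that is, type $\FP_m(\coeffZ)$ by \cref{def: general def of homological Dehn function}. Since $\Gamma$ is finitely generated, \cref{thm: intro thm a la Brown} then gives that $\filling^i_{\coeffZ,\Gamma}$ is polynomial for $2\le i\le d$.

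The main obstacle is the $\excat{\Gamma}$-connectedness of the non-locally-finite tree. That is precisely the content of \cref{thm: bass serre tree exact}, which is already available, so the remaining work is bookkeeping. The delicate part of that bookkeeping is removing inversions and confirming that subdividing does not change the set of stabilizers beyond adding conjugates of the $\Gamma_e$, or index-$\le 2$ overgroups of them. Those overgroups inherit properties (i)--(iii) from $\Gamma_e$ as finite extensions, but this would need a short check if inversions are allowed.
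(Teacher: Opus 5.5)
Your proposal is correct and matches the paper's argument. The paper applies \cref{thm: intro thm a la Brown} to the Bass--Serre tree, gets the $(d-1)$-$\excat{\Gamma}$-connectedness of this non-locally-finite tree from \cref{thm: bass serre tree exact}, and uses hypotheses (i) and (ii) for the vertex and edge stabilizers as $0$- and $1$-cells.

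One caution: drop the barycentric subdivision, since it is not equivalent to the no-inversion route. The action on the Bass--Serre tree never has inversions, because the equivariant projection to $Y$ sends $\gamma\Gamma_e$ to $e\neq\overline{e}$. Subdividing would instead make the midpoints $0$-cells with stabilizers $\Gamma_e$. That would require $\FP_d(\coeffZ)$ and polynomial $\filling^d_{\coeffZ,\Gamma_e}$ for the edge groups, which hypothesis (ii) does not provide.
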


An assumption on polynomial distortion in the previous theorems is necessary. If, for example, $A$ is a finitely generated subgroup of a finitely presented group~$B$ that is exponentially distorted, then $A$ is exponentially distorted in the double $B\ast_A B$, and $\filling^2_{\bbZ, B\ast_A B}$ is at least exponential. See~\cite{li+manin}*{Lemma~3.6}, which is the homological analog of~\cite{bridson+haefliger}*{Theorem~6.20 on p.~507}.

Another application concerns group extensions. We call an extension $N \hookrightarrow G \onto Q$ \emph{polynomial} if $N$ is at most polynomially distorted in $G$. For such extensions, polynomiality of homological Dehn functions satisfies two-out-of-three properties: it passes from kernel and quotient to the extension, and from kernel and extension to the quotient. The precise statements are~\cref{thm: extension outer to inner,thm: extension left to right}.

\pagebreak
\begin{thmalpha}
    Let $N \hookrightarrow G \onto Q$ be a polynomial extension and let $n \geq 2$.
    \begin{condenum}
        \item If $N$ and $Q$ are of type $\FP_n(\coeffZ)$, and
              $\filling_{\coeffZ,N}^i$ and $\filling_{\coeffZ,Q}^i$ are polynomial for $2 \leq i \leq n$, then $\filling_{\coeffZ,G}^n$ is polynomial.
        \item If $N$ is of type $\FP_{n-1}(\coeffZ)$, $Q$ is of type $\FP_n(\coeffZ)$, $\filling_{\coeffZ,N}^i$ is polynomial for $2 \leq i \leq n - 1$, and $\filling_{\coeffZ,G}^i$ is polynomial for $2 \leq i \leq n$, then $\filling_{\coeffZ,Q}^n$ is polynomial.
    \end{condenum}
\end{thmalpha}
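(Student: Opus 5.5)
For (i), the plan is to reduce to \cref{thm: intro thm a la Brown}, that is, to \cref{thm: geometric brown theorem}, applied to the action of $G$ on a suitable complex built from~$Q$. Since $Q$ is of type $\FP_n(\coeffZ)$ and $\filling^i_{\coeffZ,Q}$ is polynomial for $2\le i\le n$, I will take a free $\coeffZ[Q]$-resolution $P_\ast\onto\coeffZ$ that is finitely generated up to degree~$n$. This resolution should be $(n-1)$-$\excat{Q}$-connected in the sense of \cref{sec: chain complexes in exact category}; that is the translation of polynomial filling into the exact-category framework, which I take to be the content of \cref{cor: exactness from geometric filling}. I will then restrict it along $G\onto Q$ to get a complex of $\coeffZ[G]$-modules. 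The cells are permuted by $G$ with stabilizers all equal to~$N$, because the $Q$-action is free.

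Next I check the hypotheses of \cref{thm: geometric brown theorem}, taking $d=n$. The stabilizer $N$ is of type $\FP_n(\coeffZ)\supseteq\FP_{n-k}(\coeffZ)$. Its filling functions $\filling^i_{\coeffZ,N}$ are polynomial for $i\le n$. It is finitely generated and at most polynomially distorted in $G$, since the extension is polynomial.

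The key point is connectivity. I must show that $(n-1)$-$\excat{Q}$-connectedness of $\restriction$-ed $P_\ast$ implies $(n-1)$-$\excat{G}$-connectedness. This should hold because the norm on $\coeffZ[Q]$ is the quotient norm of $\coeffZ[G]$ for word metrics with compatible generating sets, so polynomial bounds on preimages in $Q$ lift to $G$ as bounded maps. If the theorem needs a genuine simplicial complex rather than a chain complex, I will replace $P_\ast$ by the cellular chains of a $Q$-CW-model with cocompact $n$-skeleton. In the $\FP$ case I expect the paper's chain-level version, stated in the exact category, to be what is actually available.

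For (ii), I will go in reverse and compute $Q$'s filling via induction. Let $F_\ast\onto\coeffZ$ be a $\coeffZ[G]$-resolution that is finite up to degree~$n$ and exact in $\excat{G}$ up to degree $n-1$. Then $\coeffZ\otimes_{\coeffZ[N]}F_\ast$ computes $H_\ast(N;\coeffZ)$ with $Q$-action. Instead, I will use the Lyndon--Hochschild--Serre type argument in the exact category. Since $N$ is $\FP_{n-1}$ with polynomial fillings, the induced complexes $\induction$ from $N$ to $G$ of $N$-resolutions are exact in $\excat{G}$; this needs polynomial distortion. Comparing via a double-complex or mapping-cone argument then gives that $\coeffZ[Q]\otimes_{\coeffZ[G]}F_\ast$ is $\excat{Q}$-connected up to degree $n-1$. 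Combined with $\FP_n$ of~$Q$, this yields polynomial $\filling^n_{\coeffZ,Q}$.

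The main obstacle is the transfer of exactness across the quotient map, namely showing that coinvariants by $N$ preserve bounded exactness. This is where the $\FP_{n-1}$ filling hypothesis on $N$ enters: it is used to correct lifts of $Q$-cycles to $G$-cycles at polynomial cost.
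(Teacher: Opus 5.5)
Your part (i) is essentially the paper's argument. The free $\coeffZ[Q]$-resolution $L$ of finite type up to degree $n$ is $(n-1)$-$\excat{Q}$-connected. This comes from \cref{cor: polynomial filling iff every free resolution bounded} together with \cref{prop: group polynomial equivalence}, not from \cref{cor: exactness from geometric filling}, which concerns locally finite simplicial complexes. The restriction of $L$ to $G$ stays $(n-1)$-$\excat{G}$-connected by \cref{prop: bornological restriction exact}. Its modules are finite sums of $\coeffZ[G/N]=\induction_N^G(\coeffZ)$, which is of type $\FP_n(\excat{G})$ by \cref{prop: FPn induction}. The ``chain-level version'' you anticipate is \cref{thm: blow-up FPnE}. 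Drop the CW-model fallback: under $\FP_n(\coeffZ)$ there need not be a free $Q$-complex with cocompact $n$-skeleton.

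Part (ii), however, has a genuine gap. The statement you aim for, that $\coeffZ[Q]\otimes_{\coeffZ[G]}F_\ast$ is $\excat{Q}$-connected up to degree $n-1$, is false. As you observe yourself, this complex is $\coeffZ\otimes_{\coeffZ[N]}F_\ast$, and its homology is $H_\ast(N;\coeffZ)$. Take $N=\bbZ\times 0$ in $G=\bbZ^2$ with $Q=\bbZ$: all hypotheses hold, yet $H_1=\bbZ\neq 0$, so the complex is not even algebraically exact in degree~$1$. No double-complex or mapping-cone comparison can repair this, and the hypotheses on $N$ do nothing to kill $H_\ast(N;\coeffZ)$. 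Passing to $N$-coinvariants is the wrong way to transport exactness from $G$ to $Q$.

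The missing idea is to go the other way, via restriction, which preserves and reflects boundedness (\cref{prop: bornological restriction exact}). The paper inducts on $k<n$:
\begin{enumerate}
\item Start from a free $\coeffZ[Q]$-resolution $L$, finitely generated up to degree $k$ and $(k-1)$-$\excat{Q}$-connected. View $C=\restriction^Q_G(L^{(k)})$ as a $\coeffZ[G]$-complex. Its modules $\coeffZ[G/N]^{I}$ are of type $\FP_{n-1}(\excat{G})$; this is where the hypotheses on $N$ and polynomial distortion enter.
\item Use \cref{thm: blow-up} to obtain a complex $\widehat{C}$ of finitely generated free $\coeffZ[G]$-modules with a highly $\excat{G}$-connected map $q\colon\widehat{C}\to C$.
\item Extend $\widehat{C}$ by $\coeffZ[G]^m\onto\ker\partial^{\widehat{C}}_k$. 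The polynomial fillings of $G$ give a bounded contraction $\widehat{h}$ of this extended complex.
\item Push $\widehat{h}$ down as $h=q\widehat{h}r-H$, with $r,H$ from \cref{prop: partial inverse iff connected}. This is a contraction of $L$ extended by $\coeffZ[Q]^m$.
\item All modules involved are restricted from $Q$, so the $\sfB_G$-bounded maps $h_i$ are $\sfB_Q$-bounded. Then \cref{prop: bounded contraction from polynomial filling} advances the induction.
\end{enumerate}
Your heuristic of ``correcting lifts of $Q$-cycles to $G$-cycles'' is roughly what $r$ does. It has to be implemented on restricted $Q$-complexes, not on coinvariants of a $G$-resolution.
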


Homological Dehn functions of lattices of semisimple groups are well studied~\cites{bux+wortman, bestvina+eskin+wortman, leuzinger+young}.
For uniform lattices, all homological Dehn functions are polynomial because their model spaces, which are products of non-positively curved symmetric spaces and Bruhat-Tits buildings, are \CAT{0}-spaces. Leuzinger-Young (\cref{thm:LY}) answered the question of polynomiality completely for a non-uniform irreducible lattice $\Gamma$ of a semisimple Lie group~$G$ of real rank~$d\ge 3$: The homological Dehn function $\filling^i_{\bbZ, \Gamma}$ is polynomial for $i<d$ and exponential for $i=d$. The analog of the result by Leuzinger-Young is still open for $S$-arithmetic groups (\cref{conj:BEW}). Bestvina-Eskin-Wortman proved that $\filling^i_{\bbZ, \Gamma}$ is polynomial for $i< \size{S}$ for an $S$\=/arithmetic lattice~$\Gamma$ (\cref{thm: BEW}). In the case where the number of inverted primes $\size{S}$ is small, we obtain~\cref{thm:filling functions for S-arithmetic groups} as an improvement over Bestvina-Eskin-Wortman's result. We formulate here a special case, which is~\cref{thm:arith-special-application}.

\begin{thmalpha}
    For every prime~$p$,
    the \Nth{i} filling function $\filling_{\bbZ,\Gamma}^i$ of $\Gamma=\SL_n(\mathbb{Z}[1/p])$ is polynomial for every $i\in\{2,\dots, n-2\}$.
\end{thmalpha}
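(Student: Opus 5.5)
We will deduce the theorem from \cref{thm: intro thm a la Brown}, applied to the action of $\Gamma=\SL_n(\bbZ[1/p])$ on a suitable complex. The complex is the Bruhat--Tits building $X$ of $\SL_n(\bbQ_p)$. It is a contractible simplicial complex of dimension $n-1$. The action of $\Gamma$ on it is type-preserving, so an element stabilizing a simplex fixes it pointwise, which gives a $\Gamma$\=/simplicial complex in the required sense. The action is cocompact, because $\SL_n(\bbZ[1/p])$ acts transitively on the vertices of each type (strong approximation). The building is locally finite, since residue fields are finite. Being \CAT{0}, it falls under \cref{cor: CAT(0)}, so the corollary of \cref{thm: intro thm a la Brown} applies with $\coeffZ=\bbZ$. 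We take $d=n-2$, working with the $(n-2)$\=/skeleton or directly with $d$ as in the corollary.

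The main work is checking the hypotheses on the stabilizers. The stabilizer of a vertex is conjugate, inside $\SL_n(\bbQ)$ or $\mathrm{GL}_n(\bbQ)$, to $\SL_n(\bbZ)$. The stabilizer $\Gamma_\sigma$ of a $k$\=/simplex is a finite-index subgroup of a vertex stabilizer, namely a congruence-type subgroup of an $\SL_n(\bbZ)$\=/conjugate. Hence every $\Gamma_\sigma$ is commensurable up to conjugacy with $\SL_n(\bbZ)$, a non-uniform lattice in $\SL_n(\bbR)$ of real rank $n-1$. The theorem of Leuzinger--Young (\cref{thm:LY}) then shows that $\filling^i_{\bbZ,\Gamma_\sigma}$ is polynomial for $i<n-1$, that is, for $2\le i\le n-2$. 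This covers the required range $2\le i\le d-k$, since $d-k\le n-2$. We also use that homological Dehn functions are invariant under passing to finite-index subgroups and under isomorphism, and that arithmetic groups are of type $\F_\infty$, so the $\FP$ conditions hold.

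The distortion hypothesis also needs checking: $\SL_n(\bbZ)$, and hence each $\Gamma_\sigma$, must be at most polynomially distorted in $\SL_n(\bbZ[1/p])$. I expect this to be the main obstacle. The plan is to cite the Lubotzky--Mozes--Raghunathan theorem. It says that both groups, for $n\ge 3$, are quasi-isometrically embedded with word metric comparable to the Riemannian metric on the symmetric space, respectively on the product of the symmetric space and the building. The inclusion is then undistorted up to a linear (logarithmic-comparison) factor, in particular polynomially distorted. For $n\le 3$ the range $\{2,\dots,n-2\}$ is empty and there is nothing to prove, so $n\ge 4$ and LMR applies.

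Finally, \cref{thm: intro thm a la Brown} yields polynomiality of $\filling^i_{\bbZ,\Gamma}$ for $2\le i\le d=n-2$. A subtle point is the choice of $d$. The building has dimension $n-1$, but it suffices to use its $(n-2)$\=/skeleton with $d=n-2$, which is $(d-1)$\=/acyclic. Its connectivity hypothesis follows from the \CAT{0} filling bounds via \cref{cor: CAT(0)}, which give polynomial (indeed Euclidean-type) filling in the building in all degrees.
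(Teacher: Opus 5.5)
Your proposal is correct and follows the paper's own argument. The paper obtains this statement as the case $\bfG=\SL_n$ over $\bbQ$ with $S=\{\infty,p\}$, so $\rank_{S_\infty}\bfG=n-1$, of \cref{thm:filling functions for S-arithmetic groups}: it applies \cref{thm: geometric brown theorem} with $d=n-2$ to the Bruhat--Tits building, uses \cref{cor: CAT(0)} for the connectivity hypothesis, Leuzinger--Young for the stabilizers (commensurable with $\SL_n(\bbZ)$), and Lubotzky--Mozes--Raghunathan for undistortedness. Two small simplifications are available: you need not pass to the $(n-2)$\=/skeleton, and no conjugation is needed, because any two compact open subgroups of $\SL_n(\bbQ_p)$ are commensurable, so each $\Gamma_\sigma$ is directly commensurable with $\SL_n(\bbZ)$.
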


\subsection{The framework}

We describe the basic idea of the homological framework in the case $\coeffZ=\bbZ$. We define
a bornology on every $\bbQ[\Gamma]$-module, called the \emph{Cayley bornology}, where $\Gamma$ is a finitely generated group with a word metric~$d$.
The Cayley bornology on $\bbQ[\Gamma]$ is induced by a family of norms (\Cref{eq: definition Cayley norm}).
The completion of $\bbQ[\Gamma]$ with respect to this family of norms is a Fr\'echet algebra $\calS(\Gamma)$, which is studied in~\cites{bader+sauer, meyer, ogle}. The relevant homological structure on $\bbZ[\Gamma]$-modules is that of an exact structure~$\excat{\Gamma}$, tailored to capture the polynomiality of homological Dehn functions.
A short exact sequence \[A \overset{\iota}{\longrightarrow} B \overset{p}{\longrightarrow} C\] of $\bbZ[\Gamma]$\=/modules is \emph{admissible} in~$\excat{\Gamma}$ if there is a $\bbZ$\=/linear section $s \colon C \to B$ and a $\bbZ$\=/linear retraction $r \colon B \to A$ such that $s\otimes_\bbZ\bbQ$ and $r\otimes_\bbZ\bbQ$ are bounded with respect to the Cayley bornologies (\cref{def: admissible morphisms}). Consequently, we have a notion of admissible resolutions, which leads to a notion of $\FP_n(\excat{\Gamma})$ for a $\bbZ[\Gamma]$-module (\cref{defn: FP_n(E)}).
The approach is similar for more general coefficients~$\coeffZ$.

The precise translation between polynomiality and the exact structure is given by the following theorem, which is a combination of~\cref{thm: polynomial fillings from FP_n(E),prop: group polynomial equivalence}.

\begin{thmalpha}
    Let $\Gamma$ be a group of type $\FP_n(\coeffZ)$. Then $\filling_{\coeffZ,\Gamma}^i$ is polynomial for all $2 \leq i \leq n$ if and only if
    $\coeffZ$ is of type $\FP_n(\excat{\Gamma})$.
\end{thmalpha}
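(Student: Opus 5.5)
Both directions rest on the dictionary between filling inequalities and bounded (with respect to the Cayley bornology) $\coeffZ$-linear contractions. Since $\Gamma$ is of type $\FP_n(\coeffZ)$, I fix a free resolution $F_\ast\onto\coeffZ$ by finitely generated free $\coeffZ[\Gamma]$-modules in degrees $\le n$, with $\coeffZ[\Gamma]$-bases. The $\ell^1$-norm with respect to the basis over $\coeffZ\times\Gamma$ defines $\filling^i_{\coeffZ,\Gamma}$ via \cref{def: general def of homological Dehn function}. The Cayley bornology on $F_i\otimes\coeffQ$ is generated by the sets of chains whose $\ell^1$-norm and support radius (word-length from the identity) are controlled polynomially.

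For the direction ``polynomial $\Rightarrow$ $\FP_n(\excat{\Gamma})$'', I would show that $F_\ast\onto\coeffZ$ itself is an admissible resolution. That is, each short exact sequence $Z_i\to F_i\to Z_{i-1}$ admits a $\coeffZ$-linear section whose rationalization is bounded. The plan is to build a contracting homotopy $s_i\colon Z_{i-1}\to F_i$ on a $\coeffZ$-basis of cycles. For a cycle $z$, I choose $s_i(z)$ to be a minimal filling. The filling inequality gives $\norm{s_i(z)}\le \filling^i(\norm z)$, which is polynomial.

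Boundedness for the Cayley norms also needs support control. I would use the standard fact, presumably available through \cref{prop: group polynomial equivalence}, that a polynomial filling function yields fillings whose support lies within a polynomial distance of the support of $z$. This is proved by throwing away far-away cells using local finiteness of the $\ell^1$ structure.

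The real difficulty is linearity. Minimal fillings are not additive, so I would define $s_i$ only on a $\coeffZ$-basis, or on generators of cycles such as boundaries of basis cells $\partial(g e)$ and cycles supported on balls, and extend linearly. Then I must check that a general cycle decomposes into basis cycles with polynomially controlled total norm. This decomposition is the step I expect to be the main obstacle. I would first try to use the retraction $r$ and inductive filling in lower degrees to write any cycle as a sum of ``cone'' pieces over the identity.

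The converse direction, ``$\FP_n(\excat{\Gamma})$ $\Rightarrow$ polynomial'', should be \cref{thm: polynomial fillings from FP_n(E)}. The plan there is as follows.
\begin{enumerate}
\item An admissible resolution of finite type gives bounded sections $s$.
\item A Schanuel-type comparison in the exact category $\excat{\Gamma}$ transfers these bounded sections to $F_\ast$. The comparison maps between free finite-type resolutions are $\Gamma$-equivariant and finitely supported, hence bounded.
\item A bounded section means that for each $\ell^1$-ball $B$ of cycles of norm $\le r$, the image under $s$ lies in a bornologous set. Unwinding the Cayley norms converts this into the estimate $\norm{s(z)}\le C(1+r)^k$, which yields a polynomial bound on $\filling^i$.
\end{enumerate}
To get uniformity I translate $z$ so that its support meets the identity, using $\Gamma$-invariance of the norms. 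Combining both directions for $2\le i\le n$ proves the equivalence.
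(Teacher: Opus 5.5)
There is a genuine gap in the direction ``polynomial $\Rightarrow$ $\FP_n(\excat{\Gamma})$'', at the step you flag yourself as the main obstacle, and none of the remedies you list closes it. A $\coeffZ$-basis of the cycle module $Z_{i-1}$ need not exist for general $\coeffZ$, and for $\coeffZ=\bbZ$ an arbitrary basis gives no norm control. Prescribing $s_i$ on a generating family of cycles (boundaries $\partial(\gamma e)$, cycles supported in balls, cone cycles) and ``extending linearly'' is not well defined, because such families satisfy linear relations. So, as written, you never produce a $\coeffZ$-linear section at all, and the decomposition estimate you would need is simply not available.

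The missing idea is to define the map on the chain module rather than on the cycles. The module $F_k$ has an honest $\coeffZ$-basis $\{\gamma e_j\}$, and the paper builds a partial chain contraction inductively. Suppose $h_{k-1}$ is $\sfB_\Gamma$-bounded with $\partial h_{k-1}+h_{k-2}\partial=\id$. Let $h_k(\gamma e_j)$ be a near-optimal $\norm{\placeholder}^\Gamma_1$-filling of the cycle $\gamma e_j-h_{k-1}(\partial(\gamma e_j))$, and extend linearly.

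Then for every cycle $z=\sum a_e e$ you get $\partial h_k(z)=z-h_{k-1}(\partial z)=z$ for free. Your ``cone pieces'' are exactly these cycles, but they are only ever indexed by basis elements of $F_k$, so no decomposition estimate is needed.

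Boundedness of $h_k$ is checked on basis elements (\cref{lem: integral boundedness}). Boundedness of $h_{k-1}$ gives $\norm{\gamma e_j-h_{k-1}\partial(\gamma e_j)}^\Gamma_1\le C(1+\ell_\Gamma(\gamma))^t$, and polynomiality of $\weightedfilling^{k+1}$ finishes the estimate. Your support-control concern is absorbed by working with $\weightedfilling$ throughout and invoking \cref{prop: group polynomial equivalence}.

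Note also that in $\excat{\Gamma}$ a bounded section alone does not yield admissibility. You also need a retraction $F_k\to Z_k$ that is bounded for the Cayley bornology of $Z_k$, which can be strictly finer than the subspace bornology. With the contraction this retraction is $\partial_{k+1}\circ h_k$, which is bounded because $\partial_{k+1}$ is equivariant (\cref{prop: connected iff bounded contraction}).

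For the converse, transferring a bounded contraction to your fixed $F_\ast$ via equivariant comparison maps is a legitimate substitute for the paper's use of \cref{thm: FP_n equivalent to FL_n}. However, the translation trick in your last step fails for cycles whose support is spread out, such as a sum of two far-apart small cycles. Instead, use $\norm{h(x)}^\Gamma_1\le C\norm{x}^\Gamma_r\le C\bigl(\norm{x}^\Gamma_1\bigr)^r$, which holds by $1$-separation of $\coeffZ$. This bounds $\weightedfilling$ directly, and you then pass to $\filling$ via \cref{prop: group polynomial equivalence}.
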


The theorem makes precise the viewpoint that the polynomiality of homological Dehn functions is
a strong form of finiteness conditions.
It allows using established techniques from homological algebra in situations where the geometry is hard to control.

We now place our work in the context of the existing literature. The family of norms~\cnumberref{eq: definition Cayley norm} first appeared in work of Connes--Moscovici in connection with the Novikov conjecture~\cite{connes+moscovici}*{\S 6}. They introduced the \emph{polynomial cohomology} of a group, which can be interpreted as a certain Ext-term over the Fr\'echet algebra $\calS(\Gamma)$~\cite{meyer-derived}. Polynomial cohomology has been further studied in~\cites{ji, ji+ramsey, ogle, meyer}. The connection between polynomial cohomology and higher homological Dehn functions was first explored by Ji--Ramsey~\cite{ji+ramsey}. They introduced a property of groups with certain finiteness conditions, called \emph{isocohomological}, defined in terms of polynomial cohomology. Theorem~2.6 of~\cite{ji+ramsey} establishes the equivalence between being isocohomological and having polynomial homological Dehn functions. Under the assumption of being isocohomological, Ji and Ramsey construct fillings of integral cycles that are polynomially bounded in the norm of the cycles. However, these fillings need not have integral coefficients and need not have finite support, so they correspond to a substantially different notion of Dehn functions. Moreover, it does not seem possible to deduce polynomiality of homological Dehn functions from cohomological data alone. Our approach instead works with the underlying chain complexes and their relative homological algebra with respect to an exact structure.

\subsection{Acknowledgements}
The first author thanks the Isaac Newton Institute for the support and hospitality during the program \emph{Graphs, groups, operators} when work on this paper was undertaken. This work was supported by EPSRC Grant Number EP/Z000580/1.
The second author was partially supported by the Deutsche Forschungsgemeinschaft (DFG, German Research Foundation) under
project number 281869850. We thank Uri Bader, Kevin Li and Claudio Llosa Isenrich for many helpful conversations on this subject.

\section{Bornological modules over the group ring}\label{sec: bornology}

In this section we introduce a bornology on modules over the group ring of a finitely generated group that is adapted to studying polynomial filling functions.

\subsection{Bornological rings and modules}\label{sec: bornological rings and modules}

We collect basic notions of bornological rings and modules. A nice exposition of these constructions can be found in~\cite{bambozzi}*{Chapter~1}.

A \emph{bornology} on a set $X$ is a collection of subsets of $X$, called \emph{bounded subsets}, such that $X$ is the union of bounded subsets, and such that subsets and finite unions of bounded subsets are bounded. A \emph{bornological space} is a set endowed with a bornology. A map between bornological spaces is \emph{bounded} if it maps bounded subsets to bounded subsets. The category of bornological spaces is complete and cocomplete and the forgetful functor to sets commutes with limits and colimits~\cite{bambozzi}*{Proposition~1.1.6}. In the special case of the limit being a product $X\times Y$ of bornological spaces, the product bornology is the smallest bornology that contains all products $B_1\times B_2$ of bounded subsets $B_1\subset X$ and $B_2\subset Y$.

A \emph{bornological group} $G$ is a bornological space with the structure of a group such that the multiplication $G\times G\to G$ and inverse map $G\to G$ are bounded. A \emph{bornological ring} $R$ is a bornological space~$R$ with the structure of a ring such that the underlying additive group $(R,+)$ is a bornological abelian group and the ring multiplication $R\times R\to R$ is bounded. Similarly, a \emph{bornological module} $M$ over a bornological ring~$R$ is a bornological space with the structure of an $R$\=/module such that $(M,+)$ is a bornological abelian group and the scalar multiplication $R\times M\to M$ is bounded.

\begin{rem}\label{rem: bornological R-modules as a category}
    The category of bornological $R$\=/modules, denoted by~$\bornR$, is quasi-abelian, complete and cocomplete, and the forgetful functor to $R$\=/modules commutes with limits and colimits~\cite{bambozzi}*{Section~1.4}.

    The category~$\bornR$ is not abelian since for a morphism $f\colon A\to B$ in~$\bornR$ that is injective as a group homomorphism the quotient bornology of $f(A)$ could be strictly finer than the subspace bornology, so that $f\colon A\to f(A)$ is not a bounded isomorphism onto $f(A)$ endowed with its subspace bornology.
\end{rem}

\begin{defn}\label{def: canonical bornology}
    Let $R$ be a bornological ring, and let $M$ be an $R$\=/module. The \emph{canonical bornology} on $M$ is the smallest bornology of $M$ such that $M$ becomes a bornological $R$\=/module and every $R$\=/homomorphism $R\to M$ is bounded.
\end{defn}
The family of bornologies with the above requirement is non-empty as it contains the bornology of all subsets. The intersection of bornologies is a bornology. So the canonical bornology indeed exists. For the following lemma, recall that a subset in the direct sum bornology is bounded if it is contained in a finite direct sum and if its projection to each summand is bounded (see~\cite{bambozzi}*{Section~1.2}).

\begin{lem}\label{lem: properties of canonical bornology: quotient}\AvoidPageBreak
    Let $R$ be a bornological ring. Then the following properties hold:
    \begin{refenum}
        \item\label{lem: properties of canonical bornology: direct sum} The canonical bornology on a free $R$\=/module $\bigoplus_{i\in I} R$ is the direct sum bornology.
        \item\label{lem: properties of canonical bornology: smallest bornology} The canonical bornology on an arbitrary $R$\=/module $M$ is the smallest bornology of~$M$ such that every $R$\=/linear map from a free $R$\=/module to $M$ is bounded.
        \item\label{lem: properties of canonical bornology: universal property} Let $M$ be an $R$\=/module equipped with the canonical bornology and let $N$ be a bornological $R$\=/module. Let $p\colon F\to M$ be a surjective $R$\=/linear map from a free $R$\=/module~$F$ to~$M$.
              Then a homomorphism $f\colon M\to N$ of abelian groups is bounded if and only if the composition $f \circ p\colon F\to N$ is bounded.
    \end{refenum}
\end{lem}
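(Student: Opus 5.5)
We prove the three parts in the order (1), (2), (3). Each part reduces to the minimality in \cref{def: canonical bornology}, together with the fact that the direct sum bornology is the finest bornology making all structure maps bounded.

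For (1), let $F=\bigoplus_{i\in I}R$ and write $\mathcal{D}$ for the direct sum bornology and $\mathcal{C}$ for the canonical one. First we check that $\mathcal{D}$ makes $F$ a bornological $R$\=/module. Addition is bounded summandwise, and scalar multiplication $R\times F\to F$ maps $B\times D$ into a finite sub-sum whose projections are $B\cdot \pi_i(D)$; these are bounded because multiplication on $R$ is bounded. Next, every $R$\=/linear map $f\colon R\to F$ is right multiplication $r\mapsto r\cdot f(1)$. Since $f(1)$ lies in a finite sub-sum and $R$ is a bornological ring, $f$ is bounded. Minimality of $\mathcal{C}$ then gives $\mathcal{C}\subseteq\mathcal{D}$.

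For the converse inclusion, take $D\in\mathcal{D}$. It lies in $\bigoplus_{i\in J}R$ for some finite $J$, and each $\pi_i(D)$ is bounded in $R$. Then $D\subseteq\sum_{i\in J}\iota_i(\pi_i(D))$. Each inclusion $\iota_i\colon R\to F$ is $R$\=/linear, so it is $\mathcal{C}$\=/bounded. Hence each $\iota_i(\pi_i(D))$ is $\mathcal{C}$\=/bounded, and so is their finite sum, because addition is bounded in a bornological group. Subsets of bounded sets are bounded, so $D\in\mathcal{C}$.

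For (2), let $\mathcal{B}$ be the smallest bornology on $M$ such that every $R$\=/linear map from a free module, equipped with the direct sum bornology, to $M$ is bounded. Concretely, $\mathcal{B}$ consists of the subsets of $\phi(D)$ for $\phi\colon F\to M$ linear and $D$ bounded; this family is closed under finite unions by taking direct sums of free modules. The map $\phi$ is $R$\=/linear from $(F,\mathcal{C}_F)$, which is a bornological module by (1). We claim that for any bornological module structure on $M$ in which all maps $R\to M$ are bounded, every such $\phi$ is bounded. Indeed, $\phi$ restricted to a finite sub-sum is a finite sum of maps $R\to M$ composed with projections, and addition is bounded. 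Hence $\mathcal{B}\subseteq\mathcal{C}_M$.

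Conversely, we must show that $\mathcal{B}$ makes $M$ a bornological $R$\=/module. For addition, if $X_1\subseteq\phi_1(D_1)$ and $X_2\subseteq\phi_2(D_2)$, then $X_1+X_2\subseteq(\phi_1\oplus\phi_2)(D_1+D_2)$. Scalar multiplication is handled the same way, since $B\cdot\phi(D)=\phi(B\cdot D)$. Maps $R\to M$ lie in $\mathcal{B}$ trivially, so $\mathcal{C}_M\subseteq\mathcal{B}$. This verification that $\mathcal{B}$ is a module bornology is the main (though routine) point.

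For (3), if $f$ is bounded then so is $f\circ p$, because $p$ is bounded by (2). Conversely, suppose $f\circ p$ is bounded and let $X\subseteq\phi(D)$ be bounded, with $\phi\colon F'\to M$ linear. Since $F'$ is free and $p$ is surjective, $\phi$ lifts to a linear $\tilde\phi\colon F'\to F$ with $p\tilde\phi=\phi$. The lift $\tilde\phi$ is bounded by (1) and the argument in (2). Therefore $f(X)\subseteq f p\tilde\phi(D)$ is bounded.
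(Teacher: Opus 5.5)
Your proof is correct and matches the paper's argument for (1) and (2). The paper phrases the converse inclusion in (1) via the universal property of the direct sum bornology, and your $\sum_{i\in J}\iota_i(\pi_i(D))$ argument simply unwinds it. For (3), the paper shows that the quotient bornology induced by $p$ coincides with the canonical one by lifting maps $R\to M$. You instead lift arbitrary $\phi\colon F'\to M$ using your explicit description of the bornology from (2), which is the same lifting idea applied directly.
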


\begin{proof}
    Ad~\localref{lem: properties of canonical bornology: direct sum}. The direct sum bornology makes $F=\bigoplus_{i\in I}R$ into a bornological $R$\=/module. For the addition this follows from the universal property of the direct sum bornology and the fact that the addition on $R$ is bounded. Similarly, for the scalar multiplication.
    If $R\to F$ is $R$\=/linear, then it factors through a direct summand $\bigoplus_{j\in J}R$ for some finite subset $J\subset I$, and the map $R\to \bigoplus_{j\in J}R$ is bounded because the composition with the projection to each summand is bounded. Hence, $R\to F$ is bounded where $F$ carries the direct sum bornology.
    So the canonical bornology is contained in the direct sum bornology.

    On the other hand, the identity map $F\to F$, where we put the direct sum bornology on the domain and the canonical bornology on the codomain, is bounded because of the universal property of the direct sum bornology and the fact that $R\hookrightarrow F\to F$ is bounded for the inclusion $R\hookrightarrow F$ of each summand. This shows that the two bornologies coincide.

    Ad~\localref{lem: properties of canonical bornology: smallest bornology}. By \localref{lem: properties of canonical bornology: direct sum}, every $R$\=/linear map from a free $R$\=/module to $M$ is bounded with respect to the canonical bornology. Thus, the canonical bornology contains the smallest bornology~$\sfB$ such that every $R$\=/linear map from a free $R$\=/module (with the direct sum or canonical bornology) to $M$ is bounded. For the reverse inclusion, it suffices to show that an $R$\=/module $M$ equipped with~$\sfB$ is a bornological $R$\=/module with respect to~$\sfB$. We only need to show that the addition on $M$ is bounded with respect to~$\sfB$.
    The universal property of~$\sfB$ is such that the addition $M\times M\to M$ is bounded if and only if for all $R$\=/linear maps $F\to M$ and $F'\to M$ from free $R$\=/modules the induced map $F\times F'\to M\times M\to M$ is bounded. But this map can be factored into $F\times F'\to F\oplus F'\to  M$ in an obvious way. Both maps in the factorization are clearly bounded. The verification for the scalar multiplication is similar.

    Ad~\localref{lem: properties of canonical bornology: universal property}. The quotient bornology~$\sfB_q$  on~$M$ induced by~$p$ has the universal property stated in~\localref{lem: properties of canonical bornology: universal property}. Therefore, it suffices to show that~$\sfB_q$ is the canonical bornology on~$M$.

    By~\localref{lem: properties of canonical bornology: smallest bornology}, the map $p \colon F \to M$ is bounded for the canonical bornology on~$M$.
    Hence, $\sfB_q$ is contained in the canonical bornology.

    Conversely, $M$ endowed with~$\sfB_q$ is a bornological $R$\=/module.
    Indeed, if $B,B'\subset M$ are $\sfB_q$\=/bounded, choose bounded subsets $A,A'\subset F$ such that $B\subset p(A)$ and $B'\subset p(A')$.
    Then $B+B'\subset p(A+A')$, and $A+A'$ is bounded in~$F$.
    The same argument with bounded subsets of~$R$ shows that scalar multiplication on~$M$ is bounded.
    Moreover, every $R$\=/linear map $u\colon R\to M$ lifts to an $R$\=/linear map $\widetilde{u}\colon R\to F$, since $R$ is free on one generator and $p$ is surjective.
    By~\localref{lem: properties of canonical bornology: direct sum}, the lift~$\widetilde{u}$ is bounded, hence $u=p\circ \widetilde{u}$ is bounded for~$\sfB_q$.
    Therefore, the canonical bornology on~$M$ is contained in~$\sfB_q$, and the two bornologies coincide.
\end{proof}

The next lemma is an immediate consequence of~\cref{lem: properties of canonical bornology: quotient}.

\begin{lem}\label{lem: properties of canonical bornology: maps}
    Let $R$ be a bornological ring. Every $R$\=/linear map between $R$\=/modules that carry their canonical bornologies is bounded.
\end{lem}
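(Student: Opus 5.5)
The plan is to reduce the statement to the universal property in \cref{lem: properties of canonical bornology: quotient}\,\localref{lem: properties of canonical bornology: universal property}. Let $f\colon M\to N$ be an $R$\=/linear map, where both $M$ and $N$ carry their canonical bornologies. Since every module is a quotient of a free module, we fix a surjective $R$\=/linear map $p\colon F\to M$ from a free $R$\=/module $F=\bigoplus_{i\in I}R$. For instance, we can take $I=M$ and send the basis element $e_m$ to $m$.

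The criterion in \localref{lem: properties of canonical bornology: universal property} applies to $f$. Indeed, $N$ with its canonical bornology is a bornological $R$\=/module by \cref{def: canonical bornology}, and $f$ is in particular a homomorphism of abelian groups. The criterion says that $f$ is bounded if and only if $f\circ p\colon F\to N$ is bounded. So it suffices to show that $f\circ p$ is bounded.

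The composition $f\circ p$ is an $R$\=/linear map from a free $R$\=/module to $N$. By \localref{lem: properties of canonical bornology: smallest bornology}, the canonical bornology on $N$ is by construction one for which every $R$\=/linear map from a free $R$\=/module, carrying its canonical (that is, direct sum) bornology, to $N$ is bounded. Hence $f\circ p$ is bounded, and therefore so is $f$.

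No step should be a genuine obstacle. The only point needing care is that the bornology on $F$ used in the smallest-bornology characterization and in the universal property must be the same one. Both statements use the canonical bornology on $F$, which by \localref{lem: properties of canonical bornology: direct sum} is the direct sum bornology, so they are consistent.
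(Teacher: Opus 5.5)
Your argument is correct and is the deduction the paper intends when it calls this lemma an immediate consequence of the preceding lemma. You choose a surjection $p\colon F\to M$ from a free module and use part (3) to reduce boundedness of $f$ to boundedness of $f\circ p$. Then part (2) shows that $f\circ p$, being $R$-linear out of a free module into $N$ with its canonical bornology, is bounded.
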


\subsection{The coefficient rings}\label{sec: coefficients}

We introduce the coefficients, which are normed rings, for which we will consider higher Dehn functions.

\begin{defn}
    A \emph{norm} on a ring $R$ is a function $\abs{\placeholder} \colon R \to \bbR_{\geq 0}$ such that
    \begin{condenum}
        \item $\abs{x} \geq 0$ and $\abs{x} = 0$ if and only if $x = 0$;
        \item $\abs{x + y} \leq \abs{x} + \abs{y}$;
        \item $\abs{xy} \leq \abs{x}\abs{y}$.
    \end{condenum}
    A ring together with a choice of norm is called a \emph{normed ring}.
    Two norms $\abs{\placeholder}_1, \abs{\placeholder}_2$ are said to be \emph{equivalent} if there exists a constant $C > 0$ such that
    \[
        \frac{1}{C} \cdot \abs{x}_2 \leq \abs{x}_1 \leq C \cdot \abs{x}_2
    \]
    for all $x \in R$.
    A norm on a ring is called \emph{symmetric} if $\abs{{-x}} = \abs{x}$ for all $x \in R$.
    It is called \emph{$\varepsilon$\=/separated} for $\varepsilon > 0$ if $\abs{x} \geq \varepsilon$ for all $x \in R \setminus \{0\}$.
\end{defn}

Every norm is equivalent to a symmetric norm~\cite{weis}*{Lemma 2.4} and if it is $\varepsilon$\=/separated
for some $\varepsilon > 0$ it is equivalent to a $1$\=/separated norm~\cite{weis}*{Lemma 2.5}.
Notable examples of  $1$-separated rings are the integers with the Euclidean norm and any ring equipped with the discrete norm.

\begin{defn}\label{def: coefficients}
    A \emph{normed coefficient system} is a pair $(\coeffZ,\coeffQ)$  of symmetric normed rings such that
    \begin{condenum}
        \item\label{def: coefficients: flat} $\coeffZ \subseteq \coeffQ$ is a flat ring extension;
        \item\label{def: coefficients: restriction} the norm on $\coeffQ$ restricts to the norm on $\coeffZ$;
        \item\label{def: coefficients: separated} $\coeffZ$ is $1$\=/separated;
        \item\label{def: coefficients: density} there exists a constant $C > 1$ such that for every $y \in \bbR_{>0}$ there exists
              some $q \in \coeffQ$ with $C^{-1}y \leq \abs{q} \leq Cy$.
    \end{condenum}
    In particular, \cnumberref{def: coefficients: density} implies that $\coeffQ$ cannot be $\varepsilon$\=/separated for any $\varepsilon > 0$.
\end{defn}

\begin{example}\AvoidPageBreak\leavevmode
    \begin{enumerate}
        \item The prototypical normed coefficient system is $(\bbZ, \bbQ)$ equipped with the Euclidean norm.
        \item For a finite field $\bbF_q$ we consider the norm $\abs{x} \coloneqq q^{-v_{\infty}(x)}$ on $\bbF_q(t)$ induced
              by the valuation $v_{\infty}(\frac{f}{g}) = \deg(g) - \deg(f)$. It restricts to the discrete norm
              on $\bbF_q$. Then $(\bbF_q, \bbF_q(t))$ is a normed coefficient system.
        \item Any $1$\=/separated ring $R$ equipped with the discrete norm admits a normed coefficient system $(R, R')$.
              Let $p > 1$ be arbitrary and consider the ring $R' = R[t,t^{-1}]$. We can equip it with
              the $\ell_1$\=/norm induced by requiring $\abs{t^k} = p^k$ for $k \in \bbZ$.
              Then $R'$ is a flat (even free) ring extension of $R$ satisfying~\cnumberref{def: coefficients: density}
              with $C = p$.
    \end{enumerate}
\end{example}

Throughout the rest of the article we fix a normed coefficient system $(\coeffZ, \coeffQ)$.
One can simply think of $(\bbZ,\bbQ)$ while reading.

\subsection{The group ring as a bornological ring}\label{sec: cayley bornology}

We introduce a bornology on the group ring $\coeffQ[\Gamma]$ of a finitely generated group~$\Gamma$ that turns it into a bornological ring. We then consider the canonical bornology on modules over~$\coeffQ[\Gamma]$, which we will refer to as the \emph{Cayley bornology}.

In the sequel, let $\Gamma$ be a finitely generated group with a word metric~$d_\Gamma$ and associated word length $\ell_\Gamma(\gamma)=d_\Gamma(\gamma,e)$.

On the group ring, we define the following family of norms.
\begin{equation}\label{eq: definition Cayley norm}
    \norm{\sum_{\gamma\in \Gamma} a_\gamma\gamma}^\Gamma_n=\sum_{\gamma\in \Gamma} \abs{a_\gamma}\cdot \bigl(1+\ell_\Gamma(\gamma)\bigr)^n.
\end{equation}

A subset of $\coeffQ[\Gamma]$ is \emph{bounded} if it is bounded with respect to $\norm{\placeholder}^\Gamma_n$ for every $n\in\bbN_0$. The bounded subsets of $\coeffQ[\Gamma]$ form a bornology, which we call the \emph{Cayley bornology}.

Note that $\coeffZ[\Gamma]$ and $\coeffQ[\Gamma]$ are normed rings when equipped with $\norm{\placeholder}^\Gamma_n$.

\begin{rem}\label{rem:characterisation-bounded}
    A $\coeffQ$\=/linear homomorphism $f\colon \coeffQ[\Gamma]\to\coeffQ[\Gamma]$, which is not necessarily $\coeffQ[\Gamma]$\=/linear, is bounded if and only if for every $n\in\bbN_0$ there is a constant $C>0$ and some $m\in\bbN_0$ such that
    \begin{equation}\label{eq:condition-bounded}
        \norm{f(x)}^\Gamma_n\le C\cdot \norm{x}^\Gamma_m
    \end{equation}
    for all $x\in\coeffQ[\Gamma]$.
    To see the \emph{only if}-direction, suppose that $f$ is bounded and there is some $n\in\bbN$ and a sequence
    $x_i\in\coeffQ[\Gamma]$
    with $\norm{f(x_i)}^\Gamma_n>i \norm{x_i}^\Gamma_i$.
    There are $q_i \in \coeffQ$ such that
    $\frac{1}{C \norm{x_i}^\Gamma_i} \leq \abs{q_i} \leq  \frac{C}{\norm{x_i}^\Gamma_i}$,
    where~$C > 1$ is the constant from~\cref{def: coefficients}~\cnumberref{def: coefficients: density}.
    Then $\norm{q_i x_i}^\Gamma_i = \abs{q_i} \norm{x_i}^\Gamma_i \leq C$ and
    $\norm{f(q_i x_i)}^\Gamma_n = \abs{q_i} \cdot \norm{f(x_i)}^\Gamma_n \geq i \cdot C^{-1}$,
    hence
    \[ B\defq \bigl\{ q_i \cdot x_i\mid i\in\bbN_0\bigr\}\]
    is a bounded subset such that $f(B)$ is not bounded.
    Because $\norm{\placeholder}^\Gamma_n \leq \norm{\placeholder}^\Gamma_m$ for  $m \geq n$,
    one also sees that it suffices to check \cref{eq:condition-bounded} for $n \geq n_0$ starting from some $n_0 \in \bbN_0$.
    We note this here because in some places it is convenient to assume that
    $n \geq 1$.
\end{rem}

\begin{prop}\label{prop: group ring is bornological}
    The group ring $\coeffQ[\Gamma]$ equipped with the Cayley bornology is a bornological ring.
\end{prop}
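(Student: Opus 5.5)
We have to check three things: the Cayley bornology on $\coeffQ[\Gamma]$ is a bornology, the additive group $(\coeffQ[\Gamma],+)$ is a bornological group, and multiplication $\coeffQ[\Gamma]\times\coeffQ[\Gamma]\to\coeffQ[\Gamma]$ is bounded for the product bornology.

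\textbf{The bornology axioms.} These are immediate, since the family is defined by a countable family of seminorms (in fact norms). Subsets of bounded sets are bounded. Finite unions of bounded sets are bounded, since a finite union of sets each bounded in $\norm{\placeholder}^\Gamma_n$ is bounded in $\norm{\placeholder}^\Gamma_n$. Every singleton is bounded, since each element has finite support. Hence $\coeffQ[\Gamma]$ is covered by bounded sets.

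\textbf{Addition and inversion.} By the triangle inequality,
\[
\norm{x+y}^\Gamma_n\le \norm{x}^\Gamma_n+\norm{y}^\Gamma_n .
\]
So if $B_1,B_2$ are bounded, then $B_1+B_2$ is bounded in every $\norm{\placeholder}^\Gamma_n$. It suffices to treat products $B_1\times B_2$, because every bounded set of the product bornology is contained in such a product. Inversion $x\mapsto -x$ preserves every norm, since the norm on $\coeffQ$ is symmetric.

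\textbf{Multiplication.} This is the key step. I will prove the submultiplicativity estimate
\[
\norm{xy}^\Gamma_n\le \norm{x}^\Gamma_n\cdot\norm{y}^\Gamma_n \quad\text{for all } n\in\bbN_0 .
\]
Write $x=\sum a_\gamma\gamma$ and $y=\sum b_\delta\delta$. Then $xy=\sum_{\gamma,\delta}a_\gamma b_\delta\,\gamma\delta$. Using the triangle inequality and $\abs{ab}\le\abs{a}\abs{b}$ in $\coeffQ$, we get
\[
\norm{xy}^\Gamma_n\le\sum_{\gamma,\delta}\abs{a_\gamma}\abs{b_\delta}\bigl(1+\ell_\Gamma(\gamma\delta)\bigr)^n .
\]
Since the word length satisfies $\ell_\Gamma(\gamma\delta)\le\ell_\Gamma(\gamma)+\ell_\Gamma(\delta)$, we have
\[
1+\ell_\Gamma(\gamma\delta)\le\bigl(1+\ell_\Gamma(\gamma)\bigr)\bigl(1+\ell_\Gamma(\delta)\bigr).
\]
Raising this to the $n$-th power and summing factors the right-hand side as $\norm{x}^\Gamma_n\norm{y}^\Gamma_n$. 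It follows that if $B_1,B_2$ are bounded, then $B_1B_2$ is bounded in each norm.

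This estimate also justifies the paper's remark that each $\norm{\placeholder}^\Gamma_n$ makes $\coeffQ[\Gamma]$ a normed ring. There is no real obstacle beyond the inequality $1+a+b\le(1+a)(1+b)$ for $a,b\ge0$.
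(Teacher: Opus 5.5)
Your proof is correct and takes the same route as the paper: a direct estimate of $\norm{xy}^\Gamma_n$ using the triangle inequality for the word length. Your version is slightly sharper, because $1+\ell_\Gamma(\gamma)+\ell_\Gamma(\delta)\le(1+\ell_\Gamma(\gamma))(1+\ell_\Gamma(\delta))$ gives genuine submultiplicativity without the factor $2^k$ the paper carries. You also correctly write the first step as an inequality, since several pairs $(\gamma,\delta)$ can have the same product and their coefficients may cancel.
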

\begin{proof}
    It is quite obvious that the addition is bounded.
    We only prove that the scalar multiplication is bounded. For $k=\min\{n,m\}$, the boundedness is a consequence of the following estimate.
    \begin{align*}
        \norm{\Bigl(\sum_{\gamma\in\Gamma} a_\gamma\gamma\Bigr)\Bigl(\sum_{\lambda\in\Gamma}b_\lambda\lambda\Bigr)}^\Gamma_k
         &= \sum_{\gamma,\lambda\in\Gamma}\abs{a_\gamma b_\lambda}\bigl(1+\ell_\Gamma(\gamma\lambda)\bigr)^k \\
         &\le \sum_{\gamma,\lambda\in\Gamma}\abs{a_\gamma} \abs{b_\lambda}\bigl(1+\ell_\Gamma(\gamma)+\ell_\Gamma(\lambda)\bigr)^k \\
         &\le 2^k\cdot\sum_{\gamma,\lambda\in\Gamma}\abs{a_\gamma} \abs{b_\lambda}\bigl(1+\ell_\Gamma(\gamma)\bigr)^k\bigl(1+\ell_\Gamma(\lambda)\bigr)^k \\
         &\le 2^k\cdot\norm{\sum_{\gamma\in\Gamma}a_\gamma\gamma}^\Gamma_n\cdot \norm{\sum_{\lambda\in\Gamma}b_\lambda\lambda}^\Gamma_m.\qedhere
    \end{align*}
\end{proof}

\begin{lem}\label{lem: integral boundedness}
    Let $f \colon \coeffQ[\Gamma] \to \coeffQ[\Gamma]$ be a $\coeffQ$\=/linear homomorphism. Assume that $f$ restricts to a $\coeffZ$\=/linear
    map $f \colon \coeffZ[\Gamma] \to \coeffZ[\Gamma]$. Then $f$ is bounded if and only if there exists $k\in\bbN$ and $C>0$ such that $\norm{f(\gamma)}_1^\Gamma \leq C\cdot \norm{\gamma}^\Gamma_k$ for every $\gamma \in \Gamma$.
\end{lem}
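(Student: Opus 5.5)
The plan is to prove the two directions separately: the forward direction is just \cref{rem:characterisation-bounded} applied to group elements, and the backward direction uses $1$\=/separatedness of~$\coeffZ$ to turn the $\norm{\placeholder}^\Gamma_1$\=/bound into a bound on the support of~$f(\gamma)$.

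\emph{Only if.} Suppose $f$ is bounded. By \cref{rem:characterisation-bounded} with $n=1$, there are $C>0$ and $m\in\bbN_0$ such that $\norm{f(x)}^\Gamma_1\le C\norm{x}^\Gamma_m$ for all $x\in\coeffQ[\Gamma]$. Take $x=\gamma\in\Gamma$ and $k=\max\{m,1\}$, using $\norm{\placeholder}^\Gamma_m\le\norm{\placeholder}^\Gamma_k$.

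\emph{If.} By \cref{rem:characterisation-bounded} it suffices to find, for every $n\ge 1$, a constant $C'$ and an $m$ with $\norm{f(x)}^\Gamma_n\le C'\norm{x}^\Gamma_m$ for all $x\in\coeffQ[\Gamma]$.

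First consider a single group element~$\gamma$. Write $f(\gamma)=\sum_\lambda a_\lambda\lambda$; its coefficients lie in~$\coeffZ$ because $f$ restricts to $\coeffZ[\Gamma]$. Since $\coeffZ$ is $1$\=/separated, $\abs{a_\lambda}\ge 1$ for every $\lambda$ in the support. Hence every such $\lambda$ satisfies
\[
1+\ell_\Gamma(\lambda)\le \abs{a_\lambda}\bigl(1+\ell_\Gamma(\lambda)\bigr)\le \norm{f(\gamma)}^\Gamma_1\le C\bigl(1+\ell_\Gamma(\gamma)\bigr)^k .
\]
Splitting $(1+\ell_\Gamma(\lambda))^n=(1+\ell_\Gamma(\lambda))\cdot(1+\ell_\Gamma(\lambda))^{n-1}$ and bounding the second factor by the display, we get
\[
\norm{f(\gamma)}^\Gamma_n\le \norm{f(\gamma)}^\Gamma_1\cdot\bigl(C(1+\ell_\Gamma(\gamma))^k\bigr)^{n-1}\le C^n\bigl(1+\ell_\Gamma(\gamma)\bigr)^{kn}.
\]

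Now take a general $x=\sum_\gamma q_\gamma\gamma$ with $q_\gamma\in\coeffQ$. By $\coeffQ$\=/linearity of~$f$, the triangle inequality, and submultiplicativity $\abs{qa}\le\abs{q}\abs{a}$ (which gives $\norm{qy}^\Gamma_n\le\abs{q}\norm{y}^\Gamma_n$), we obtain
\[
\norm{f(x)}^\Gamma_n\le\sum_\gamma\abs{q_\gamma}\,\norm{f(\gamma)}^\Gamma_n\le C^n\norm{x}^\Gamma_{kn}.
\]
This is the required estimate with $C'=C^n$ and $m=kn$.

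The only real point is the support estimate in the single-element step. It is exactly where integrality of $f$ and $1$\=/separatedness of $\coeffZ$ enter: they upgrade an $\norm{\placeholder}^\Gamma_1$\=/bound into control of all the higher norms. Without them, small coefficients could sit on very long group elements and the argument would fail.
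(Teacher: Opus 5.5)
Your proof is correct and follows the paper's argument. The paper also gets the ``only if'' direction from \cref{rem:characterisation-bounded} applied to $x=\gamma$. For the ``if'' direction it uses the inequality $\norm{y}^\Gamma_n \le \bigl(\norm{y}^\Gamma_1\bigr)^n$ for $y\in\coeffZ[\Gamma]$, which follows from $1$-separatedness and is exactly what your support estimate proves inline, and then the same triangle-inequality step giving $\norm{f(x)}^\Gamma_n\le C^n\norm{x}^\Gamma_{kn}$.
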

\begin{proof}
    As $\coeffZ$ is $1$\=/separated, we have $\norm{f(x)}^\Gamma_n \leq \bigl(\norm{f(x)}_1^\Gamma\bigr)^n$ and
    $\bigl(\norm{\gamma}_s^\Gamma\bigr)^k=\norm{\gamma}_{ks}^\Gamma$ for every~$x \in \coeffZ[\Gamma]$, every~$\gamma\in\Gamma$ and all $n, k, s \in \bbN_0$.
    If $f$ is bounded, then by \cref{rem:characterisation-bounded} there are $k \in \bbN$ and $C>0$ such that
    $\norm{f(x)}^\Gamma_1 \le C\norm{x}^\Gamma_k$ for all $x\in \coeffQ[\Gamma]$. Applying this to $x=\gamma$ gives the desired estimate.

    Conversely, assume that there are $k \in \bbN$ and $C > 0$ such that $\norm{f(\gamma)}_1^\Gamma \leq C\cdot \norm{\gamma}^\Gamma_k$ for every~$\gamma \in \Gamma$.
    Then for an arbitrary element $\sum_\gamma a_\gamma \gamma \in \coeffQ[\Gamma]$ we have
    \begin{align*}
        \norm{f\bigl(\sum_\gamma a_\gamma \gamma\bigr)}^\Gamma_n
        \leq \sum_\gamma \abs{a_\gamma} \norm{f(\gamma)}^\Gamma_n
        \leq \sum_\gamma \abs{a_\gamma} \bigl(C\norm{\gamma}^\Gamma_k\bigr)^n
        = C^n \norm{\sum_\gamma a_\gamma \gamma}^\Gamma_{nk},
    \end{align*}
    hence $f$ is bounded.
\end{proof}

\begin{defn}
    The \emph{Cayley bornology} on a $\coeffQ[\Gamma]$\=/module $M$ is the canonical bornology coming from $\coeffQ[\Gamma]$ as a bornological ring with respect to its Cayley bornology.
\end{defn}
We record the following special case of~\cref{lem: properties of canonical bornology: maps}.

\begin{lem}\label{lem: every equivariant map is bounded}
    Every $\coeffQ[\Gamma]$\=/linear map is bounded with respect to the Cayley bornology.
\end{lem}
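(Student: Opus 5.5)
This is a direct specialization of \cref{lem: properties of canonical bornology: maps}, so the plan is simply to check that the hypotheses of that lemma hold in the present setting.

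First, I take $R=\coeffQ[\Gamma]$ with its Cayley bornology. By \cref{prop: group ring is bornological}, this is a bornological ring, so the notion of canonical bornology from \cref{def: canonical bornology} applies to $R$\=/modules.

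Second, let $f\colon M\to N$ be a $\coeffQ[\Gamma]$\=/linear map. By definition, the Cayley bornology on each of $M$ and $N$ is the canonical bornology with respect to $R$. So $f$ is an $R$\=/linear map between $R$\=/modules that both carry their canonical bornologies. \cref{lem: properties of canonical bornology: maps} then says $f$ is bounded.

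If one wants to unwind that lemma rather than cite it, the argument goes as follows. Choose a surjection $p\colon F\to M$ from a free $R$\=/module. By \cref{lem: properties of canonical bornology: universal property}, it suffices to show that $f\circ p\colon F\to N$ is bounded. But $f\circ p$ is an $R$\=/linear map from a free module into $N$ equipped with its canonical bornology, and \cref{lem: properties of canonical bornology: smallest bornology} says exactly that every such map is bounded.

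No step here is a genuine obstacle. The only point that needs care is confirming that $\coeffQ[\Gamma]$ really is a bornological ring, so that the canonical bornology is defined. That is supplied by \cref{prop: group ring is bornological}.
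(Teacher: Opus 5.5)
Your proposal is correct and matches the paper, which records this lemma as a special case of \cref{lem: properties of canonical bornology: maps} applied to $R=\coeffQ[\Gamma]$. \cref{prop: group ring is bornological} makes $\coeffQ[\Gamma]$ a bornological ring, and your optional unwinding via \cref{lem: properties of canonical bornology: universal property} and \cref{lem: properties of canonical bornology: smallest bornology} is also sound.
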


The following example describes the Cayley bornology on permutation modules in more concrete terms.

\begin{example}\label{exa: bornology permutation modules}
    Let $M = \coeffQ[\Gamma/\Lambda]$ be a $\coeffQ[\Gamma]$\=/permutation module, where $\Lambda \leq \Gamma$ is a subgroup.
    On $M$ we can define the following family of norms.
    \[\norm{\sum_{\gamma\Lambda} a_\gamma \gamma\Lambda}^{\Gamma/\Lambda}_n=\sum_{\gamma\Lambda} \abs{a_\gamma}\cdot (1+ \min\{\ell_\Gamma(\gamma') \mid \gamma' \in \gamma\Lambda\})^n.\]
    They induce a bornology on $\coeffQ[\Gamma/\Lambda]$ where subsets are bounded if they are bounded with respect to each $\norm{\placeholder}^{\Gamma/\Lambda}_n$. For the purpose of this example we refer to this bornology as the \emph{permutation bornology} and to its bounded subsets as \emph{permutation bounded} subsets. We use the description of the canonical bornology, or Cayley bornology in this case, from~\cref{lem: properties of canonical bornology: quotient}.
    Let $B \subset \coeffQ[\Gamma/\Lambda]$ be permutation bounded. Because $\Gamma$ is finitely generated, the projection
    $p \colon \coeffQ[\Gamma] \to \coeffQ[\Gamma/\Lambda]$ admits a section $s \colon \coeffQ[\Gamma/\Lambda] \to \coeffQ[\Gamma]$, such that
    $\norm{x}^{\Gamma/\Lambda}_n = \norm{s(x)}^\Gamma_n$ for all $x \in \coeffQ[\Gamma/\Lambda]$ and all $n \in \bbN_0$.
    Then $s(B)$ is bounded, so $B = p(s(B))$ is bounded.
    Conversely, $p$ is norm-decreasing and therefore $p$ maps bounded subsets to permutation bounded subsets. Since every map from a free module to $\coeffQ[\Gamma/\Lambda]$ factors over $p$, we conclude that the Cayley bornology on $\coeffQ[\Gamma/\Lambda]$ is contained in the permutation bornology. Thus, the two bornologies coincide.

    If $M$ is a free module, then $\Lambda$ is trivial and $\norm{\placeholder}^{\Gamma/\Lambda}_n = \norm{\placeholder}^\Gamma_n$ agrees with the norms from~\cref{eq: definition Cayley norm}.
    For a general permutation module $\coeffQ[S] = \bigoplus_{i \in I} \coeffQ[\Gamma/\Lambda_i]$ we also write $\norm{\placeholder}^\Gamma_n$
    for the $\ell_1$-norm induced by the $\norm{\placeholder}^{\Gamma/\Lambda_i}_n$.
\end{example}

\begin{rem}\label{rem: integral boundedness permutation}
    Let $f \colon \coeffQ[S] \to \coeffQ[T]$ be a $\coeffQ$\=/linear homomorphism between $\coeffQ[\Gamma]$\=/permutation modules.
    Using the description of the bornology from~\cref{exa: bornology permutation modules} one can deduce an analogous statement
    of~\cref{lem: integral boundedness} for permutation modules.
    Namely, assume that $f$ restricts to a $\coeffZ$\=/linear map $\coeffZ[S] \to \coeffZ[T]$.
    Then $f$ is bounded if and only if there exists $k \in \bbN$ and $C > 0$ such that $\norm{f(s)}^\Gamma_1 \leq C \cdot \norm{s}^\Gamma_k$ for
    every $s \in S$.
\end{rem}

\subsection{Functorial properties of the Cayley bornology}

We endow $\coeffQ$ with the standard bornology of subsets that are bounded with respect to the norm. Any bornological module over the bornological ring $\coeffQ[\Gamma]$ with the Cayley bornology is in particular a bornological $\coeffQ$\=/module.
As a consequence of~\cref{lem: every equivariant map is bounded} there is a forgetful functor
\begin{equation}\label{eq: forgetful functor}
    \sfB_\Gamma\colon \modQGamma\to\bornQ
\end{equation}
that maps a $\coeffQ[\Gamma]$\=/module with its Cayley bornology to its underlying bornological $\coeffQ$\=/module.

\begin{lem}\label{lem: coproducts are preserved}
    $\sfB_\Gamma$ preserves all small coproducts.
\end{lem}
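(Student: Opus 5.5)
Let $(M_i)_{i\in I}$ be a family of $\coeffQ[\Gamma]$\=/modules and $M=\bigoplus_{i\in I}M_i$. Coproducts in $\modQGamma$ and in $\bornQ$ both have the algebraic direct sum as underlying module (\cref{rem: bornological R-modules as a category}). So the task is to compare two bornologies on $M$: the Cayley bornology $\sfB_M$ of $M$, and the direct sum bornology $\sfB_\oplus$ built from the Cayley bornologies of the summands $M_i$. A set is $\sfB_\oplus$\=/bounded if it lies in a finite subsum and its projection to each summand is Cayley bounded. I will prove both inclusions.

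First, $\sfB_\oplus\subseteq\sfB_M$. By the universal property of the direct sum bornology, it suffices that each inclusion $M_i\hookrightarrow M$ is bounded from the Cayley bornology on $M_i$ to $\sfB_M$. This holds by \cref{lem: every equivariant map is bounded}, since the inclusion is $\coeffQ[\Gamma]$\=/linear. (Equivalently, the identity $(M,\sfB_\oplus)\to(M,\sfB_M)$ is bounded.)

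Second, $\sfB_M\subseteq\sfB_\oplus$. For each $i$ choose a surjection $p_i\colon F_i\to M_i$ from a free module $F_i$, and set $F=\bigoplus_i F_i$ with $p=\bigoplus_i p_i$. By \cref{lem: properties of canonical bornology: quotient}~\localref{lem: properties of canonical bornology: universal property}, it suffices to show that $p\colon F\to(M,\sfB_\oplus)$ is bounded. By \cref{lem: properties of canonical bornology: quotient}~\localref{lem: properties of canonical bornology: direct sum}, the Cayley bornology on the free module $F$ is the direct sum bornology over all the copies of $\coeffQ[\Gamma]$, so it is also the direct sum of the Cayley bornologies on the $F_i$. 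Hence it suffices to check boundedness on each summand. On $F_i$, the map $p$ is the composite $F_i\to M_i\hookrightarrow M$. The first map is bounded by \cref{lem: every equivariant map is bounded}, and the second is bounded into $\sfB_\oplus$ by definition.

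The one point needing care is associativity of direct sum bornologies: the direct sum over $I$ of the direct sums over the index sets of the $F_i$ equals the direct sum over the combined index set. This is immediate from the description of bounded sets as those lying in a finite subsum with bounded coordinate projections. I do not expect a real obstacle; this is the only place where the argument uses more than universal properties.
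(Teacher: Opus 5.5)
Your proof is correct and follows the paper's two-inclusion argument: one inclusion holds because each $M_j\hookrightarrow\bigoplus_i M_i$ is $\coeffQ[\Gamma]$\=/linear, and the other because maps out of free modules are bounded into the direct sum bornology. The only cosmetic difference is that you test against a single surjection $\bigoplus_i F_i\to\bigoplus_i M_i$ adapted to the decomposition, via \cref{lem: properties of canonical bornology: universal property}, which is why you need associativity of direct sum bornologies. The paper instead tests against an arbitrary free module and reduces to rank-one summands whose images lie in a finite subsum.
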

\begin{proof}
    Let $\{M_i\}_{i\in I}$ be a collection of $\coeffQ[\Gamma]$\=/modules.
    Let $p \colon F \to \bigoplus_{i \in I} M_i$ be a $\coeffQ[\Gamma]$\=/linear map from a free $\coeffQ[\Gamma]$\=/module.
    By the universal property of the direct sum bornology, the map $p \colon \sfB_\Gamma(F) \to \bigoplus_{i \in I} \sfB_\Gamma(M_i)$
    is bounded if and only if $p \circ \iota$ is bounded for every inclusion $\iota \colon \coeffQ[\Gamma] \to F$
    of a free summand. As the image of $p \circ \iota$ is supported in $\bigoplus_{j \in J} M_j$ for a finite subset $J \subseteq I$,
    $p \circ \iota$ is bounded if and only if $\pi_{M_j} \circ p \circ \iota$ is bounded for all projections $\pi_{M_j}$.
    These are all bounded by the definition of the Cayley bornology, hence every bounded subset in the direct Cayley bornology
    on $\bigoplus_{i \in I} M_i$ is bounded in the direct sum bornology.

    Conversely, consider the inclusion $\iota_j \colon \sfB_\Gamma(M_j) \to \sfB_\Gamma(\bigoplus_{i \in I} M_i)$ of one of the summands.
    By the definition of the Cayley bornology, it is bounded if and only if $\iota_j \circ p$ is bounded for all $\coeffQ[\Gamma]$\=/linear maps
    $p \colon F \to M_j$ where $F$ is a free $\coeffQ[\Gamma]$\=/module. This clearly holds for the Cayley bornology on $\bigoplus_{i \in I} M_i$.
    Therefore, every bounded subset in the direct sum bornology is bounded in the Cayley bornology.
\end{proof}

For finite index sets products and coproducts in $\modQGamma$ and $\bornQ$ agree as $\coeffQ$-modules.
Hence, the above proof also shows that $\sfB_\Gamma$
preserves all finite products. Infinite products however are not preserved by
$\sfB_\Gamma$ as the canonical bornology is the direct sum bornology on such a product and not the product bornology.

\begin{lem}\label{lem: quotients are preserved}
    $\sfB_\Gamma$ preserves quotients.
\end{lem}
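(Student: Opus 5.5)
We must show: if $q\colon M\to N$ is a surjective $\coeffQ[\Gamma]$\=/linear map, then the Cayley bornology on $N$ equals the quotient bornology induced from the Cayley bornology on $M$ via $q$, viewed in $\bornQ$. In other words, $\sfB_\Gamma(q)$ is a cokernel/quotient map in $\bornQ$. The plan is to use the description of the canonical bornology from \cref{lem: properties of canonical bornology: quotient} twice, once for $M$ and once for $N$, via a common free module.

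First, the inclusion of the quotient bornology into the Cayley bornology. By \cref{lem: every equivariant map is bounded}, $q$ is bounded for the Cayley bornologies. So the image under $q$ of a Cayley-bounded subset of $M$ is Cayley-bounded in $N$. Since the quotient bornology consists exactly of subsets of such images, it is contained in the Cayley bornology.

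For the reverse inclusion, choose a surjective $\coeffQ[\Gamma]$\=/linear map $p\colon F\to M$ from a free module $F$. Then $q\circ p\colon F\to N$ is a surjection from a free module. By \cref{lem: properties of canonical bornology: quotient}~\cnumberref{lem: properties of canonical bornology: universal property} and its proof, the Cayley bornology on $N$ is the quotient bornology of the direct sum bornology on $F$ along $q\circ p$. Hence every Cayley-bounded $B\subset N$ satisfies $B\subset q(p(A))$ for some bounded $A\subset F$. Because $p$ is bounded (again \cref{lem: every equivariant map is bounded}), $p(A)$ is Cayley-bounded in $M$. Thus $B$ lies in the image of a bounded subset of $M$, i.e.\ $B$ is bounded in the quotient bornology.

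The key step is recognizing that the proof of \cref{lem: properties of canonical bornology: quotient}~\cnumberref{lem: properties of canonical bornology: universal property} identifies the canonical bornology with the quotient bornology along any free surjection, not just a specific one. This is the only point requiring care, and it is exactly what was shown there: the quotient bornology $\sfB_q$ agrees with the canonical one for arbitrary surjective $p$. The rest is formal. One might additionally remark that the quotient bornology on $N$ from $\bornQ$ agrees as a $\coeffQ$\=/module structure, which is automatic.
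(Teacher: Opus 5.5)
Your proof is correct and essentially matches the paper's: one inclusion comes from the boundedness of the equivariant map $q$, and for the other inclusion the paper lifts each $\coeffQ[\Gamma]$-linear map from a free module to $N$ through $q$ and then uses the minimality characterization of the canonical bornology in \cref{lem: properties of canonical bornology: quotient}~(2). You instead use the identification, established in the proof of part~(3), of the Cayley bornology on $N$ with the quotient bornology along the single free surjection $q\circ p$, which is an equally valid variant.
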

\begin{proof}
    Let $q \colon M \onto Q$ be a quotient map in $\modQGamma$.
    Let $p \colon F \to Q$ be a $\coeffQ[\Gamma]$\=/linear map from a free $\coeffQ[\Gamma]$\=/module.
    First we consider the quotient bornology on $Q$, that is, the smallest bornology making $q$ bounded.
    By surjectivity of $q$, there exists a lift $\widetilde{p}\colon F \to M$ such that $q \circ \widetilde{p} = p$.
    Therefore, $p$ is bounded, hence every bounded subset in the Cayley bornology is bounded in the quotient bornology.

    Now consider $Q$ with the Cayley bornology. Then for every $\coeffQ[\Gamma]$\=/linear map $p \colon F \to M$ from a free $\coeffQ[\Gamma]$\=/module
    the composition $q \circ p$ is a $\coeffQ[\Gamma]$\=/linear map from a free module, hence bounded.
    This shows that $q$ is bounded and hence every bounded subset in the quotient bornology is bounded in the Cayley bornology.
\end{proof}

\begin{cor}\label{cor: colimits are preserved}
    $\sfB_\Gamma$ preserves all small colimits.
\end{cor}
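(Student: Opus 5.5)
The plan is to use the standard categorical fact that every small colimit can be built from coproducts and coequalizers. Concretely, for a small diagram $D\colon I\to\modQGamma$, the colimit $\colim D$ is the cokernel of the morphism
\[
\bigoplus_{\alpha\colon i\to j} D(i)\longrightarrow \bigoplus_{i\in I} D(i), \qquad x\mapsto \iota_j\bigl(D(\alpha)x\bigr)-\iota_i(x),
\]
where $\alpha$ runs over the morphisms of $I$. The same formula computes the colimit in $\bornQ$, because that category is additive, complete and cocomplete (\cref{rem: bornological R-modules as a category}). In $\bornQ$ the cokernel is the algebraic quotient equipped with the quotient bornology. This is the point to be careful about: the cokernel in a quasi-abelian category of bornological modules is the quotient by the image with the quotient bornology, with no closure taken, since the bornologies here are not required to be separated. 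I would check this against \cite{bambozzi}*{Section~1.4}.

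The key steps are then as follows.

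\begin{enumerate}
\item Apply $\sfB_\Gamma$ to the presentation above. By \cref{lem: coproducts are preserved}, both direct sums acquire the direct sum bornology of the $\sfB_\Gamma(D(i))$. The connecting map is $\coeffQ[\Gamma]$-linear, so it is automatically bounded by \cref{lem: every equivariant map is bounded}.
\item Invoke \cref{lem: quotients are preserved}. It says that the Cayley bornology on $\colim D$, viewed as a quotient of $\bigoplus_i D(i)$, is exactly the quotient bornology. This quotient is precisely the cokernel in $\bornQ$ of the image of the connecting map.
\item Identify the structure maps. The canonical map from the colimit in $\bornQ$ to $\sfB_\Gamma(\colim D)$ is the identity on underlying $\coeffQ$-modules, since the forgetful functors commute with colimits (\cref{rem: bornological R-modules as a category}). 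Steps 1 and 2 then show it is a bounded isomorphism with bounded inverse.
\end{enumerate}

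The only step I expect to be a real obstacle is the one flagged in the first paragraph: confirming that cokernels in $\bornQ$ carry the plain quotient bornology, so that \cref{lem: quotients are preserved} applies verbatim. Everything else is formal.
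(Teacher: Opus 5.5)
Your proposal is correct and takes essentially the same route as the paper. The paper likewise builds colimits from coproducts and coequalizers: it handles coproducts by \cref{lem: coproducts are preserved}, and it handles coequalizers by noting that every coequalizer in $\modQGamma$ is a quotient, so that \cref{lem: quotients are preserved} applies. The point you flag is harmless: by \cref{rem: bornological R-modules as a category} the cokernel in $\bornQ$ has the algebraic quotient as its underlying module, and its universal property then forces its bornology to be the quotient bornology.
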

\begin{proof}
    In $\modQGamma$ every coequalizer is a quotient, hence by~\cref{lem: quotients are preserved} $\sfB_\Gamma$ preserves coequalizers.
    Together with \cref{lem: coproducts are preserved} this implies that all small colimits are preserved.
\end{proof}

\subsection{Induction of modules}
For the rest of this subsection we consider a subgroup $\Lambda$ of a finitely generated group~$\Gamma$ with a word metric~$d_\Gamma$ and word length $\ell_\Gamma=d_\Gamma(\placeholder, e)$. Further, we fix a set-theoretic map $s\colon \Gamma/\Lambda\to \Gamma$ such that

\begin{equation}\label{eq: section}
    \ell_\Gamma(s(\gamma\Lambda)) = \min\bigl\{ \ell_{\Gamma}(\gamma^\prime) \mid \gamma^\prime\Lambda = \gamma\Lambda \bigr\}.
\end{equation}

There is an induction functor $\induction_\Lambda^\Gamma\colon\modQLambda\to\modQGamma$ that sends $M$ to $\coeffQ[\Gamma]\otimes_{\coeffQ[\Lambda]} M$. Depending on the set-theoretic section $s\colon \Gamma/\Lambda\to\Gamma$, one defines the functorial isomorphism of $\coeffQ$\=/modules
\begin{equation}\label{eq: def nat isomorphism}
    \varphi_M\colon \coeffQ[\Gamma]\otimes_{\coeffQ[\Lambda]} M\cong \coeffQ[\Gamma/\Lambda]\otimes_\coeffQ M,~\gamma \otimes m\mapsto \gamma\Lambda \otimes s(\gamma\Lambda)^{-1}\gamma m
\end{equation}
for every $\coeffQ[\Lambda]$\=/module~$M$. The inverse is given by $\varphi_M^{-1}(\gamma\Lambda\otimes m)=s(\gamma\Lambda)\otimes m$.

The following definition extends the induction functor from $\coeffQ[\Lambda]$\=/linear maps to $\coeffQ$\=/linear maps, depending on the choice of the section~$s$.
\begin{defn}
    For a $\coeffQ$\=/linear map $f\colon M\to N$ between $\coeffQ[\Lambda]$\=/modules we define the
    $\coeffQ$\=/linear map $\induction_\Lambda^\Gamma(f)\colon \induction_\Lambda^\Gamma(M)\to \induction_\Lambda^\Gamma(N)$ by
    \[\induction_\Lambda^\Gamma(f)=\varphi_N^{-1}\circ\bigl(\id_{\coeffQ[\Gamma/\Lambda]}\otimes f\bigr)\circ \varphi_M.\]
    Explicitly, $\induction_\Lambda^\Gamma(f)(\gamma \otimes m) = s(\gamma\Lambda) \otimes f(s(\gamma\Lambda)^{-1}\gamma m)$ for $\gamma \in \Gamma$ and $m \in M$.
\end{defn}

For the purpose of~\cref{thm: bornological induction,prop: bornological induction exact} a $\coeffQ[\Gamma]$\=/module $N$ is called
\emph{$\sfB_\Gamma$\=/boundedly finitely generated} if there exists a finitely generated projective $\coeffQ[\Gamma]$\=/module together with
a surjective $\coeffQ[\Gamma]$\=/map $p\colon P\to N$ admitting a bounded $\coeffQ$\=/linear section $s\colon N\to P$.

Any finitely generated permutation module is automatically $\sfB_\Gamma$\=/boundedly finitely generated, as every quotient $\Gamma/\Lambda$
admits a set-theoretic word length preserving section. Consequently, every direct summand of a finitely generated permutation
module is also $\sfB_\Gamma$\=/boundedly finitely generated.

\begin{thm}\label{thm: bornological induction}
    Let the subgroup $\Lambda<\Gamma$ be finitely generated and at most polynomially distorted in~$\Gamma$.

    Let $M$ and $N$ be $\coeffQ[\Lambda]$\=/modules such that $N$ is $\sfB_\Lambda$\=/boundedly finitely generated.

    Let $f\colon M\to N$ be a $\coeffQ$\=/linear map that is bounded with respect to the Cayley bornologies of $\coeffQ[\Lambda]$\=/modules.
    Then $\induction_\Lambda^\Gamma(f)\colon \induction_\Lambda^\Gamma(M)\to \induction_\Lambda^\Gamma(N)$ is bounded with respect to the Cayley bornologies of $\coeffQ[\Gamma]$\=/modules.
\end{thm}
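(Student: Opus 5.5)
Write $\operatorname{Ind}(f)=\varphi_N^{-1}\circ(\id\otimes f)\circ\varphi_M$. Using the universal property of the Cayley bornology, boundedness of $\induction_\Lambda^\Gamma(f)$ reduces to a concrete norm estimate on free modules.

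\emph{Reduction on the source.} $\induction_\Lambda^\Gamma(M)$ carries the canonical bornology. By \cref{lem: quotients are preserved}, \cref{cor: colimits are preserved} and \cref{lem: properties of canonical bornology: quotient}, it suffices to show that $\induction_\Lambda^\Gamma(f)\circ\induction_\Lambda^\Gamma(p)$ is bounded for a surjection $p\colon F=\bigoplus_I\coeffQ[\Lambda]\to M$. Since $\induction_\Lambda^\Gamma(p)$ is $\coeffQ[\Gamma]$-linear and surjective, and $\induction_\Lambda^\Gamma(f)\circ\induction_\Lambda^\Gamma(p)=\induction_\Lambda^\Gamma(f\circ p)$ (directly from the explicit formula), we may replace $M$ by $F$. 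Because $\induction_\Lambda^\Gamma(F)=\bigoplus_I\coeffQ[\Gamma]$ has the direct sum bornology and bounded sets lie in finitely many summands, we may further reduce to $M=\coeffQ[\Lambda]$, using \cref{lem: coproducts are preserved}.

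\emph{Reduction on the target.} Choose $q\colon P\to N$ with $P$ finitely generated projective and $t\colon N\to P$ a bounded $\coeffQ$-linear section. Enlarging $P$ to a finitely generated free module $\coeffQ[\Lambda]^k$, we get $f=q\circ(t\circ f)$. Here $\induction_\Lambda^\Gamma(q)$ is $\coeffQ[\Gamma]$-linear, hence bounded by \cref{lem: every equivariant map is bounded}, and $\induction_\Lambda^\Gamma(q\circ g)=\induction_\Lambda^\Gamma(q)\circ\induction_\Lambda^\Gamma(g)$. So it suffices to treat a bounded $\coeffQ$-linear map $g\colon\coeffQ[\Lambda]\to\coeffQ[\Lambda]^k$, and componentwise $g\colon\coeffQ[\Lambda]\to\coeffQ[\Lambda]$. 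This is exactly why the boundedly finitely generated hypothesis on $N$ is needed.

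\emph{Core estimate.} Take $x=\sum_\gamma a_\gamma\gamma\in\coeffQ[\Gamma]$ and write $\gamma=s(\gamma\Lambda)\lambda_\gamma$. Then
\[
\induction_\Lambda^\Gamma(g)(x)=\sum_\gamma a_\gamma\, s(\gamma\Lambda)\,g(\lambda_\gamma).
\]
By \cref{rem:characterisation-bounded}, for each $n$ there are $C$ and $m$ with $\norm{g(y)}^\Lambda_{n}\le C\norm{y}^\Lambda_m$. For $\mu\in\Lambda$ we have
\[
\bigl(1+\ell_\Gamma(s(\gamma\Lambda)\mu)\bigr)^n\le 2^n\bigl(1+\ell_\Gamma(s(\gamma\Lambda))\bigr)^n\bigl(1+\ell_\Lambda(\mu)\bigr)^n,
\]
using $\ell_\Gamma\le c\,\ell_\Lambda$ on $\Lambda$. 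Hence
\[
\norm{s(\gamma\Lambda)g(\lambda_\gamma)}^\Gamma_n\le C'\bigl(1+\ell_\Gamma(s(\gamma\Lambda))\bigr)^n\bigl(1+\ell_\Lambda(\lambda_\gamma)\bigr)^m.
\]
Polynomial distortion gives $1+\ell_\Lambda(\lambda)\le D(1+\ell_\Gamma(\lambda))^r$. Minimality of $s$ gives $\ell_\Gamma(s(\gamma\Lambda))\le\ell_\Gamma(\gamma)$, and therefore $\ell_\Gamma(\lambda_\gamma)\le 2\ell_\Gamma(\gamma)$. Combining these, each term is bounded by $C''(1+\ell_\Gamma(\gamma))^{n+rm}$. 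Summing over $\gamma$ yields
\[
\norm{\induction_\Lambda^\Gamma(g)(x)}^\Gamma_n\le C''\norm{x}^\Gamma_{n+rm},
\]
which is boundedness by \cref{rem:characterisation-bounded}.

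The main obstacle I expect is the bookkeeping in the reductions: compatibility of $\induction_\Lambda^\Gamma$ with compositions of non-equivariant maps. This composition rule holds only when the outer map is $\coeffQ[\Lambda]$-linear, or in general because the explicit formula involves $\id\otimes(\cdot)$ after $\varphi$. One also has to check that boundedness for finitely many summands suffices. The core estimate itself is where the distortion hypothesis enters, and it is routine.
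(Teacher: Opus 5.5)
Your proof is correct and follows essentially the same route as the paper. It uses the same core estimate for a bounded $\coeffQ$-linear map $\coeffQ[\Lambda]\to\coeffQ[\Lambda]$: minimal coset representatives give $\ell_\Gamma(s(\gamma\Lambda)^{-1}\gamma)\le 2\ell_\Gamma(\gamma)$, and this is combined with the distortion bound. It then makes the same reductions as the paper, only in a different order: coproducts and finite products of free modules, a free surjection onto $M$ via the universal property of the canonical bornology, and the factorization $f=q\circ(t\circ f)$ through a bounded section. The composition rule you flag as the main obstacle holds unconditionally, because $\induction_\Lambda^\Gamma(f)=\varphi_N^{-1}\circ(\id\otimes f)\circ\varphi_M$ is defined by conjugation with the fixed isomorphisms $\varphi$.
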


\begin{proof}
    By assumption, there are $D\ge 1$ and $d\in\bbN$ such that for every~$\lambda\in\Lambda$
    \[\ell_\Lambda(\lambda)\le D\cdot\ell_\Gamma(\lambda)^d.\]
    By taking a larger constant, we may always assume that $\ell_\Gamma(\lambda)\le D\cdot\ell_\Lambda(\lambda)$ for every $\lambda\in\Lambda$. This implies in particular that
    \begin{equation}\label{eq: distortion norms}
        \norm{x}^\Gamma_n\le D^n\norm{x}^\Lambda_n \text{ and }
        \norm{x}^\Lambda_n\le D^n\cdot \norm{x}^\Gamma_{dn}\text{ for every }x\in \coeffQ[\Lambda]\subset\coeffQ[\Gamma].
    \end{equation}

    We first treat the case $M=N=\coeffQ[\Lambda]$. In this case, $\induction_\Lambda^\Gamma(M)=\coeffQ[\Gamma]$ and $\induction_\Lambda^\Gamma(N)=\coeffQ[\Gamma]$.
    Let $f\colon \coeffQ[\Lambda]\to \coeffQ[\Lambda]$ be a bounded $\coeffQ$\=/linear map. This means that for every $n\in\bbN$ there are $C_n\ge 1$ and $l_n\in\bbN$ such that
    $\norm{f(x)}^\Lambda_n\le C_n\cdot \norm{x}^\Lambda_{l_n}$ for every $x\in\coeffQ[\Lambda]$.
    To show $\induction^\Gamma_\Lambda(f)$ is bounded, it suffices to show that there are $C'_n \geq 1$ and $l'_n \in \bbN$ such that
    $\norm{\induction*_\Lambda^\Gamma(f)(\gamma)}_n^\Gamma \leq C'_n \cdot \norm{\gamma}^\Gamma_{l'_n}$.
    For a general element $x = \sum_\gamma a_\gamma \gamma \in \coeffQ[\Gamma]$ it then follows that
    \begin{align*}
        \norm{\induction*_\Lambda^\Gamma(f)(x)}^\Gamma_n
        = \norm{\sum_\gamma a_\gamma \induction*_\Lambda^\Gamma(f)(\gamma)}^\Gamma_n
         &\leq \sum_\gamma \abs{a_\gamma} \norm{\induction_\Lambda^\Gamma(f)(\gamma)}^\Gamma_n \\
         &\leq \sum_\gamma \abs{a_\gamma} C'_n \norm{\gamma}^\Gamma_{l'_n} = C'_n \cdot \norm{x}^\Gamma_{l'_n}.
    \end{align*}
    We have $\induction_\Lambda^\Gamma(f)(\gamma) = s(\gamma\Lambda)f(s(\gamma\Lambda)^{-1}\gamma)$
    and by~\cref{eq: section}, $\ell_\Gamma(s(\gamma\Lambda)^{-1}\gamma)\le 2\cdot \ell_\Gamma(\gamma)$.
    Using~\cref{eq: distortion norms}, we obtain that
    \begin{align*}
        \norm{f(s(\gamma\Lambda)^{-1}\gamma)}^\Gamma_n
         &\leq D^n \cdot \norm{f(s(\gamma\Lambda)^{-1}\gamma)}^\Lambda_{n} \\
         &\leq C_n D^n \cdot \norm{s(\gamma\Lambda)^{-1}\gamma}^\Lambda_{l_n} \\
         &\leq C_n D^{l_n + n} \cdot \norm{s(\gamma\Lambda)^{-1}\gamma}^\Gamma_{dl_n} \\
         &\leq C_n D^{l_n + n}2^{dl_n} \cdot \norm{\gamma}^\Gamma_{dl_n}.
    \end{align*}
    Finally, assume that $s(\gamma\Lambda)$ is chosen to be the $\ell_\Gamma$-minimizing coset representative in $\gamma\Lambda$. Then for any $\gamma \in \Gamma$ we have
    \begin{align*}
        \norm{\induction*^\Gamma_\Lambda(f)(\gamma)}^\Gamma_n
         &\leq \norm{s(\gamma\Lambda)}^\Gamma_n \norm{f(s(\gamma\Lambda)^{-1}\gamma)}^\Gamma_n \\
         &\leq \norm{\gamma}^\Gamma_n \cdot C_n D^{l_n + n}2^{dl_n} \cdot \norm{\gamma}^\Gamma_{dl_n} \\
         &= C_n D^{l_n + n}2^{dl_n} \cdot \norm{\gamma}^\Gamma_{dl_n + n}.
    \end{align*}

    This shows that $\induction_\Lambda^\Gamma(f)$ is bounded in the case $M=N=\coeffQ[\Lambda]$.
    By~\cref{lem: coproducts are preserved}, the case that $M$ is a free $\coeffQ[\Lambda]$\=/module and $N=\coeffQ[\Lambda]$ immediately follows.
    Similarly, the case of a free $\coeffQ[\Lambda]$\=/module $M$ and finitely generated free $\coeffQ[\Lambda]$\=/module $N$ follows from the universal property of products and the fact that $N$ and $\induction_\Lambda^\Gamma(N)$ are
    products of free modules of rank~$1$.

    Next let $M$ be arbitrary and $N$ a finitely generated free $\coeffQ[\Lambda]$\=/module. Choose a $\coeffQ[\Lambda]$\=/epimorphism $p\colon F \onto M$, where $F$ is a free $\coeffQ[\Lambda]$\=/module.
    Then $\induction_\Lambda^\Gamma(f)$ is bounded if and only if $\induction_\Lambda^\Gamma(f)\circ \induction_\Lambda^\Gamma(p) = \induction_\Lambda^\Gamma(f\circ p)$ is bounded by~\cref{lem: properties of canonical bornology: quotient} -- a case that we just settled.

    Next let $M$ be arbitrary and $N$ be $\sfB_\Lambda$-boundedly finitely generated, that is,
    there is a $\coeffQ[\Lambda]$\=/epimorphism $q\colon F'\onto N$ from a finitely generated projective $\coeffQ[\Lambda]$\=/module~$F'$ to~$N$ with a bounded $\coeffQ$\=/linear section~$s$.
    By adding a summand we may and will assume that $F'$ is free.
    Then $\induction_\Lambda^\Gamma(f)$ is bounded because it is a composition
    $\induction_\Lambda^\Gamma(q)\circ\induction_\Lambda^\Gamma(s\circ f)$ of bounded maps.
\end{proof}

\subsection{Restriction of modules}
Let $p \colon \Lambda \to \Gamma$ be a group homomorphism. Then there is a restriction functor $\restriction^\Gamma_\Lambda \colon \modQGamma\to\modQLambda$ induced by $p$. If $p$ is surjective, \cref{exa: bornology permutation modules} implies that
$\sfB_\Lambda(\restriction^\Gamma_\Lambda(\coeffQ[\Gamma])) \cong \sfB_\Gamma(\coeffQ[\Gamma])$ as bornological modules. By
\cref{cor: colimits are preserved}, this also holds for arbitrary free $\coeffQ[\Gamma]$\=/modules. In fact the following general
statement holds.

\begin{thm}\label{thm: restriction commutes}
    Let $p \colon \Lambda \onto \Gamma$ be a surjective group homomorphism between finitely generated groups. Then the square of functors
    \[\begin{tikzcd}
        \modQGamma\ar[d, "\sfB_\Gamma"']\ar[r, "\restriction_\Lambda^\Gamma"] & \modQLambda\ar[d, "\sfB_\Lambda"]\\
        \born(\modQ)\ar[r, "\id"] & \born(\modQ)
    \end{tikzcd}\]
    commutes up to natural isomorphism.
\end{thm}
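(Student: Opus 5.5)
The claim is that for every $\coeffQ[\Gamma]$\=/module $M$ the identity map of $M$ is a bounded isomorphism between two bornologies: the Cayley bornology $\sfB_\Gamma(M)$ coming from $\coeffQ[\Gamma]$, and the Cayley bornology $\sfB_\Lambda(\restriction^\Gamma_\Lambda M)$ coming from $\coeffQ[\Lambda]$. Naturality is then automatic. Both functors are identity on underlying $\coeffQ$\=/modules and maps, so the natural isomorphism is the identity once the bornologies agree. I would prove the two inclusions separately. For free modules, the argument just before the theorem gives the result from \cref{exa: bornology permutation modules} together with \cref{cor: colimits are preserved}. There, $\restriction^\Gamma_\Lambda(\coeffQ[\Gamma])\cong\coeffQ[\Lambda/\ker p]$ is a permutation module, and the minimal-length coset representatives realise the word length of a word metric on $\Gamma$ (take the image of a generating set of $\Lambda$). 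One also needs that word metrics for different finite generating sets are bi-Lipschitz, so the Cayley bornology is independent of that choice.

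\emph{Step 1: $\sfB_\Gamma(M)\subseteq\sfB_\Lambda(\restriction M)$.} By \cref{lem: properties of canonical bornology: quotient}, choose a $\coeffQ[\Gamma]$\=/epimorphism $q\colon F\onto M$ with $F$ free. The bornology $\sfB_\Gamma(M)$ is then the quotient bornology of $\sfB_\Gamma(F)$. The map $q$ is also $\coeffQ[\Lambda]$\=/linear, hence bounded $\sfB_\Lambda(\restriction F)\to\sfB_\Lambda(\restriction M)$ by \cref{lem: every equivariant map is bounded}. Since $\sfB_\Lambda(\restriction F)=\sfB_\Gamma(F)$ by the free case, the image under $q$ of a $\sfB_\Gamma$\=/bounded set is $\sfB_\Lambda$\=/bounded.

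\emph{Step 2: $\sfB_\Lambda(\restriction M)\subseteq\sfB_\Gamma(M)$.} By \cref{lem: properties of canonical bornology: quotient}, it suffices to show that every $\coeffQ[\Lambda]$\=/linear map $g\colon\coeffQ[\Lambda]\to\restriction M$ is bounded into $\sfB_\Gamma(M)$, since then every map from a free $\coeffQ[\Lambda]$\=/module is. Such a $g$ is $x\mapsto xm$ for $m=g(1)$, and it factors as
\[\coeffQ[\Lambda]\xrightarrow{\ p\ }\coeffQ[\Gamma]\xrightarrow{\ y\mapsto ym\ }M.\]
The second map is $\coeffQ[\Gamma]$\=/linear, so it is bounded for $\sfB_\Gamma$. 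The first map is bounded from the $\Lambda$\=/Cayley bornology to the $\Gamma$\=/Cayley bornology because $\ell_\Gamma(p(\lambda))\le C\,\ell_\Lambda(\lambda)$ when the generating set of $\Gamma$ contains the images of the generators of $\Lambda$; hence $\norm{p(x)}^\Gamma_n\le C^n\norm{x}^\Lambda_n$. Alternatively, $p$ is the quotient map and the free case applies directly.

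The main obstacle is bookkeeping, namely justifying that the Cayley bornology on a free $\coeffQ[\Gamma]$\=/module equals the restricted $\Lambda$\=/bornology. That is the content of the free case, and the permutation-module description in \cref{exa: bornology permutation modules} handles it. Everything else follows from the universal properties established in \cref{lem: properties of canonical bornology: quotient}.
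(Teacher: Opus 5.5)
Your proposal is correct and follows essentially the same route as the paper. For one inclusion, both push $\sfB_\Gamma$-bounded sets forward along a $\coeffQ[\Gamma]$-linear map from a free module, using the free case. For the other, both factor $\coeffQ[\Lambda]$-linear maps from free modules through $\coeffQ[\Gamma]$ and invoke the universal property of the canonical bornology. The difference is cosmetic: you reduce to rank-one maps $\coeffQ[\Lambda]\to M$ and add a direct word-length estimate for the boundedness of $p$, whereas the paper factors an arbitrary free-module map through $\bigoplus_I\coeffQ[\Lambda]\to\bigoplus_I\coeffQ[\Gamma]$ and relies only on the free case.
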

\begin{proof}
    Let $M$ be a $\coeffQ[\Gamma]$\=/module and $q \colon F \to M$ be a $\coeffQ[\Gamma]$\=/linear map
    from a free $\coeffQ[\Gamma]$\=/module. Since $\sfB_\Gamma(F) \cong \sfB_\Lambda(\restriction^\Gamma_\Lambda(F))$ the map
    $q \colon \sfB_\Gamma(F) \to \sfB_\Lambda(\restriction^\Gamma_\Lambda(M))$
    is bounded.

    Conversely, every $\coeffQ[\Lambda]$\=/linear map $q' \colon F' \to \restriction^\Gamma_\Lambda(M)$ from a free
    $\coeffQ[\Lambda]$\=/module $F' = \bigoplus_{i \in I} \coeffQ[\Lambda]$ factors through
    $\pi \colon \bigoplus_{i \in I} \coeffQ[\Lambda] \to \bigoplus_{i \in I} \coeffQ[\Gamma] = F'_\Gamma$, that is,
    $q' = q'' \circ \pi$ for some $\coeffQ[\Gamma]$\=/linear map $q'' \colon F'_\Gamma \to M$.
    Again, because $\sfB_\Gamma(F'_\Gamma) \cong \sfB_\Lambda(\restriction^\Gamma_\Lambda(F'_\Gamma))$,
    $\sfB_\Lambda(q') = \sfB_\Gamma(q'') \circ \sfB_\Lambda(\pi)$ hence $\sfB_\Lambda(q')$ is bounded.

    By the universal property of the Cayley bornology, we conclude that $\sfB_\Gamma(M) \cong \sfB_\Lambda(\restriction^\Gamma_\Lambda(M))$
    as bornological $\coeffQ$-modules.
\end{proof}

\section{An exact structure on modules over the group ring}\label{sec: exact structure}
In this section we describe an exact structure on the category of $\coeffZ[\Gamma]$\=/modules. Our main reference on exact categories is~\cite{buehler}.

\subsection{The exact category \texorpdfstring{$\excat{\Gamma}$}{E}}

For convenience, we recall some basic notions about exact categories.

\begin{defn}\label{def: exact category}
    Let $\sfA$ be an additive category. A \emph{kernel-cokernel pair} in $\sfA$ is a pair of composable morphisms $A \overset{\iota}{\to} B \overset{p}{\to} C$
    such that $\iota$ is a kernel of $p$ and $p$ is a cokernel of $\iota$. Let $\sfE$ be a fixed class of kernel-cokernel pairs in $\sfA$.
    A monomorphism $\iota$ is called \emph{admissible} if it appears in a kernel-cokernel pair in $\sfE$. Admissible epimorphisms are defined similarly. The kernel-cokernel pairs in $\sfE$ are also called \emph{admissible short exact sequences}.

    An \emph{exact structure} on $\sfA$ is a collection $\sfE$ of kernel-cokernel pairs closed under isomorphism satisfying the following axioms:
    \begin{enumerate}
        \item[(E0)]\label{def: exact category: identity admissible mono}
              Every identity morphism in $\sfA$ is an admissible monomorphism.
        \item[(E0\ensuremath{^\prime})]\label{def: exact category: identity admissible epi}
              Every identity morphism in $\sfA$ is an admissible epimorphism.
        \item[(E1)]\label{def: exact category: composition admissible mono}
              The class of admissible monomorphisms is closed under composition.
        \item[(E1\ensuremath{^\prime})]\label{def: exact category: composition admissible epi}
              The class of admissible epimorphisms is closed under composition.
        \item[(E2)]\label{def: exact category: pushout admissible mono}
              The pushout of an admissible monomorphism along an arbitrary morphism exists and is an admissible monomorphism.
        \item[(E2\ensuremath{^\prime})]\label{def: exact category: pullback admissible epi}
              The pullback of an admissible epimorphism along an arbitrary morphism exists and is an admissible epimorphism.
    \end{enumerate}
\end{defn}

\begin{defn}
    A morphism $f \colon A \to B$ in an exact category is called \emph{admissible} if it factors as a composition
    $A \onto I \injectarr B$ of an admissible epimorphism followed by an admissible monomorphism.

    A sequence of admissible morphisms
    \[\begin{tikzcd}
        A &[-2ex] &[-2ex] B &[-2ex]&[-2ex] C \\[-1em]
        & I && {I'}
        \arrow["f", from=1-1, to=1-3]
        \arrow[two heads, from=1-1, to=2-2]
        \arrow["{f'}", from=1-3, to=1-5]
        \arrow[two heads, from=1-3, to=2-4]
        \arrow[hook, from=2-2, to=1-3]
        \arrow[hook, from=2-4, to=1-5]
    \end{tikzcd}\]
    is called $\sfE$\=/exact if $I \injectarr B \onto I'$ is a short exact sequence.
    This notion naturally extends to longer sequences of admissible morphisms.

    An \emph{$\sfE$\=/resolution} of an object $A$ is a chain complex $C_\ast$ together with an admissible morphism
    $C_0 \to A$ such that the augmented complex $C_\ast \to A$ is $\sfE$\=/exact.
\end{defn}

Let $f \colon A \to B$ be a $\coeffZ$\=/linear map between $\coeffZ[\Gamma]$\=/modules.
We write $\Qmap{f}$ for the map $\coeffQ \otimes_\coeffZ f$. We say that $f_\coeffQ$ is \emph{bounded} if it is bounded with respect to the Cayley bornology on $\coeffQ \otimes_\coeffZ A$ and $\coeffQ \otimes_\coeffZ B$.
Equivalently, we say that $f$ is \emph{$\sfB_\Gamma$\=/bounded}.

\begin{defn}\label{def: admissible morphisms}
    A short exact sequence $A \overset{\iota}{\to} B \overset{p}{\to} C$ of $\coeffZ[\Gamma]$\=/modules is \emph{admissible} if there is a $\coeffZ$\=/linear section $s \colon C \to B$ and a $\coeffZ$\=/linear retraction $r \colon B \to A$ such that $\Qmap{s}$ and $\Qmap{r}$ are bounded.
    We denote the collection of admissible short exact sequences by $\excat{\Gamma}$.
\end{defn}

\begin{defn}
    An object $P$ in an exact category is called \emph{$\sfE$-projective} if
    the functor $\hom_\sfA(P,\placeholder) \to \abelian$ is exact, that is,
    for every admissible epimorphism $A \onto B$ and every morphism $P \to B$
    there exists a solution to the lifting problem
    \[\begin{tikzcd}
        & P & \\[-0.5em]
        A && B
        \arrow[dashed, from=1-2, to=2-1]
        \arrow[from=1-2, to=2-3]
        \arrow[two heads, from=2-1, to=2-3]
    \end{tikzcd}\]
    making the triangle commute.
\end{defn}

\subsection{Verification of the axioms of an exact category}
It is clear from the definition that $\excat{\Gamma}$ satisfies~\cnumberref{def: exact category: identity admissible mono, def: exact category: identity admissible epi}.

\begin{lem}\label{lem: mono with bounded retraction is admissible}
    Let $f \colon A \to B$ be a monomorphism admitting a $\sfB_\Gamma$\=/bounded $\coeffZ$\=/linear retraction $r \colon B \to A$.
    Then the short exact sequence $A \xrightarrow{f} B \xrightarrow{g} B/\im(f)$ is admissible.
    In particular, $f$ and $g$ are admissible.
\end{lem}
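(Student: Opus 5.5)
We have to produce a $\coeffZ$\=/linear section $s\colon B/\im(f)\to B$ of $g$ such that $\Qmap{s}$ is bounded; the retraction $r$ is given. The natural choice is to use $r$ to split $B$ as $\im(f)\oplus\ker(r)$. Set $e\defq \id_B - f\circ r\colon B\to B$. It is $\coeffZ$\=/linear and satisfies $e\circ f = f - f\circ r\circ f = 0$, because $r\circ f=\id_A$. Hence $e$ factors uniquely through $g$: there is a $\coeffZ$\=/linear map $s\colon B/\im(f)\to B$ with $s\circ g = e$. Moreover $g\circ s\circ g = g\circ e = g - g\circ f\circ r = g$, and since $g$ is surjective, $g\circ s=\id$. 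So $s$ is a section. The sequence is also exact as $\coeffZ[\Gamma]$\=/modules, since $f$ is an injective $\coeffZ[\Gamma]$\=/map and $g$ is its cokernel.

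It remains to check that $\Qmap{s}$ is bounded. The functor $\coeffQ\otimes_\coeffZ\placeholder$ is exact by the flatness in \cref{def: coefficients}. Therefore $\Qmap{g}\colon \coeffQ\otimes_\coeffZ B\to \coeffQ\otimes_\coeffZ (B/\im f)$ is a surjective $\coeffQ[\Gamma]$\=/linear map, and $\Qmap{s}\circ\Qmap{g}=\Qmap{e}=\id-\Qmap{f}\circ\Qmap{r}$.

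Next, $\Qmap{f}$ is $\coeffQ[\Gamma]$\=/linear, hence bounded by \cref{lem: every equivariant map is bounded}, and $\Qmap{r}$ is bounded by hypothesis. Sums and compositions of bounded maps are bounded in bornological $\coeffQ$\=/modules, so $\Qmap{e}$ is bounded.

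The only delicate step is passing boundedness from $\Qmap{s}\circ\Qmap{g}$ to $\Qmap{s}$. By \cref{lem: quotients are preserved}, the Cayley bornology on $\coeffQ\otimes_\coeffZ(B/\im f)$ is the quotient bornology induced by the $\coeffQ[\Gamma]$\=/linear surjection $\Qmap{g}$. A map out of a quotient bornology is bounded exactly when its precomposition with the quotient map is bounded. Alternatively, one can choose a free module $p\colon F\onto \coeffQ\otimes_\coeffZ B$ and apply \cref{lem: properties of canonical bornology: quotient} to $\Qmap{g}\circ p$. Either way, boundedness of $\Qmap{e}$ gives boundedness of $\Qmap{s}$.

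Therefore the sequence lies in $\excat{\Gamma}$. Since it is an admissible short exact sequence, $f$ is an admissible monomorphism and $g$ an admissible epimorphism, hence both are admissible morphisms by definition.
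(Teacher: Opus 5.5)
Your proposal is correct and follows essentially the same route as the paper: you take the section $s$ induced on $B/\im(f)$ by $\id_B - f\circ r$, and you deduce boundedness of $\Qmap{s}$ from boundedness of $\Qmap{s}\circ\Qmap{g}$. That step works because the Cayley bornology on the quotient is the quotient bornology. The paper phrases this via preservation of colimits, while you cite the quotient lemma underlying it, which amounts to the same argument.
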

\begin{proof}
    Let $C = B/\im(f)$ and define $s' \colon B \to B$ by $s'(b) \coloneqq b - f(r(b))$. Then $s'(f(a)) = f(a) - f(r(f(a))) = 0$ for every~$a \in A$, hence
    $s'$ descends to a $\coeffZ$\=/linear map $s \colon C \to B$. The map $s$ satisfies $g(s(b + \im(f))) = g(b - f(r(b))) = g(b)$, hence is a section of $g$.
    Because $s$ comes from the universal property of the quotient,
    exactness of $\coeffQ \otimes_\coeffZ -$ and \cref{cor: colimits are preserved} show that $\Qmap{s}$ is bounded in the Cayley bornology.
\end{proof}

\begin{rem}
    Let $0 \to A \xrightarrow{f} B \xrightarrow{g} C \to 0$ be an exact sequence in $\modGamma$.
    The constructed section $s \colon C \to B$ from~\cref{lem: mono with bounded retraction is admissible} satisfies
    $r \circ s = 0$ and $f \circ r + s \circ g = \id_B$.
\end{rem}

\begin{lem}\label{lem: induced bornology and cayley bornology equal for admissible map}
    Let $f \colon M \to N$ be an admissible monomorphism. Then the Cayley bornology on $\coeffQ \otimes_\coeffZ M$ coincides with the subspace bornology induced by $\coeffQ \otimes_\coeffZ f$.
\end{lem}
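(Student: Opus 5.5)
Write $f_\coeffQ = \coeffQ \otimes_\coeffZ f$ and $M_\coeffQ = \coeffQ\otimes_\coeffZ M$, $N_\coeffQ = \coeffQ\otimes_\coeffZ N$. By flatness of $\coeffZ \subseteq \coeffQ$ (\cref{def: coefficients}~\cnumberref{def: coefficients: flat}), $f_\coeffQ$ is injective. So the claim is that a subset $B \subseteq M_\coeffQ$ is Cayley bounded if and only if $f_\coeffQ(B)$ is Cayley bounded in $N_\coeffQ$. I would prove the two inclusions of bornologies separately.

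First, the Cayley bornology is contained in the subspace bornology. The map $f_\coeffQ$ is $\coeffQ[\Gamma]$\=/linear, hence bounded by \cref{lem: every equivariant map is bounded}. Therefore every Cayley bounded $B$ has Cayley bounded image $f_\coeffQ(B)$, i.e.\ $B$ is bounded in the subspace bornology.

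Second, the subspace bornology is contained in the Cayley bornology; this is where admissibility enters. Since $f$ is an admissible monomorphism, it sits in an admissible short exact sequence $M \xrightarrow{f} N \to C$. By \cref{def: admissible morphisms} there is a $\coeffZ$\=/linear retraction $r \colon N \to M$ with $r\circ f = \id_M$ and $r_\coeffQ$ bounded for the Cayley bornologies. Tensoring gives $r_\coeffQ \circ f_\coeffQ = \id_{M_\coeffQ}$. Now suppose $f_\coeffQ(B)$ is Cayley bounded in $N_\coeffQ$. Then $B = r_\coeffQ(f_\coeffQ(B))$ is Cayley bounded in $M_\coeffQ$, because $r_\coeffQ$ is bounded.

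There is a subtlety I would check, namely that "admissible monomorphism" gives exactly the data used above. An admissible monomorphism is by definition one appearing in a kernel-cokernel pair in $\excat{\Gamma}$, so the retraction $r$ is supplied directly by \cref{def: admissible morphisms}. No factorization issues arise, and the argument is essentially immediate. The only point requiring care is that the bornological statements are about $\coeffQ\otimes_\coeffZ$ of the maps, and the identity $r_\coeffQ\circ f_\coeffQ=\id$ holds by functoriality of $\coeffQ\otimes_\coeffZ\placeholder$.
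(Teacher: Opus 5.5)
Your proposal is correct and matches the paper's proof: one inclusion holds because $\Qmap{f}$ is $\coeffQ[\Gamma]$\=/linear and hence bounded, and the other follows from $B = \Qmap{(r\circ f)}(B)$ with $\Qmap{r}$ bounded. Your remark that flatness makes $\Qmap{f}$ injective is a harmless extra detail that the paper leaves implicit.
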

\begin{proof}
    It always holds that a bounded subset of $\coeffQ \otimes_\coeffZ M$ in the Cayley bornology is also bounded in the induced bornology.
    Conversely, let $B \subseteq \coeffQ \otimes_\coeffZ M$ be bounded in the induced bornology, that is, $\Qmap{f}(B)$ is bounded in the Cayley bornology on
    $\coeffQ \otimes_\coeffZ N$. Let $r \colon N \to M$ be a retraction as in~\cref{def: admissible morphisms}.
    Then $\Qmap{(r \circ f)}(B) = B$ is bounded in the Cayley bornology.
\end{proof}

\begin{prop}[{$\excat{\Gamma}$ satisfies~\cnumberref{def: exact category: pushout admissible mono}}]\label{prop: pushout admissible mono}
    The pushout of an admissible monomorphism along an arbitrary morphism is admissible.
\end{prop}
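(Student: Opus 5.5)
Let $\iota\colon A\injectarr B$ be an admissible monomorphism, with $\coeffZ$\=/linear retraction $r\colon B\to A$ such that $\Qmap{r}$ is bounded, and let $f\colon A\to A'$ be an arbitrary $\coeffZ[\Gamma]$\=/linear map. The plan is to form the pushout in $\modGamma$,
\[
    B' \defq (A'\oplus B)/\{(f(a),-\iota(a)) \mid a\in A\},
\]
with $\iota'\colon A'\to B'$, $a'\mapsto[(a',0)]$, and $f'\colon B\to B'$, $b\mapsto[(0,b)]$. Since $\iota$ is injective and $A\to A'\oplus B$ has the $\coeffZ$\=/linear retraction $(a',b)\mapsto r(b)$ up to sign, $\iota'$ is a monomorphism. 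The square is a pushout in the additive category $\modGamma$, so by~\cref{lem: mono with bounded retraction is admissible} it suffices to produce a $\coeffZ$\=/linear retraction $r'\colon B'\to A'$ of $\iota'$ with $\Qmap{r'}$ bounded.

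Define $\tilde r\colon A'\oplus B\to A'$ by $\tilde r(a',b)=a'+f(r(b))$. This is $\coeffZ$\=/linear. It vanishes on the relations, since $\tilde r(f(a),-\iota(a))=f(a)-f(r\iota(a))=0$, so it descends to $r'\colon B'\to A'$. It satisfies $r'\iota'(a')=a'$. Note that $r'$ is not $\Gamma$\=/equivariant, because $r$ is not, but that is all the definition requires.

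For boundedness, tensor with $\coeffQ$, which is exact by flatness, so $\coeffQ\otimes_\coeffZ B'$ is the corresponding quotient of $\coeffQ\otimes_\coeffZ(A'\oplus B)$. By \cref{lem: quotients are preserved}, together with \cref{lem: properties of canonical bornology: quotient}~\localref{lem: properties of canonical bornology: universal property} or directly the universal property of the quotient bornology, $\Qmap{r'}$ is bounded if and only if $\Qmap{\tilde r}$ is bounded on $\coeffQ\otimes_\coeffZ(A'\oplus B)$. By \cref{lem: coproducts are preserved} the Cayley bornology on this finite direct sum is the direct sum bornology, so it suffices that both components are bounded. The component $\id$ on $A'$ is clearly bounded. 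The component $\Qmap{f}\circ\Qmap{r}$ is a composition of the bounded map $\Qmap{r}$ with the $\coeffQ[\Gamma]$\=/linear map $\Qmap{f}$, and the latter is bounded by \cref{lem: every equivariant map is bounded}.

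The only point needing care is identifying the Cayley bornology on $\coeffQ\otimes_\coeffZ B'$ with the quotient bornology from $\coeffQ\otimes_\coeffZ(A'\oplus B)$. This is exactly \cref{lem: quotients are preserved} applied to $\coeffQ\otimes_\coeffZ$ of the quotient map, using $\coeffQ\otimes_\coeffZ(\coeffZ[\Gamma]\text{-module})\cong\coeffQ[\Gamma]\otimes_{\coeffZ[\Gamma]}(\placeholder)$. With this, \cref{lem: mono with bounded retraction is admissible} shows that $\iota'$ is admissible.
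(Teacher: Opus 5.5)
Your proof is correct and is essentially the paper's argument. The paper gets $r'$ from the universal property of the pushout applied to the cocone $(\id_{A'}, f\circ r)$ (in your notation) and deduces boundedness from \cref{cor: colimits are preserved}; your explicit map $\tilde r(a',b)=a'+f(r(b))$ on $A'\oplus B$, together with the coproduct and quotient lemmas packaged in that corollary, is the same construction made concrete. Your first justification that $\iota'$ is a monomorphism is not really an argument, but this does not matter, since $r'\circ\iota'=\id$ establishes it anyway.
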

\begin{proof}
    Let $f \colon A \injectarr B$ be an admissible monomorphism with $\coeffZ$\=/linear retraction $r$ and let $g \colon A \to A'$ be any morphism.
    Then $g \circ r \circ f = g \circ \id = g$.
    As the forgetful functor from $\modQGamma$ to $\modZ$ preserves pushouts, the universal property of $B'$ yields a $\coeffZ$\=/linear
    map $r' \colon B' \to A'$ such that
    \[\begin{tikzcd}
        A & B \\
        {A'} & {B'} \\
        && {A'}
        \arrow["f", hook, from=1-1, to=1-2]
        \arrow["g"', from=1-1, to=2-1]
        \arrow["\Big\ulcorner"{anchor=center, pos=0.125}, draw=none, from=2-2, to=1-1]
        \arrow["{g'}", from=1-2, to=2-2]
        \arrow["{g \circ r}", curve={height=-12pt}, from=1-2, to=3-3]
        \arrow["{f'}"', from=2-1, to=2-2]
        \arrow["\id"', curve={height=12pt}, from=2-1, to=3-3]
        \arrow["{r'}"', dashed, from=2-2, to=3-3]
    \end{tikzcd}\]
    commutes, that is, $r' \circ f' = \id$.
    Moreover, by~\cref{cor: colimits are preserved} applying the functor $\sfB_\Gamma(\coeffQ \otimes_\coeffZ -)$ preserves pushouts, hence $\Qmap{r'}$ is bounded. By~\cref{lem: mono with bounded retraction is admissible}, $f'$ is admissible.
\end{proof}

\begin{prop}[{$\excat{\Gamma}$ satisfies \cnumberref{def: exact category: pullback admissible epi}}\label{prop: pullback admissible epi}]
    The pullback of an admissible epimorphism along an arbitrary morphism is admissible.
\end{prop}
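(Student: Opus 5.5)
Let $p\colon B\onto C$ be an admissible epimorphism, sitting in an admissible sequence $A\xrightarrow{\iota}B\xrightarrow{p}C$ with $\coeffZ$\=/linear section $s\colon C\to B$ and retraction $r\colon B\to A$ such that $\Qmap{s}$ and $\Qmap{r}$ are bounded. Let $h\colon C'\to C$ be any $\coeffZ[\Gamma]$\=/linear map. I would use the explicit pullback $B'=B\times_C C'=\{(b,c')\mid p(b)=h(c')\}$ in $\modGamma$, with projections $p'\colon B'\to C'$ and $h'\colon B'\to B$. The map $p'$ is surjective because $p$ is, and its kernel is $\{(\iota(a),0)\}\cong A$, embedded by $\iota'(a)=(\iota(a),0)$. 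So $A\xrightarrow{\iota'}B'\xrightarrow{p'}C'$ is a short exact sequence, and it remains to produce bounded section and retraction.

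For the section I take $s'(c')=(s(h(c')),c')$, which lands in $B'$ since $p s=\id$. It is $\coeffZ$\=/linear and satisfies $p's'=\id$. For the retraction I take $r'(b,c')=r(b)=r\circ h'(b,c')$, which satisfies $r'\iota'=r\iota=\id_A$. The dual route would be \cref{lem: mono with bounded retraction is admissible}: once $r'$ is bounded, $\iota'$ has a bounded retraction, so the sequence $A\to B'\to B'/\im\iota'\cong C'$ is admissible, and then only $r'$ needs checking. Either way the issue reduces to boundedness after tensoring with $\coeffQ$.

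Boundedness of $\Qmap{r'}=\Qmap{r}\circ\Qmap{h'}$ follows from \cref{lem: every equivariant map is bounded}, since $\Qmap{h'}$ is $\coeffQ[\Gamma]$\=/linear, composed with the bounded $\Qmap{r}$. Since $\coeffQ$ is flat over $\coeffZ$, $\coeffQ\otimes_\coeffZ B'$ is the pullback of the tensored modules, so the formulas make sense rationally.

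For $\Qmap{s'}$ (not needed if I go via the lemma, but a good consistency check), the main obstacle is that $\sfB_\Gamma$ need not preserve pullbacks, meaning kernels. The Cayley bornology on $\coeffQ\otimes B'$ could a priori be finer than the subspace bornology from $\coeffQ\otimes(B\oplus C')$. I would circumvent this using the decomposition $B'\cong A\oplus C'$ given by $(a,c')\mapsto \iota'(a)+s'(c')$. This is a $\coeffZ$\=/linear isomorphism whose inverse is $(r',p')$.

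To see that this decomposition respects the Cayley bornology, I would argue directly. The map $\coeffQ\otimes A\to\coeffQ\otimes B'$ given by $\iota'$ is equivariant, hence bounded. For $s'$, I would write $s'=\id_{B'}-\iota' r'$ composed with some map. This is circular, since it requires a lift $C'\to B'$.

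So I would fall back on the lemma route, which avoids $s'$ entirely: $r'$ bounded plus \cref{lem: mono with bounded retraction is admissible} gives admissibility of $A\to B'\to \operatorname{coker}\iota'$. The cokernel identifies with $C'$ via $p'$, an equivariant isomorphism. Admissible sequences are closed under isomorphism, and the class is closed under isomorphism by definition. Hence $p'$ is an admissible epimorphism.
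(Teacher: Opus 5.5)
Your final argument is correct and is essentially the paper's proof. The paper also takes the retraction $r'=\varphi^{-1}\circ r\circ f'$, which is your $r\circ h'$, with $\varphi\colon\ker(p')\cong\ker(p)$ playing the role of your identification $\ker p'=\iota'(A)$; it notes that $\Qmap{r'}$ is bounded because it is a bounded map precomposed with an equivariant one, and it concludes with \cref{lem: mono with bounded retraction is admissible}. As you recognized, the detour through $s'$ is unnecessary: the lemma supplies the bounded section automatically, and under the normalization $r\circ s=0$ that section even coincides with your $s'$.
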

\begin{proof}
    Let $p \colon A \onto B$ be an admissible epimorphism. Let $r \colon A \to \ker(p)$ be a $\coeffZ$\=/linear retraction such that $\Qmap{r}$ is bounded.
    Now consider the following diagram for an arbitrary morphism $f \colon B' \to B$.
    \[\begin{tikzcd}
        {\ker(p')} & {A'} & {B'} \\
        {\ker(p)} & A & B
        \arrow["\iota'", hook, from=1-1, to=1-2]
        \arrow["\varphi", from=1-1, to=2-1]
        \arrow["\cong"', draw=none, from=1-1, to=2-1]
        \arrow["{p'}", two heads, from=1-2, to=1-3]
        \arrow["{f'}"', from=1-2, to=2-2]
        \arrow["\Big\lrcorner"{anchor=center, pos=0.125}, draw=none, from=1-2, to=2-3]
        \arrow["f", from=1-3, to=2-3]
        \arrow["\iota", hook, from=2-1, to=2-2]
        \arrow["p", two heads, from=2-2, to=2-3]
        \arrow["r", curve={height=-10pt}, from=2-2, to=2-1, start anchor={[yshift=-0.25ex,xshift=-0.25ex]south}]
    \end{tikzcd}\]
    There is an equivariant isomorphism $\varphi \colon \ker(p') \to \ker(p)$. Define $r' \colon A' \to \ker(p')$ as $r' = \varphi^{-1} \circ r \circ f'$.
    Then $\Qmap{r'}$ is bounded and
    \[(\varphi^{-1} \circ r \circ f') \circ \iota' = \varphi^{-1} \circ (r \circ \iota) \circ \varphi = \varphi^{-1} \circ \varphi = \id_{\ker(p')}.\]
    Therefore, by~\cref{lem: mono with bounded retraction is admissible} $p'$ is admissible.
\end{proof}

\begin{prop}[{$\excat{\Gamma}$ satisfies \cnumberref{def: exact category: composition admissible mono, def: exact category: composition admissible epi}}]
    The classes of admissible monomorphisms and admissible epimorphisms in $\modGamma$ are closed under composition.
\end{prop}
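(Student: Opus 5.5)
Admissible monomorphisms and admissible epimorphisms are exactly the maps that occur in a short exact sequence with a $\sfB_\Gamma$\=/bounded $\coeffZ$\=/linear section and retraction. By~\cref{lem: mono with bounded retraction is admissible}, a monomorphism is admissible as soon as it has a $\sfB_\Gamma$\=/bounded $\coeffZ$\=/linear retraction. Dually, an epimorphism $p\colon B\to C$ is admissible as soon as it has a $\sfB_\Gamma$\=/bounded $\coeffZ$\=/linear section $s$. Indeed, $r(b)\defq \iota^{-1}(b-s(p(b)))$ then defines a retraction onto $\ker(p)$, and $\Qmap{r}$ is bounded by~\cref{lem: induced bornology and cayley bornology equal for admissible map}, or more simply because $\Qmap{\iota}\circ\Qmap{r}=\id-\Qmap{s}\Qmap{p}$ is bounded. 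I will first record this dual statement, since it reduces both halves of the proposition to composing one-sided inverses.

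For monomorphisms, let $f\colon A\injectarr B$ and $g\colon B\injectarr C$ be admissible, with bounded retractions $r_f\colon B\to A$ and $r_g\colon C\to B$. Then $r_f\circ r_g$ is a $\coeffZ$\=/linear retraction of $g\circ f$. Since $\coeffQ\otimes_\coeffZ-$ is a functor, $\Qmap{(r_f\circ r_g)}=\Qmap{r_f}\circ\Qmap{r_g}$, and a composition of bounded maps is bounded. \cref{lem: mono with bounded retraction is admissible} then shows that $g\circ f$ is admissible.

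For epimorphisms, let $p\colon A\onto B$ and $q\colon B\onto C$ be admissible, with bounded sections $s_p$ and $s_q$. Then $s_p\circ s_q$ is a bounded $\coeffZ$\=/linear section of $q\circ p$. To get the matching retraction onto $\ker(qp)$, I take $r(a)=a-s_ps_qqp(a)$, viewed in $\ker(qp)$. This is a $\coeffZ$\=/linear retraction, and its rationalization is bounded as a map into $\coeffQ\otimes_\coeffZ A$. To see that it is bounded for the Cayley bornology on $\coeffQ\otimes\ker(qp)$ itself, I need that the inclusion $\ker(qp)\injectarr A$ is admissible; \cref{lem: induced bornology and cayley bornology equal for admissible map} then identifies the Cayley bornology on $\ker(qp)$ with the subspace bornology.

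The main obstacle is precisely this identification of bornologies on kernels. The Cayley bornology of a submodule need not equal its subspace bornology in general, which is the phenomenon behind~\cref{rem: bornological R-modules as a category}. To handle it, I will write down an explicit bounded retraction $A\to\ker(qp)$ instead of relying on the Cayley bornology of $\ker(qp)$ directly. I would try to build it from the kernel retractions $r_p\colon A\to\ker(p)$ and $r_q\colon B\to\ker(q)$ and the section $s_p$, so that it factors through admissible pieces whose bornologies are already understood, namely $\ker(p)$ and $s_p(\ker q)$. Concretely, $\ker(qp)$ is an extension of $\ker(q)$ by $\ker(p)$. I expect to verify that $\ker(p)\injectarr\ker(qp)$ has bounded retraction $r_p|$ and that $p|\colon\ker(qp)\onto\ker(q)$ has bounded section $s_p|$. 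From these, boundedness of the retraction in the Cayley bornology follows via~\cref{lem: properties of canonical bornology: quotient} and~\cref{cor: colimits are preserved}.
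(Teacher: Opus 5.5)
\textbf{Verdict: genuine gap.} Your monomorphism half is fine and is what the paper does. The problems are on the epimorphism side.

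\textbf{The ``dual'' criterion in your first paragraph is false.} The identity $\Qmap{\iota}\circ\Qmap{r}=\id-\Qmap{s}\Qmap{p}$ only shows that $\Qmap{r}$ is bounded into the subspace bornology that $\coeffQ\otimes_\coeffZ\ker(p)$ inherits from $\coeffQ\otimes_\coeffZ B$. The alternative justification via \cref{lem: induced bornology and cayley bornology equal for admissible map} is circular, since that lemma presupposes that $\iota$ is already admissible.

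Here is a counterexample. Take $\Gamma=F_2$, $\Lambda=[F_2,F_2]$ and $p\colon\bbZ[\Gamma]\onto\bbZ[\Gamma/\Lambda]$.
\begin{itemize}
\item The length-minimizing section of $p$ is bounded by \cref{exa: bornology permutation modules}.
\item $\bbQ\otimes\ker(p)$ is not finitely generated, because $\Lambda$ is not.
\item Enumerate generators $k_1,k_2,\dots$ of $\ker(p)$ and pick $q_i\in\bbQ_{>0}$ with $q_i\norm{k_i}^\Gamma_i\le 1$. Then $\{q_ik_i\}$ is bounded in $\bbQ[\Gamma]$ and is fixed by every retraction onto $\ker(p)$.
\item This set lies in no finitely generated submodule. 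Every Cayley-bounded subset of $\bbQ\otimes\ker(p)$ does, since the Cayley bornology is a quotient of a direct-sum bornology.
\end{itemize}
Hence no retraction onto $\ker(p)$ is bounded. This asymmetry ($\sfB_\Gamma$ preserves quotients but not subobjects) is exactly why the epimorphism case is not a matter of composing sections.

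\textbf{The key step of your epimorphism argument is left unproved.} You locate the difficulty correctly. But the step you leave as ``I expect to verify'' is the entire content of the proposition: that $s_p|\colon\ker(q)\to\ker(qp)$ is bounded for the Cayley bornology of $\ker(qp)$. Checking it directly again only gives boundedness into $A$.

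The missing idea is to obtain the section through a quotient. Write $u\colon\ker(p)\injectarr\ker(qp)$ and $v=p|\colon\ker(qp)\onto\ker(q)$, and let $\iota_{qp}$, $\iota_q$ be the kernel inclusions.
\begin{enumerate}
\item The retraction $r\defq r_p\circ\iota_{qp}$ of $u$ is bounded. Its domain carries the Cayley bornology and $\iota_{qp}$ is equivariant.
\item So \cref{lem: mono with bounded retraction is admissible} applies to $u$; this is what the paper's appeal to \cref{prop: pullback admissible epi} amounts to. It yields a section $s$ of $v$ with $u\circ r+s\circ v=\id$.
\item This $s$ is bounded, because $\ker(q)\cong\ker(qp)/\ker(p)$ carries the quotient bornology by \cref{lem: quotients are preserved}.
\item Set $r'\defq u\circ r_p+s\circ r_q\circ p\colon A\to\ker(qp)$. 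It is bounded, and $r'\circ\iota_{qp}=u\circ r+s\circ r_q\circ\iota_q\circ v=\id$.
\item \cref{lem: mono with bounded retraction is admissible} then shows that $\iota_{qp}$, and hence $q\circ p$, is admissible.
\end{enumerate}
This is the paper's proof.

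Your $s_p|$ can also be rescued the same way. Normalize $s_p$ so that $\iota_p r_p+s_p p=\id_A$. Then $s_p|\circ v=\id-u\circ r$ on $\ker(qp)$, which is bounded, and quotient preservation shows that $s_p|$ itself is bounded.
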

\begin{proof}
    For admissible monomorphisms this is a direct consequence of~\cref{lem: mono with bounded retraction is admissible}.
    Let $f \colon A \onto B$ and $g \colon B \onto C$ be admissible epimorphisms.
    Again, by~\cref{lem: mono with bounded retraction is admissible}, it suffices to show that $\iota_{gf} \colon \ker(g \circ f) \to A$
    is an admissible monomorphism.
    By assumption, the inclusions $\iota_f$ and $\iota_g$ of the kernels $\ker(f)$ and $\ker(g)$ are admissible, hence admit $\sfB_\Gamma$\=/bounded $\coeffZ$\=/linear retractions $r_f$ and $r_g$ respectively.
    Applying \cref{prop: pullback admissible epi} to the pullback
    \[\begin{tikzcd}
        {\ker(g \circ f)} & {\ker(g)} \\
        A & B
        \arrow["v", two heads, from=1-1, to=1-2]
        \arrow["{\iota_{gf}}"', from=1-1, to=2-1]
        \arrow["\Big\lrcorner"{anchor=center, pos=0.125}, draw=none, from=1-1, to=2-2]
        \arrow["{\iota_g}", from=1-2, to=2-2]
        \arrow["f", two heads, from=2-1, to=2-2]
    \end{tikzcd}\]
    yields that the sequence
    \[
        \ker(f) \overset{u}{\injectarr} \ker(g \circ f) \overset{v}{\onto} \ker(g)
    \]
    is admissible, hence admits a $\coeffZ$\=/linear retraction $r$ of $u$
    and $\coeffZ$\=/linear section $s$ of $v$ such that $\Qmap{r}$ and $\Qmap{s}$
    are bounded. Explicitly we have $r = r_f \circ \iota_{gf}$.
    By~\cref{lem: mono with bounded retraction is admissible}, we may assume that $s$ is chosen such that
    $u \circ r + s \circ v = \id_{\ker(g \circ f)}$.
    Now consider the map $r' \coloneqq u \circ r_f + s \circ r_g \circ f \colon A \to \ker(g \circ f)$.
    It is $\sfB_\Gamma$\=/bounded and satisfies
    \begin{align*}
        r' \circ \iota_{gf}
         &= u \circ r + s \circ r_g \circ (\iota_g \circ v) \\
         &= u \circ r + s \circ v = \id_{\ker(g \circ f)},
    \end{align*}
    hence $\ker(g \circ f) \to A$ is admissible by~\cref{lem: mono with bounded retraction is admissible}.
\end{proof}

The culmination of the preceding results of this section is the following theorem.

\begin{thm}
    $\excat{\Gamma}$ is an exact structure on $\modGamma$.
\end{thm}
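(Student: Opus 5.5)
The plan is to check that $\excat{\Gamma}$ consists of kernel--cokernel pairs, is closed under isomorphism, and satisfies all six axioms of~\cref{def: exact category}, collecting what has already been proved. First, every admissible short exact sequence $A \xrightarrow{\iota} B \xrightarrow{p} C$ is a short exact sequence in the abelian category $\modGamma$. Hence $\iota$ is a kernel of $p$ and $p$ is a cokernel of $\iota$, so $\excat{\Gamma}$ is a class of kernel--cokernel pairs.

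For closure under isomorphism, let a second sequence $A' \to B' \to C'$ be isomorphic to $A \to B \to C$ via $\coeffZ[\Gamma]$\=/linear isomorphisms $\alpha, \beta, \gamma$. Transport the data: set $s' = \beta s \gamma^{-1}$ and $r' = \alpha r \beta^{-1}$. These are again a $\coeffZ$\=/linear section and retraction. After tensoring with $\coeffQ$, the isomorphisms become $\coeffQ[\Gamma]$\=/linear, so they are bounded by~\cref{lem: every equivariant map is bounded}. Therefore $\Qmap{s'}$ and $\Qmap{r'}$ are bounded as composites of bounded maps.

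Axioms (E0) and (E0$'$) hold directly. For instance, $\id_A$ sits in the sequence $A \xrightarrow{\id} A \to 0$, with retraction $\id$ and the zero section. Symmetrically, $\id_A$ sits in $0 \to A \xrightarrow{\id} A$. All these maps are bounded.

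Axioms (E1) and (E1$'$) are the composition proposition proved above. Axiom (E2) is~\cref{prop: pushout admissible mono}. Here one should also note that the pushout exists in $\modGamma$, since it is cocomplete. Axiom (E2$'$) is~\cref{prop: pullback admissible epi}, where the pullback likewise exists because $\modGamma$ is complete.

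The only point needing care is the meaning of ``admissible monomorphism'': it is one that appears in some pair of $\excat{\Gamma}$. By~\cref{lem: mono with bounded retraction is admissible}, this is equivalent to admitting a $\sfB_\Gamma$\=/bounded $\coeffZ$\=/linear retraction. The dual statement for epimorphisms follows the same way via the kernel inclusion. With this, the propositions above are exactly the required axioms, and there is no real obstacle beyond this bookkeeping.
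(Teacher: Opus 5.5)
Your proposal is correct and follows the paper's route. The paper also obtains the theorem by assembling (E0)/(E0$'$), which are immediate from the definition, the composition proposition for (E1)/(E1$'$), and the pushout and pullback propositions for (E2)/(E2$'$). Your extra checks (that admissible sequences are kernel--cokernel pairs, and closure under isomorphism because $\coeffQ[\Gamma]$-linear isomorphisms are bounded) are correct bookkeeping that the paper leaves implicit.
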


\subsection{Finiteness properties in \texorpdfstring{$\excat{\Gamma}$}{E}}

Finiteness properties of modules over the group ring are usually defined in terms of projective resolutions. We adapt this definition to our setting of exact categories.

\begin{defn}\label{defn: FP_n(E)}
    An \emph{$\FP_n(\excat{\Gamma})$\=/resolution} of a $\coeffZ[\Gamma]$\=/module $M$ is a partial $\excat{\Gamma}$\=/resolution
    \[ P_n \to P_{n-1} \to \cdots \to P_0 \onto M \to 0\]
    such that $P_i$ is a finitely generated projective $\coeffZ[\Gamma]$\=/module for $i \leq n$.

    A $\coeffZ[\Gamma]$\=/module is said to be of type $\FP_n(\excat{\Gamma})$ if it admits an $\FP_n(\excat{\Gamma})$\=/resolution.
    Moreover, if we can choose the $P_i$ to be free $\coeffZ[\Gamma]$\=/modules, $M$ is said to be of type $\FL_n(\excat{\Gamma})$.
\end{defn}

Note that we require the modules to be projective as $\coeffZ[\Gamma]$\=/modules instead of the weaker condition of
being $\excat{\Gamma}$\=/projective.
This guarantees that every module appearing in such a partial resolution is a
direct summand of a free module, which we make use of in the proof of~\cref{thm: FP_n equivalent to FL_n}.

The following \lcnamecref{prop: bornological induction exact, prop: bornological restriction exact} are consequences of~\cref{thm: bornological induction,thm: restriction commutes}
and give criteria for boundedness of $\coeffZ$\=/linear maps when passing from subgroups or to quotients.

\begin{prop}\label{prop: bornological induction exact}
    Let $\Lambda < \Gamma$ be a finitely generated subgroup that is at most polynomially distorted in $\Gamma$.
    Let $f \colon M \to N$ be a $\coeffZ$\=/linear map between $\coeffZ[\Lambda]$\=/modules, where $\coeffQ \otimes_\coeffZ N$ is
    $\sfB_\Lambda$\=/boundedly finitely generated\footnote{A sufficient condition is for $N$ to be of type~$\FP_0(\excat{\Lambda})$.}.
    Then $\induction^\Gamma_\Lambda(f)$
    is $\sfB_\Gamma$\=/bounded if $f$ is $\sfB_\Lambda$\=/bounded.
    Moreover, for any admissible exact sequence $\sigma \in \excat{\Lambda}$ where the left and the middle module are of type~$\FP_0(\excat{\Lambda})$, the sequence $\induction^\Gamma_\Lambda(\sigma)$ is
    an admissible exact sequence in $\excat{\Gamma}$.
\end{prop}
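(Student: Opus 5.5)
The plan is to reduce both parts to \cref{thm: bornological induction} by tensoring with $\coeffQ$. For the first part, let $f\colon M\to N$ be $\coeffZ$\=/linear and $\sfB_\Lambda$\=/bounded, so $\Qmap{f}\colon \coeffQ\otimes_\coeffZ M\to\coeffQ\otimes_\coeffZ N$ is bounded for the Cayley bornologies of $\coeffQ[\Lambda]$\=/modules. I will use the natural identification $\coeffQ\otimes_\coeffZ \induction_\Lambda^\Gamma(M)\cong \induction_\Lambda^\Gamma(\coeffQ\otimes_\coeffZ M)$, where on the left induction is over $\coeffZ[\Lambda]\to\coeffZ[\Gamma]$ and on the right over $\coeffQ[\Lambda]\to\coeffQ[\Gamma]$. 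This identification is compatible with the isomorphisms $\varphi_M$ of \cref{eq: def nat isomorphism}, provided we use the same section $s$ for both. Under it, $\coeffQ\otimes_\coeffZ\induction_\Lambda^\Gamma(f)$ becomes $\induction_\Lambda^\Gamma(\Qmap{f})$, as one checks from the explicit formula $\gamma\otimes m\mapsto s(\gamma\Lambda)\otimes f(s(\gamma\Lambda)^{-1}\gamma m)$. Since $\coeffQ\otimes_\coeffZ N$ is $\sfB_\Lambda$\=/boundedly finitely generated, \cref{thm: bornological induction} applies and gives that $\induction_\Lambda^\Gamma(f)$ is $\sfB_\Gamma$\=/bounded.

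For the footnote, suppose $N$ is of type $\FP_0(\excat{\Lambda})$. Then there is an admissible epimorphism $P\onto N$ with $P$ finitely generated projective, together with a $\coeffZ$\=/linear section whose $\coeffQ$\=/tensor is bounded. Tensoring with $\coeffQ$ gives a finitely generated projective $\coeffQ[\Lambda]$\=/module surjecting onto $\coeffQ\otimes_\coeffZ N$ with a bounded $\coeffQ$\=/linear section. This is exactly the definition of being $\sfB_\Lambda$\=/boundedly finitely generated.

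For the second part, let $\sigma\colon A\xrightarrow{\iota}B\xrightarrow{p}C$ be admissible in $\excat{\Lambda}$, with section $s$ and retraction $r$ such that $\Qmap{s}$ and $\Qmap{r}$ are bounded, and with $A$ and $B$ of type $\FP_0(\excat{\Lambda})$. Induction is exact because $\coeffZ[\Gamma]$ is free over $\coeffZ[\Lambda]$, so $\induction_\Lambda^\Gamma(\sigma)$ is short exact. By functoriality of the extended induction, $\induction(r)\circ\induction(\iota)=\induction(r\circ\iota)=\id$ and $\induction(p)\circ\induction(s)=\id$; here composition is compatible because each $\induction(f)$ is $\varphi^{-1}(\id\otimes f)\varphi$. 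Hence $\induction(r)$ is a $\coeffZ$\=/linear retraction, and it is $\sfB_\Gamma$\=/bounded by the first part since its target $A$ is $\FP_0(\excat{\Lambda})$. By \cref{lem: mono with bounded retraction is admissible}, $\induction(\sigma)$ is then admissible, so we do not need a separate bounded section. Alternatively, $\induction(s)$ is bounded by the first part because its target is $B$, which is also of type $\FP_0(\excat{\Lambda})$.

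The main obstacle is bookkeeping rather than analysis. One must check that the $\coeffQ$\=/tensor of the integral induction really is the $\coeffQ$\=/induction as $\coeffQ[\Gamma]$\=/modules with their Cayley bornologies. One must also check that the extended, section-dependent $\induction$ on $\coeffZ$\=/linear maps preserves identities and compositions. Both follow from the explicit formula via $\varphi_M$, using that $\coeffQ\otimes_\coeffZ(\coeffZ[\Gamma/\Lambda]\otimes_\coeffZ M)\cong\coeffQ[\Gamma/\Lambda]\otimes_\coeffQ(\coeffQ\otimes_\coeffZ M)$ naturally.
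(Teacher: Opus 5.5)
Your proposal is correct and follows the paper's approach: the paper gives no separate proof and treats this proposition as a direct consequence of the theorem on bornological induction, which you apply after passing to $\coeffQ\otimes_\coeffZ(\placeholder)$ (compatibly with the section-dependent induction) and after checking that $\FP_0(\excat{\Lambda})$ gives $\sfB_\Lambda$-bounded finite generation. Your observation is also right that the bounded retraction $\induction^\Gamma_\Lambda(r)$ alone suffices by the lemma on monomorphisms with bounded retractions, so only the hypothesis on the left module is actually needed.
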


\begin{prop}\label{prop: bornological restriction exact}
    Let $p \colon \Lambda \onto \Gamma$ be a surjective group homomorphism between finitely generated groups.
    Let $f \colon M \to N$ be a $\coeffZ$\=/linear map between $\coeffZ[\Gamma]$\=/modules. Then $f$ is $\sfB_\Gamma$\=/bounded if and only
    if $\restriction^\Gamma_\Lambda(f)$ is $\sfB_\Lambda$\=/bounded.
    Moreover, for any admissible exact sequence $\sigma \in \excat{\Gamma}$, the sequence $\restriction^\Gamma_\Lambda(\sigma)$ is
    an admissible exact sequence in $\excat{\Lambda}$.
\end{prop}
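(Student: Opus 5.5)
The statement to prove is \cref{prop: bornological restriction exact}. The plan is to reduce everything to \cref{thm: restriction commutes}, applied after tensoring with $\coeffQ$.

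For the first claim, note that restriction along $p$ does not change the underlying abelian groups. So $\coeffQ\otimes_\coeffZ \restriction^\Gamma_\Lambda(M)$ is canonically $\restriction^\Gamma_\Lambda(\coeffQ\otimes_\coeffZ M)$ as $\coeffQ[\Lambda]$\=/modules, and the same holds for $N$. Under these identifications $\restriction^\Gamma_\Lambda(f)_\coeffQ$ and $f_\coeffQ$ are the same $\coeffQ$\=/linear map. By \cref{thm: restriction commutes}, $\sfB_\Gamma(\coeffQ\otimes_\coeffZ M)$ and $\sfB_\Lambda(\restriction^\Gamma_\Lambda(\coeffQ\otimes_\coeffZ M))$ are the same bornological $\coeffQ$\=/module, and likewise for $N$. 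Boundedness only depends on the bornologies of source and target, so $f$ is $\sfB_\Gamma$\=/bounded if and only if $\restriction^\Gamma_\Lambda(f)$ is $\sfB_\Lambda$\=/bounded.

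The one point needing care is that the isomorphism in \cref{thm: restriction commutes} is the identity on underlying sets. This is clear from its proof, which compares two bornologies on the same $\coeffQ$\=/module $M$, so I expect no real obstacle here.

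For the second claim, let $\sigma\colon A\xrightarrow{\iota}B\xrightarrow{q}C$ be in $\excat{\Gamma}$, with $\coeffZ$\=/linear section $s$ and retraction $r$ such that $\Qmap{s}$ and $\Qmap{r}$ are bounded. Restriction is exact, so $\restriction^\Gamma_\Lambda(\sigma)$ is a short exact sequence of $\coeffZ[\Lambda]$\=/modules. The same $\coeffZ$\=/linear maps $s$ and $r$ are still a section and a retraction, since these are identities of maps of abelian groups. By the first part they are $\sfB_\Lambda$\=/bounded. Hence $\restriction^\Gamma_\Lambda(\sigma)\in\excat{\Lambda}$ by \cref{def: admissible morphisms}.
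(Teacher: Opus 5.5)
Your proposal is correct and matches the paper, which records this proposition without a separate proof as a direct consequence of \cref{thm: restriction commutes}: restriction leaves the underlying $\coeffQ$-modules and maps unchanged, so boundedness of $\Qmap{f}$ and the bounded $\coeffZ$-linear section and retraction of an admissible sequence carry over verbatim. Your observation that the comparison isomorphism in that theorem is the identity on underlying modules is exactly what makes this transfer work.
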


The following is a direct consequence of~\cref{prop: bornological induction exact} together with the fact that
$\induction^\Gamma_\Lambda(-)$ preserves finite generation and projectivity.

\begin{prop}\label{prop: FPn induction}
    Let $\Lambda < \Gamma$ be a finitely generated subgroup that is at most polynomially distorted in $\Gamma$.
    If a $\coeffZ[\Lambda]$\=/module $M$ is of type $\FP_n(\excat{\Lambda})$ then
    $\induction^\Gamma_\Lambda(M)$ is of type $\FP_n(\excat{\Gamma})$.
\end{prop}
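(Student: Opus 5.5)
Take an $\FP_n(\excat{\Lambda})$\=/resolution
\[ P_n \to P_{n-1} \to \cdots \to P_0 \onto M \to 0 \]
with $P_i$ finitely generated projective $\coeffZ[\Lambda]$\=/modules. Apply $\induction^\Gamma_\Lambda$ termwise. Because $\coeffZ[\Gamma]$ is free over $\coeffZ[\Lambda]$, induction is exact and sends finitely generated projectives to finitely generated projectives. So the induced sequence is an exact sequence of $\coeffZ[\Gamma]$\=/modules with the right terms, and only its admissibility remains. $\excat{\Lambda}$\=/exactness means the complex splits into admissible short exact sequences
\[ K_{i+1} \injectarr P_i \onto K_i \qquad (0 \le i \le n-1), \]
where $K_0 = M$ and $K_i = \ker(P_{i-1}\to K_{i-1})$ for $i \ge 1$, together with an admissible epimorphism $P_n \onto I_n \injectarr P_{n-1}$ onto the image $I_n \subseteq K_n$. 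Induction commutes with kernels and images, so it suffices to show that $\induction^\Gamma_\Lambda$ of each of these admissible short exact sequences (and of the admissible factorization at the top degree) is admissible in $\excat{\Gamma}$.

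For each sequence I will invoke \cref{prop: bornological induction exact}. This needs the left and middle terms to be of type $\FP_0(\excat{\Lambda})$. The middle term $P_i$ is finitely generated projective, hence of type $\FP_0(\excat{\Lambda})$ with the identity as resolution. The left term $K_{i+1}$ (for $i+1 \le n$) has the truncated admissible resolution $P_n \to \dots \to P_{i+1} \onto K_{i+1}$, so it is of type $\FP_0(\excat{\Lambda})$ as well. Similarly, $I_n$ is an admissible quotient of $P_n$ and hence of type $\FP_0(\excat{\Lambda})$. For the top sequence $\ker(P_n\to I_n) \injectarr P_n \onto I_n$, the left term need not be finitely generated, so I will treat it directly rather than through the proposition.

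The main obstacle is this last point, and the argument rests on the first assertion of \cref{prop: bornological induction exact}, which only needs the \emph{target} to be $\sfB_\Lambda$\=/boundedly finitely generated. Admissibility of $\induction^\Gamma_\Lambda$ of a sequence needs a bounded section of the epimorphism and a bounded retraction of the monomorphism. Take the bounded $\coeffZ$\=/linear section $s$ of $P_n \onto I_n$. Its induction $\induction^\Gamma_\Lambda(s)$ is a $\coeffZ$\=/linear section of $\induction^\Gamma_\Lambda(P_n)\onto\induction^\Gamma_\Lambda(I_n)$, since $\induction^\Gamma_\Lambda$ is functorial on $\coeffZ$\=/linear maps: it is conjugation of $\id\otimes f$ by $\varphi$. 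It is $\sfB_\Gamma$\=/bounded because its target $\coeffQ\otimes P_n$ is boundedly finitely generated. By \cref{lem: mono with bounded retraction is admissible}, dualized, a bounded section yields a bounded retraction $\id - s\circ p$, so this sequence is admissible. The same argument, using the section $P_i \to K_i$ whose target is projective, works uniformly for every $i$, so I only need that each $P_i$ is boundedly finitely generated; this holds for direct summands of finitely generated free modules. It remains to check that $\coeffQ\otimes_\coeffZ$ commutes with induction and that $\varphi$ is compatible with $\coeffZ$\=/structures. Both are routine, and they finish the proof.
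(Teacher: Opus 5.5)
For the sequences $K_{i+1}\injectarr P_i\onto K_i$ with $0\le i\le n-1$ you argue exactly as the paper does. Its proof is just \cref{prop: bornological induction exact} plus the fact that $\induction^\Gamma_\Lambda$ preserves finitely generated projectives. You are also right that the top sequence $K_{n+1}\injectarr P_n\onto K_n$, which is needed for admissibility of $\induction^\Gamma_\Lambda(P_n)\to\induction^\Gamma_\Lambda(P_{n-1})$, is not covered by that proposition, because $K_{n+1}$ need not be of type $\FP_0(\excat{\Lambda})$. The paper's one-line proof does not treat this case separately.

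However, your handling of the top sequence has a genuine gap: \cref{lem: mono with bounded retraction is admissible} has no valid dual in $\excat{\Gamma}$.
\begin{itemize}
\item The map $\id-\induction^\Gamma_\Lambda(s)\circ\induction^\Gamma_\Lambda(p)$ is bounded into $\coeffQ\otimes_\coeffZ\induction^\Gamma_\Lambda(P_n)$. That only gives boundedness for the \emph{subspace} bornology on the kernel.
\item Admissibility requires boundedness into the \emph{Cayley} bornology of $\coeffQ\otimes_\coeffZ\induction^\Gamma_\Lambda(K_{n+1})$, which can be strictly finer (\cref{rem: bornological R-modules as a category}).
\item The lemma works in its stated direction only because $\sfB_\Gamma$ preserves quotients (\cref{lem: quotients are preserved}). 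For submodules, agreement of the two bornologies is the \emph{conclusion} of \cref{lem: induced bornology and cayley bornology equal for admissible map}, and that lemma presupposes admissibility.
\end{itemize}

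The dual statement is in fact false. Take $\Gamma=\langle a,t\mid tat^{-1}=a^2\rangle$ with the resolution coming from its aspherical presentation complex.
\begin{itemize}
\item The sequence $\ker\partial_1\injectarr\bbZ[\Gamma]^2\onto\ker\varepsilon$ has a $\sfB_\Gamma$-bounded section, given by geodesic edge paths.
\item Since $\partial_2$ identifies $\ker\partial_1$ with $\bbZ[\Gamma]$, fillings are unique.
\item The cycles of $[t^mat^{-m},a]$, rescaled by $2^{-\lceil m/2\rceil}$, form a subspace-bounded set whose fillings have unbounded $\ell^1$-norm.
\end{itemize}
So this sequence is not admissible. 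Your remark that the section argument ``works uniformly for every $i$'' rests on the same false principle.

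What is missing is a proof that the Cayley bornology on $\coeffQ\otimes\induction^\Gamma_\Lambda(K_{n+1})$ agrees with its subspace bornology in $\coeffQ\otimes\induction^\Gamma_\Lambda(P_n)$. Over $\Lambda$ this holds by \cref{lem: induced bornology and cayley bornology equal for admissible map}, because the original top sequence is admissible. It has to be carried through induction by hand, since \cref{thm: bornological induction} needs a finitely generated target. One way to do it:
\begin{enumerate}
\item Write $x=\sum_{c\in\Gamma/\Lambda}s(c)\otimes k_{x,c}$.
\item Minimality of $s(c)$ and \cref{eq: distortion norms} turn subspace-boundedness of a set of such $x$ into uniform bounds on $\sum_c(1+\ell_\Gamma(s(c)))^a\norm{k_{x,c}}^\Lambda_b$ for all $a,b$.
\item Choose scalars $\mu_{x,c}\in\coeffQ$ by a diagonal argument, using the density condition in \cref{def: coefficients}. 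Arrange that $\{k_{x,c}/\mu_{x,c}\}$ is bounded in $\coeffQ\otimes P_n$, hence Cayley-bounded in $\coeffQ\otimes K_{n+1}$, while $\sum_c(1+\ell_\Gamma(s(c)))^a\abs{\mu_{x,c}}$ stays uniformly bounded.
\item Lift the $k_{x,c}/\mu_{x,c}$ to a bounded set $\{\widetilde y_{x,c}\}$ in a finitely generated free $\coeffQ[\Lambda]$-module.
\item The elements $\sum_c\mu_{x,c}\,s(c)\,\widetilde y_{x,c}$ then form a bounded set of a free $\coeffQ[\Gamma]$-module, whose image under the induced map is the given set.
\end{enumerate}
With this in hand, $\id-\induction^\Gamma_\Lambda(s)\circ\induction^\Gamma_\Lambda(p)$ is the required bounded retraction.
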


For $\coeffZ[\Gamma]$\=/modules, the properties $\FP_n$ and $\FL_n$ coincide, by Schanuel's
Lemma~\cite{brown}*{VIII Lemma 4.2}. The following
\lcnamecref{prop: schanuel exact categories} is the analog for exact categories.
The statement in~\cite{mathieu+rosbotham} is formulated for injective objects.
The projective version as stated here is formally
dual (see~\citelist{\cite{buehler}*{11.2}\cite{mathieu+rosbotham}*{p.~11}}).

\begin{prop}[\cite{mathieu+rosbotham}*{Proposition 3.4}]\label{prop: schanuel exact categories}
    Let $M$ be an object in an exact category $(\sfA,\sfE)$. Let
    \[0 \to P_n \to P_{n-1} \to \dots \to P_0 \onto M \to 0\]
    and
    \[0 \to P'_n \to P'_{n-1} \to \dots \to P'_0 \onto M \to 0\]
    be $\sfE$\=/exact sequences of admissible morphisms. Assume that $P_i$ and $P'_i$ are $\sfE$\=/projective for $i \leq n - 1$.
    Then
    \[ P_0 \oplus P'_1 \oplus P_2 \oplus \cdots \cong P_0' \oplus P_1 \oplus P'_2 \oplus \cdots\text{.}\]
\end{prop}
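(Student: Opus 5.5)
The plan is induction on $n$, modelled on the classical proof of Schanuel's lemma. The abelian-category arguments are replaced by the axioms (E0)--(E2$'$) together with two standard facts about exact categories (see~\cite{buehler}). First, a direct sum of admissible short exact sequences is admissible. Second, an admissible epimorphism $X \onto P$ that admits a section $\sigma$ yields $X \cong P \oplus \ker(X \onto P)$. For $n = 0$ the hypothesis just says $P_0 \cong M \cong P'_0$, so there is nothing to prove.

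\emph{The case $n=1$.} Here $0 \to K \to P_0 \onto M \to 0$ and $0 \to K' \to P'_0 \onto M \to 0$ are admissible short exact sequences, and $P_0, P'_0$ are $\sfE$\=/projective.
\begin{enumerate}
    \item Form the pullback $X = P_0 \times_M P'_0$. By (E2$'$) it exists, and both projections $X \onto P_0$ and $X \onto P'_0$ are admissible epimorphisms.
    \item Identify the kernels. The kernel of $X \to P_0$ is $K'$, and the kernel of $X \to P'_0$ is $K$. This is the usual pullback computation and works in any additive category.
    \item Split. Since $P_0$ is $\sfE$\=/projective, $\id_{P_0}$ lifts along $X \onto P_0$, so this epimorphism has a section. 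The splitting fact then gives $X \cong P_0 \oplus K'$. Symmetrically, $X \cong P'_0 \oplus K$.
    \item Hence $P_0 \oplus K' \cong P'_0 \oplus K$, which is exactly the claim for $n=1$.
\end{enumerate}

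\emph{Inductive step.} Let $K_1 = \ker(P_0 \onto M)$ and $K'_1 = \ker(P'_0 \onto M)$. Since the sequences are $\sfE$\=/exact, $K_1$ is the image object of $P_1 \to P_0$, and the truncated complex $0 \to P_n \to \dots \to P_1 \onto K_1 \to 0$ is $\sfE$\=/exact; the same holds for the primed sequence. The case $n=1$ gives an isomorphism $N \defq K_1 \oplus P'_0 \cong K'_1 \oplus P_0$. Now build two $\sfE$\=/exact resolutions of $N$ of length $n-1$:
\[
    0 \to P_n \to \dots \to P_2 \to P_1 \oplus P'_0 \onto K_1 \oplus P'_0 \to 0,
\]
\[
    0 \to P'_n \to \dots \to P'_2 \to P'_1 \oplus P_0 \onto K'_1 \oplus P_0 \to 0.
\]
Each is the direct sum of a truncated resolution with the trivial admissible sequence $P'_0 \xrightarrow{\id} P'_0$ (respectively $P_0 \xrightarrow{\id} P_0$). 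The second is then transported to $N$ along the isomorphism above. In the new indexing, the modules in positions $\le n-2$ are $P_1 \oplus P'_0, P_2, \dots, P_{n-1}$ and their primed counterparts. These are $\sfE$\=/projective, because finite sums of $\sfE$\=/projectives are $\sfE$\=/projective. The induction hypothesis therefore gives
\[
    (P_1 \oplus P'_0) \oplus P'_2 \oplus P_3 \oplus \cdots \cong (P'_1 \oplus P_0) \oplus P_2 \oplus P'_3 \oplus \cdots,
\]
and regrouping the summands yields the claimed isomorphism.

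The main obstacle is bookkeeping inside the exact category rather than anything conceptual. One has to check that the two pullback projections have the claimed kernels. One also has to check that direct sums and splicing of admissible morphisms preserve $\sfE$\=/exactness, so that the new length-$(n-1)$ sequences really are $\sfE$\=/exact sequences of admissible morphisms. I would handle these by citing the corresponding lemmas in~\cite{buehler}, namely that direct sums of exact sequences are exact and that split sequences are exact. Alternatively, since the statement is the formal dual of~\cite{mathieu+rosbotham}*{Proposition~3.4}, one can simply pass to the opposite exact category, as indicated before the statement.
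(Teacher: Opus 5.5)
Your proof is correct, but it does not follow the paper's route. The paper gives no argument of its own. It quotes the injective version from Mathieu--Rosbotham and remarks that the projective version is formally dual: pass to the opposite exact category $(\sfA^{\mathrm{op}},\sfE^{\mathrm{op}})$, where $\sfE$-projectives become injectives and the resolutions become coresolutions.

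You prove the projective version directly, in two stages:
\begin{enumerate}
\item The case $n=1$: take the pullback $P_0\times_M P'_0$ supplied by (E2$'$), identify the kernels of the two projections as $K'$ and $K$, and split both projections using $\sfE$-projectivity.
\item A dimension shift: replace the two resolutions of $M$ by the stabilized resolutions $P_1\oplus P'_0, P_2,\dots$ and $P'_1\oplus P_0, P'_2,\dots$ of the common object $K_1\oplus P'_0\cong K'_1\oplus P_0$, apply induction, and regroup.
\end{enumerate}

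The bookkeeping checks out on three points:
\begin{enumerate}
\item The truncated sequences, and their sums with $P'_0\xrightarrow{\id}P'_0$, are $\sfE$-exact, because admissible sequences and admissible morphisms are closed under direct sums.
\item In the new sequences, projectivity is needed in positions $\le n-2$, plus $P_0,P'_0$ for the $n=1$ step. This is exactly what the hypothesis $i\le n-1$ supplies.
\item The regrouped isomorphism is precisely the claimed one.
\end{enumerate}

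The citation buys brevity, but it relies on an external reference and on the self-duality of the exact-category axioms. Your argument is self-contained. It uses only (E2$'$), the splitting of an admissible epimorphism that has a section, and closure of admissible sequences and of $\sfE$-projectives under finite sums. It is essentially the projective-side Schanuel induction that the duality remark is meant to supply.
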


\begin{thm}\label{thm: FP_n equivalent to FL_n}
    Let $M$ be a $\coeffZ[\Gamma]$\=/module. Then $M$ is of type $\FP_n(\excat{\Gamma})$ if and only if it is of type $\FL_n(\excat{\Gamma})$.
\end{thm}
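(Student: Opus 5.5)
The direction $\FL_n(\excat{\Gamma}) \Rightarrow \FP_n(\excat{\Gamma})$ is trivial, since finitely generated free modules are projective. For the converse I would use the classical Eilenberg swindle trick, done step by step from the bottom of the resolution upward. The aim is to keep every modification inside the exact structure $\excat{\Gamma}$.

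Start with an $\FP_n(\excat{\Gamma})$-resolution $P_n \to \cdots \to P_0 \onto M$. Since $P_0$ is finitely generated projective, choose a finitely generated projective $Q_0$ with $P_0 \oplus Q_0 \cong F_0$ free and finitely generated. Replace $P_0$ by $F_0$, with augmentation $(\epsilon,0)\colon P_0\oplus Q_0 \to M$. Replace $P_1$ by $P_1 \oplus Q_0$, with differential $d_1 \oplus \id_{Q_0}$.

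The new sequence is again $\excat{\Gamma}$-exact. At each stage we have only added the split exact complex $Q_0 \xrightarrow{\id} Q_0$, placed in two adjacent degrees, so the new kernels are the old kernels plus a summand. The needed bounded sections and retractions are the old ones direct-summed with identity or zero maps. Boundedness after $\coeffQ\otimes_\coeffZ-$ is preserved by finite direct sums, using \cref{lem: coproducts are preserved} and the remark that finite products are preserved as well. Admissibility of short exact sequences is therefore stable under direct sum with split sequences of $\coeffZ[\Gamma]$-modules, where the splitting is $\coeffZ[\Gamma]$-linear. The latter sequences are admissible by \cref{lem: every equivariant map is bounded}.

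Next iterate. The module $P_1\oplus Q_0$ is still finitely generated projective, so add a complement $Q_1$ in degrees $1$ and $2$, and so on up to degree $n-1$. This makes $P_0,\dots,P_{n-1}$ free. The top term becomes $P_n \oplus Q_{n-1}$, which is finitely generated projective but possibly not free. Since a partial resolution of length $n$ has no constraint above $P_n$, fix the top by adding one more complement. Choose $Q_n$ with $P_n\oplus Q_{n-1}\oplus Q_n$ free, and let $Q_n$ map to zero. This is harmless, because only exactness at degrees $\le n-1$ is required, and a sequence ending in an admissible epimorphism onto the degree-$(n-1)$ kernel stays an admissible epimorphism after precomposing with the split projection $P_n\oplus Q_{n-1}\oplus Q_n\onto P_n\oplus Q_{n-1}$. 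That precomposition is admissible by (E1$'$) and the previous paragraph.

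The main point to check carefully is the bookkeeping of admissibility under these direct sum modifications. One must verify that each factored morphism $C_{i}\onto I_i \injectarr C_{i-1}$ remains a composition of an admissible epimorphism and an admissible monomorphism, and that the sequences $I_i\injectarr C_{i-1}\onto I_{i-1}$ remain admissible. I expect this to reduce cleanly to the observation that a direct sum of two admissible short exact sequences is admissible, taking the sum of the sections and retractions. Schanuel's lemma for exact categories (\cref{prop: schanuel exact categories}) is not really needed for this direction. I would mention it only as an alternative route: compare with a resolution by free modules built from the bottom, and use the stable isomorphism of the kernel to absorb the difference.
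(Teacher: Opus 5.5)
Your argument is correct, but it takes a different route from the paper.

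\textbf{The paper's route.} The paper builds a finitely generated free partial resolution $L_\ast$ from scratch, by induction on $k$. To produce $L_{k+1}$ it applies the exact-category Schanuel lemma (\cref{prop: schanuel exact categories}) to two truncated sequences: the given resolution cut off at $K_k=\ker(P_k\to P_{k-1})$, and $L_\ast$ cut off at $K'_k=\ker(L_k\to L_{k-1})$. This gives $K_k\oplus L_{k-1}\oplus P_{k-2}\oplus\cdots\cong K'_k\oplus P_{k-1}\oplus L_{k-2}\oplus\cdots$. It then covers each summand on the left by a finitely generated free module via an admissible epimorphism (the case $n=0$), and projects onto $K'_k$.

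\textbf{Your route.} You modify the given resolution in place. You add the elementary complexes $Q_i\xrightarrow{\id}Q_i$ in degrees $i+1,i$, and cap the top with a summand $Q_n$ mapping to zero. This is the standard finite trick; it is not really an Eilenberg swindle, since no infinite sums occur.

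Your bookkeeping holds up, and in fact it reduces entirely to direct sums. Every new kernel--image sequence is the direct sum of an old admissible sequence with one of the trivially admissible sequences $0\to Q\to Q$ or $Q\to Q\to 0$. This includes the top, where the kernel becomes $K_n\oplus 0\oplus Q_n$. Finite direct sums of admissible sequences are admissible via \cref{lem: coproducts are preserved}, by taking direct sums of the bounded sections and retractions. So you do not even need (E1$'$) at the top, although using it is also fine.

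\textbf{What each route buys.} Your route avoids Schanuel's lemma. It also avoids the observation that projective $\coeffZ[\Gamma]$-modules are $\excat{\Gamma}$-projective. And it gives an explicit resolution with terms $P_i\oplus Q_{i-1}\oplus Q_i$.

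The paper's route gives a slightly stronger statement as a byproduct. Its inductive step never uses how $L_\ast$ was built. It therefore shows that every finitely generated free partial $\excat{\Gamma}$-resolution of length $k<n$ extends by one more finitely generated free term, which is the exact-category analogue of Brown's criterion for $\FP_n$. Your in-place modification, by itself, only converts the given resolution.
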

\begin{proof}
    Clearly $\FL_n(\excat{\Gamma})$ implies $\FP_n(\excat{\Gamma})$ for all $n$. For the converse direction
    let $P_\ast \onto M$ be a $\FP_n(\excat{\Gamma})$\=/resolution.
    We inductively build a suitable partial free $\excat{\Gamma}$\=/resolution.
    Because $P_0$ is finitely generated and projective, there exists a finitely generated $\coeffZ[\Gamma]$\=/module $Q$ such that
    $L_0 \coloneqq P_0 \oplus Q$ is free.
    In particular, $Q \injectarr L_0 \onto P_0$ is an admissible short exact sequence.
    The composition $L_0 \onto P_0 \onto M$ is admissible, hence $M$ is of type $\FL_0(\excat{\Gamma})$.
    Now assume that for $k < n$ we have constructed a partial free $\excat{\Gamma}$\=/resolution
    \[L_{k} \to L_{k-1} \to \cdots \to L_0 \onto M \to 0.\]
    Define $K_{k} \coloneqq \ker(P_{k} \to P_{k-1})$ and $K'_{k} \coloneqq \ker(L_{k} \to L_{k-1})$.
    By~\cref{prop: schanuel exact categories}, there is an isomorphism
    \begin{align*}
        A \coloneqq K_k \oplus L_{k-1} \oplus P_{k-2} \oplus \cdots \xrightarrow{\;\varphi\;} B \coloneqq K'_k \oplus P_{k-1} \oplus L_{k-2} \oplus \cdots.
    \end{align*}
    From the argument for $n = 0$ it follows that there exist finitely generated free modules $L'_0, \ldots, L'_k$ and
    admissible epimorphisms $L'_k \onto K_k$, $L'_{k-1} \onto L_{k-1}$, $L'_{k-2} \onto P_{k-2}, \ldots$.
    Consider the composition
    \begin{align*}
        L_{k+1} \coloneqq \bigoplus_{i = 0}^k L'_i \longtwoheadrightarrow A \overset{\varphi}{\longrightarrow} B \longtwoheadrightarrow K'_k
    \end{align*}
    where the last map is the projection onto the $K'_k$ summand. Each map is an admissible epimorphism, hence
    \[L_{k+1} \to L_{k} \to \cdots \to L_0 \onto M \to 0\]
    is a partial $\excat{\Gamma}$\=/resolution consisting of finitely generated free modules.
\end{proof}

\section{Chain complexes in the exact category of \texorpdfstring{$\coeffZ[\Gamma]$}{ZG}-modules}%
\label{sec: chain complexes in exact category}

First we study some homotopy connectivity properties of chain complexes in an exact category. We then apply these results to the exact category $(\modGamma, \excat{\Gamma})$. The main purpose of this section is to show~\cref{thm: blow-up}, which is similar
to~\cite{cheap_rebuilding}*{Proposition~3.3}. The latter can be seen as an explicit version of~\cite{brown2}*{Lemma~1.5}.

\subsection{Chain complexes and mapping cones}
Let $(\sfA, \sfE)$ be an exact category. We denote by $\chain(\sfA)$ the category of chain complexes in~$\sfA$.
Our convention is that
a chain complex is concentrated in non-negative degrees unless said otherwise. A chain complex that is concentrated in degrees greater than or equal to~$-1$ is referred to as an \emph{augmented chain complex}.

Differentials of chain complexes are generically denoted by~$\partial$. For a chain homotopy $h$ between chain maps $f$ and $g$ we write $h\colon f\simeq g$ to indicate that $\partial h_n+h_{n-1}\partial=f_n-g_n$ for every $n\in\bbZ$.

For a chain complex $C_\ast$ we write $C_\ast^{(k)}$ for its $k$\=/skeleton, that is, $C_i^{(k)}$ is $C_i$ for $i\le k$ and $0$ for $i>k$.

The \emph{suspension} of a chain complex $C_\ast$ is the chain complex $\Sigma C_\ast$ with chain modules $\Sigma C_n=C_{n-1}$ and differentials $\partial^{\Sigma C_\ast}_n=-\partial^{C_\ast}_{n-1}$.

The \emph{mapping cone} of a chain map $f\colon C \to D$ is the chain complex~$\cone(f)$ whose \Nth{n} chain module is $C_{n-1}\oplus D_n$
with the differential
\[\partial^f_n(x_{n-1},y_n)=(-\partial^C_{n-1} (x_{n-1}), \partial^D_n (y_n)+f_{n-1}(x_{n-1})).\]

\begin{defn}
    Let $n \in \bbN \cup \{\infty\}$.
    A chain complex $A\in\chain(\sfA)$ is \emph{$n$\=/acyclic with respect to~$\sfE$} or \emph{$n$\=/$\sfE$\=/acyclic} if $A_{n+1} \to A_{n} \to \cdots$ consists of admissible morphisms and is $\sfE$\=/exact.
    It is called \emph{$n$\=/connected with respect to~$\sfE$} or \emph{$n$\=/$\sfE$\=/connected} if it is homotopy equivalent to an $n$\=/$\sfE$\=/acyclic complex.
    A chain map is \emph{$n$\=/connected with respect to~$\sfE$} or \emph{$n$\=/$\sfE$\=/connected} if its mapping cone is $n$\=/$\sfE$\=/connected.
    For $n = \infty$ such a map is also called an \emph{$\sfE$\=/quasi-isomorphism}.
\end{defn}

\begin{rem}
    If $\sfA$ is a quasi-abelian category, the collection of all kernel-cokernel pairs forms an exact structure $\sfE_\mathrm{max}$~\cite{buehler}*{13.2}.
    An $\sfE_\mathrm{max}$-projective module then is a projective module in the classical sense.
    Given a different exact structure $\sfE$ on $\sfA$ it is sometimes convenient to refer to $\sfE_\mathrm{max}$ when talking about
    $n$\=/acyclic and $n$\=/connected chain complexes. In this case we write $n$\=/$\sfA$\=/acyclic and $n$\=/$\sfA$\=/connected when referring
    to the exact structure $\sfE_\mathrm{max}$. If $\sfA = \module(R)$ for some ring $R$, we also write $n$\=/$R$\=/acyclic and $n$\=/$R$\=/connected.
\end{rem}

\begin{rem}
    The categories we are interested in all have a property called idempotent completeness~\cite{buehler}*{Definition 6.1}.
    As in~\cite{buehler}*{10.7,10.9}, one shows that in this case any complex homotopy equivalent to an $n$\=/$\sfE$\=/acyclic complex already has
    to be $n$\=/$\sfE$\=/acyclic.
    In particular, for abelian categories~$\sfA$, a complex is $n$\=/$\sfA$\=/connected if and only if its homology vanishes up to degree $n$ and
    an $n$\=/$\sfA$\=/connected map $f$ is the same as a map for which $H_\ast(f)$ is an isomorphism up to degree $n$ and $H_{n+1}(f)$
    is surjective.
\end{rem}

\begin{rem}\label{rem: FPnE and connectivity}
    Let $M$ be a $\coeffZ[\Gamma]$\=/module. That $\varepsilon \colon P_\ast \onto M$ is an $\FP_n(\excat{\Gamma})$\=/resolution of $M$ is equivalent to the condition that $P_i$ is a finitely generated projective $\coeffZ[\Gamma]$\=/module for $i \leq n$ and $\varepsilon$ is $(n - 1)$\=/$\excat{\Gamma}$\=/connected with respect
    to $\excat{\Gamma}$, where we regard $M$ as a chain complex concentrated in degree~$0$.
\end{rem}

In the sequel the datum of a \emph{homotopy commutative square} of chain maps consists of a square of chain maps
\begin{equation}\label{eq: htp square}
    \begin{tikzcd}
        X\ar[r, "f"]\ar[d, "a" left] & Y\ar[d, "b"]\\
        X'\ar[r, "f'"] & Y'\arrow[from=1-1, to=2-2, phantom, "\commutes_{H}" description]
    \end{tikzcd}
\end{equation}
and a chain homotopy $H\colon f'\circ a\simeq b\circ f$. Mapping cones are functorial with respect to homotopy commutative squares in the following sense. The maps in~\eqref{eq: htp square} induce a chain map
\begin{gather*}
    (a, b;H)\colon \cone(f)\to \cone(f')\\
    (a,b;H)_n(x,y)=\bigl(a_{n-1}(x),b_n(y)-H_{n-1}(x)\bigr).
\end{gather*}

\begin{lem}\label{lem: cone of chain map}
    Let $n \in \bbN \cup \{\infty\}$. Let $f \colon X \to Y$ be a map of chain complexes in $\sfA$ such that
    $X$ is $(n-1)$\=/$\sfE$\=/connected and $Y$ is $n$\=/$\sfE$\=/connected. Then $\cone(f)$ is $n$\=/$\sfE$\=/connected.
\end{lem}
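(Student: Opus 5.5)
First reduce to the acyclic case. Choose homotopy equivalences $\alpha\colon X\to X'$ and $\beta\colon Y\to Y'$ with $X'$ $(n-1)$\=/$\sfE$\=/acyclic and $Y'$ $n$\=/$\sfE$\=/acyclic, together with homotopy inverses $\alpha'$, $\beta'$. Put $f'\defq \beta\circ f\circ\alpha'\colon X'\to Y'$. There is a chain homotopy $H\colon f'\circ\alpha\simeq \beta\circ f$, coming from $\alpha'\alpha\simeq\id$. The square with vertical maps $\alpha,\beta$ is therefore homotopy commutative, and it induces $(\alpha,\beta;H)\colon\cone(f)\to\cone(f')$.

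The standard fact (five-lemma for cones) is that a homotopy commutative square whose vertical maps are homotopy equivalences induces a homotopy equivalence of cones. This holds in any additive category, since it is purely about chain homotopies: one can build the inverse $(\alpha',\beta';H')$ explicitly and correct it by a homotopy. Alternatively, one can use that $\cone$ is a triangle in the homotopy category $\hocat(\sfA)$ and apply the five-lemma for triangulated categories, which avoids explicit formulas. Hence it suffices to show that $\cone(f')$ is $n$\=/$\sfE$\=/acyclic, and so we may assume $X$ and $Y$ are acyclic in the relevant ranges.

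Now $\cone(f)_k=X_{k-1}\oplus Y_k$, and there is a degreewise split short exact sequence of complexes
\[0\to Y\to\cone(f)\to\Sigma X\to 0.\]
Here $\Sigma X$ is $n$\=/$\sfE$\=/acyclic, because the shift moves $(n-1)$\=/acyclicity up by one. I would show that the class of $n$\=/$\sfE$\=/acyclic complexes is closed under such degreewise split extensions. This is essentially Bühler's result that acyclic complexes are closed under extensions in $\chain(\sfA)$ with degreewise split exact sequences (\cite{buehler}*{Lemma~10.3/Corollary~10.5}). Its proof uses the $3\times3$-lemma and the Noether isomorphism in exact categories, applied to the cycle objects $Z_k$ in degrees $\ge n$.

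One caveat concerns truncation. "$n$\=/acyclic" only requires exactness of $\cdots\to A_{n+1}\to A_n$, so degree $n$ must have admissible image/cokernel behaviour. I would handle this by replacing each complex by the truncation in which $A_n$ is replaced by $A_n/\im(\partial_{n+1})$, or equivalently by working with the brutal-above truncation and applying Bühler's argument there. The idempotent completeness remark lets me pass freely between acyclic and connected.

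I expect the main obstacle to be the careful check that the extension argument respects the truncation degree. Concretely, I need the admissibility of $\cone(f)_{n+1}\to\cone(f)_n$, which involves $X_n\to X_{n-1}$ in the $\Sigma X$ part. Since $X$ is only $(n-1)$\=/acyclic, $X_n\to X_{n-1}$ is admissible, which is exactly what is needed, so the degrees match. Note also that the bottom degree of acyclicity for the cone needs no exactness at degree $n$, only that the differential landing there is admissible. For $n=\infty$ the argument is the same without truncation.
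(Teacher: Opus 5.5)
Your overall route is the paper's. For $n=\infty$ the paper just cites \cite{buehler}*{10.3} and \cite{neeman}*{1.1}: the cone of a chain map between acyclic complexes is acyclic. That is the same input as your closure of acyclic complexes under degreewise split extensions, since such an extension is a mapping cone. For finite $n$ the paper says to re-run that argument while keeping track of degrees. Your explicit reduction from connected to acyclic complexes, via the triangulated five lemma applied to $(\alpha,\beta;H)$, is correct and fills in a step the paper leaves implicit. It needs no idempotent completeness, so drop that remark; the lemma is stated for an arbitrary exact category.

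The gap is in the degree bookkeeping, which is all the finite case consists of, and you have it reversed.

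With the paper's homological grading and complexes bounded below, $A$ is $n$-$\sfE$-acyclic when $A_{n+1}\to A_n\to\cdots\to A_0\to 0$ is admissible and $\sfE$-exact. So exactness is required in every degree $\le n$, and $\partial_k$ must be admissible for $k\le n+1$. The unconstrained end is the top, $A_{n+1}$, not the bottom. Consequently:
\begin{itemize}
\item your claim that the cone needs no exactness at degree $n$ is false;
\item the cycle objects to control are $Z_k$ for $k\le n$, not $k\ge n$;
\item the truncation $A_n\mapsto A_n/\im(\partial_{n+1})$ sits at the wrong end.
\end{itemize}
Even at the correct end, truncation does not work cleanly. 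Any truncation making $C\defq\cone(f)$ acyclic (replacing $C_{n+1}$ by $\im\partial^C_{n+1}$, or adjoining $\ker\partial^C_{n+1}$) presupposes that $\partial^C_{n+1}$ is admissible, which is part of the claim. So instead of reducing to the $n=\infty$ statement, re-run its proof by induction from the bottom, as follows.
\begin{itemize}
\item The kernel $Z_{k-1}C$ of $\partial^C_{k-1}$ is the pullback of the admissible epimorphism $\partial^Y_{k-1}\colon Y_{k-1}\twoheadrightarrow Z_{k-2}Y$ along $-f\colon Z_{k-2}X\to Z_{k-2}Y$. Hence it sits in a short exact sequence $Z_{k-1}Y\rightarrowtail Z_{k-1}C\twoheadrightarrow Z_{k-2}X$.
\item The differential $\partial^C_k$ maps the split sequence $Y_k\rightarrowtail C_k\twoheadrightarrow X_{k-1}$ to this sequence, with outer components the admissible epimorphisms $\partial^Y_k$ and $-\partial^X_{k-1}$. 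So $\partial^C_k\colon C_k\twoheadrightarrow Z_{k-1}C$ is an admissible epimorphism.
\item $Z_{k-1}C\rightarrowtail C_{k-1}$ is an admissible monomorphism by the previous step of the induction.
\end{itemize}
For $k\le n+1$ this uses exactly exactness of $Y$ in degrees $\le n$ and of $X$ in degrees $\le n-1$. It also uses admissibility of $\partial^Y_k$ for $k\le n+1$ and of $\partial^X_k$ for $k\le n$. These are precisely your hypotheses. Your final check on $\partial^C_{n+1}$, via admissibility of $X_n\to X_{n-1}$, is the right one: it is the top step of this induction.
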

\begin{proof}
    The case where $n = \infty$ is proved in~\citelist{\cite{buehler}*{10.3}\cite{neeman}*{1.1}}.
    For $n < \infty$, one proceeds in exactly the same way while keeping track of the degrees in which the
    complexes $X$ and $Y$ need to be $\sfE$\=/exact.
\end{proof}

Let $\acyclic_n(\sfA)$ denote the full subcategory of the homotopy category $\hocat(\sfA)$ consisting
of $n$\=/$\sfE$\=/connected chain complexes. It is well known that $\hocat(\sfA)$ has the structure of a triangulated
category~\cite{weibel}*{10.2.4}. The direct sum of $n$\=/$\sfE$\=/connected chain complexes is again $n$\=/$\sfE$\=/connected~\cite{buehler}*{2.9},
hence $\acyclic_n(\sfA)$ is a full additive subcategory of $\hocat(\sfA)$.
Clearly it is closed under suspension and the previous \lcnamecref{lem: cone of chain map}
implies that $\acyclic_n(\sfA)$ is closed under forming mapping cones.
However, $\acyclic_n(\sfA)$ is not closed under desuspension unless $n = \infty$,
preventing it from being a triangulated subcategory.
Nevertheless, it is a \emph{suspended category}~\citelist{\cite{tattar}*{2.24}\cite{keller}*{7}}, which is a slightly weaker notion sufficient for our purposes.

The following \lcnamecref{lem: connectivity pushforward, lem: connectivity of composition} are straightforward
consequences of the suspended structure on $\acyclic_n(\sfA)$.
For the convenience of the reader, we provide ad-hoc proofs instead of relying on general theorems in the literature on suspended categories.

\begin{lem}\label{lem: connectivity pushforward}
    Let $n \in \bbN \cup \{\infty\}$. Let $f \colon X \to Y$ be a map of chain complexes in $\sfA$ such that $f$ is $n$\=/$\sfE$\=/connected
    and $X$ is $n$\=/$\sfE$\=/connected. Then $Y$ is $n$\=/$\sfE$\=/connected.
\end{lem}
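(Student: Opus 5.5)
The plan is to use the rotation of the cone triangle: there is a canonical chain map $Y \to \cone(f)$ whose mapping cone is homotopy equivalent to $\Sigma X$. The proof then reduces to \cref{lem: cone of chain map} together with the facts that suspension raises connectivity and that homotopy equivalences preserve connectivity.

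First, write $j\colon Y\to\cone(f)$ for the inclusion $y\mapsto(0,y)$. I would check directly that $\cone(j)$ is homotopy equivalent to $\Sigma X\oplus \cone(\id_Y)$, or more simply to $\Sigma X$, via the standard maps. One map is $\cone(j)_n = Y_{n-1}\oplus X_{n-1}\oplus Y_n \to X_{n-1}$, the projection. Its homotopy inverse is $x\mapsto (-f(x), x, 0)$, up to signs fixed by the differential convention $\partial^f_n(x,y)=(-\partial x,\partial y+f x)$. The homotopy is the evident one shifting the $Y_n$ component into the $Y_{n-1}$ slot. These formulas are built from the given morphisms, so they make sense in any additive category $\sfA$; no exact-structure issues arise at this stage.

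Next, I would compare connectivities. By hypothesis $\cone(f)$ is $n$-$\sfE$-connected, and $X$ is $n$-$\sfE$-connected, so $\Sigma X$ is $(n+1)$-$\sfE$-connected and in particular $n$-$\sfE$-connected. If $X\simeq A$ with $A$ $n$-acyclic, then $\Sigma X\simeq\Sigma A$, and $\Sigma A$ is exact in degrees $\ge n+1$; signs do not affect admissibility.

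Then I would apply \cref{lem: cone of chain map} to the chain map $g\colon \Sigma^{-1}$-free version. Concretely, use the map $\Sigma^{-1}$ is unavailable, so instead consider the map $\cone(f)\to\Sigma X$, the projection $p$, whose cone is homotopy equivalent to $\Sigma Y$ by the same rotation computation. Applying \cref{lem: cone of chain map} to $p$ would only give information about $\cone(p)\simeq\Sigma Y$, namely that it is $(n+1)$-connected. That requires $\cone(f)$ to be $n$-connected and $\Sigma X$ to be $(n+1)$-connected, and both hold.

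Finally, I would desuspend: if $\Sigma Y$ is homotopy equivalent to an $(n+1)$-$\sfE$-acyclic complex $B$, I need $Y$ to be $n$-$\sfE$-connected. This desuspension step is the main obstacle, since $\acyclic_n$ is not closed under desuspension in general. The obstruction is only in low degrees, however: a complex concentrated in degrees $\ge 0$ whose suspension is exact from degree $n+1$ on is itself exact from degree $n$ on, with everything shifted by one. I would try first to arrange that $B$ is itself concentrated in degrees $\ge 1$, using idempotent completeness and the remark that homotopy equivalence preserves acyclicity, so that $\Sigma Y$ is directly $(n+1)$-acyclic and hence $Y$ is $n$-acyclic. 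If that fails, the fallback is to unwind the homotopies by hand, as in \cite{buehler}*{10.3}. In that case I would build a homotopy equivalence of $Y$ with an $n$-acyclic complex degree by degree, truncating at degree $0$ by replacing the bottom term with a suitable cokernel.
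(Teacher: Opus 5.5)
The reduction is correct, but the proof has a genuine gap at the final step.

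\textbf{What works.} Applying \cref{lem: cone of chain map} to the projection $p\colon\cone(f)\to\Sigma X$ does show that $\cone(p)\simeq\Sigma Y$ is $(n+1)$-$\sfE$-connected.

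\textbf{The gap.} The lemma then depends on the implication ``$\Sigma Y$ is $(n+1)$-$\sfE$-connected, hence $Y$ is $n$-$\sfE$-connected'', and you only sketch it.
\begin{itemize}
\item Your first option works when $\sfA$ is idempotent complete: by the paper's remark, $\Sigma Y$ is then itself $(n+1)$-$\sfE$-acyclic, hence $Y$ is $n$-$\sfE$-acyclic. But the lemma is stated for an arbitrary exact category, so this adds a hypothesis.
\item Your fallback is not right as stated. $\sfA$ need not have cokernels, and the object actually needed is a kernel. Moreover, a truncation gives a homotopy equivalence only if the bottom differential splits, which is exactly the point to be proved.
\item A side remark: $n$-$\sfE$-acyclic means that $A_{n+1}\to A_n\to\cdots$, running down to degree $0$, is $\sfE$-exact, i.e.\ exactness in degrees $\le n$. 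Your phrase ``exact in degrees $\ge n+1$'' reverses the direction.
\end{itemize}

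\textbf{How the paper avoids this.} It rotates the triangle the other way and applies \cref{lem: cone of chain map} to $\Sigma^{-1}p\colon\Sigma^{-1}\cone(f)\to X$, whose cone is homotopy equivalent to $Y$. Desuspending the \emph{source} costs nothing, because the lemma only needs the source to be $(n-1)$-$\sfE$-connected. Also, $\Sigma^{-1}$ is available in $\hocat(\sfA)$, and the extra degree $-1$ term is harmless. Indeed, for an $n$-acyclic $B\simeq\cone(f)$ the differential $B_1\to B_0$ is an admissible epimorphism, so $\Sigma^{-1}B$ is $(n-1)$-acyclic.

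\textbf{How to repair your route without idempotent completeness.} Take $\alpha\colon\Sigma Y\to B$, $\beta\colon B\to\Sigma Y$ and a homotopy $h$ between $\alpha\beta$ and $\id_B$, with $B$ $(n+1)$-$\sfE$-acyclic. In degree $0$ one has $\beta_0=0$ (as $(\Sigma Y)_0=0$) and $h_{-1}=0$, so $\partial_1h_0=\pm\id_{B_0}$. Thus the admissible epimorphism $B_1\to B_0$ splits, and $B\cong B'\oplus(B_0\xrightarrow{\id}B_0)$ with $B'_1=\ker(B_1\to B_0)$. The complex $B'$ is concentrated in degrees $\ge 1$, is still $(n+1)$-$\sfE$-acyclic, and is homotopy equivalent to $\Sigma Y$. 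Hence $\Sigma^{-1}B'$ is an $n$-$\sfE$-acyclic complex homotopy equivalent to $Y$.
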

\begin{proof}
    Let $p \colon \cone(f) \to \Sigma X$ be the projection $p_n(x_{n-1},y_n) = x_{n-1}$ and consider the triangle
    \[
        \Sigma^{-1}\cone(f) \xrightarrow{\Sigma^{-1}p} X \xrightarrow{f} Y \to \cone(f)
    \]
    obtained by rotating the triangle associated to the mapping cone sequence of $f$.
    Then $Y$ is homotopy equivalent to $\cone(\Sigma^{-1}p)$.
    Because $f$ is $n$\=/$\sfE$\=/connected, $\cone(f)$ is $n$\=/$\sfE$\=/connected, hence $\Sigma^{-1}\cone(f)$ is $(n-1)$\=/$\sfE$\=/connected.
    \Cref{lem: cone of chain map} yields that $\cone(\Sigma^{-1}p)$
    is $n$\=/$\sfE$\=/connected, hence $Y$ is also $n$\=/$\sfE$\=/connected.
\end{proof}

\begin{lem}\label{lem: connectivity of composition}
    Let $n \in \bbN \cup \{\infty\}$. Let $f \colon X \to Y$ and $g \colon Y \to Z$ be maps of chain complexes in $\sfA$.
    If $f$ and $g$ are $n$\=/$\sfE$\=/connected, then so is $g \circ f$.
\end{lem}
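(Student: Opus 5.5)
The plan is to use the octahedral axiom, in the explicit form of a homotopy equivalence of cones. For composable chain maps $f\colon X\to Y$ and $g\colon Y\to Z$ there is a distinguished triangle
\[
    \cone(f)\longrightarrow \cone(g\circ f)\longrightarrow \cone(g)\longrightarrow \Sigma\cone(f)
\]
in $\hocat(\sfA)$. Concretely, I would write down the chain map
\[
    \alpha=(\id_X, g;0)\colon \cone(f)\to\cone(g\circ f),
\]
which is the map induced by the strictly commutative square with $a=\id_X$, $b=g$ and $H=0$. Then I would show that $\cone(\alpha)$ is homotopy equivalent to $\cone(g)$.

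For this homotopy equivalence I would use explicit formulas. In degree $k$,
\[
    \cone(\alpha)_k = X_{k-2}\oplus Y_{k-1}\oplus X_{k-1}\oplus Z_k .
\]
The candidate maps are the following.
\begin{itemize}
    \item $\phi\colon \cone(\alpha)\to\cone(g)$, given by $(x',y,x,z)\mapsto(y+f(x),z)$, up to the sign conventions.
    \item $\psi\colon\cone(g)\to\cone(\alpha)$, given by $(y,z)\mapsto(0,y,0,z)$.
    \item A homotopy $\psi\phi\simeq\id$, which moves the $X_{k-1}$-component into the $X_{k-2}$-slot of the next degree, essentially $h(x',y,x,z)=(-x,0,0,0)$ with signs to be fixed.
\end{itemize}
This is the standard verification, for example in Weibel's proof of the octahedral axiom for $\hocat(\sfA)$. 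All maps involved are built from identities, $f$, $g$ and differentials, so they are morphisms in $\chain(\sfA)$. Nothing special about the exact structure enters at this step. Checking the signs is the main bookkeeping obstacle; everything else is formal.

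With this in hand I would conclude as follows. By hypothesis $\cone(f)$ and $\cone(g)$ are $n$\=/$\sfE$\=/connected. Since $n$\=/$\sfE$\=/connectedness is invariant under homotopy equivalence, $\cone(\alpha)$ is $n$\=/$\sfE$\=/connected. Hence $\alpha$ is an $n$\=/$\sfE$\=/connected map whose source $\cone(f)$ is $n$\=/$\sfE$\=/connected. Then \cref{lem: connectivity pushforward}, applied to $\alpha$, gives that $\cone(g\circ f)$ is $n$\=/$\sfE$\=/connected. This is exactly the statement that $g\circ f$ is $n$\=/$\sfE$\=/connected.

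There is one alternative route in case \cref{lem: connectivity pushforward} is awkward to apply. Using the rotated triangle one sees that $\cone(g\circ f)$ is homotopy equivalent to $\cone(\Sigma^{-1}\cone(g)\to\cone(f))$. Here the source $\Sigma^{-1}\cone(g)$ is $(n-1)$\=/$\sfE$\=/connected and the target $\cone(f)$ is $n$\=/$\sfE$\=/connected, so \cref{lem: cone of chain map} applies directly. The case $n=\infty$ is covered by the same argument.
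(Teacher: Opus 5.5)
Your proposal is correct and is essentially the paper's proof. The paper also takes the octahedral triangle $\cone(f)\to\cone(g\circ f)\to\cone(g)\to\Sigma\cone(f)$, observes that the first map is $n$-$\sfE$-connected because its cone is homotopy equivalent to $\cone(g)$, and concludes with \cref{lem: connectivity pushforward}. Your explicit formulas only unpack the octahedral step, and they work with the paper's sign conventions: $\phi(x',y,x,z)=(y+f(x),z)$ is a chain map, and $h(x',y,x,z)=(x,0,0,0)$ satisfies $\partial h+h\partial=\id-\psi\circ\phi$.
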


The proof can be found in~\cite{keller}*{10.1}. For convenience, we include a direct argument.
\begin{proof}
    By the octahedral axiom there is a triangle
    \[
        \cone(f) \xrightarrow{p} \cone(g \circ f) \to \cone(g) \to \Sigma \cone(f)
    \]
    in $\hocat(\sfA)$.
    Because $g$ is $n$\=/$\sfE$\=/connected, so is $p$, as $\cone(p)$ is homotopy
    equivalent to $\cone(g)$.
    Now apply \cref{lem: connectivity pushforward} to conclude $n$\=/$\sfE$\=/connecivity of $g \circ f$.
\end{proof}

\begin{lem}\label{lem: cones connectivity induced map}
    Let $n \in \bbN \cup \{\infty\}$. We consider
    \begin{equation*}
        \begin{tikzcd}
            X\ar[r, "f"]\ar[d, "a" left] & Y\ar[d, "b"]\\
            X'\ar[r, "f'"] & Y'\arrow[from=1-1, to=2-2, phantom, "\commutes_{H}" description]
        \end{tikzcd}
    \end{equation*}
    a homotopy commutative square in $\chain(\sfA)$ such that $a$ is $(n-1)$\=/$\sfE$\=/connected and $b$ is $n$\=/$\sfE$\=/connected.
    Then the induced map $(a, b; H)\colon\cone(f)\to \cone(f')$ is $n$\=/$\sfE$\=/connected.
\end{lem}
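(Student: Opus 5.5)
The plan is to reduce to the lemmas just proved on compositions and cones, working in the homotopy category $\hocat(\sfA)$ with its (suspended) triangulated structure. The key observation is that the induced map $(a,b;H)$ sits in a morphism of mapping-cone triangles
\[
X \xrightarrow{f} Y \to \cone(f) \to \Sigma X, \qquad X' \xrightarrow{f'} Y' \to \cone(f') \to \Sigma X',
\]
with vertical maps $a$, $b$, $(a,b;H)$ and $\Sigma a$. By the $3\times 3$ lemma (or the octahedral axiom applied twice) in $\hocat(\sfA)$, the cone of $(a,b;H)$ fits into a triangle
\[
\cone(a) \to \cone(b) \to \cone\bigl((a,b;H)\bigr) \to \Sigma\cone(a).
\]
This yields a homotopy equivalence $\cone((a,b;H)) \simeq \cone(\phi)$ for a suitable chain map $\phi\colon \Sigma^{-1}\cdots$. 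Concretely, I would write down $\phi\colon\cone(a)\to\cone(b)$ explicitly. In degree $k$ it sends $(x_{k-1},x'_k)\mapsto (f_{k-1}(x_{k-1}), f'_k(x'_k) + H_{k-1}(x_{k-1}))$ up to signs. This is the map induced on cones by the homotopy commutative square with horizontal maps $a,b$ and vertical maps $f,f'$, with the homotopy $H$ reversed.

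Next I would check directly, by reordering summands, that the chain module of $\cone((a,b;H))$ in degree $k$ is $X_{k-2}\oplus Y_{k-1}\oplus X'_{k-1}\oplus Y'_k$. This coincides with that of $\cone(\phi)$, and the differentials agree after an explicit sign-change isomorphism (multiplying suitable summands by $-1$). So there is in fact an isomorphism of chain complexes $\cone((a,b;H))\cong\cone(\phi)$, and no abstract octahedral argument is needed. This bookkeeping of signs and of the $H$-terms is the step I expect to be the main obstacle, though it is routine.

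Finally, $\cone(a)$ is $(n-1)$\=/$\sfE$\=/connected and $\cone(b)$ is $n$\=/$\sfE$\=/connected by hypothesis. Applying \cref{lem: cone of chain map} to $\phi\colon\cone(a)\to\cone(b)$ shows that $\cone(\phi)$ is $n$\=/$\sfE$\=/connected. Since being $n$\=/$\sfE$\=/connected is invariant under isomorphism (indeed under homotopy equivalence), $\cone((a,b;H))$ is $n$\=/$\sfE$\=/connected, i.e.\ $(a,b;H)$ is $n$\=/$\sfE$\=/connected. The case $n=\infty$ is covered by the same argument.
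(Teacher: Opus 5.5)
Your proposal is correct and follows the paper's proof. The paper likewise identifies $\cone\bigl((a,b;H)\bigr)$, after swapping the summands $Y_{k-1}$ and $X'_{k-1}$, with the cone of a map $\cone(\pm a)\to\cone(b)$ induced by the transposed square, and then applies \cref{lem: cone of chain map}; the only cosmetic difference is that the paper uses $\psi=(-f,-f';H)\circ(\id,-\id)\colon\cone(-a)\to\cone(b)$, so that the differentials match on the nose, whereas your $\phi=(f,f';-H)$ (which, with all plus signs as you wrote it, is indeed a chain map) also needs a sign change on the $X_{k-2}$ summand, exactly as you anticipated.
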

\begin{proof}
    For any chain map $\varphi \colon C \to D$ the map $(\id, -\id)$ is an isomorphism from $\cone(-\varphi)$ to $\cone(\varphi)$.
    Therefore, by assumption $\cone(-a)$ is $(n-1)$\=/$\sfE$\=/connected and $\cone(b)$ is $n$\=/$\sfE$\=/connected.
    Now consider the composition
    \[\psi \colon \cone(-a) \xrightarrow{(\id,-\id)} \cone(a) \xrightarrow{(-f,-f';H)} \cone(b)\]
    where~$H$ is a homotopy from $b \circ (-f)$ to $(-f') \circ a$. By~\cref{lem: cone of chain map},
    $\cone(\psi)$ is $n$\=/$\sfE$\=/connected.

    The components of $\cone(a,b;H)$ and $\cone(\psi)$ are given by
    \begin{align*}
        \cone(a,b;H)_i &= X_{i-2} \oplus Y_{i-1} \oplus X'_{i-1} \oplus Y'_{i} \\
        \cone(\psi)_i  &= X_{i-2} \oplus X'_{i-1} \oplus Y_{i-1} \oplus Y'_{i}
    \end{align*}
    and one checks that, up to the isomorphism interchanging $Y_{i-1}$ and $X'_{i-1}$, the differentials
    of $\cone(\psi)$ agree with the differentials of $\cone(a,b;H)$. It follows that $\cone(a,b;H)$,
    and hence the map $(a,b;H)$, is $n$\=/$\sfE$\=/connected.
\end{proof}

\begin{lem}\label{lem:quasi-isomorphism-induce-iso-on-homsets}
    Let $f\colon X\to Y$ be an $\sfE$-quasi-isomorphism and let $P$ be a complex of $\sfE$-projectives. Then postcomposition with
    $f$ induces an isomorphism
    \[
        f_\ast\colon
        \hom_{\hocat(\sfA)}(P,X)
        \xrightarrow{\;\cong\;}
        \hom_{\hocat(\sfA)}(P,Y).
    \]
\end{lem}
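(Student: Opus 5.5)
Since $\hocat(\sfA)$ is a triangulated category and $\hom_{\hocat(\sfA)}(P,\placeholder)$ is a homological functor, the plan is to use the mapping cone triangle $X \xrightarrow{f} Y \to \cone(f) \to \Sigma X$. Applying $\hom_{\hocat(\sfA)}(P,\placeholder)$ gives a long exact sequence
\[
\hom(P,\Sigma^{-1}\cone(f)) \to \hom(P,X) \xrightarrow{f_\ast} \hom(P,Y) \to \hom(P,\cone(f)).
\]
So $f_\ast$ is bijective as soon as $\hom_{\hocat(\sfA)}(P,C)=0$ for $C=\cone(f)$ and its desuspension. By assumption $\cone(f)$ is $\infty$\=/$\sfE$\=/connected. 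That is, it is homotopy equivalent to an $\sfE$\=/acyclic complex, and that class is closed under (de)suspension. Hence it suffices to prove the following: every chain map from a complex $P$ of $\sfE$\=/projectives into an $\sfE$\=/acyclic complex $A$ is null-homotopic.

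For this key step I would construct the null-homotopy $h_n\colon P_n \to A_{n+1}$ inductively in $n$. Write $A_{n+1} \onto Z_n \injectarr A_n$ for the admissible factorization of the differential, where $Z_n=\ker(A_n\to A_{n-1})$; this factorization exists by $\sfE$\=/exactness. The base of the induction is handled by the convention that complexes vanish in negative degrees. If $P$ is only bounded below in a shifted sense, I would start at the lowest nonzero degree, where $g_{n_0}=\phi_{n_0}$ is the only term.

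Suppose $h_{k}$ has been constructed for $k<n$ with $\partial h_k + h_{k-1}\partial = \phi_k$. Set $g_n = \phi_n - h_{n-1}\partial \colon P_n \to A_n$. A standard computation using the inductive identity shows that $\partial g_n = \phi_{n-1}\partial - \partial h_{n-1}\partial = h_{n-2}\partial\partial = 0$. Hence $g_n$ factors through the kernel $Z_n \injectarr A_n$. Since $P_n$ is $\sfE$\=/projective and $A_{n+1}\onto Z_n$ is an admissible epimorphism, $g_n$ lifts to $h_n\colon P_n\to A_{n+1}$ with $\partial h_n = g_n$, which completes the induction.

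The main obstacle I expect is bookkeeping rather than a conceptual difficulty. First, one must check that homotopy equivalence preserves the vanishing of $\hom_{\hocat}(P,\placeholder)$; this is automatic in $\hocat(\sfA)$. Second, one must ensure that the exact-category notion of ``$\sfE$\=/exact'' really provides $Z_n$ as a kernel together with an admissible epimorphism onto it. Both points follow directly from the definitions of admissible morphisms and $\sfE$\=/exactness given above.

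If $P$ is unbounded below, the induction has no starting point, so I would rely on the paper's convention that complexes are concentrated in non-negative degrees, or degree $\ge -1$ for augmented complexes. Under that convention the induction starts at the bottom degree.
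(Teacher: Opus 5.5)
Your proof is correct and follows the paper's argument. Both use the mapping-cone triangle and the exact $\hom_{\hocat(\sfA)}(P,\placeholder)$-sequence, and both kill the two outer terms because $\cone(f)$ and its desuspension are homotopy equivalent to $\sfE$-acyclic complexes. The only difference is that the paper cites B\"uhler's Corollary~12.6 for the vanishing of maps from a bounded-below complex of $\sfE$-projectives into an $\sfE$-acyclic complex, whereas you reprove it by the standard inductive lifting through $A_{n+1}\onto Z_n$, and you correctly note that $P$ must be bounded below for this.
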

\begin{proof}
    Consider the distinguished triangle $X \overset{f}{\to} Y \to C_f \to \Sigma X$, where $C_f$ denotes the mapping cone $\cone(f)$.
    By~\cite{weibel}*{10.2.8}, applying $\hom_{\hocat(\sfA)}(P,-)$ gives an exact sequence
    \[
        \hom_{\hocat(\sfA)}(P,\Sigma^{-1}C_f)
        \to
        \hom_{\hocat(\sfA)}(P,X)
        \xrightarrow{f_\ast}
        \hom_{\hocat(\sfA)}(P,Y)
        \to
        \hom_{\hocat(\sfA)}(P,C_f).
    \]
    Because $f$ is an $\sfE$-quasi-isomorphism, $C_f$ is homotopy equivalent to an acyclic complex.
    Then \cite{buehler}*{Corollary~12.6} implies that the outer two groups vanish.
    Therefore, $f_\ast$ is an isomorphism.
\end{proof}

\begin{lem}\label{lem:lifting-map-up-to-homotopy}
    Let $f \colon X \to X'$ be a chain map.
    Let $p \colon P \to X$ and $p' \colon P' \to X'$ be $\sfE$\=/quasi-isomorphisms such that $P$ is a complex of $\sfE$\=/projectives.
    Then there exist a chain map $\widehat{f} \colon P \to P'$
    and an $\sfE$\=/quasi\=/isomorphism $\cone(\widehat{f}\,) \to \cone(f)$
    fitting into a diagram
    \[\begin{tikzcd}
        P & {P'} & \cone(\widehat{f}\,)\\
        X & {X'} & \cone(f)
        \arrow["{\widehat{f}}", from=1-1, to=1-2]
        \arrow["p"', from=1-1, to=2-1]
        \arrow["{\commutes_H}"{description}, draw=none, from=1-1, to=2-2]
        \arrow["{\commutes}"{description}, draw=none, from=1-2, to=2-3]
        \arrow["{p'}", from=1-2, to=2-2]
        \arrow["f", from=2-1, to=2-2]
        \arrow[from=1-2, to=1-3]
        \arrow[from=2-2, to=2-3]
        \arrow[from=1-3, to=2-3, shift right=5pt]
    \end{tikzcd}\]
    such that the right-hand square is commutative and the left-hand square commutes up to a chain homotopy $H$.
\end{lem}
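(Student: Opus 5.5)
The plan is to build $\widehat{f}$ from \cref{lem:quasi-isomorphism-induce-iso-on-homsets}, choose a homotopy $H$, and then identify the map of cones as one of the induced maps $(a,b;H)$ covered by \cref{lem: cones connectivity induced map}.

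\emph{Step 1: construct $\widehat{f}$ and $H$.} Since $p'$ is an $\sfE$\=/quasi-isomorphism and $P$ is a complex of $\sfE$\=/projectives, postcomposition with $p'$ gives a bijection
\[
    p'_\ast\colon\hom_{\hocat(\sfA)}(P,P')\to\hom_{\hocat(\sfA)}(P,X')
\]
by \cref{lem:quasi-isomorphism-induce-iso-on-homsets}. Surjectivity applied to the class of $f\circ p$ yields a chain map $\widehat{f}\colon P\to P'$ with $p'\circ\widehat{f}\simeq f\circ p$. Fix a chain homotopy $H\colon p'\circ\widehat{f}\simeq f\circ p$. This is precisely the datum of a homotopy commutative square in the sense of~\eqref{eq: htp square}, with $a=p$, $b=p'$ and top and bottom maps $\widehat{f}$ and $f$.

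\emph{Step 2: define the map of cones.} Take the induced map
\[
    (p,p';H)\colon\cone(\widehat{f}\,)\to\cone(f),\qquad (x,y)\mapsto\bigl(p_{n-1}(x),\,p'_n(y)-H_{n-1}(x)\bigr).
\]
The horizontal maps $P'\to\cone(\widehat{f}\,)$ and $X'\to\cone(f)$ are the inclusions $y\mapsto(0,y)$. The right-hand square then commutes strictly, since $(0,y)\mapsto(0,p'(y))$.

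\emph{Step 3: connectivity of the cone map.} Both $p$ and $p'$ are $\infty$\=/$\sfE$\=/connected. Apply \cref{lem: cones connectivity induced map} with $n=\infty$, using $\infty-1=\infty$. It shows that $(p,p';H)$ is $\infty$\=/$\sfE$\=/connected, that is, an $\sfE$\=/quasi-isomorphism.

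The only delicate point is bookkeeping. First, the sign and direction convention of $H$ must match the one used for $(a,b;H)$: that definition requires $H\colon f'\circ a\simeq b\circ f$, which here reads $H\colon p'\circ\widehat{f}\simeq f\circ p$, as chosen in Step~1. Second, one must check that the case $n=\infty$ of \cref{lem: cones connectivity induced map} applies. It does, because that lemma is stated for $n\in\bbN\cup\{\infty\}$.
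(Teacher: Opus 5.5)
Your argument is correct and is essentially the paper's own proof: obtain $\widehat{f}$ from \cref{lem:quasi-isomorphism-induce-iso-on-homsets}, take the induced map $(p,p';H)$ on cones, check that the right-hand square commutes strictly, and apply \cref{lem: cones connectivity induced map} with $n=\infty$. There is one sign slip in the bookkeeping you flagged: matching $f'\circ a\simeq b\circ f$ against your square gives $H\colon f\circ p\simeq p'\circ\widehat{f}$, that is, $\partial H+H\partial=f\circ p-p'\circ\widehat{f}$, which is the reverse of what you wrote; with your orientation the formula $(x,y)\mapsto\bigl(p(x),p'(y)-H(x)\bigr)$ is not a chain map, so replace $H$ by $-H$ and everything goes through.
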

\begin{proof}
    If $\sfA = \module(R)$ for a ring $R$ this is proved in~\cite{brown2}*{Lemma~1.3}.
    By \cref{lem:quasi-isomorphism-induce-iso-on-homsets} $p'$ induces an isomorphism
    \[
        p'_\ast \colon \hom_{K(\sfA)}(P,P') \xrightarrow{\;\cong\;} \hom_{K(\sfA)}(P,X').
    \]
    Let $\widehat{f}$ be the image of $f \circ p$ under the inverse of $p'_\ast$.
    Then $p' \circ \widehat{f} \simeq f \circ p$ for a chain homotopy $H$.
    By \cref{lem: cones connectivity induced map}, the induced map $(p,p';H) \colon \cone(\widehat{f}\,) \to \cone(f)$ is an $\sfE$\=/quasi\=/isomorphism.
    It is also easily checked that it makes the right-hand square in the diagram above commute.
\end{proof}

\subsection{A combination result}
In the following let $\sfA$ be an additive category equipped with two exact
structures $\sfE_0$, $\sfE_1$ such that $\sfE_1 \subseteq \sfE_0$.
We are mainly interested in the case where $\sfA = \modGamma$, $\sfE_0 = \sfE_{\max}$,
and $\sfE_1 = \excat{\Gamma}$.
The extra structure $\sfE_1$ is needed because $\sfE_1$ might not have enough projectives for $\sfE_1$-resolutions to exist, for example this is likely the case for $\excat{\Gamma}$.

\begin{thm}\label{thm: blow-up}
    Let $n\in\bbN \cup \{\infty\}$ and $X \in \chain(\sfA)$.
    For each~$j \geq 0$, let $\varepsilon^j\colon P^j\onto X_j$ be an $\sfE_0$-projective resolution of $X_j$ such that $\varepsilon^j$ is $(n-j)$\=/$\sfE_1$\=/connected\footnote{The condition is void for~$j>n$.}.

    Then there exists a chain complex~$\widehat{X}$ together with a filtration of chain complexes $(\widehat{X}^{[k]})_{k \geq 0}$ and a
    chain map $q\colon \widehat{X}\to X$ such that for every~$k \geq 0$ the following hold:
    \begin{enumerate}
        \item $\widehat{X}^{[k]}$ is the mapping cone of a chain map~$\Sigma^{k-1}P^k\to \widehat{X}^{[k-1]}$;
        \item $q^k$ is $n$\=/$\sfE_1$\=/connected;
        \item $q^k$ is an $\sfE_0$\=/quasi\=/isomorphism,
    \end{enumerate}
    where $q^k\coloneqq q|_{\widehat{X}^{[k]}}\colon \widehat{X}^{[k]}\to X^{(k)}$ is the restriction to $\widehat{X}^{[k]}$.
    In particular, the chain modules of~$\widehat{X}$ are of the form
    \[
        \widehat{X}_n=\;\bigoplus_{\mathclap{j+i=n}}\; P^j_i\text{.}
    \]
\end{thm}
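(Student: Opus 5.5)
We build $\widehat{X}^{[k]}$ and $q^k$ by induction on $k$, following the pattern of Brown's blow-up construction. At each stage we verify connectivity through the mapping-cone lemmas: \cref{lem: cones connectivity induced map}, \cref{lem: connectivity pushforward} and \cref{lem: connectivity of composition}.

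For the base case, put $\widehat{X}^{[0]} \defq P^0$ and $q^0 \defq \varepsilon^0 \colon P^0 \to X_0 = X^{(0)}$. This map is an $\sfE_0$-quasi-isomorphism because $P^0$ is an $\sfE_0$-resolution. It is $n$-$\sfE_1$-connected by hypothesis, since $\varepsilon^0$ is $(n-0)$-$\sfE_1$-connected.

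For the inductive step $k \geq 1$, observe that $X^{(k)}$ is the mapping cone of the chain map $\Sigma^{k-1}X_k \to X^{(k-1)}$ given by $\partial_k \colon X_k \to X_{k-1}$ in degree $k-1$, up to the sign conventions for suspension and cone. We apply \cref{lem:lifting-map-up-to-homotopy} with the following data:
\begin{itemize}
\item $f = \Sigma^{k-1}\partial_k$ as the map to be lifted;
\item $p = \Sigma^{k-1}\varepsilon^k \colon \Sigma^{k-1}P^k \to \Sigma^{k-1}X_k$, a complex of $\sfE_0$-projectives;
\item $p' = q^{k-1}$, an $\sfE_0$-quasi-isomorphism by induction;
\item the exact structure $\sfE_0$.
\end{itemize}
This yields a chain map $\widehat{f} \colon \Sigma^{k-1}P^k \to \widehat{X}^{[k-1]}$ and a homotopy $H$ with $q^{k-1}\circ\widehat{f} \simeq f\circ \Sigma^{k-1}\varepsilon^k$. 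Set $\widehat{X}^{[k]} \defq \cone(\widehat{f}\,)$ and $q^k \defq (\Sigma^{k-1}\varepsilon^k, q^{k-1}; H)$. The right-hand square in \cref{lem:lifting-map-up-to-homotopy} shows that $q^k$ restricts to $q^{k-1}$ on $\widehat{X}^{[k-1]}$, so the maps $q^k$ assemble compatibly.

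Now $\Sigma^{k-1}\varepsilon^k$ is $(n-k+k-1)$-connected, i.e. $(n-1)$-$\sfE_1$-connected: suspending $k-1$ times raises connectivity by $k-1$, because $\cone(\Sigma\varphi)=\Sigma\cone(\varphi)$ and $\Sigma$ of an $m$-acyclic complex is $(m+1)$-acyclic. By induction $q^{k-1}$ is $n$-$\sfE_1$-connected and an $\sfE_0$-quasi-isomorphism. \Cref{lem: cones connectivity induced map}, applied once with $\sfE_1$ and once with $\sfE_0$ at $n=\infty$, then shows that $q^k$ is $n$-$\sfE_1$-connected and an $\sfE_0$-quasi-isomorphism. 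For $k > n$ the hypothesis on $\varepsilon^k$ is void, but this causes no problem. In that range $\Sigma^{k-1}\varepsilon^k$ has cone concentrated in degrees $\ge k-1 \ge n$. Since $\varepsilon^k$ is an $\sfE_0$-resolution, it is $\sfE_0$-exact, and then it is automatically $(n-1)$-$\sfE_1$-acyclic, since the condition only constrains degrees $\ge n$. I expect this degree check to be the main obstacle; equivalently, one can ensure the $\sfE_1$-connectivity required at degree $n$ comes only from cone terms in low degrees. I would handle it by writing out the acyclicity range explicitly.

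Finally, let $\widehat{X} \defq \colim_k \widehat{X}^{[k]} = \bigcup_k \widehat{X}^{[k]}$ with $q$ induced by the $q^k$. In each fixed degree $m$ the filtration stabilizes at $k = m+1$, because $\Sigma^{k-1}P^k$ vanishes below degree $k-1$. Hence no limit argument is needed. Unwinding the iterated cones gives $\widehat{X}_m = \bigoplus_{j+i=m} P^j_i$, where $P^j_i$ appears in degree $i+(k-1)+1 = i+j$ for $j=k$.
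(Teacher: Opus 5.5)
Your proposal is correct and follows the paper's proof essentially step for step: set $\widehat{X}^{[0]}=P^0$, lift $\partial^X_k$ along $q^{k-1}$ via \cref{lem:lifting-map-up-to-homotopy} with respect to $\sfE_0$, take $\widehat{X}^{[k]}$ to be the cone, obtain both the $n$-$\sfE_1$-connectivity and the $\sfE_0$-quasi-isomorphism property of $q^k$ from \cref{lem: cones connectivity induced map}, and pass to the colimit, which stabilizes degreewise. The case $k>n$ that you flag is not an obstacle; it is exactly the paper's footnote that the hypothesis on $\varepsilon^k$ is void there. Your justification is misworded, though: $(n-1)$-acyclicity constrains degrees $\le n$, not $\ge n$, and that is precisely why a cone concentrated in degrees $\ge n$ satisfies it trivially.
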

\begin{proof}
    We proceed by induction on~$k \geq 0$.
    Set $\widehat{X}^{[0]}\coloneqq P^0$ and let~$q^0\colon P^0\to X^{(0)}$ be given by the augmentation $P_0^0\onto X_0$ in degree~$0$. By assumption, $q^0$ is an $n$\=/$\sfE_1$\=/connected $\sfE_0$\=/quasi\=/isomorphism and a degreewise admissible epimorphism.

    For the inductive step, assume that the chain complex $\widehat{X}^{[k-1]}$ and an $n$\=/$\sfE_1$\=/connected $\sfE_0$\=/quasi\=/isomorphism
    $q^{k-1}\colon \widehat{X}^{[k-1]}\to X^{(k-1)}$ have been constructed.

    Consider the following diagram
    \[\begin{tikzcd}
        \Sigma^{k-1}P^k
        \arrow[d, swap, two heads, "\Sigma^{k-1}\varepsilon^k"]
        & \widehat{X}^{[k-1]}
        \arrow[d, two heads, "q^{k-1}"] \\
        \Sigma^{k-1}X_k
        \arrow[r, "\partial^X_k"]
        & X^{(k-1)}
    \end{tikzcd}\]
    where $\varepsilon^k\colon P^k\to X_k$ is given by the augmentation $P^k_0\onto X_k$ in degree~$0$.
    Since $\varepsilon^k$ is $(n-k)$\=/$\sfE_1$\=/connected, $\Sigma^{k-1}\varepsilon^k$ is $(n-1)$\=/$\sfE_1$\=/connected.
    By assumption, the chain complex $\Sigma^{k-1}P^k$ consists of projective modules and the chain map~$q^{k-1}$ is a $\sfE_0$\=/quasi\=/isomorphism.
    Hence by \cref{lem:lifting-map-up-to-homotopy}, there exists a chain map~$\widehat{\partial^X_k}\colon \Sigma^{k-1}P^k\to \widehat{X}^{[k-1]}$ making the square homotopy commutative through a chain homotopy $H$.

    We set $\widehat{X}^{[k]}\coloneqq \cone(\widehat{\partial^X_k})$ and let $q^k\colon \widehat{X}^{[k]}\to X^{(k)} = \cone(\partial_k^X)$ be the induced chain map on mapping cones.
    Thus, by \cref{lem:lifting-map-up-to-homotopy}, we have a homotopy commutative diagram of mapping cone sequences
    \[\begin{tikzcd}
        \Sigma^{k-1}P^k
        \arrow[r, "\widehat{\partial^X_k}"]
        \arrow[d, swap, two heads, "\Sigma^{k-1}\varepsilon^k"]
        \arrow[dr, phantom, "\commutes_{\mathrlap{H}}"]
        & \widehat{X}^{[k-1]}
        \arrow[d, "q^{k-1}"]
        \arrow[r]
        & \widehat{X}^{[k]}
        \arrow[d, two heads, "q^k"] \\
        \Sigma^{k-1}X_k
        \arrow[r, "\partial^X_k"']
        & X^{(k-1)}
        \arrow[r]
        & X^{(k)}.
    \end{tikzcd}\]
    Since the chain maps~$\Sigma^{k-1}\varepsilon^k$ and~$q^{k-1}$ are $(n-1)$\=/$\sfE_1$\=/connected and $n$\=/$\sfE_1$\=/connected, so is~$q^k$
    by~\cref{lem: cones connectivity induced map}.
    Moreover, \cref{lem:lifting-map-up-to-homotopy} also shows that $q^k$ is an $\sfE_0$\=/quasi\=/isomorphism.

    Taking the limit, we set $\widehat{X}\coloneqq \colim_k \widehat{X}^{[k]}$ and $q\coloneqq \colim_k q^{k}$, which are as desired.
    Note that by construction, we have
    \[
        \widehat{X}^{[k]}_n=\;\bigoplus_{\mathclap{\substack{j+i=n,\\ 0 \le j\le k}}}\; P^j_i
    \]
    and $\widehat{X}^{[k]}_n=\widehat{X}_n$ for~$k\ge n$ thus $\widehat{X}_n = \bigoplus_{j+i=n} P^j_i$.
    In particular, we see that $\colim_k \widehat{X}^{[k]}$ is a degreewise finite coproduct, hence exists in any additive category.
\end{proof}

The next two results are specifically for the exact category $(\modGamma, \excat{\Gamma})$.

\begin{prop}[2-out-of-3]\label{prop:2 out of 3}
    Let $0 \to A \xrightarrow{\iota} B \xrightarrow{\pi} C \to 0$ be an admissible short exact sequence of $\coeffZ[\Gamma]$\=/modules and let $n \in \bbN \cup \{\infty\}$.
    \begin{refenum}
        \item\label{prop:2 out of 3:quotient} If $A$ is of type $\FP_{n-1}(\excat{\Gamma})$ and $B$ is of type $\FP_n(\excat{\Gamma})$, then $C$ is of type $\FP_n(\excat{\Gamma})$;
        \item\label{prop:2 out of 3:extension} If $A$ is of type $\FP_{n}(\excat{\Gamma})$ and $C$ is of type $\FP_n(\excat{\Gamma})$, then $B$ is of type $\FP_n(\excat{\Gamma})$;
        \item\label{prop:2 out of 3:kernel} If $B$ is of type $\FP_{n}(\excat{\Gamma})$ and $C$ is of type $\FP_{n+1}(\excat{\Gamma})$, then $A$ is of type $\FP_n(\excat{\Gamma})$.
    \end{refenum}
\end{prop}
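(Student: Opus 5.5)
The plan is to run the classical mapping-cone proofs of the 2-out-of-3 properties for $\FP_n$, with the connectivity calculus of \cref{sec: chain complexes in exact category} doing the work. By \cref{rem: FPnE and connectivity}, a module $M$ is of type $\FP_n(\excat{\Gamma})$ exactly when there is a complex $P$ of finitely generated projective $\coeffZ[\Gamma]$-modules, at least up to degree $n$, and an $(n-1)$\=/$\excat{\Gamma}$\=/connected augmentation $P\to M$. Every admissible short exact sequence $A\injectarr B\onto C$ gives a map of complexes $\iota\colon A\to B$, viewed as complexes concentrated in degree $0$. The natural map $\cone(\iota)\to C$ is an $\excat{\Gamma}$\=/quasi-isomorphism, because its cone is the admissible exact sequence itself, which is $\excat{\Gamma}$\=/acyclic. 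Each case will therefore be obtained by lifting $\iota$ or $\pi$ to resolutions and comparing cones.

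For \localref{prop:2 out of 3:quotient} and \localref{prop:2 out of 3:extension} I would use the horseshoe-type construction via \cref{lem:lifting-map-up-to-homotopy}. The ambient structure $\sfE_0=\sfE_{\max}$ provides projective resolutions, but the connectivity is only partial, so I would follow the proof of \cref{thm: blow-up} rather than quote it.

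For \localref{prop:2 out of 3:quotient}, take an $\FP_{n-1}$\=/resolution $P_A\to A$ and an $\FP_n$\=/resolution $P_B\to B$, and extend both to full $\coeffZ[\Gamma]$-projective resolutions in higher degrees. Lift $\iota$ to $\widehat\iota\colon P_A\to P_B$ with a homotopy $H$. By \cref{lem: cones connectivity induced map}, the map $\cone(\widehat\iota)\to\cone(\iota)$ is $(n-1)$\=/$\excat{\Gamma}$\=/connected, since $P_A\to A$ is $(n-2)$\=/connected and $P_B\to B$ is $(n-1)$\=/connected. Composing with $\cone(\iota)\to C$ and applying \cref{lem: connectivity of composition} gives an $(n-1)$\=/connected map $\cone(\widehat\iota)\to C$. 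In degree $i$ the cone is $P_{A,i-1}\oplus P_{B,i}$, which is finitely generated projective for $i\le n$.

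A technical point remains: $\cone(\widehat\iota)\to C$ is only a chain map into a complex concentrated in degree $0$. For the output to be an actual resolution, the augmentation must be admissible and exactness must hold in degree $0$. I expect this to follow because $P_{B,0}\to B\to C$ is a composite of admissible epimorphisms, and the degree-zero part of the cone is $P_{B,0}$.

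For \localref{prop:2 out of 3:extension}, use the rotated triangle $B\to C\to \Sigma A$. Let $\delta\colon \Sigma^{-1}C\to A$ denote the connecting map, realised at the level of resolutions as $\Sigma^{-1}P_C\to P_A$ lifting the composite $\Sigma^{-1}\cone(\iota)\to A$. The complex $\Sigma^{-1}\cone(\widehat\delta)$ is then a candidate resolution of $B$ with terms $P_{A,i}\oplus P_{C,i}$. More concretely, I would apply \cref{thm: blow-up} to the two-term complex $X = (A\to B)$ in degrees $1,0$, whose homology is $C$ in degree $0$. The connectivity degrees work out because $P_A$ enters shifted by one.

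For \localref{prop:2 out of 3:kernel}, lift $\pi$ to $\widehat\pi\colon P_B\to P_C$. The complex $\Sigma^{-1}\cone(\widehat\pi)$, truncated appropriately, has terms $P_{B,i}\oplus P_{C,i+1}$. These are finitely generated for $i\le n$ because $C$ is of type $\FP_{n+1}$. It maps to $A$ with connectivity $n-1$ by \cref{lem: cones connectivity induced map}, and the degree shift accounts for the loss of one degree.

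The main obstacle is bookkeeping rather than ideas: I need to verify that the desuspended cone in cases \localref{prop:2 out of 3:extension} and \localref{prop:2 out of 3:kernel} really sits in nonnegative degrees with an admissible augmentation. In case \localref{prop:2 out of 3:kernel} the degree $-1$ term $P_{C,0}$ must be cancelled. I would handle this by replacing the bottom of the complex with the kernel of $P_{B,0}\oplus P_{C,1}\to P_{C,0}$, which is an admissible kernel since the map is an admissible epimorphism. This kernel is projective and finitely generated by the usual Schanuel-type argument, and \cref{thm: FP_n equivalent to FL_n} handles the freeness issue if needed.
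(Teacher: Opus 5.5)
Your argument for (1) is the paper's. Blowing up the two-term complex $A\to B$ via \cref{thm: blow-up} amounts to coning a lift $\widehat\iota\colon P_A\to P_B$ and then composing with the $\excat{\Gamma}$-quasi-isomorphism $\cone(\iota)\to C$.

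For (2) the paper simply cites the Horseshoe Lemma in $\excat{\Gamma}$. It applies because finitely generated projective $\coeffZ[\Gamma]$-modules are $\excat{\Gamma}$-projective: admissible epimorphisms are in particular surjective. Your sketch lifts the connecting map to $\widehat\delta\colon\Sigma^{-1}P_C\to P_A$ and cones it. This is the homotopical form of the horseshoe construction and works, with two corrections:
\begin{itemize}
\item The resolution is $\cone(\widehat\delta)$ itself, with degree-$i$ term $P_{C,i}\oplus P_{A,i}$, not $\Sigma^{-1}\cone(\widehat\delta)$. It is $P_C$, not $P_A$, that enters desuspended.
\item Drop the ``more concretely'' sentence. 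Blowing up $A\to B$ needs a resolution of $B$ as input and outputs one of $C$. So it reproves (1) and is circular for (2).
\end{itemize}

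For (3) you take a genuinely different route. The paper dimension-shifts. It takes an admissible epimorphism $P\onto B$ from a finitely generated projective. \cref{prop: schanuel exact categories} then makes $\ker(P\to B)$ of type $\FP_{n-1}(\excat{\Gamma})$ and $\ker(P\to C)$ of type $\FP_n(\excat{\Gamma})$. The snake lemma gives an admissible sequence $\ker(P\to B)\injectarr\ker(P\to C)\onto A$, and (1) finishes.

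Your desuspended cone of $\widehat\pi\colon P_B\to P_C$ builds the resolution of $A$ directly. It has explicit terms $P_{B,i}\oplus P_{C,i+1}$ and needs no separate $n=0$ case, but it requires bookkeeping the paper avoids. The steps you leave open do close:
\begin{itemize}
\item The lifting square commutes strictly in degree $0$, since the homotopy vanishes there ($C$ sits in degree $0$). Hence $P_{B,0}\oplus P_{C,1}\to P_{C,0}$ is surjective.
\item That map splits equivariantly, hence boundedly, because $P_{C,0}$ is projective. So its kernel is a finitely generated projective summand, and the discarded piece is contractible.
\item On the truncation, the degree-$0$ component $(x,y)\mapsto\varepsilon_B(x)$ lands in $\ker\pi=A$. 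This gives an honest chain map to $A$.
\end{itemize}

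The one ingredient you must add, in both (2) and (3), is a cancellation property. If $g$ is an $\excat{\Gamma}$-quasi-isomorphism and $g\circ f$ is $m$-$\excat{\Gamma}$-connected, then so is $f$. This follows from the octahedral triangle $\cone(f)\to\cone(g\circ f)\to\cone(g)\to\Sigma\cone(f)$, the same one used for \cref{lem: connectivity of composition}, together with \cref{lem: cone of chain map}.
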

\begin{proof}
    Ad~\localref{prop:2 out of 3:quotient}. By assumption, there exist a $\FP_{n-1}(\excat{\Gamma})$\=/resolution of $A$ and a $\FP_{n}(\excat{\Gamma})$\=/resolution
    of $B$. Let $\widehat{X}$ and $q \colon \widehat{X} \to X$ be the chain complex and map obtained from~\cref{thm: blow-up}
    applied to the complex $X_\ast = 0 \to A \injectarr B \to 0$.
    Then $\widehat{X}_i$ is finitely generated for $i \leq n$ and $q$ is $(n-1)$\=/$\excat{\Gamma}$\=/connected.
    Because the short exact sequence is admissible,
    the equivalence $X_\ast \xrightarrow{\pi} C$ is $\infty$\=/$\excat{\Gamma}$\=/connected.
    It follows that the composition $\widehat{X} \to X \to C$ is $(n-1)$\=/$\excat{\Gamma}$\=/connected, so $C$ is of type $\FP_n(\excat{\Gamma})$.

    Ad~\localref{prop:2 out of 3:extension}. This is an easy consequence of the Horseshoe Lemma~\cite{buehler}*{Theorem 12.8}.

    Ad~\localref{prop:2 out of 3:kernel}. For $n=0$ this is a consequence of Schanuel's Lemma. For $n \geq 1$ let $P$ be a finitely generated
    projective $\coeffZ[\Gamma]$\=/module with an admissible epimorphism $p \colon P \onto B$. Let $K \defq \ker(p)$ and $K' \defq \ker(\pi \circ p)$.
    By~\cref{prop: schanuel exact categories}, $K$ is of type $\FP_{n-1}(\excat{\Gamma})$ and $K'$ of type $\FP_n(\excat{\Gamma})$. Consider the following
    diagram.
    \[\begin{tikzcd}
        & 0 & K & {K'} \\
        \phantom{0} & 0 & P & P & 0 \\[1ex]
        0 & A & B & C & 0 \\
        & |[alias=D]|A & 0
        \arrow[from=1-2, to=1-3]
        \arrow[hook, from=1-3, to=1-4]
        \arrow[hook', from=1-2, to=2-2]
        \arrow[hook', from=1-3, to=2-3]
        \arrow[hook', from=1-4, to=2-4]
        \arrow[from=2-2, to=2-3]
        \arrow[from=2-3, to=2-4]
        \arrow[phantom, from=2-3, to=3-3, ""{coordinate, name=B}]
        \arrow[phantom, from=2-4, to=2-5, ""{coordinate, name=A}]
        \arrow[phantom, from=3-1, to=3-2, ""{coordinate, name=C}]
        \arrow[hook, from=3-2, to=3-3]
        \arrow[from=3-2, to=4-2]
        \arrow[two heads, from=3-3, to=3-4]
        \arrow[from=3-3, to=4-3]
        \arrow[from=3-4, to=3-5]
        \arrow[from=4-2, to=4-3]
        \arrow[from=1-4, to=4-2, to path={
                ([xshift=-0.5ex]\tikztostart.east) -- (\tikztostart.east)
                to[out=0, in=90, looseness=1] ($ (A) + (0.5ex,3ex) $)
                to[out=-90, in=0, looseness=1.1] ($ (B) + (0ex,0.5ex) $)
                to[out=180, in=90, looseness=1.1] ($ (C) - (0.5ex,2.5ex) $)
                to[out=-90, in=180, looseness=1] ([xshift=-0.5ex]D.west) -- (D.west)
            }]
        \arrow[from=2-2, to=3-2, crossing over]
        \arrow[two heads, from=2-3, to=3-3, crossing over]
        \arrow[two heads, from=2-4, to=3-4, crossing over]
        \arrow[from=2-4, to=2-5, crossing over]
        \arrow[from=3-1, to=3-2, crossing over]
    \end{tikzcd}\]
    The Snake Lemma~\cite{buehler}*{Proposition 8.11} yields the connecting homomorphism $K' \to A$ such that
    $0 \to K \to K' \to A \to 0$ is an admissible short exact sequence. It follows from \localref{prop:2 out of 3:quotient}
    that $A$ is of type $\FP_n(\excat{\Gamma})$.
\end{proof}

\begin{thm}\label{thm: blow-up FPnE}
    Let $n\in\bbN \cup \{\infty\}$ and let $M$ be a $\coeffZ[\Gamma]$\=/module with a $\coeffZ[\Gamma]$\=/resolution $\varepsilon \colon C \onto M$.
    Assume that $\varepsilon$ is $(n - 1)$\=/$\excat{\Gamma}$\=/connected and for $j \geq 0$ the module $C_j$ is of type $\FP_{n-j}(\excat{\Gamma})$.
    Then $M$ is of type $\FP_n(\excat{\Gamma})$.
\end{thm}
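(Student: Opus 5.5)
We apply \cref{thm: blow-up} to the chain complex $X = C$ with $\sfA = \modGamma$, $\sfE_0 = \sfE_{\max}$ and $\sfE_1 = \excat{\Gamma}$, and then compose with the augmentation $\varepsilon$. For each $j$ with $j \le n$, the module $C_j$ is of type $\FP_{n-j}(\excat{\Gamma})$. By \cref{rem: FPnE and connectivity} we therefore have an $\FP_{n-j}(\excat{\Gamma})$\=/resolution $P^j_\ast \onto C_j$ whose augmentation is $(n-j-1)$\=/$\excat{\Gamma}$\=/connected. We extend $P^j$ in degrees above $n-j$ by an arbitrary projective resolution of the kernel, which exists in $\module(\coeffZ[\Gamma])$. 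This yields an $\sfE_0$\=/projective resolution of $C_j$ with $P^j_i$ finitely generated projective for $i \le n-j$. For $j > n$ we take any projective resolution of $C_j$.

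There is a degree shift relative to the hypothesis of \cref{thm: blow-up}, which asks that $\varepsilon^j$ be $(n'-j)$\=/$\sfE_1$\=/connected. We apply that theorem with $n' = n-1$. Then $\varepsilon^j$ is $(n-1-j)$\=/connected, as required. The theorem produces $q\colon \widehat{C} \to C$. Each restriction $q^k$ is $(n-1)$\=/$\excat{\Gamma}$\=/connected and an $\sfE_0$\=/quasi-isomorphism, and $\widehat{C}_m = \bigoplus_{j+i=m} P^j_i$. For $m \le n$ every summand satisfies $i = m - j \le n-j$, so $\widehat{C}_m$ is a finite direct sum of finitely generated projective modules. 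Hence $\widehat{C}_m$ is finitely generated projective for $m \le n$.

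It remains to show that the composite $\varepsilon \circ q\colon \widehat{C} \to M$ is $(n-1)$\=/$\excat{\Gamma}$\=/connected. By \cref{lem: connectivity of composition}, this reduces to showing that $q$ is $(n-1)$\=/connected, since $\varepsilon$ is $(n-1)$\=/connected by hypothesis. Connectivity up to degree $n-1$ only involves the mapping cone in degrees at most $n$. In these degrees $\cone(q)$ agrees with $\cone(q^k)$ for $k \ge n+1$: since $\widehat{C}^{[k]}_m = \widehat{C}_m$ for $k \ge m$ and $C^{(k)}_m = C_m$, the two cones coincide through degree $n+1$. I expect this step to be the main obstacle, because $n$\=/$\sfE$\=/connectivity is defined as homotopy equivalence to an acyclic complex rather than as a degreewise condition. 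To handle it I would use the remark on idempotent completeness: for the idempotent complete category $\modGamma$, being $(n-1)$\=/$\excat{\Gamma}$\=/connected is equivalent to being $(n-1)$\=/$\excat{\Gamma}$\=/acyclic. The latter condition only depends on the differentials in degrees at most $n+1$, so it transfers from $\cone(q^{n+1})$ to $\cone(q)$. The case $n = \infty$ needs a separate check. There one either uses the same degreewise argument in every degree or takes $k \to \infty$.

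Finally, by \cref{rem: FPnE and connectivity}, the complex $\widehat{C}$ with augmentation $\varepsilon \circ q$ is an $\FP_n(\excat{\Gamma})$\=/resolution of $M$. Its augmentation in degree $0$ is the composite $P^0_0 \onto C_0 \onto M$ of admissible maps. Hence $M$ is of type $\FP_n(\excat{\Gamma})$.
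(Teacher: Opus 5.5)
Your proof is correct, but it is not the route the paper takes for this statement. The paper notes that the theorem ``can be readily proved'' from \cref{thm: blow-up}, and then deliberately gives a different proof. It cuts $C \onto M$ into the admissible short exact sequences $K_i \injectarr C_i \onto K_{i-1}$, where $K_i=\ker\partial_i$ and the last sequence is $K_0 \injectarr C_0 \onto M$. It observes that $K_{n-1}$ is of type $\FP_0(\excat{\Gamma})$ as an admissible quotient of $C_n$. It then inducts downward with \cref{prop:2 out of 3:quotient}, obtaining $K_i$ of type $\FP_{n-i-1}(\excat{\Gamma})$ and finally $M$ of type $\FP_n(\excat{\Gamma})$.

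Your argument is essentially the paper's proof of \cref{prop:2 out of 3:quotient}, namely blow-up with $n-1$ followed by composition with the augmentation, run on the whole complex $C$ instead of a two-term complex.

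\emph{What your route gains.} It produces an explicit resolution $\widehat{C}_m=\bigoplus_{i+j=m}P^j_i$. It also treats $n=\infty$ uniformly. The paper's downward induction has no top module $C_n$ to start from in that case, so it would need a separate argument, for instance that $\FP_m(\excat{\Gamma})$ for all $m$ implies $\FP_{\infty}(\excat{\Gamma})$.

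\emph{What it costs.} You correctly flagged the extra step. \cref{thm: blow-up} only asserts connectivity of the finite stages $q^k$, so for an unbounded $C$ you must pass to $q=\colim_k q^k$. You do this by using idempotent completeness to turn connectivity into a degreewise acyclicity condition, and by noting that $\cone(q)$ and $\cone(q^k)$ agree in degrees $\le k$. The same degreewise reading also justifies extending each $P^j$ above degree $n-j$ without changing its connectivity.

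The paper's reduction to two-term complexes sidesteps the colimit issue. There one can take $P^j=0$ for $j\ge 2$, so $\widehat{X}=\widehat{X}^{[1]}$ and $q=q^1$. In exchange, it relies on the differentials of $C$ being admissible in the relevant range, so that $C\onto M$ splits into admissible short exact sequences.
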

\begin{proof}
    This can be readily proved using~\cref{thm: blow-up}. However, we give here an alternative proof that uses~\cref{prop:2 out of 3}.
    The resolution $C \onto M$ splits into a sequence of admissible short exact sequences
    \[\begin{tikzcd}
        {C_{n}} &[-4ex] &[-4ex] {C_{n-1}} &[-4ex] &[-4ex] {C_{n-2}} & \cdots &[-4ex] &[-4ex] {C_0} &[2ex] M, \\[-1ex]
        & {K_{n-1}} && {K_{n-2}} &&& {K_0}
        \arrow["{\partial_{n}}", from=1-1, to=1-3]
        \arrow[two heads, from=1-1, to=2-2]
        \arrow["{\partial_{n-1}}", from=1-3, to=1-5]
        \arrow[two heads, from=1-3, to=2-4]
        \arrow["{\partial_{n-2}}", from=1-5, to=1-6]
        \arrow["{\partial_1}", from=1-6, to=1-8]
        \arrow[two heads, from=1-6, to=2-7]
        \arrow["{\partial_0 = \varepsilon}", two heads, from=1-8, to=1-9]
        \arrow[hook, from=2-2, to=1-3]
        \arrow[hook, from=2-4, to=1-5]
        \arrow[hook, from=2-7, to=1-8]
    \end{tikzcd}\]
    where $K_i \defq \ker(\partial_i)$. Since $C_{n}$ is of type $\FP_0(\excat{\Gamma})$, so is $K_{n-1}$.
    Considering the admissible exact sequences $0 \to K_i \to C_i \to K_{i-1} \to 0$, we inductively obtain
    that $K_i$ is of type $\FP_{n-i-1}(\excat{\Gamma})$ through application of~\cref{prop:2 out of 3:quotient}.
    Since $K_0$ is of type $\FP_{n-1}(\excat{\Gamma})$ and $C_0$ is of type $\FP_n(\excat{\Gamma})$ a final application
    of~\cref{prop:2 out of 3:quotient} yields that $M$ is of type $\FP_n(\excat{\Gamma})$.
\end{proof}

\section{Resolutions and polynomial filling functions}\label{sec: resolutions and fillings}
This section explains the connection between filling functions (see~\cref{sec: filling functions}) and exactness in the exact category
$\excat{\Gamma}$. It identifies connectivity of a chain complex $X$
with respect to $\excat{\Gamma}$ with the existence of a (partial) $\sfB_\Gamma$\=/bounded
non-equivariant chain contraction. The key point is that the Cayley bornology is chosen so that
boundedness directly corresponds to polynomial bounds for a weighted version of the homological
filling functions.

\begin{prop}\label{prop: connected iff bounded contraction}
    Let $X$ be a $\coeffZ[\Gamma]$\=/chain complex.
    Then $X$ is $n$\=/$\excat{\Gamma}$\=/connected if and only if there exists a partial $\coeffZ$\=/linear homotopy $h \colon X_k \to X_{k+1}$ such that $h_k$ is $\sfB_\Gamma$\=/bounded and
    $\partial h_k + h_{k-1} \partial = \id_{X_k}$ for $k \leq n$.
\end{prop}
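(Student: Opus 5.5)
We prove the two implications separately. Throughout we use the convention that $h_{-1}=0$ and $X_k=0$ for $k<0$, so the condition in degree $0$ reads $\partial h_0=\id_{X_0}$.

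\emph{From a bounded partial contraction to connectivity.} Assume $h$ exists. We show directly that $X$ is $n$\=/$\excat{\Gamma}$\=/acyclic, which certainly implies $n$\=/$\excat{\Gamma}$\=/connected.
\begin{itemize}
\item For $k\le n$ put $Z_k\defq\ker(\partial_k)$. On $Z_k$ the identity $\partial h_k+h_{k-1}\partial=\id$ reduces to $\partial h_k=\id$. Hence $\partial_{k+1}\colon X_{k+1}\to Z_k$ is surjective, with $\coeffZ$\=/linear section $h_k|_{Z_k}$.
\item Define $r_k\defq \id-h_{k-1}\partial_k\colon X_k\to X_k$. Since $\partial r_k=\partial-\partial h_{k-1}\partial=h_{k-2}\partial\partial=0$, the map $r_k$ lands in $Z_k$. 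It restricts to the identity on $Z_k$, so it is a retraction of the inclusion $Z_k\injectarr X_k$.
\item Both $r_k$ and the section are $\sfB_\Gamma$\=/bounded. Indeed, $\partial$ is equivariant and hence bounded by \cref{lem: every equivariant map is bounded}, and $h$ is bounded by assumption.
\item Boundedness of the restricted section needs a comparison of bornologies. Since $Z_k\injectarr X_k$ has a bounded retraction, \cref{lem: induced bornology and cayley bornology equal for admissible map} shows that the Cayley bornology on $\coeffQ\otimes Z_k$ equals the subspace bornology. Flatness of $\coeffZ\subseteq\coeffQ$ identifies $\coeffQ\otimes Z_k$ with $\ker(\Qmap{\partial_k})$.
\item By \cref{lem: mono with bounded retraction is admissible}, $Z_k\injectarr X_k\onto X_k/Z_k\cong Z_{k-1}$ is admissible for $k\le n$.
\item The sequence $Z_{k+1}\injectarr X_{k+1}\onto Z_k$ is admissible in every degree $k+1\le n+1$, using the bounded section $h_k|_{Z_k}$ and a bounded retraction. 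For the top degree $n+1$, the retraction is built from $\id-s\partial$ with $s=h_n|_{Z_n}$, again via \cref{lem: mono with bounded retraction is admissible} applied dually.
\end{itemize}
Together these give exactly the $\sfE$\=/exactness required for $n$\=/$\excat{\Gamma}$\=/acyclicity.

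\emph{From connectivity to a bounded partial contraction.} Let $f\colon X\to Y$ and $g\colon Y\to X$ be inverse homotopy equivalences, with $Y$ $n$\=/$\excat{\Gamma}$\=/acyclic and $H\colon gf\simeq\id_X$ an equivariant homotopy. The plan has two steps.
\begin{itemize}
\item First build a bounded contraction $h^Y$ of $Y$ in degrees $\le n$. Acyclicity gives admissible sequences $Z_k\injectarr Y_k\onto Z_{k-1}$ with bounded sections $s_{k-1}\colon Z_{k-1}\to Y_k$ and bounded retractions $\rho_k\colon Y_k\to Z_k$. By the remark after \cref{lem: mono with bounded retraction is admissible}, these can be chosen with $\iota\rho_k+s_{k-1}\partial=\id$. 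Set $h^Y_k\defq s_k\rho_k$. Then
\[\partial h^Y_k+h^Y_{k-1}\partial=\rho_k+s_{k-1}\rho_{k-1}\partial=\rho_k+s_{k-1}\partial=\id,\]
using that $\rho_{k-1}$ is the identity on boundaries.
\item Then transport the contraction along the homotopy equivalence by setting
\[h^X_k\defq g\,h^Y_k\,f+H_k.\]
A direct computation gives $\partial h^X+h^X\partial=g(\partial h^Y+h^Y\partial)f+(\id-gf)=\id$ in degrees $\le n$. Boundedness is preserved because $f$, $g$ and $H$ are equivariant, hence bounded.
\end{itemize}

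\emph{Main obstacle.} The delicate point is degree bookkeeping at the top and bottom of the range. In the backward direction one must check that acyclicity from $Y_{n+1}$ upward supplies the sections needed for $h^Y_k$ with $k\le n$, including the admissibility of $Y_{n+1}\to Z_n$. In the forward direction one must ensure the top admissible epimorphism $X_{n+1}\onto Z_n$ has a bounded kernel retraction. In both places the subspace\=/bornology identification from \cref{lem: induced bornology and cayley bornology equal for admissible map} is what makes restrictions of bounded maps bounded.
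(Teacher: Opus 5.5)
Your direction from connectivity to a contraction is correct. It is more explicit than the paper, which simply treats $X$ as $n$-$\excat{\Gamma}$-acyclic via the remark on idempotent completeness. With the paper's sign convention for homotopies you need $-H$ in $h^X_k=g\,h^Y_k\,f+H_k$.

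The genuine gap is the point you yourself flag as delicate: the top degree of the other direction. For $\partial_{n+1}$ to be admissible, $X_{n+1}\onto Z_n$ must be an admissible epimorphism. So you need a retraction $X_{n+1}\to Z_{n+1}$ that is bounded for the \emph{Cayley} bornology of $\coeffQ\otimes_{\coeffZ} Z_{n+1}$. The map $\id-s\partial_{n+1}$ lands in $Z_{n+1}$ and is bounded as a map into $X_{n+1}$, but \cref{lem: mono with bounded retraction is admissible} has no dual:
\begin{itemize}
\item There, the section is bounded because maps out of a quotient can be tested after composing with the quotient map (\cref{lem: quotients are preserved}).
\item For a map into a kernel, you would need the Cayley bornology on $\coeffQ\otimes_{\coeffZ} Z_{n+1}$ to agree with the subspace bornology. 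That is exactly what \cref{lem: induced bornology and cayley bornology equal for admissible map} deduces \emph{from} a bounded retraction, so the argument is circular.
\end{itemize}
The same circularity sits in your lower-degree bullets: boundedness of $r_k=\id-h_{k-1}\partial_k$ into $X_k$ is not enough. There it is repaired by the identity $r_k=\partial_{k+1}h_k$, which factors through the equivariant map $\partial_{k+1}\colon X_{k+1}\to Z_k$; this is the retraction the paper uses. In degree $n+1$ there is no $h_{n+1}$, hence no such factorization.

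No better argument can close this, because the top-degree claim is false. Take $n=0$, $\coeffZ=\bbZ$ and $\Gamma=\langle a,t\mid tat^{-1}=a^2\rangle$. Let $C_\ast$ be the cellular chain complex of its Cayley complex, and let $X$ be $C_1\xrightarrow{\partial_1}\ker(\bbZ[\Gamma]\to\bbZ)$ in degrees $1,0$.
\begin{itemize}
\item Sending each basis element $\gamma-1$ to a geodesic edge path from $1$ to $\gamma$ gives a $\sfB_\Gamma$-bounded $h_0$ with $\partial h_0=\id_{X_0}$.
\item The presentation complex is aspherical, so $\partial_2\colon C_2\to Z_1$ is an isomorphism. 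Hence the Cayley bornology on $\bbQ\otimes Z_1$ is transported from $\bbQ[\Gamma]$.
\item The cycles of the loops $[t^nat^{-n},a]$ have weighted norms polynomial in $n$, while their unique fillings have $\ell^1$-norm at least $2^n$.
\item The scaling argument of \cref{rem:characterisation-bounded} then shows that the Cayley and subspace bornologies on $\bbQ\otimes Z_1$ differ.
\item So $Z_1\injectarr X_1$ has no bounded retraction, $\partial_1$ is not admissible, and $X$ is not $0$-$\excat{\Gamma}$-acyclic. By idempotent completeness it is not $0$-connected either.
\end{itemize}

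In general, a bounded contraction up to degree $n$ gives $(n-1)$-$\excat{\Gamma}$-acyclicity, admissibility of $Z_n\injectarr X_n$, and a bounded section of $X_{n+1}\onto Z_n$. Full $n$-connectivity additionally requires a bounded retraction $X_{n+1}\to Z_{n+1}$.

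The paper's own proof has the same weak spot. It applies \cref{lem: mono with bounded retraction is admissible} to $X_{k+1}\onto K_k$. That is legitimate only for $k\le n-1$, where $K_{k+1}\injectarr X_{k+1}$ already has the bounded retraction $\partial h_{k+1}$.
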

\begin{proof}
    Throughout let $K_k \defq \ker(\partial_k)$.
    Assume that $X$ is $n$\=/$\excat{\Gamma}$\=/connected.
    Then $\partial_k$ is admissible for $k \leq n + 1$, hence factors as $X_k \to K_{k-1} \to X_{k-1}$,
    where $X_k \onto K_{k-1}$ is an admissible epimorphism. In particular, the sequence $K_k \injectarr X_k \onto K_{k-1}$
    is $\excat{\Gamma}$\=/exact, that is, it splits
    along $\sfB_\Gamma$\=/bounded $\coeffZ$\=/linear maps $r_k \colon X_k \to K_k$
    and $s_{k}\colon K_{k-1} \to X_k$.
    By~\cref{lem: mono with bounded retraction is admissible}, we may assume that $r_k \circ s_k = 0$
    and $\iota_k \circ r_k + s_k \circ \partial_k = \id_{X_k}$, where $\iota_k$ denotes the inclusion $K_k \injectarr X_k$.
    For $k \leq n$ we obtain a decomposition
    \[\begin{tikzcd}
        {X_{k+1}} & {K_k} & {X_k}.
        \arrow["{\partial_{k+1}}", two heads,, from=1-1, to=1-2]
        \arrow["{s_{k+1}}", bend left=40, shift right=1, from=1-2, to=1-1]
        \arrow["{\iota_k}", hook, from=1-2, to=1-3]
        \arrow["{r_{k}}", bend left=40, shift right=1, from=1-3, to=1-2]
    \end{tikzcd}\]
    Define $h_k = s_{k+1} \circ r_k$ for $k \leq n$ and $h_k = 0$ otherwise. Then $h_k$ is $\sfB_\Gamma$\=/bounded
    and satisfies $\partial h_k + h_{k-1} \partial = \id$ for $k \leq n$.

    For the converse direction, it is straightforward to check that $X$ is $\coeffZ$\=/exact up to degree $n$. It therefore remains
    to show that the differentials of $X$ are admissible morphisms up to degree $n + 1$.
    Let $s_k \colon X_k \to K_k$ be given by $s_k = \partial \circ h_k$. Because $\partial$ is $\Gamma$-equivariant, it is $\sfB_\Gamma$\=/bounded by~\cref{lem: every equivariant map is bounded}, hence $s_k$ is $\sfB_\Gamma$\=/bounded as the
    composition of $\sfB_\Gamma$\=/bounded maps.
    It follows that $K_k \injectarr X_k$ is admissible.
    In particular, \cref{lem: induced bornology and cayley bornology equal for admissible map} implies that the subspace bornology and the
    Cayley bornology on $\coeffQ \otimes_\coeffZ K_k$ coincide.
    A section of $X_{k+1} \onto K_k$ is given by $h_k \circ \iota_k$ which, by the preceding observation, is $\sfB_\Gamma$\=/bounded.
    Therefore, by~\cref{lem: mono with bounded retraction is admissible} $X_{k+1} \onto K_k$ is admissible, and we conclude that
    $X_\ast$ is $n$\=/$\excat{\Gamma}$\=/connected.
\end{proof}

\begin{prop}\label{prop: partial inverse iff connected}
    Let $f \colon X \to Y$ be a map of $\coeffZ[\Gamma]$\=/chain complexes.
    Assume $f$ is $n$\=/$\excat{\Gamma}$\=/connected. Then there is a partial $\coeffZ$\=/linear chain map $g \colon Y \to X$ and partial $\coeffZ$\=/linear homotopies
    $h^X_k \colon X_k \to X_{k+1}$, $h^Y_k \colon Y_k \to Y_{k+1}$ such that
    \begin{enumerate}
        \item $\partial g_k = g_{k-1}\partial$ for $k \leq n$;
        \item $\partial h^X_k + h^X_{k-1}\partial = g_k \circ f_k - \id$ for $k \leq n - 1$;
        \item $\partial h^Y_k + h^Y_{k-1}\partial = f_k \circ g_k - \id$ for $k \leq n$;
        \item $h^X_{k-1}$, $h^Y_k$, $g_k$ are $\sfB_\Gamma$\=/bounded.
    \end{enumerate}
\end{prop}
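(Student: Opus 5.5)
Since $f$ is $n$\=/$\excat{\Gamma}$\=/connected, $\cone(f)$ is $n$\=/$\excat{\Gamma}$\=/connected. By \cref{prop: connected iff bounded contraction} there is a partial $\coeffZ$\=/linear contraction $H_k\colon \cone(f)_k\to\cone(f)_{k+1}$ with $H_k$ being $\sfB_\Gamma$\=/bounded and $\partial^f H_k+H_{k-1}\partial^f=\id$ for $k\le n$. The plan is to read off $g$, $h^X$, $h^Y$ as matrix entries of $H$. This is the standard argument that a contractible cone yields a homotopy inverse, now carried out with bounded $\coeffZ$\=/linear maps.

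Write $H_k$ on $\cone(f)_k=X_{k-1}\oplus Y_k$ as a matrix
\[
H_k=\begin{pmatrix} \alpha_{k-1} & \beta_k\\ \gamma_{k-1} & \delta_k\end{pmatrix},
\]
with $\alpha_{k-1}\colon X_{k-1}\to X_k$, $\beta_k\colon Y_k\to X_k$, $\gamma_{k-1}\colon X_{k-1}\to Y_{k+1}$ and $\delta_k\colon Y_k\to Y_{k+1}$. Each entry is a composition of $H_k$ with the inclusion and projection of summands. These are equivariant maps, hence bounded by \cref{lem: every equivariant map is bounded}, so every entry is $\sfB_\Gamma$\=/bounded. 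Set
\[
g_k\defq \beta_k,\qquad h^Y_k\defq -\delta_k,\qquad h^X_{k-1}\defq \pm\alpha_{k-1},
\]
with the sign of $h^X$ fixed by the computation below.

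The next step is to expand $\partial^f H+H\partial^f=\id$ componentwise in degree $k$, using
\[
\partial^f(x,y)=(-\partial x,\ \partial y+f x).
\]
\begin{itemize}
\item The $(X,Y)$\=/entry gives $-\partial\beta_k+\beta_{k-1}\partial=0$, which is the chain map relation for $g$.
\item The $(Y,Y)$\=/entry gives $f\beta_k+\partial\delta_k+\delta_{k-1}\partial=\id_{Y_k}$, which is relation (3) with $h^Y=-\delta$.
\item The $(X,X)$\=/entry, taken in degree $k$ on $X_{k-1}$, gives $-\partial\alpha_{k-1}-\alpha_{k-2}\partial+\beta_{k-1}f=\id_{X_{k-1}}$. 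With $h^X=\alpha$ this is relation (2) in degree $k-1$.
\end{itemize}
Since the cone identity holds for $k\le n$, we get (1) and (3) for $k\le n$ and (2) for $k-1\le n-1$. This is exactly the degree shift in the statement, and it is why only $h^X_{k-1}$ is claimed bounded rather than $h^X_k$.

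The step I expect to need the most care is the sign and index bookkeeping. One must check that the $(X,Y)$\=/relation holds in every degree $k\le n$ needed for (1), in particular at the boundary degree $k=n$, and that the partial definitions, set to zero above degree $n$, are consistent. Degrees below $0$ are handled by the convention that complexes vanish there. Boundedness itself is automatic from the matrix-entry description, so no further analytic input is required.
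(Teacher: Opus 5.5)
Your proposal is correct and is the paper's argument: the paper also takes the bounded partial contraction of $\cone(f)$ from \cref{prop: connected iff bounded contraction} and reads off $g$, $h^X$ and $-h^Y$ as its matrix entries. Your sign and index bookkeeping ($g_k=\beta_k$, $h^X_{k-1}=\alpha_{k-1}$ with the plus sign, $h^Y_k=-\delta_k$) checks out, and it is more explicit than the paper's proof, which leaves the componentwise verification to the reader.
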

\begin{proof}
    Assume that $f$ is $n$\=/$\excat{\Gamma}$\=/connected. Let $h_k \colon \cone(f)_k \to \cone(f)_{k+1}$ be the homotopy obtained from~\cref{prop: connected iff bounded contraction}. The map $h_k$ can be written as
    \[
        h_k(x,y) = \bigl(h_k^{11}(x) + h_k^{12}(y), h_k^{21}(x) + h_k^{22}(y)\bigr).
    \]
    Clearly all $h_k^{ij}$ are $\sfB_\Gamma$\=/bounded for $k \leq n$.
    One easily checks that the homotopy condition $\partial h_k + h_{k-1}\partial = \id$ implies that $g_k \coloneqq h_k^{12}$
    is a partial chain map and $h^X_k \coloneqq h_k^{11}$ and $h^Y_k \coloneqq -h_k^{22}$ are the desired partial homotopies for $k \leq n$. For $k > n$ all maps can simply be chosen to be trivial.
\end{proof}

\subsection{Bounded resolutions from polynomial filling functions}\label{sec: filling functions}
Let $f \colon M \to N$ be a $\coeffZ[\Gamma]$\=/linear map between $\coeffZ[\Gamma]$\=/permutation modules.
We consider the $\ell^1$-norm $\norm{\placeholder}=\norm{\placeholder}^\Gamma_0$ and the weighted norm $\norm{\placeholder}^\Gamma_1$ from
\cref{exa: bornology permutation modules} on $M$ and $N$.

The \emph{filling volume} of an element $c \in \im(f)$ is defined as
\[
    \filling_{\im(f)}(c) = \inf_{\substack{b\in M\\ f(b)=c}} \norm{b}.
\]
The \emph{weighted filling volume} of an element $c \in \im(f)$ is defined as
\[
    \weightedfilling_{\im(f)}(c) = \inf_{\substack{b\in M\\ f(b)=c}}\norm{b}_1^\Gamma.
\]
The \emph{filling function} of $f$ is defined as
\[
    \filling_f(v) = \;\sup_{\mathclap{\substack{c\in \im(f)\\\norm{c} \le v}}}\; \filling_{\im(f)}(c).
\]
The \emph{weighted filling function} of $f$ is defined as
\[
    \weightedfilling_f(v) = \;\sup_{\mathclap{\substack{c\in \im(f)\\\norm{c}^\Gamma_1\le v}}}\; \weightedfilling_{\im(f)}(c).
\]
For an $(n-1)$-acyclic $\coeffZ[\Gamma]$-chain complex $X$ consisting of permutation modules we write $\filling^n_X$ for $\filling_{\partial^X_n}$
and $\weightedfilling^n_X$ for $\weightedfilling_{\partial^X_n}$.
In this case $\im \partial^X_n = \ker \partial^X_{n-1}$, hence
$\filling^n_X$ and $\weightedfilling^n_X$ measure the difficulty of filling $(n-1)$-cycles by $n$-chains in $X$.

\begin{defn}\label{def: geometric filling function}
    Let $X$ be a $\Gamma$\=/simplicial complex. Then the cellular chain complex $C_\ast(X;\coeffZ)$
    consists of $\Gamma$\=/permutation modules and we define
    $\filling^n_{\coeffZ,X} \defq \filling^n_{C_\ast(X;\coeffZ)}$
    and $\weightedfilling^n_{\coeffZ,X} \defq \weightedfilling^n_{C_\ast(X;\coeffZ)}$.
\end{defn}

\begin{defn}\label{def: general def of homological Dehn function}
    Let $n \geq 0$. A group $\Gamma$ is said to be of type $\FP_n(\coeffZ)$ if there exists a projective resolution $P_\ast \onto \coeffZ$ of the trivial $\coeffZ[\Gamma]$\=/module $\coeffZ$ such that $P_i$ is finitely generated as a $\coeffZ[\Gamma]$\=/module for $i \leq n$.
    By~\cite{brown}*{VIII 4.3}, this is equivalent to the existence of a free $\coeffZ[\Gamma]$\=/resolution with the same finiteness properties.

    Let $\Gamma$ be of type $\FP_n(\coeffZ)$ and $L_\ast \onto \coeffZ$ be such a resolution.
    The \Nth{n} \emph{homological Dehn function} of $\Gamma$ is $\filling^n_{\coeffZ,\Gamma} \coloneqq \filling^n_{L_\ast}$
    and the \Nth{n} \emph{weighted homological Dehn function} of $\Gamma$ is $\weightedfilling^n_{\coeffZ,\Gamma} \coloneqq \weightedfilling^n_{L_\ast}$.
\end{defn}

While these definitions depend on the choice of resolution, the filling functions are well-defined up to linear equivalence (\cref{def:linear-equivalence}).
Their equivalence class is also invariant under equivalence of norms~\cite{weis}*{Lemma 2.14}.
Therefore, one could also extend \cref{def: coefficients} to normed rings which are $\varepsilon$-separated for $0 < \varepsilon \leq 1$.
Moreover, both the weighted and unweighted homological filling functions are quasi-isometry invariants for groups of type $\FP_n(\coeffZ)$ under the assumption that they take finite values~\cite{weis}*{Theorems 5.7, 5.10}, which is the case for $\coeffZ = \bbZ$ or if $\coeffZ$ is
equipped with the discrete norm~\cites{fleming+martinez-pedroza,weis}.

For a Riemannian manifold, the classical homological filling functions are defined using integral Lipschitz singular chains:
the size of a chain is its Riemannian mass, and one minimizes the mass of Lipschitz chains filling a given Lipschitz cycle~\cite{gromov-filling}.
In the situations where both viewpoints apply, these filling functions agree with the cellular ones up to $\approx$\=/equivalence (see~\cref{def:linear-equivalence}).
This is a consequence of the Federer-Fleming deformation technique~\citelist{\cite{abrams_etal}*{Section~2}\cite{epstein}*{10.3}}.

\begin{defn}\label{def:linear-equivalence}
    Given functions $f, g \colon \bbR_{\ge 0} \to \bbR_{\ge 0} \cup \{\infty\}$ we say that $f \preccurlyeq g$ if there are constants $C, D > 0$ such that $f(v) \leq C g(Cv + D) + Cv + D$ for every $v \geq 0$. We say $f \approx g$ if both $f \preccurlyeq g$ and $g \preccurlyeq f$ hold.
\end{defn}

\begin{lem}[\citelist{\cite{bader+kropholler+vankov}*{Lemma 2.12}\cite{weis}*{Lemma 2.12}}]
    Up to $\approx$-equivalence the definitions of $\filling^n_{\coeffZ,\Gamma}$ and $\weightedfilling^n_{\coeffZ,\Gamma}$ do not depend on the choice of free resolution.
\end{lem}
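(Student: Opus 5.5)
Given two free resolutions $L_\ast \onto \coeffZ$ and $L'_\ast \onto \coeffZ$ of finite type up to degree $n$, I will compare their filling functions through chain maps. By the fundamental lemma of homological algebra there are $\coeffZ[\Gamma]$\=/chain maps $f\colon L_\ast\to L'_\ast$ and $g\colon L'_\ast\to L_\ast$ over $\id_\coeffZ$, and a $\coeffZ[\Gamma]$\=/chain homotopy $h\colon g\circ f\simeq \id$. Only degrees $\le n$ matter, where all modules are finitely generated free. By symmetry it suffices to prove $\filling^n_{L} \preccurlyeq \filling^n_{L'}$ and the analogous weighted inequality.

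The key estimate is that a $\coeffZ[\Gamma]$\=/linear map $\varphi\colon \coeffZ[\Gamma]^a\to\coeffZ[\Gamma]^b$ between finitely generated free modules is Lipschitz for both $\norm{\placeholder}$ and $\norm{\placeholder}^\Gamma_1$. Indeed, $\varphi(\gamma e_i)=\gamma\varphi(e_i)$ with finitely many fixed $\varphi(e_i)$. Hence $\norm{\varphi(x)}\le C\norm{x}$ with $C=\max_i\norm{\varphi(e_i)}$. For the weighted norm, $\ell_\Gamma(\gamma\mu)\le\ell_\Gamma(\gamma)+\ell_\Gamma(\mu)$ gives $\norm{\gamma\varphi(e_i)}^\Gamma_1\le \norm{\varphi(e_i)}^\Gamma_1(1+\ell_\Gamma(\gamma))$, the same submultiplicativity as in \cref{prop: group ring is bornological}, so $\norm{\varphi(x)}^\Gamma_1\le C'\norm{x}^\Gamma_1$.

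Now let $c\in\ker\partial^L_{n-1}$ with $\norm{c}\le v$. Then $f_{n-1}(c)$ is a cycle in $L'$ with $\norm{f_{n-1}(c)}\le Cv$. Choose a filling $b'$ with $\partial b'=f_{n-1}(c)$ and $\norm{b'}\le \filling^n_{L'}(Cv)$; the infimum is attained up to a bounded error, and is attained exactly when $\coeffZ$ is $1$\=/separated and the sets are discrete, which we can avoid needing. The identity
\[
c = g_{n-1}f_{n-1}(c) - \partial h_{n-1}(c) - h_{n-2}\partial c
\]
holds, and $\partial c=0$ for $n\ge 2$. For $n=1$ one uses the augmentation instead, or treats $c\in\ker\varepsilon$ the same way with $h_{-1}=0$. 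It follows that
\[
b\coloneqq g_n(b') - h_{n-1}(c)
\]
satisfies $\partial b=c$. Moreover $\norm{b}\le C\,\filling^n_{L'}(Cv)+Cv$, which gives $\filling^n_L(v)\le C\filling^n_{L'}(Cv)+Cv$ and hence $\filling^n_L\preccurlyeq\filling^n_{L'}$. The weighted case is verbatim with $\norm{\placeholder}^\Gamma_1$, using the weighted Lipschitz bounds.

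The only delicate point is the Lipschitz estimate for the weighted norm together with handling infima and suprema that may be infinite. Both are harmless: the inequalities hold in $\bbR_{\ge0}\cup\{\infty\}$ by taking an $\epsilon$\=/almost optimal $b'$. One also uses that the filling volumes are taken over $\coeffZ$\=/chains, which the chain maps and homotopies preserve since they are defined over $\coeffZ[\Gamma]$.
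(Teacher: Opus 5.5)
Your argument is correct. Comparison chain maps $f,g$ with a $\coeffZ[\Gamma]$-homotopy $g\circ f\simeq\id$, together with the fact that $\coeffZ[\Gamma]$-linear maps out of finitely generated free modules are Lipschitz for both $\norm{\placeholder}$ and $\norm{\placeholder}^\Gamma_1$, give $\filling^n_{L}(v)\le C\,\filling^n_{L'}(Cv)+Cv$ and its weighted analogue. The paper gives no proof of its own and only cites Bader--Kropholler--Vankov and Weis, whose proofs follow essentially this standard chain-map-and-homotopy route.
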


\begin{defn}
    We say that a function $f \colon \bbR_{\ge 0} \to \bbR_{\ge 0}$ is \emph{polynomial} if there is some $k\in\bbN$ and $C>0$ such that $f(v)\le C\cdot v^k$ for every $v \geq 1$. Equivalently $f$ is polynomial if $f \preccurlyeq p$ for some polynomial $p$.
\end{defn}

The following \lcnamecref{prop: group polynomial equivalence} allows us to work with
$\filling^n_{\coeffZ,\Gamma}$ instead of $\weightedfilling^n_{\coeffZ,\Gamma}$. Note that while $\filling^1_{\coeffZ,\Gamma}$ is only finite
for non-infinite groups, $\weightedfilling^1_{\coeffZ,\Gamma}$ is always quadratic (see~\cite{weis}*{Remark 4.7})
and both $\filling^0_{\coeffZ,\Gamma}$ and $\weightedfilling^0_{\coeffZ,\Gamma}$ are always linear.

\begin{prop}[\cite{weis}*{Proposition 4.6}]\label{prop: group polynomial equivalence}
    Let $\Gamma$ be a group of type $\FP_n(\coeffZ)$ where $n \geq 2$. Then $\filling^n_{\coeffZ,\Gamma}$ is polynomial if and
    only if $\weightedfilling^n_{\coeffZ,\Gamma}$ is polynomial.
\end{prop}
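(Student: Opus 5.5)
The plan is to compare the two functions directly on a fixed resolution, using a \emph{clustering} argument on supports. Fix a free resolution $L_\ast \onto \coeffZ$ with $L_i$ finitely generated free for $i\le n$, and use the $\Gamma$-bases $\Gamma\times\{e_1,\dots,e_{r_i}\}$. Let $R$ be a constant such that for every basis element $\gamma e_j$ of $L_i$ ($i\le n$), every basis element in the support of $\partial(\gamma e_j)$ lies within distance $R$ of $\gamma$. Two facts drive everything.
\begin{itemize}
\item Since $\coeffZ$ is $1$\=/separated, the number of basis elements in the support of a chain $x$ is at most $\norm{x}$.
\item The norm $\norm{\placeholder}=\norm{\placeholder}^\Gamma_0$ is $\Gamma$-invariant, while $\norm{x}^\Gamma_1\le \norm{x}\cdot(1+\max\{\ell_\Gamma(\gamma): \gamma e_j\in\operatorname{supp}x\})$.
\end{itemize}
Note also $\norm{x}\le\norm{x}_1^\Gamma$ always.

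\emph{Clustering.} For a chain $x\in L_i$, call two basis elements of its support adjacent if their boundaries share a basis element in their supports (for $i=n$), or if they lie within distance $2R$ (for cycles in degree $n-1$). Split $x=\sum x_\alpha$ into the sub-chains supported on connected components. For a cycle $c$, each $c_\alpha$ is again a cycle, since the boundary contributions of distinct components cannot cancel. Each $c_\alpha$ is supported in a set of diameter at most $2R\cdot\norm{c_\alpha}$. The same holds for the components $b_\alpha$ of a filling $b$: the sets $\partial b_\alpha$ have pairwise disjoint supports, so $c=\sum\partial b_\alpha$ componentwise. Hence every $b_\alpha$ whose boundary vanishes can be discarded without changing $\partial b$, and every remaining $b_\alpha$ has boundary support meeting $\operatorname{supp} c$.

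\emph{wFV polynomial $\Rightarrow$ FV polynomial.} Let $c$ be an $(n-1)$-cycle with $\norm{c}\le v$ and decompose it into clusters $c_\alpha$. Translate each $c_\alpha$ by a group element so that its support contains an element near $e$. Invariance of $\norm{\placeholder}$ then gives $\norm{\gamma_\alpha c_\alpha}^\Gamma_1\le \norm{c_\alpha}(1+2Rv+R)$. Fill it by $b_\alpha$ with $\norm{b_\alpha}\le\norm{b_\alpha}_1^\Gamma\le p(\norm{c_\alpha}(1+3Rv))$, where $p$ is a polynomial bound for $\weightedfilling^n$, and translate back. Summing over $\alpha$ and using $\sum_\alpha\norm{c_\alpha}=\norm{c}\le v$ and superadditivity of $p(t\cdot(1+3Rv))$ in $t$ (we may take $p$ with nonnegative coefficients and no constant term, up to $\approx$) gives $\filling^n(v)\le p(v(1+3Rv))$, which is polynomial.

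\emph{FV polynomial $\Rightarrow$ wFV polynomial.} Let $\norm{c}^\Gamma_1\le v$. Then $\norm{c}\le v$, and every basis element in $\operatorname{supp}c$ has length at most $v$. Choose $b$ with $\partial b=c$ and $\norm{b}\le P(v)$, where $P$ is a polynomial bound for $\filling^n$. Discard the components of $b$ with zero boundary as explained above. Each remaining component is connected, has at most $P(v)$ basis elements, and so has diameter at most $2R\,P(v)$. It also lies within distance $R$ of $\operatorname{supp}c$, hence within $v+R+2RP(v)$ of $e$. Therefore the pruned filling $b'$ satisfies $\norm{b'}_1^\Gamma\le P(v)(1+v+R+2RP(v))$, which is polynomial.

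\emph{Main obstacle.} The delicate step is the clustering lemma: choosing the adjacency relation so that both of the following hold.
\begin{itemize}
\item Components of a cycle are cycles, and components of a filling have disjoint boundary supports.
\item Diameters are bounded linearly by the number of cells.
\end{itemize}
I would define adjacency of $n$-cells via overlapping boundary supports, and adjacency of $(n-1)$-cells via overlapping boundary supports in degree $n-2$. For $n-1=0$ the last notion degenerates, but $n\ge2$ ensures that both $n$ and $n-1$ are at least $1$, so boundary supports are meaningful. I would then check linear diameter control using the constant $R$. The rest is the bookkeeping above, together with the fact that these functions are only defined up to $\approx$.
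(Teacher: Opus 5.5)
Your argument is correct. The paper does not prove this proposition itself; it imports it from Weis, so there is no in-paper proof to compare against. What you give is a standard self-contained argument for this kind of comparison, with two halves:
\begin{itemize}
\item For $\weightedfilling^n$ polynomial $\Rightarrow$ $\filling^n$ polynomial, you split cycles into connected pieces whose diameter is linear in their $\ell^1$-norm, and translate each piece to the identity.
\item For the converse, you prune a filling to its components with nonzero boundary. These components then lie in a neighbourhood of $\operatorname{supp} c$ whose radius is linear in $\filling^n$.
\end{itemize}
This is plausibly the same mechanism as in the geometric analogue the paper attributes to Ji--Ramsey and Bader--Sauer. It works verbatim for any $1$-separated coefficient ring $\coeffZ$, which is the paper's setting; $1$-separation enters exactly where you count support elements by the norm.

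Two small points need tidying.
\begin{itemize}
\item You describe adjacency of $(n-1)$-cells in two ways: distance at most $2R$, and overlapping boundary supports. Both are fine. Each forces the boundary supports of distinct components to be disjoint, and each gives the linear diameter bound.
\item Infima need not be attained, so choose fillings within $+1$ of optimal. This costs at most $\norm{c}\le v$ in total, since there are at most that many clusters.
\end{itemize}

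Your diagnosis of where $n\ge 2$ is used is right. The boundary map out of degree $n-1\ge 1$ is local, but the augmentation is not. That is why the clustering of cycles fails for $n=1$, and the statement fails there too: $\weightedfilling^1$ is always quadratic, while $\filling^1$ is infinite for infinite groups.
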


\begin{prop}\label{prop: bounded contraction from polynomial filling}
    Let $\varepsilon \colon X \onto \coeffZ$ be a $\coeffZ[\Gamma]$\=/resolution such that $X_i$ is a permutation module for $i \leq n$.
    Then $\weightedfilling^k_X$ is polynomial for all $k \leq n$ if and only if there exists a $\coeffZ$\=/linear chain contraction
    $h_k \colon X_k \to X_{k+1}$ such that $h_k$ is $\sfB_\Gamma$\=/bounded for $k \leq n - 1$.
\end{prop}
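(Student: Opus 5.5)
Both directions work degree by degree, with $X_{-1}=\coeffZ$ and $\varepsilon$ as $\partial_0$. The engine is \cref{rem: integral boundedness permutation}: a $\coeffZ$\=/linear map $f$ between permutation modules is $\sfB_\Gamma$\=/bounded as soon as $\norm{f(s)}^\Gamma_1\le C\norm{s}^\Gamma_k$ on basis elements. Conversely, \cref{rem:characterisation-bounded} turns boundedness into estimates $\norm{f(x)}^\Gamma_1\le C\norm{x}^\Gamma_m$ for all $x$.

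\emph{Filling functions polynomial $\Rightarrow$ bounded contraction.} I build $h_k$ inductively on the $\coeffZ$\=/basis $S_k$ of $X_k$, given by the cells $\gamma\sigma$ with $\gamma$ the minimal representative. For $k=-1$, set $h_{-1}(1)$ to be a fixed vertex, a preimage of $1$ in $X_0$. Given $h_{k-1}$, for a basis element $s\in S_k$ the element
\[
c=s-h_{k-1}\partial s
\]
is a cycle, since $\partial c=\partial s-\partial s+h_{k-2}\partial\partial s=0$ by induction, and it lies in $\im\partial_{k+1}$. I define $h_k(s)$ to be a filling of $c$ with $\norm{h_k(s)}^\Gamma_1\le 2\,\weightedfilling_{\im\partial_{k+1}}(c)$. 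Note that $\im \partial_{k+1}$ is measured by $\weightedfilling^{k+1}_X$, so degrees up to $n$ need fillings up to $n$. Now $\norm{c}^\Gamma_1\le \norm{s}^\Gamma_1+\norm{h_{k-1}\partial s}^\Gamma_1$. Since $\partial$ is equivariant with finitely many orbit types, $\norm{\partial s}^\Gamma_m\le C\norm{s}^\Gamma_m$, and boundedness of $h_{k-1}$ gives $\norm{h_{k-1}\partial s}^\Gamma_1\le C\norm{s}^\Gamma_m$. Hence $\norm{c}^\Gamma_1$ is polynomial in $\norm{s}^\Gamma_1=1+\ell_\Gamma(\gamma)$. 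Polynomiality of $\weightedfilling^{k+1}$ then yields $\norm{h_k(s)}^\Gamma_1\le C\norm{s}^\Gamma_{k'}$, and \cref{rem: integral boundedness permutation} gives boundedness. The infimum is attained up to the factor $2$ because $\coeffZ$ is $1$\=/separated; this is harmless. The index shift must be handled: the contraction in degrees $\le n-1$ uses $\weightedfilling^k$ for $k\le n$.

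\emph{Bounded contraction $\Rightarrow$ filling functions polynomial.} Given a cycle $c\in\ker\partial_{k-1}=\im\partial_k$ with $k\le n$, the element $h_{k-1}(c)$ fills it, since $\partial h_{k-1}c=c-h_{k-2}\partial c=c$. Boundedness gives $\norm{h_{k-1}c}^\Gamma_1\le C\norm{c}^\Gamma_m$, and I need to control $\norm{c}^\Gamma_m$ by a polynomial in $\norm{c}^\Gamma_1$. For integral $c$ this holds because $\coeffZ$ is $1$\=/separated: $\norm{c}^\Gamma_m\le(\norm{c}^\Gamma_1)^m$, as in the proof of \cref{lem: integral boundedness}. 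Hence $\weightedfilling^k_X(v)\le Cv^m$.

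\emph{Main obstacle.} The delicate point is the first direction, where $h_k$ must be defined $\coeffZ$\=/linearly on a basis and then shown to be bounded after tensoring with $\coeffQ$. Here I rely on \cref{rem: integral boundedness permutation} together with the norm estimate on the auxiliary cycle $c$. The estimate on $c$ needs higher norms $\norm{s}^\Gamma_m$ of a basis element, which are powers of $\norm{s}^\Gamma_1$, so everything stays polynomial.
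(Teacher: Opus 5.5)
Your proposal is correct and follows essentially the same approach as the paper. Both arguments build $h_k$ inductively on a $\coeffZ$-basis by choosing near-optimal fillings of the cycles $s-h_{k-1}\partial s$, get boundedness from the basis-element criterion for permutation modules, and prove the converse by using $h_{k-1}(c)$ as a filling together with $\norm{c}^\Gamma_r\le(\norm{c}^\Gamma_1)^r$ from $1$-separation. The only cosmetic differences are that you take fillings within a factor $2$ of optimal rather than within an additive $1$, and that you spell out the bound on $\norm{h_{k-1}\partial s}^\Gamma_1$ using the equivariance of $\partial$.
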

\begin{proof}
    By~\cref{rem: integral boundedness permutation}, a homomorphism $f \colon X_i \to X_{i+1}$ induces a bounded map $\Qmap{f}$
    if there exist $s\in\bbN$, $C>0$ such that $\norm{f(e)}^\Gamma_1 \leq C \cdot \norm{e}^\Gamma_s$ for every~$\coeffZ$-basis element $e$.

    We construct $h_\ast$ inductively. We define $h_{-1}\colon\coeffZ\to X_0$ to be any choice of section.
    Suppose there are homomorphisms $h_i\colon X_i\to X_{i+1}$ for $-1\le i\le k-1$ such that $h_i$ is $\sfB_\Gamma$\=/bounded and
    \[\partial h_i+h_{i-1}\partial=\id\]
    for every $-1\le i\le k-1$.
    If $0 \leq k<n$, for every $\coeffZ$-basis element $e \in X_k$ we choose an \emph{optimal}
    boundary $b\in X_{k+1}$, that is,
    \[\norm{b}_1^\Gamma \leq \weightedfilling_{\partial_{k+1}}(e') + 1,\]
    of the $k$-cycle $e'=e-h_{k-1}(\partial e)$.
    Since $\weightedfilling_X^i$ is polynomial for $1\le i\le n$ and $k+1\le n$,
    there is an exponent $s\in \bbN$ and $C>0$ such that
    \[ \norm{b}^\Gamma_1 \leq C \cdot \bigl(\norm{e'}^\Gamma_1\bigr)^s + 1\]
    for every~$\coeffZ$-basis element $e$.
    Because $\coeffZ$ is $1$\=/separated, $\norm{e'}^\Gamma_1 \geq 1$, so after possibly enlarging $C$, we can assume
    $\norm{b}^\Gamma_1 \leq C \cdot \bigl(\norm{e'}^\Gamma_1\bigr)^s$.

    Setting $h_k(e)=b$ on a $\coeffZ$-basis, we obtain a homomorphism $X_k \to X_{k+1}$. For $k \geq n$ we let $h_k$ be the trivial map.

    Let $k-1<n$. By $\sfB_\Gamma$\=/boundedness of $h_{k-1}$, there is an exponent $t\in\bbN$ and $\widetilde C>0$ such that $\norm{e'}^\Gamma_1 \leq \widetilde C\cdot \norm{e}^\Gamma_t$ for every~$\coeffZ$-basis element $e$.
    Hence, we obtain that
    \[ \norm{h_k(e)}^\Gamma_1=\norm{b}^\Gamma_1\leq C\cdot \bigl(\norm{e'}^\Gamma_1\bigr)^s\leq C \widetilde C^s\cdot \bigl(\norm{e}^\Gamma_t\bigr)^s=C \widetilde C^s\cdot \norm{e}^\Gamma_{ts}\]
    for every $\coeffZ$-basis element $e$. This means that $h_k$ is $\sfB_\Gamma$\=/bounded.

    For the converse, it is clear that $\weightedfilling_X^{0}(v)$ is linear.
    Let $1\le k\le n$ and let $c\in K_{k-1}\coloneqq\ker(\partial^X_{k-1})$ be a cycle.
    Since $\partial c=0$, we have $\partial_k h_{k-1}(c)=c$.
    Thus, $h_{k-1}(c)$ is a filling of $c$.
    Since $h_{k-1}$ is $\sfB_\Gamma$-bounded and $X_{k-1}$, $X_k$ are permutation modules,
    \cref{rem:characterisation-bounded} gives constants $C>0$ and $r\in\bbN$ such that
    \[
        \norm{h_{k-1}(x)}^\Gamma_1 \le C\norm{x}^\Gamma_r.
    \]
    By $1$-separation of $\coeffZ$, we have $\norm{x}^\Gamma_r \leq (\norm{x}^\Gamma_1)^r$
    and therefore $\weightedfilling_X^{k}(v) \leq C \cdot v^r$.
\end{proof}

\begin{rem}
    Under the weaker assumption that $X_\ast \onto \coeffZ$ is not a resolution but only $(n-1)$\=/$\coeffZ$\=/acyclic, one still obtains a partial
    $\sfB_\Gamma$\=/bounded chain contraction up to degree $n-1$.
\end{rem}

\begin{cor}\label{cor: polynomial filling iff every free resolution bounded}
    Let $\varepsilon \colon X \onto \coeffZ$ be a $\coeffZ[\Gamma]$\=/resolution such that $X_i$ is a permutation module for $i \leq n$.
    Then $X \onto \coeffZ$ is $(n-1)$\=/$\excat{\Gamma}$\=/connected if and only if
    $\weightedfilling^k_X$ is polynomial for all $k \leq n$.
\end{cor}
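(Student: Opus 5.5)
The corollary follows by chaining \cref{prop: connected iff bounded contraction} with \cref{prop: bounded contraction from polynomial filling}. The one point needing care is the indexing: the augmentation is placed in degree $-1$, so the relevant homotopy operators are $h_{-1},\dots,h_{n-1}$.

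View the augmented complex $\widetilde X \colon \cdots \to X_1 \to X_0 \to \coeffZ \to 0$, with $\coeffZ$ in degree $-1$, as a $\coeffZ[\Gamma]$\=/chain complex. In the framework of \cref{sec: chain complexes in exact category}, saying that $\varepsilon$ is $(n-1)$\=/$\excat{\Gamma}$\=/connected (cf.~\cref{rem: FPnE and connectivity}) means that $\cone(\varepsilon)$ is $(n-1)$\=/$\excat{\Gamma}$\=/connected. Up to a shift by one and a sign in the differential, $\cone(\varepsilon)$ is $\widetilde X$. So I will first record the following: $\varepsilon$ is $(n-1)$\=/$\excat{\Gamma}$\=/connected if and only if $\widetilde X$ satisfies the conclusion of \cref{prop: connected iff bounded contraction} in degrees $-1\le k\le n-1$. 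That is, there are $\sfB_\Gamma$\=/bounded $\coeffZ$\=/linear maps $h_k\colon X_k\to X_{k+1}$ for $-1\le k\le n-1$, with $X_{-1}=\coeffZ$, such that $\partial h_k+h_{k-1}\partial=\id$ in those degrees. Both directions come from \cref{prop: connected iff bounded contraction}, applied to $\cone(\varepsilon)$ and transported along the evident isomorphism with the shifted $\widetilde X$. Sign changes of the differential do not affect boundedness.

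Next I match this with \cref{prop: bounded contraction from polynomial filling}. Its statement concerns a contraction that is bounded for $k\le n-1$, and its proof constructs exactly $h_{-1},\dots,h_{n-1}$ with the contraction identity in those degrees. Two small remarks are needed here.

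First, $h_{-1}\colon \coeffZ\to X_0$ is automatically $\sfB_\Gamma$\=/bounded. The trivial module $\coeffQ$ carries the quotient bornology of $\coeffQ[\Gamma]\to\coeffQ$ by \cref{lem: quotients are preserved}, and it is boundedly finitely generated. A single element mapped to a fixed chain gives a bounded map.

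Second, the higher $h_k$ for $k\ge n$ play no role. The converse direction of \cref{prop: bounded contraction from polynomial filling} only uses $h_{k-1}$ for $k\le n$, that is, the identity $\partial_k h_{k-1}(c)=c$ on cycles $c\in\ker\partial_{k-1}$. This identity follows from $\partial h_{k-1}+h_{k-2}\partial=\id$ in degree $k-1\le n-1$, which is within the range supplied by the connectivity.

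Combining the two characterizations yields the equivalence: $\varepsilon$ is $(n-1)$\=/$\excat{\Gamma}$\=/connected if and only if a partial bounded contraction exists up to degree $n-1$, if and only if $\weightedfilling^k_X$ is polynomial for all $k\le n$.

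The only step I expect to require care is the identification of $\cone(\varepsilon)$ with the shifted augmented complex, together with the resulting shift of connectivity indices by one. An index slip here would give the wrong range, either $n$ or $n-2$. I will check the dictionary explicitly in the lowest degrees. Surjectivity of $\varepsilon$ with a bounded section corresponds to $h_{-1}$. Admissibility of $X_1\to\ker\varepsilon$ corresponds to $h_0$. Continuing in this way, the top degree is $h_{n-1}$, which is exactly what is needed to fill $(n-1)$\=/cycles, i.e.\ to control $\weightedfilling^n_X$.
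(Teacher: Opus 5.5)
Your strategy matches the paper's: combine \cref{prop: connected iff bounded contraction} with \cref{prop: bounded contraction from polynomial filling}. Your remarks on $h_{-1}$ and on the irrelevance of $h_k$ for $k\ge n$ are fine. However, the dictionary step you flag as delicate is off by one as written.

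With the paper's cone, $\cone(\varepsilon)_m = X_{m-1}\oplus\coeffZ_m=\widetilde X_{m-1}$, so $\cone(\varepsilon)\cong\Sigma\widetilde X$. Applying \cref{prop: connected iff bounded contraction} to $\cone(\varepsilon)$ at level $n-1$ gives the homotopy identity on $\cone(\varepsilon)_k$ for $k\le n-1$, i.e.\ on $\widetilde X_j$ only for $j\le n-2$. You therefore get $h_{-1},\dots,h_{n-2}$, not $h_{-1},\dots,h_{n-1}$. Your own low-degree check shows this: $0$-connectivity of the cone corresponds to $h_{-1}$, $1$-connectivity to $h_0$, so $(n-1)$-connectivity stops at $h_{n-2}$, which controls $\weightedfilling^k$ only for $k\le n-1$. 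Under the literal reading ``$\cone(\varepsilon)$ is $(n-1)$-connected'' the corollary is actually false. For $n=2$ it amounts to polynomiality of $\weightedfilling^0$ and $\weightedfilling^1$, which holds for the standard free resolution of every finitely presented group. Yet $\weightedfilling^2$ need not be polynomial, e.g.\ for the doubles $B\ast_A B$ in the introduction.

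The fix is to use the reading the paper intends. The augmented complex $\widetilde X$, viewed as a chain complex in degrees $\ge -1$, is $(n-1)$-$\excat{\Gamma}$-connected, i.e.\ $\excat{\Gamma}$-exact at $\coeffZ,X_0,\dots,X_{n-1}$. Equivalently, $\cone(\varepsilon)\cong\Sigma\widetilde X$ is $n$-connected, not $(n-1)$-connected. Several places in the paper force this reading:
\begin{itemize}
\item \cref{rem: FPnE and connectivity} needs it, since an $\FP_n(\excat{\Gamma})$-resolution must be exact at $P_{n-1}$.
\item \cref{thm: polynomial fillings from FP_n(E)} uses it, since it deduces $\FL_n(\excat{\Gamma})$ from this corollary.
\item It matches the remark in \cref{sec: chain complexes in exact category} that an $n$-connected map is an isomorphism on homology up to degree $n$ and surjective in degree $n+1$. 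For cones, that condition says $\cone(f)$ is $(n+1)$-connected.
\end{itemize}
Apply \cref{prop: connected iff bounded contraction} directly to $\widetilde X$; its proof is insensitive to the index shift, and this is what the paper's one-line proof does implicitly. This yields exactly bounded $h_{-1},\dots,h_{n-1}$ with the contraction identity on $\widetilde X_k$ for $k\le n-1$. From there your matching with \cref{prop: bounded contraction from polynomial filling} goes through unchanged.
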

\begin{proof}
    Apply \cref{prop: connected iff bounded contraction,prop: bounded contraction from polynomial filling}.
\end{proof}

\begin{thm}\label{thm: polynomial fillings from FP_n(E)}
    Let $\Gamma$ be a group of type $\FP_n(\coeffZ)$. Then $\weightedfilling_{\coeffZ,\Gamma}^k$ is polynomial for all $k \leq n$ if and only if
    $\coeffZ$ is of type $\FP_n(\excat{\Gamma})$.
\end{thm}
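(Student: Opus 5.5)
The plan is to reduce both directions to \cref{cor: polynomial filling iff every free resolution bounded}, applied to a free resolution of $\coeffZ$ that is finitely generated in degrees $\le n$.

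\emph{Forward direction.} Since $\Gamma$ is of type $\FP_n(\coeffZ)$, we fix a free resolution $L_\ast \onto \coeffZ$ with $L_i$ finitely generated free for $i \le n$. Free modules are permutation modules. By definition $\weightedfilling^k_{\coeffZ,\Gamma} = \weightedfilling^k_{L_\ast}$ up to $\approx$, and polynomiality is preserved under $\approx$. So if all $\weightedfilling^k_{\coeffZ,\Gamma}$ with $k\le n$ are polynomial, then \cref{cor: polynomial filling iff every free resolution bounded} shows that $L_\ast \onto \coeffZ$ is $(n-1)$\=/$\excat{\Gamma}$\=/connected. By \cref{rem: FPnE and connectivity}, $L_\ast^{(n)}$ is then an $\FP_n(\excat{\Gamma})$\=/resolution.

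\emph{Converse direction.} Suppose $\coeffZ$ is of type $\FP_n(\excat{\Gamma})$. By \cref{thm: FP_n equivalent to FL_n} there is a partial $\excat{\Gamma}$\=/resolution
\[ F_n \to \cdots \to F_0 \onto \coeffZ \]
by finitely generated free modules. Its differentials are admissible, so $K=\ker(F_n\to F_{n-1})$ is a $\coeffZ[\Gamma]$\=/module. We extend by an arbitrary free resolution of $K$ in degrees $>n$, that is, a free module surjecting onto $K$, then one onto its kernel, and so on. The result is an honest resolution $X_\ast \onto \coeffZ$. The new differentials in degrees above $n$ need not be admissible. This does not matter, because $(n-1)$\=/$\excat{\Gamma}$\=/connectedness only involves $X_{n}\to X_{n-1}\to\cdots$. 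Concretely, \cref{prop: connected iff bounded contraction} only asks for bounded $h_k$ with $k\le n-1$, and these come from the original partial resolution. We then apply \cref{cor: polynomial filling iff every free resolution bounded} to get that $\weightedfilling^k_X$ is polynomial for $k\le n$.

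It remains to identify $\weightedfilling^k_X$ with $\weightedfilling^k_{\coeffZ,\Gamma}$. This is the main subtle point. The chain modules $X_i$ with $i\le n$ are finitely generated free, so $X$ is a free resolution that is finitely generated in degrees $\le n$. The definition of $\weightedfilling^k$ and its independence of the resolution up to $\approx$ (the cited lemma) only involve modules in degrees $\le k \le n$. So $\weightedfilling^k_X \approx \weightedfilling^k_{\coeffZ,\Gamma}$, and polynomiality transfers.

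I expect no real obstacle beyond two checks. First, the infinitely generated tail of $X$ must not interfere with \cref{cor: polynomial filling iff every free resolution bounded}. That corollary only uses that $X_i$ is a permutation module for $i\le n$; a free module of arbitrary rank is one, so the tail causes no trouble. Second, the conventions on the degree shift between \emph{$(n-1)$-connected} and \emph{$\FP_n$} must match, which is exactly the content of \cref{rem: FPnE and connectivity}.
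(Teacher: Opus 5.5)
Your proof is correct and follows the paper's argument: both directions reduce to \cref{cor: polynomial filling iff every free resolution bounded}, and in the converse \cref{thm: FP_n equivalent to FL_n} supplies a finitely generated free partial $\excat{\Gamma}$\=/resolution. You also make explicit two points the paper leaves implicit: extending the partial resolution to a full resolution, which does not affect $(n-1)$\=/$\excat{\Gamma}$\=/connectivity, and transferring polynomiality along the $\approx$\=/independence of the weighted filling functions from the chosen resolution.
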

\begin{proof}
    If $\weightedfilling_{\coeffZ,\Gamma}^k$ is polynomial for all $k \leq n$, it follows from
    \cref{cor: polynomial filling iff every free resolution bounded} that $\coeffZ$ is of type $\FL_n(\excat{\Gamma})$.

    If $\coeffZ$ is of type $\FP_n(\excat{\Gamma})$ it is also of type $\FL_n(\excat{\Gamma})$ by \cref{thm: FP_n equivalent to FL_n}, hence \cref{cor: polynomial filling iff every free resolution bounded}
    implies that $\weightedfilling_{\coeffZ,\Gamma}^k$ is polynomial for all $k \leq n$.
\end{proof}

\subsection{Comparison to geometry and filling functions}
An easy application of~\cref{thm: blow-up FPnE,cor: polynomial filling iff every free resolution bounded} together with~\cref{prop: FPn induction}
is the following theorem, similar in style to a theorem of Brown for finiteness properties~\cite{brown_finiteness}.

\begin{thm}\label{thm: geometric brown theorem}
    Let $\Gamma$ be a finitely generated group.
    Let $X$ be a $(d-1)$\=/acyclic $\Gamma$\=/simplicial complex with cocompact $d$\=/skeleton such that the cellular chain complex $C_\ast(X;\coeffZ) \onto \coeffZ$ is $(d-1)$\=/$\excat{\Gamma}$\=/connected.
    Assume for each stabilizer $\Gamma_\sigma$ of a $k$-cell $\sigma$ that
    \begin{condenum}
        \item $\Gamma_\sigma$ is of type $\FP_{d-k}(\coeffZ)$;
        \item $\filling_{\coeffZ,\Gamma_\sigma}^i$ is polynomial for $2 \le i \leq d - k$;
        \item $\Gamma_\sigma$ is finitely generated and at most polynomially distorted in $\Gamma$.
    \end{condenum}
    Then $\filling_{\coeffZ,\Gamma}^i$ is polynomial for $2 \le i \leq d$.
\end{thm}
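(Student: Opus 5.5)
The plan is to apply \cref{thm: blow-up FPnE} with $n = d$ to the cellular resolution $C_\ast(X;\coeffZ) \onto \coeffZ$, and then translate the conclusion back via \cref{thm: polynomial fillings from FP_n(E)} and \cref{prop: group polynomial equivalence}. The connectivity hypothesis of \cref{thm: blow-up FPnE} is built into the assumptions. Since $C_\ast(X;\coeffZ)\onto\coeffZ$ is $(d-1)$\=/$\excat{\Gamma}$\=/connected, that hypothesis is immediate, up to truncating the complex above degree $d$ if it is not a full resolution. Truncation is harmless: by \cref{thm: blow-up} only degrees $\le d$ matter for the $\FP_d$ conclusion, and we may replace $C_{d}$ by itself and attach the kernel without changing connectivity up to degree $d-1$. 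So the main work is to verify that each chain module $C_k(X;\coeffZ)$ is of type $\FP_{d-k}(\excat{\Gamma})$ for $0\le k\le d$.

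Because $X$ is a $\Gamma$\=/simplicial complex, elements of a stabilizer fix the simplex pointwise. Hence $C_k(X;\coeffZ) \cong \bigoplus_{\sigma} \coeffZ[\Gamma/\Gamma_\sigma] = \bigoplus_\sigma \induction^\Gamma_{\Gamma_\sigma}\coeffZ$, with $\sigma$ running over the finitely many orbit representatives of $k$\=/cells (cocompactness of the $d$\=/skeleton). For each $\sigma$, the group $\Gamma_\sigma$ is of type $\FP_{d-k}(\coeffZ)$, and $\filling^i_{\coeffZ,\Gamma_\sigma}$ is polynomial for $2\le i\le d-k$. By \cref{prop: group polynomial equivalence} the weighted functions $\weightedfilling^i_{\coeffZ,\Gamma_\sigma}$ are then polynomial in that range. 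In degrees $0$ and $1$ they are linear and quadratic respectively, as remarked before \cref{prop: group polynomial equivalence}. So \cref{thm: polynomial fillings from FP_n(E)} gives that $\coeffZ$ is of type $\FP_{d-k}(\excat{\Gamma_\sigma})$.

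By condition (iii), $\Gamma_\sigma$ is finitely generated and at most polynomially distorted. Then \cref{prop: FPn induction} shows that $\coeffZ[\Gamma/\Gamma_\sigma]$ is of type $\FP_{d-k}(\excat{\Gamma})$. A finite direct sum of modules of type $\FP_m(\excat{\Gamma})$ is again of that type: take direct sums of resolutions, which works since split sequences are admissible, or use \cref{prop:2 out of 3:extension}. Hence $C_k(X;\coeffZ)$ is of type $\FP_{d-k}(\excat{\Gamma})$. \Cref{thm: blow-up FPnE} then yields that $\coeffZ$ is of type $\FP_d(\excat{\Gamma})$.

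To finish, I first need $\Gamma$ to be of type $\FP_d(\coeffZ)$. This follows either from the classical Brown criterion or, more directly, from \cref{thm: blow-up} with $\sfE_1=\sfE_{\max}$: the $\FP_d(\excat{\Gamma})$\=/resolution is in particular a finitely generated projective partial resolution. Then \cref{thm: polynomial fillings from FP_n(E)} gives that $\weightedfilling^i_{\coeffZ,\Gamma}$ is polynomial for $i\le d$, and \cref{prop: group polynomial equivalence} converts this to polynomiality of $\filling^i_{\coeffZ,\Gamma}$ for $2\le i\le d$.

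The main obstacle is the degree bookkeeping at the edges. For $k=d$ we only need $\FP_0$, and for $k=d-1$ only $\FP_1$, where the unweighted Dehn function gives no information. I must make sure that \cref{thm: polynomial fillings from FP_n(E)} is applicable there, using that the weighted fillings in degrees $\le 1$ are automatically polynomial for finitely generated $\Gamma_\sigma$. I also need to check that the complex beyond degree $d$, which is not assumed cocompact, causes no trouble in \cref{thm: blow-up FPnE}; the condition there is void for $j>n$.
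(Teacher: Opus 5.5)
Your proposal is correct and follows the argument the paper intends; the paper calls the theorem an easy application of \cref{thm: blow-up FPnE}, \cref{cor: polynomial filling iff every free resolution bounded} and \cref{prop: FPn induction}. As you do, it writes $C_k(X;\coeffZ)$ as a finite sum of modules $\coeffZ[\Gamma/\Gamma_\sigma]=\induction^\Gamma_{\Gamma_\sigma}\coeffZ$, gets $\FP_{d-k}(\excat{\Gamma_\sigma})$ for $\coeffZ$ from the filling hypotheses, induces up, applies \cref{thm: blow-up FPnE}, and translates back to Dehn functions; your remarks on truncating above degree $d$, on weighted fillings in degrees $0$ and $1$, and on deducing $\FP_d(\coeffZ)$ for $\Gamma$ fill in details the paper leaves implicit.
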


\begin{rem}
    \Cref{thm: geometric brown theorem} deduces polynomiality of $\filling^n_\Gamma$ from
    the polynomiality of filling functions for vertex stabilizers.
    In~\cite{li+sanchez-saldana}*{Proposition 2.4, Remark 2.6} it is shown that one can deduce finiteness properties of stabilizers of $d$-cells from
    those of the stabilizers in the other dimensions together with the finiteness properties of $\Gamma$.
    Utilizing \cref{prop:2 out of 3} one can obtain similar results by the same techniques.
\end{rem}

The difficult part when trying to apply~\cref{thm: geometric brown theorem} lies in checking whether $C_\ast(X;\coeffZ) \onto \coeffZ$ is $(d-1)$\=/$\excat{\Gamma}$\=/connected, that is, whether it admits
a $\sfB_\Gamma$\=/bounded partial chain contraction up to degree $d-1$. We have seen that for groups this is equivalent
to $\filling_{\coeffZ,\Gamma}^k$ being polynomially bounded for $2 \leq k \leq d$, and one might expect the same to happen in the case of spaces.
For the weighted filling functions of $X$, a consequence of
\cref{cor: polynomial filling iff every free resolution bounded} is
the following \lcnamecref{prop: bounded chain contraction from filling}.

\begin{prop}\label{prop: bounded chain contraction from filling}
    Let $X$ be a $(d-1)$\=/acyclic $\Gamma$\=/simplicial complex. Then $\weightedfilling^i_{\coeffZ,X}$ is polynomial for all $i \leq d$ if and only if $C_\ast(X;\coeffZ)\onto \coeffZ$ is $(d-1)$\=/$\excat{\Gamma}$\=/connected.
\end{prop}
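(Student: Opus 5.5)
The plan is to reduce the statement to \cref{prop: bounded contraction from polynomial filling} (in the form of the subsequent remark for partially acyclic complexes) and \cref{prop: connected iff bounded contraction}, exactly as in the proof of \cref{cor: polynomial filling iff every free resolution bounded}. The cellular chain complex $C_\ast(X;\coeffZ)$ consists of permutation modules $\coeffZ[S_k]$, where $S_k$ is the set of $k$-simplices. By the convention on $\Gamma$-simplicial complexes, these are genuine permutation modules without orientation characters. Hence \cref{rem: integral boundedness permutation} applies to every $\coeffZ$-linear map between them.

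\emph{From polynomial weighted fillings to connectivity.} Write $X_\ast = C_\ast(X;\coeffZ)\onto\coeffZ$, augmented in degree $-1$. Since $X$ is $(d-1)$-acyclic, $\im\partial_{k+1} = \ker\partial_k$ for $k\le d-1$. I will build $h_{-1},\dots,h_{d-1}$ inductively as in \cref{prop: bounded contraction from polynomial filling}:
\begin{enumerate}
\item choose $h_{-1}$ sending $1$ to a fixed vertex;
\item for each simplex $e$ of dimension $k\le d-1$, let $h_k(e)$ be an almost optimal weighted filling of the cycle $e-h_{k-1}(\partial e)$.
\end{enumerate}
Polynomiality of $\weightedfilling^{k+1}_{\coeffZ,X}$ together with $1$-separation of $\coeffZ$ gives $\norm{h_k(e)}^\Gamma_1 \le C\bigl(\norm{e'}^\Gamma_1\bigr)^s$. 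Boundedness of $h_{k-1}$ gives $\norm{e'}^\Gamma_1\le \widetilde C\norm{e}^\Gamma_t$. By \cref{rem: integral boundedness permutation}, $h_k$ is then $\sfB_\Gamma$-bounded. Finally, \cref{prop: connected iff bounded contraction} yields $(d-1)$-$\excat{\Gamma}$-connectivity. No cocompactness is needed here, since the argument is purely basis-elementwise.

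\emph{Converse.} If the complex is $(d-1)$-$\excat{\Gamma}$-connected, \cref{prop: connected iff bounded contraction} gives a bounded partial contraction with $\partial h_k + h_{k-1}\partial=\id$ for $k\le d-1$. For a cycle $c\in\ker\partial_{i-1}$ with $1\le i\le d$, the chain $h_{i-1}(c)$ fills $c$. By \cref{rem:characterisation-bounded} we have $\norm{h_{i-1}(c)}^\Gamma_1\le C\norm{c}^\Gamma_r\le C\bigl(\norm{c}^\Gamma_1\bigr)^r$, using $1$-separation. Therefore $\weightedfilling^i_{\coeffZ,X}$ is polynomial; the case $i=0$ is trivial.

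The main obstacle is the permutation-module bookkeeping. The norms $\norm{\placeholder}^\Gamma_n$ on $\coeffZ[S_k]$ are defined via minimal word length of representatives of $\Gamma/\Gamma_\sigma$ for each orbit. I need \cref{rem: integral boundedness permutation} to cover infinitely many orbits; when the skeleta are not cocompact, this means checking that the $\ell^1$-sum of the $\norm{\placeholder}^{\Gamma/\Lambda_i}_n$ norms still describes the Cayley bornology. That follows from \cref{lem: coproducts are preserved} together with \cref{exa: bornology permutation modules}, and this is the step I would verify carefully first. A second point to check is that the partial $h$ coming from \cref{prop: connected iff bounded contraction} satisfies the homotopy identity up to degree $d-1$, which is exactly what is required for fillings up to dimension $d$.
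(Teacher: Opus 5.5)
Your reduction is the paper's: the proposition is derived there directly from \cref{cor: polynomial filling iff every free resolution bounded}, that is, from \cref{prop: connected iff bounded contraction} combined with \cref{prop: bounded contraction from polynomial filling} in its partially acyclic form. Your indexing ($h_k$ for $k\le d-1$, fillings up to degree $d$) is also correct.

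\textbf{The gap.} The step you single out at the end is not bookkeeping. It is false, and so is your claim that no cocompactness is needed. By \cref{lem: coproducts are preserved}, the Cayley bornology on $\coeffQ\otimes_\coeffZ C_k(X;\coeffZ)=\bigoplus_{i\in I}\coeffQ[\Gamma/\Gamma_i]$ is the direct sum bornology, so every bounded set lies in finitely many orbit summands. For infinite $I$ the $\ell^1$-sum of the orbit norms therefore does not describe this bornology. As a result, \cref{rem: integral boundedness permutation} and the analogue of \cref{rem:characterisation-bounded} fail in both directions.

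\emph{Forward direction.} Let $\Gamma=\bbZ=\langle t\rangle$ act on the graph with vertices $v_j$, $j\in\bbZ$, and an edge $\{v_j,v_{j+n}\}$ for all $j\in\bbZ$ and $n\ge 1$. The edges form one free orbit for each $n$, with base edge $\{v_0,v_n\}$ of weight $1$. Your recipe may choose $h_0(v_k)=\{v_0,v_k\}$ for $k\ge 1$; this is an optimal filling of $v_k-v_0$, with $\norm{h_0(v_k)}^\Gamma_1=1$. Take $0\ne q_k\in\coeffQ$ with $\abs{q_k}\le 2^{-k}$. The set $\{q_kv_k\mid k\ge 1\}$ is bounded, but $h_0$ maps it to a set meeting infinitely many summands, so $h_0$ is not $\sfB_\Gamma$-bounded. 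A bounded contraction does exist here, using only the orbit $n=1$, but your construction does not find it.

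\emph{Converse direction.} Here the statement itself breaks. Take $\Gamma$ trivial, $d=1$, and $X$ the infinite ray with vertices $v_0,v_1,\dots$. Every $\coeffQ$-linear map is then bounded by \cref{lem: every equivariant map is bounded}. So the contraction sending $v_n$ to the path from $v_0$ to $v_n$ shows that $C_\ast(X;\coeffZ)\onto\coeffZ$ is $0$-$\excat{\Gamma}$-connected. However, all weights equal $1$, and $v_n-v_0$ has norm $2$ while its only filling has norm $n$. Hence $\weightedfilling^1_{\coeffZ,X}$ is not even finite, and no estimate $\norm{h_0(c)}^\Gamma_1\le C\norm{c}^\Gamma_r$ can hold for any bounded contraction.

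\textbf{The fix.} The proposition has to be read with cocompact $d$-skeleton, which is the standing hypothesis in this part of the paper (compare \cref{thm: geometric brown theorem} and \cref{cor: exactness from geometric filling}). Then $C_i(X;\coeffZ)$ is a finitely generated permutation module for $i\le d$. That is exactly what \cref{rem: integral boundedness permutation} and \cref{rem:characterisation-bounded} require, and your argument then goes through verbatim. Drop the sentence asserting that cocompactness is unnecessary rather than trying to verify it.
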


Let $\Gamma$ be a finitely generated group. Let $X$ be a connected $\Gamma$\=/simplicial complex.
We endow $X$ with the path metric~$d$ that restricts to the standard Euclidean metric on each simplex. The $n$\=/skeleton $X^{(n)}$ fits into an equivariant pushout diagram
\[\begin{tikzcd}
    \bigsqcup_{i\in I_n} \Gamma/\Gamma_i\times \partial\Delta^n\ar[d,hook]\ar[r,hook] & X^{(n-1)}\ar[d, hook]\\
    \bigsqcup_{i\in I_n} \Gamma/\Gamma_i\times \Delta^n\ar[r,hook] & X^{(n)},
\end{tikzcd}\]
where $I_n$ is a set of orbit representatives of $n$-cells. For each dimension~$n$ we pick a base simplex among the $n$-simplices of $X$.
We declare and compare two weight functions on the set of $n$-simplices.

\begin{defn}
    Let $\Gamma$ be a finitely generated group, and let $X$ be a connected $\Gamma$\=/simplicial
    complex.
    For an $n$-simplex $\sigma$ of $X$ let $d_\sigma$ be the distance of its barycenter to the barycenter of the base $n$-simplex. The \emph{geometric weight function} $\geomweight$ on the set of $n$-simplices is defined by $\geomweight(\sigma)= 1+ d_\sigma$.

    The \emph{algebraic weight function} $\algweight$ on the set of $n$-simplices is defined by
    \[
        \algweight(\{\gamma\Gamma_i\}\times \Delta^n)= 1 + \min \{\ell_\Gamma(\gamma') \mid \gamma' \in \gamma\Gamma_i \}.
    \]
    Associated to each weight function $w \in \{\geomweight,\algweight\}$ we consider the family of norms
    \[
        \norm{\sum_{\sigma} a_\sigma \sigma}^w_k \coloneqq \sum_\sigma \abs{a_\sigma} \cdot w(\sigma)^k.
    \]
    Note that $\norm{\placeholder}^{\algweight}_n$ is the norm described in \cref{exa: bornology permutation modules}.

    For $k = 1$ the norms induce filling functions as in \cref{sec: filling functions}.
    In the case of the algebraic weight the corresponding functions are $\weightedfilling^n_{\coeffZ,X}$. For the geometric weight we denote the resulting filling functions
    by $\geomweightedfilling^n_{\coeffZ,X}$.
\end{defn}

\begin{lem}\label{lem: comparison geometric vs group-theoretic weights on cellular chains}
    Let $X$ be as before. Let $\geomweight, \algweight$ be the geometric and group-theoretic weight functions on the set of $n$-simplices of $X$. Assume that $X$ has cocompact $n$-skeleton.
    \begin{refenum}
        \item\label{lem: comparison weights: algebraic dominates geometric}
              There exists $C>1$ such that $\geomweight\le C\cdot \algweight$.
        \item\label{lem: comparison weights: geometric dominates algebraic iff locally finite}
              There exists $D>0$ such that $\algweight \le \geomweight + D$ if and only if $X$ is locally finite.
    \end{refenum}
\end{lem}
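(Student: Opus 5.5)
For \localref{lem: comparison weights: algebraic dominates geometric} I would use cocompactness of the $n$\=/skeleton together with the triangle inequality. Fix a finite generating set $S$ of $\Gamma$ and the base $n$\=/simplex $\sigma_0$ with barycenter $b_0$. Set
\[
    L\defq\max_{s\in S} d(s b_0,b_0)
\]
and
\[
    R\defq\max_{i\in I_n} d(b_0, b_i),
\]
where $b_i$ is the barycenter of the orbit representative $\{e\Gamma_i\}\times\Delta^n$. Both maxima are finite: $S$ is finite, and cocompactness makes $I_n$ finite. Since $\Gamma$ acts by isometries, $d(\gamma b_0,b_0)\le L\,\ell_\Gamma(\gamma)$. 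For $\sigma=\{\gamma\Gamma_i\}\times\Delta^n$ I choose $\gamma'\in\gamma\Gamma_i$ minimizing word length. Then $\sigma=\gamma'\sigma_i$, because $\Gamma_i$ fixes $\sigma_i$ pointwise by the $\Gamma$\=/simplicial convention. This gives
\[
    d_\sigma\le d(\gamma' b_i,\gamma' b_0)+d(\gamma' b_0,b_0)\le R+L\,\ell_\Gamma(\gamma'),
\]
and hence $\geomweight(\sigma)\le (1+R+L)\,\algweight(\sigma)$. This part needs neither local finiteness nor anything beyond cocompactness.

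For the \emph{only if} direction of \localref{lem: comparison weights: geometric dominates algebraic iff locally finite}, suppose $X$ is not locally finite. Then there is a simplex $\tau$ contained in infinitely many $n$\=/simplices (if $n$ is small, pass through cofaces and a fixed dimension). All these $n$\=/simplices have barycenters within distance $1$ of $\tau$, so their $\geomweight$ is uniformly bounded. By cocompactness, infinitely many of them lie in a single orbit $\Gamma/\Gamma_i$, so they correspond to infinitely many distinct cosets $\gamma\Gamma_i$. Only finitely many cosets have a representative of bounded word length, so $\algweight$ is unbounded on this family. Therefore no additive constant $D$ can work.

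For the \emph{if} direction, assume $X$ is locally finite. Combined with cocompactness of $X^{(n)}$, the relevant connected piece of $X$ (I would use $X^{(n)}$, or $X^{(1)}$ and push distances to it) is a proper geodesic space with a proper, cocompact, isometric $\Gamma$\=/action, because stabilizers of simplices are finite when $X$ is locally finite. By Švarc–Milnor, the orbit map $\gamma\mapsto\gamma b_0$ is a quasi-isometry. In particular
\[
    \ell_\Gamma(\gamma)\le A\, d(\gamma b_0,b_0)+B .
\]
For $\sigma=\gamma'\sigma_i$ this yields
\[
    \algweight(\sigma)\le 1+\ell_\Gamma(\gamma')\le 1+A(d_\sigma+R)+B .
\]

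The main obstacle is reducing this to the additive form $\algweight\le\geomweight+D$ as stated, that is, controlling the multiplicative Švarc–Milnor constant $A$. I would first try to get $A\le 1$ directly. To do this, I would choose the generating set of $\Gamma$ to be the set of $\gamma$ with $d(\gamma b_0,b_0)\le 3$ (a finite set by local finiteness). Then I would subdivide a geodesic from $b_0$ to $\gamma b_0$ into unit steps, each lying in a translate of a fixed compact fundamental region. The word length is then bounded by the number of steps, which is at most $d(\gamma b_0,b_0)+1$. Since the homological Dehn functions are insensitive to changing the word metric, only up to the $\approx$ relation, I regard this choice of word metric as the point to double-check. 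It is also where the additive bound, rather than a merely affine one, really comes from.
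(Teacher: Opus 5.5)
\textbf{Verdict: genuine gap.} Part (1) and the \emph{only if} half of part (2) are correct and essentially match the paper; the latter is the contrapositive of the paper's count of $n$-simplices near a given one.

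The \emph{if} half of (2) rests on your claim that local finiteness of $X$ forces simplex stabilizers to be finite. That claim is false: local finiteness constrains the complex, not the action. In the case the paper actually cares about, $\SL_n(\bbZ[1/p])$ acting on its locally finite Bruhat--Tits building, the stabilizers are commensurable with $\SL_n(\bbZ)$ and hence infinite. Once $\Gamma_{\sigma_0}$ is infinite, three things break:
\begin{itemize}
\item the action is not proper, so the \v{S}varc--Milnor lemma does not apply;
\item the bound $\ell_\Gamma(\gamma)\le A\,d(\gamma b_0,b_0)+B$ fails for long elements $\gamma\in\Gamma_{\sigma_0}$; it can only hold for suitable coset representatives;
\item your proposed repair fails too, since $\{\gamma\mid d(\gamma b_0,b_0)\le 3\}$ contains $\Gamma_{\sigma_0}$. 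So it is not a finite generating set and gives no word length of the kind used in $\algweight$.
\end{itemize}

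The missing idea is to track simplices (cosets) rather than group elements, since that is all $\algweight$ sees.
\begin{enumerate}
\item Fix a suitable constant $R\ge 1$. By local finiteness, only finitely many $n$-simplices lie within distance $3R$ of the finitely many orbit representatives.
\item Choose one coset representative for each of these simplices, add a finite generating set of $\Gamma$, and define $\algweight$ via the word metric of this finite set $S$.
\item Take a path from the barycenter of $\sigma_0$ to that of $\sigma$ of length at most $d_\sigma+R$. Cut it into $m\le d_\sigma+R+1$ pieces of length at most $R$, and choose $n$-simplices $\sigma_k$ within $R$ of the division points.
\item Inductively pick $s_k\in S$ such that $\delta_k\defq\delta_{k-1}s_k$ represents $\sigma_k$. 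This is possible because $\delta_{k-1}^{-1}\sigma_k$ is one of the finitely many simplices near a representative, and $\delta_{k-1}$ only matters modulo the stabilizer.
\end{enumerate}
This gives $\algweight(\sigma)\le 1+m\le \geomweight(\sigma)+R+1$, which is exactly the paper's proof.

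The paper also makes exactly this choice of word metric. So your instinct is right that the additive constant comes from choosing a generating set adapted to the geometry. But that set must be built from coset representatives, not from all elements moving $b_0$ a bounded distance. Separately, with unit steps your threshold $3$ would have to be enlarged to account for the diameter of the fundamental region.
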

\begin{proof}
    Ad~\localref{lem: comparison weights: algebraic dominates geometric}. Let $K\subset X$ be a compact subset of~$X$ consisting of the finitely many simplices $\{e\Gamma_i\}\times \Delta^n$, $i\in I_n$. Let
    \[
        R=\max\{ d(sx, y)\mid s\in S,x,y\in K\}<\infty.
    \]
    We may assume that $X$ has at least one edge, so $R \ge 1$.
    Then one easily sees that
    \[
        d_{\{\gamma\Gamma_i\}\times \Delta^n} \leq R+R\cdot \ell(\gamma) = R \cdot (1 + \ell(\gamma)),
    \]
    hence $\geomweight \le 1 + R \cdot \algweight \le (R + 1)\cdot \algweight$.

    Ad~\localref{lem: comparison weights: geometric dominates algebraic iff locally finite}.
    Assume that $X$ is locally finite.
    Then there are only finitely many $n$-simplices at
    distance at most~$3R$ from the base $n$-simplex~$\sigma_0$.
    Therefore, there is a finite subset $S\subset \Gamma$ such that if the distance of an $n$-simplex $\{\gamma\Gamma_i\}\times \Delta^n$ to the base $n$-simplex~$\sigma_0$ is at most~$3R$, then $\gamma=sh$ for some $s\in S$ and $h\in \Gamma_i$.
    By enlarging~$S$, we may assume that $S$ generates~$\Gamma$.
    We may also assume that the word metric and thus $\algweight$ are defined with respect to~$S$.

    Next, consider an $n$-simplex $\sigma$. Let $\alpha$ be a path (between barycenters) that approximates the distance of~$\sigma$ to $\sigma_0$ within~$R$, that is, $\ell(\alpha) \leq d_\sigma + R$. Let $m\in\bbN$ be such that $(m-1)R< \ell(\alpha)\le mR$.
    Then
    \[
        m \leq \frac{\ell(\alpha)}{R} + 1 \leq \frac{d_\sigma + R}{R} + 1 \leq d_\sigma + (R + 1).
    \]
    We divide the path~$\alpha$ by $(m+1)$ equally spaced points $x_0,\dots, x_m$, where $x_0$ is the barycenter of $\sigma_0$ and $x_m$ is the barycenter of $\sigma_m\defq\sigma$.
    For every other point $x_k$, $k\in\{1,\dots, m-1\}$, we pick an $n$-simplex $\sigma_k$ that is at most distance $R$ from~$x_k$. Set $\delta_0=e$.
    Since the distance of $\sigma_{k-1}$ and $\sigma_k$ is at most~$3R$, we can choose $s_k\in S$ inductively such that $\delta_k\defq \delta_{k-1}s_k$ represents $\sigma_k$.
    Hence, $\delta_m=s_1\cdots s_m$ represents $\sigma$, and therefore
    $\algweight(\sigma)\le 1+m \le \geomweight(\sigma) + (R + 1)$.

    Conversely, assume that $\algweight \leq \geomweight + D$ for some $D > 0$ and let $\sigma$ be an $n$-simplex.
    If $\tau$ is another $n$-simplex such that $d(\sigma, \tau) \leq 1$ we have $d_\tau \leq d_\sigma + d(\sigma,\tau) \leq d_\sigma + 1$ by the triangle inequality.
    Therefore, the set of $n$-simplices contained in the simplicial ball of radius $1$ around $\sigma$ is a subset of
    \begin{align*}
        \{ \tau \mid \geomweight(\tau) \leq \geomweight(\sigma) + 1\}
         &\subseteq \{\tau \mid \algweight(\tau) \leq \geomweight(\sigma) + D + 1\} \\
         &= \bigl\{\{\gamma\Gamma_i\} \times \Delta^n \mid \ell(\gamma) \leq \geomweight(\sigma) + D\bigr\}.
    \end{align*}
    The latter set has size at most $\size{I_n} \cdot \size{\{\gamma \mid \ell(\gamma) \leq \geomweight(\sigma) + D\}}$, which is finite as $\Gamma$ is finitely generated and acts cocompactly on $X^{(n)}$.
\end{proof}

\begin{cor}\label{cor: cellular chain complex exact}
    Let $X$ be a $(d-1)$\=/$\coeffZ$\=/acyclic $\Gamma$\=/simplicial complex with cocompact $d$\=/skeleton. If $X$ is locally finite, then $\geomweightedfilling^d_{\coeffZ,X}$ is equivalent to $\weightedfilling^d_{\coeffZ,X}$.
\end{cor}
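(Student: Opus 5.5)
The plan is to reduce the statement to \cref{lem: comparison geometric vs group-theoretic weights on cellular chains}, applied in the two relevant dimensions $n = d-1$ and $n = d$. Both $\geomweightedfilling^d_{\coeffZ,X}$ and $\weightedfilling^d_{\coeffZ,X}$ are built from the same boundary map $\partial_d \colon C_d(X;\coeffZ) \to C_{d-1}(X;\coeffZ)$; the only difference is which weighted $\ell^1$-norm, $\norm{\placeholder}^{\geomweight}_1$ or $\norm{\placeholder}^{\algweight}_1$, is put on the chains. So it suffices to show that these two norms are bi-Lipschitz equivalent on $C_{d}$ and on $C_{d-1}$, with constants independent of the chain. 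The hypotheses of the lemma hold in both degrees: the $d$-skeleton is cocompact, hence so is the $(d-1)$-skeleton, and $X$ is locally finite.

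\emph{Step 1: comparing the weights.} By part \localref{lem: comparison weights: algebraic dominates geometric} of the lemma we get $\geomweight \le C\,\algweight$. By part \localref{lem: comparison weights: geometric dominates algebraic iff locally finite}, using local finiteness, we get $\algweight \le \geomweight + D$. Since $\geomweight \ge 1$ by definition, the additive bound upgrades to a multiplicative one:
\[
    \algweight \le (1+D)\,\geomweight .
\]
We take the constants to be the maximum over the two dimensions $d-1$ and $d$. The choice of base simplices only changes constants, and this is already absorbed in the lemma.

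\emph{Step 2: comparing the norms.} Since both norms are $\ell^1$-sums of $\abs{a_\sigma}$ weighted by $w(\sigma)$, Step 1 gives, for every chain $c$ in degree $d-1$ or $d$,
\[
    \norm{c}^{\geomweight}_1 \le C\,\norm{c}^{\algweight}_1
    \quad\text{and}\quad
    \norm{c}^{\algweight}_1 \le (1+D)\,\norm{c}^{\geomweight}_1 .
\]

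\emph{Step 3: comparing the filling functions.} Fix a cycle $c \in \im \partial_d$. Every filling $b$ of $c$ satisfies $\norm{b}^{\algweight}_1 \le (1+D)\norm{b}^{\geomweight}_1$. Taking the infimum over all fillings $b$, the weighted filling volume of $c$ for $\algweight$ is at most $(1+D)$ times its weighted filling volume for $\geomweight$. Next, if $\norm{c}^{\algweight}_1 \le v$, then $\norm{c}^{\geomweight}_1 \le Cv$. Taking the supremum over such $c$ gives
\[
    \weightedfilling^d_{\coeffZ,X}(v) \le (1+D)\,\geomweightedfilling^d_{\coeffZ,X}(Cv).
\]
The symmetric argument gives
\[
    \geomweightedfilling^d_{\coeffZ,X}(v) \le C\,\weightedfilling^d_{\coeffZ,X}\bigl((1+D)v\bigr).
\]
These two inequalities are exactly $\preccurlyeq$ in both directions in the sense of \cref{def:linear-equivalence}, so the two functions are $\approx$-equivalent.

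There is no real obstacle here. The only points needing care are the conversion of the additive bound $\algweight \le \geomweight + D$ into a multiplicative one, which uses $\geomweight \ge 1$, and checking that the lemma applies simultaneously in degrees $d-1$ and $d$. Both are handled by cocompactness of the $d$-skeleton together with local finiteness.
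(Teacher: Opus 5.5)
Your proposal is correct and is the argument the paper intends. The paper states the corollary without proof, as an immediate consequence of the weight-comparison lemma applied in degrees $d-1$ and $d$: the additive bound $\algweight \le \geomweight + D$ becomes multiplicative via $\geomweight \ge 1$, which makes the two weighted norms bi-Lipschitz and hence the filling functions $\approx$-equivalent (the lemma's choice of generating set only changes $\algweight$ by multiplicative constants, so it is harmless as you note).
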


The following \lcnamecref{lem: polynomial equivalence} is the geometric
analog of~\cref{prop: group polynomial equivalence}.
It is proved for free $\Gamma$\=/complexes in~\citelist{\cite{ji+ramsey}*{Corollary~2.5}\cite{bader+sauer}*{Lemma~6.7}}.
Since the proofs do not use freeness, the result also holds for $\Gamma$\=/complexes with non-trivial stabilizers.

\begin{lem}[\citelist{\cite{ji+ramsey}*{Corollary~2.5}\cite{bader+sauer}*{Lemma~6.7}}]\label{lem: polynomial equivalence}
    Let $X$ be a $(d-1)$\=/$\coeffZ$\=/acyclic simplicial $\Gamma$\=/complex with cocompact $d$\=/skeleton. For $2 \le i\le d$, the function $\filling_{\coeffZ,X}^i$ is polynomial if and only if $\geomweightedfilling^i_{\coeffZ,X}$ is polynomial.
\end{lem}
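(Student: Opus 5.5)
The plan is to compare the two filling functions directly, using two elementary facts. First, since $\coeffZ$ is $1$\=/separated, the number of simplices in the support of an integral chain is at most its $\ell^1$\=/norm. Second, both the group-theoretic filling problem and the geometric weights interact well with the decomposition of chains into \emph{connected pieces}. Call two $k$\=/simplices adjacent if they share a $(k-1)$\=/face. If $z$ is a chain and $z=\sum_j z_j$ is the decomposition according to the adjacency components of its support, then the boundaries $\partial z_j$ have pairwise disjoint supports. This holds because a $(k-1)$\=/face occurring in $\partial z_j$ and $\partial z_{j'}$ would make the two components adjacent. Consequently, if $z$ is an $(i-1)$\=/cycle with $i\ge 2$, each $z_j$ is itself a cycle and $\norm{z}=\sum_j\norm{z_j}$. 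Throughout, $R$ denotes a constant bounding the diameters of simplices and the distance from the base simplex to some simplex in every $\Gamma$\=/orbit of $i$\=/ and $(i-1)$\=/simplices. Such an $R$ exists by cocompactness of the $d$\=/skeleton.

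\emph{Step 1: $\filling^i_{\coeffZ,X}$ polynomial implies $\geomweightedfilling^i_{\coeffZ,X}$ polynomial.}
\begin{itemize}
\item Let $c$ be a cycle with $\norm{c}^{\geomweight}_1\le v$. Then $\norm{c}\le v$, and every simplex of $c$ lies within distance $v$ of the base.
\item Choose a filling $b$ with $\norm{b}\le \filling^i_{\coeffZ,X}(v)+1$, and decompose $b$ into adjacency components $b_j$.
\item By the disjointness remark, $\partial b_j = c|_{\mathrm{supp}\,\partial b_j}$. Any component whose boundary avoids $\mathrm{supp}(c)$ is therefore a cycle and can be discarded; this yields a filling of no larger norm.
\item Each surviving component has a face in $\mathrm{supp}(c)$. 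It is a chain of at most $\norm{b}$ simplices, each consecutive pair at distance at most $2R$.
\item Hence every simplex of the reduced filling has $\geomweight\le 1+v+2R\norm{b}$, and $\norm{b}^{\geomweight}_1\le \norm{b}\,(1+v+2R\norm{b})$. This is polynomial in $v$.
\end{itemize}

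\emph{Step 2: $\geomweightedfilling^i_{\coeffZ,X}$ polynomial implies $\filling^i_{\coeffZ,X}$ polynomial.}
\begin{itemize}
\item Let $c$ be a cycle with $\norm{c}\le v$, and decompose it into adjacency components $c_j$. Each $c_j$ is a cycle, and $\sum_j v_j\le v$ where $v_j=\norm{c_j}$.
\item The support of $c_j$ consists of at most $v_j$ simplices, chained through shared faces, so it has diameter at most $2Rv_j$.
\item Since $\Gamma$ acts by isometries, we may translate $c_j$ by some $\gamma_j\in\Gamma$ so that one of its simplices lies within $R$ of the base. Then $\norm{\gamma_jc_j}^{\geomweight}_1\le v_j(1+R+2Rv_j)$.
\item By hypothesis, $\gamma_jc_j$ has a filling $b_j$ with $\norm{b_j}\le\norm{b_j}^{\geomweight}_1\le \geomweightedfilling^i_{\coeffZ,X}\bigl(v_j(1+R+2Rv_j)\bigr)+1$. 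Translating back, $\gamma_j^{-1}b_j$ fills $c_j$ with the same $\ell^1$\=/norm.
\item Summing over $j$, $\filling^i_{\coeffZ,X}(c)\le \sum_j p(v_j)$ for a polynomial $p$ of the form $C\,t^k$ with $k\ge1$; the additive constants are absorbed using $v_j\ge 1$. By superadditivity, $\sum_j p(v_j)\le p(v)$.
\end{itemize}

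The main obstacle is the component bookkeeping in both steps. One must check that adjacency components of a cycle are cycles, and that discarding boundaryless components of a filling leaves a filling. This needs $i\ge2$, so that the faces involved are genuine $(i-2)$\=/ and $(i-1)$\=/cells, and it needs integrality, so that the count of simplices is controlled by the $\ell^1$\=/norm. Local finiteness and freeness are never used, only cocompactness of the $d$\=/skeleton and the isometric action. This is why the argument of~\cite{ji+ramsey,bader+sauer} carries over to non-trivial stabilizers.
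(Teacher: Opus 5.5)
Your proof is correct and is essentially the standard argument of Ji--Ramsey and Bader--Sauer, which the paper cites rather than reproduces. That argument decomposes cycles and fillings into adjacency components, uses $1$-separation of $\coeffZ$ to bound the diameter of a connected piece linearly by its $\ell^1$-norm, and uses $\Gamma$-invariance of the unweighted norm to translate each piece next to the base simplex. Your closing remark that only cocompactness of the $d$-skeleton and the isometric action are used is exactly the paper's justification for extending the lemma to actions with non-trivial stabilizers. The bookkeeping needs only harmless extra constants, such as the distance between the base $i$- and $(i-1)$-simplices in Step~1.
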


\begin{cor}\label{cor: exactness from geometric filling}
    Let $X$ be a locally finite $(d-1)$\=/$\coeffZ$\=/acyclic $\Gamma$\=/simplicial complex with cocompact $d$\=/skeleton. Then $\filling^i_{\coeffZ,X}$ is polynomial for all $2 \leq i \leq d$ if and only if $C_\ast(X;\coeffZ) \onto \coeffZ$ is $(d-1)$\=/$\excat{\Gamma}$\=/connected.
\end{cor}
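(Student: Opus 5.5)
The plan is to chain three earlier comparison results, passing through weighted filling functions. By \cref{prop: bounded chain contraction from filling}, the complex $C_\ast(X;\coeffZ)\onto\coeffZ$ is $(d-1)$\=/$\excat{\Gamma}$\=/connected if and only if $\weightedfilling^i_{\coeffZ,X}$ is polynomial for all $i\le d$. So it suffices to show the following: $\filling^i_{\coeffZ,X}$ is polynomial for all $2\le i\le d$ if and only if $\weightedfilling^i_{\coeffZ,X}$ is polynomial for all $i\le d$.

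I will make this comparison in two steps, using the geometric weighted filling functions as an intermediary.
\begin{enumerate}
\item By \cref{lem: polynomial equivalence}, for $2\le i\le d$, $\filling^i_{\coeffZ,X}$ is polynomial if and only if $\geomweightedfilling^i_{\coeffZ,X}$ is polynomial.
\item I then compare $\geomweightedfilling^i_{\coeffZ,X}$ with $\weightedfilling^i_{\coeffZ,X}$. Since $X$ is locally finite with cocompact $d$\=/skeleton, \cref{lem: comparison geometric vs group-theoretic weights on cellular chains} applies in each dimension $i\le d$. It gives $\geomweight\le C\algweight$ and $\algweight\le\geomweight+D$. Because all weights are at least $1$, the second bound gives $\algweight\le(1+D)\geomweight$.
\end{enumerate}
Hence the norms $\norm{\placeholder}^{\geomweight}_1$ and $\norm{\placeholder}^{\algweight}_1$ are bi-Lipschitz equivalent on $i$\=/chains and on $(i-1)$\=/chains. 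The two weighted filling functions therefore dominate each other up to multiplicative constants in the argument and the value, so they are $\approx$\=/equivalent. This is exactly the content of \cref{cor: cellular chain complex exact}, but it is needed in every degree $i\le d$ and not only $i=d$. The same argument works for each $i$, because the $i$\=/skeleton is cocompact for $i\le d$.

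It remains to handle the low degrees $i=0,1$, which the hypothesis on $\filling$ does not cover but the weighted statement requires.
\begin{itemize}
\item For $i=0$ the weighted filling function is linear: fill a point by its preimage under the augmentation, or observe that a $0$\=/chain basis element has weight at least $1$.
\item For $i=1$ we need a polynomial bound on $\weightedfilling^1_{\coeffZ,X}$ unconditionally. This is the main obstacle. I would argue directly: a $0$\=/cycle $\sum a_v v$ with $\sum a_v=0$ is filled by joining each vertex $v$ to the base vertex by a geodesic edge path in $X^{(1)}$. Since $X$ is connected, locally finite and $\Gamma$ acts cocompactly on $X^{(1)}$, such a path has at most $c\cdot\geomweight(v)$ edges, each of geometric weight at most $\geomweight(v)+c$. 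The filling then has $\norm{\placeholder}^{\geomweight}_1$ norm bounded quadratically in the $\norm{\placeholder}^{\geomweight}_1$ norm of the cycle. Transferring to $\algweight$ via step 2 gives a quadratic bound on $\weightedfilling^1_{\coeffZ,X}$, as in \cite{weis}*{Remark 4.7}.
\end{itemize}
Combining the two steps in degrees $2\le i\le d$ with the automatic bounds in degrees $0$ and $1$, the characterisation of \cref{prop: bounded chain contraction from filling} gives both directions of the corollary.
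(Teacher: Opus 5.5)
Your proposal is correct and matches the argument the paper intends. The paper states the corollary without proof, as the combination of \cref{prop: bounded chain contraction from filling}, the weight comparison \cref{lem: comparison geometric vs group-theoretic weights on cellular chains} (as in \cref{cor: cellular chain complex exact}, but applied in every degree $i\le d$), and \cref{lem: polynomial equivalence}. Your explicit treatment of degrees $i=0,1$ fills in a point the paper leaves implicit; note only that edges on a combinatorial geodesic to $v$ have geometric weight bounded by $c\cdot\geomweight(v)$ rather than $\geomweight(v)+c$, which still gives the quadratic bound.
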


By a \CAT{0}\=/\emph{simplicial complex} we mean a $M_0$-simplicial complex in the sense of Bridson-Haefliger~\cite{bridson+haefliger}*{Chapter~I.7}.

\begin{cor}\label{cor: CAT(0)}
    Let $X$ be a locally finite \CAT{0}\=/simplicial complex that admits a cocompact simplicial group action by a finitely generated group $\Gamma$. Then $C_\ast(X;\bbZ)$ is an $\excat{\Gamma}$\=/resolution of~$\bbZ$.
\end{cor}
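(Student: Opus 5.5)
We want to show: for a locally finite \CAT{0}\=/simplicial complex $X$ with a cocompact simplicial $\Gamma$\=/action, $C_\ast(X;\bbZ)\onto\bbZ$ is an $\excat{\Gamma}$\=/resolution, i.e.\ $\infty$\=/$\excat{\Gamma}$\=/acyclic. The plan is to reduce to polynomial filling estimates via \cref{cor: exactness from geometric filling} and \cref{prop: connected iff bounded contraction}.

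\textbf{Step 1: acyclicity and finiteness.} Since $X$ is \CAT{0}, it is contractible, so $C_\ast(X;\bbZ)\onto\bbZ$ is exact and $X$ is $(d-1)$\=/$\bbZ$\=/acyclic for every $d$. Cocompactness together with local finiteness gives finitely many orbits of simplices. Finite shapes (\cite{bridson+haefliger}*{I.7}) then make $X$ a complete geodesic \CAT{0} space of finite dimension $N$. In particular every skeleton is cocompact.

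\textbf{Step 2: polynomial fillings.} For each $2\le i\le d$ we show that $\filling^i_{\bbZ,X}$ is polynomial. The standard route is the geodesic coning construction on a \CAT{0} space.
\begin{enumerate}
\item Given a cellular $(i-1)$\=/cycle $c$, cone it off to a vertex $v$ lying on its support, using geodesic contraction.
\item Uniqueness and continuity of geodesics make this a Lipschitz chain.
\item The diameter of the support is at most a constant times $\norm{c}$, because the support is a union of at most $\norm{c}$ simplices and the relevant parts are connected. If the support is disconnected, cone each component separately; each is itself a cycle after choosing a vertex.
\item The cone therefore has mass $\lesssim \operatorname{diam}\cdot\norm{c}\lesssim\norm{c}^2$.
\end{enumerate}
Federer--Fleming deformation (\cite{abrams_etal}*{Section~2}, \cite{epstein}*{10.3}) pushes this Lipschitz filling back to a cellular filling with comparable mass. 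This step uses local finiteness and finitely many shapes, which give bounded geometry. The result is $\filling^i_{\bbZ,X}(v)\preccurlyeq v^2$.

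\textbf{Step 3: conclude for degrees $\le d$.} Apply \cref{cor: exactness from geometric filling} with $\coeffZ=\bbZ$. This shows that $C_\ast(X;\bbZ)\onto\bbZ$ is $(d-1)$\=/$\excat{\Gamma}$\=/connected for every $d$. Since the maximal dimension is $N$, the complex vanishes above $N$, so taking $d=N+1$ gives full $\infty$\=/connectivity. Because the category is idempotent complete, connectivity coincides with acyclicity, so the complex is an $\excat{\Gamma}$\=/resolution.

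If one prefers, the contraction can be built directly. Coning from a fixed base vertex yields a contraction $h$ with $\norm{h(\sigma)}\le C(1+d_\sigma)^{k}$, which is $\sfB_\Gamma$\=/bounded by \cref{lem: comparison geometric vs group-theoretic weights on cellular chains} and \cref{rem: integral boundedness permutation}. \Cref{prop: connected iff bounded contraction} then applies.

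The main obstacle is Step 2: making the coning-plus-pushing argument rigorous with polynomial control in each dimension. I would follow the standard \CAT{0} isoperimetric argument, quoting \cite{bridson+haefliger} and the Federer--Fleming comparison rather than reproving them.
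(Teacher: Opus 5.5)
Your proposal is correct and follows the paper's route: you show that the geometric filling functions of the \CAT{0} complex are polynomial, transfer this to the cellular $\filling^i_{\bbZ,X}$ via Federer--Fleming, and then invoke \cref{cor: exactness from geometric filling}. The only differences are that you prove the polynomial bound by the geodesic cone argument, where the paper cites Gromov and Wenger, and that your bookkeeping (taking $d=N+1$, and using idempotent completeness to pass from connectivity to acyclicity) spells out what the paper's one-line conclusion leaves implicit.
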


\begin{proof}
    By work of Gromov~\cite{gromov-filling}*{2.3} and Wenger~\cite{wenger}, the higher filling functions of a \CAT{0}-space, defined in terms of singular Lipschitz cycles and Hausdorff volume, are polynomial in all degrees $i \geq 2$. By~\citelist{\cite{abrams_etal}*{Section~2}\cite{epstein}*{10.3}}, these geometric filling functions coincide with their combinatorial counterparts $\filling_{\bbZ,X}^i$ under the assumptions on~$X$. Therefore, $\filling_{\bbZ,X}^i$ is polynomial for every $i\ge 2$.
    \Cref{cor: exactness from geometric filling} now implies the statement.
\end{proof}

We do not know whether \cref{cor: exactness from geometric filling} can be extended to complexes which are not locally finite. In~\cite{webb} Webb gives examples of
locally infinite complexes $X$ which are hyperbolic and contractible but do not admit any combinatorial
isoperimetric inequality. In our notation this means that $\filling^2_X$ does not
take finite values. While this does not give an immediate counterexample
to a version of \cref{cor: exactness from geometric filling} for locally infinite
complexes, it highlights the pathologies one needs to take into consideration.
We therefore ask the following question, to which we expect a negative answer in general.
A positive answer for $1$\=/dimensional complexes is given in \cref{sec: graphs of groups}.

\begin{question}\label{question: polynomial fillings imply connected chain complex}
    Let $X$ be a non-locally finite $(d-1)$\=/$\coeffZ$\=/acyclic $\Gamma$\=/simplicial complex with cocompact $d$\=/skeleton.
    Does polynomiality of $\filling_{\coeffZ,X}^i$ for $2 \leq i \leq d$ imply that $C_\ast(X;\coeffZ) \onto \coeffZ$ is $(d-1)$\=/$\excat{\Gamma}$\=/connected?
\end{question}

In view of~\cref{lem: polynomial equivalence} it would suffice to show that polynomiality of $\geomweightedfilling_{\coeffZ,X}^i$ and $\weightedfilling_{\coeffZ,X}^i$ are equivalent.

\begin{question}
    Let $X$ be a non-locally finite $(d-1)$\=/$\coeffZ$\=/acyclic $\Gamma$\=/simplicial complex with cocompact $d$\=/skeleton.
    Is $\weightedfilling_{\coeffZ,X}^i$ polynomial if and only if $\geomweightedfilling_{\coeffZ,X}^i$ is polynomial?
\end{question}

\section{Application to group extensions}
In the following we consider extensions $N \hookrightarrow G \onto Q$ of finitely generated groups.
We say an extension is \emph{polynomial} if $N$ is at most polynomially distorted in $G$.

\begin{thm}\label{thm: extension outer to inner}
    Let $N \hookrightarrow G \onto Q$ be a polynomial extension and $n \geq 0$. Assume
    \begin{condenum}
        \item $N$ is of type $\FP_n(\coeffZ)$ and $\weightedfilling^i_{\coeffZ,N}$ is polynomial for all $i \leq n$;
        \item $Q$ is of type $\FP_n(\coeffZ)$ and $\weightedfilling^i_{\coeffZ,Q}$ is polynomial for all $i \leq n$.
    \end{condenum}
    Then $G$ is of type $\FP_n(\coeffZ)$ and $\weightedfilling^i_{\coeffZ,G}$ is polynomial for all $i \leq n$.
\end{thm}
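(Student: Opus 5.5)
By \cref{thm: polynomial fillings from FP_n(E)}, the hypotheses say that $\coeffZ$ is of type $\FP_n(\excat{N})$ and of type $\FP_n(\excat{Q})$, and the conclusion is equivalent to $\coeffZ$ being of type $\FP_n(\excat{G})$. So the plan is to prove the exact-category statement, in the style of the classical proof that $\FP_n$ passes to extensions. We resolve $\coeffZ$ over $G$ by a complex whose chain modules are of type $\FP_{n-j}(\excat{G})$, and then apply \cref{thm: blow-up FPnE}.

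First, take an $\FL_n(\excat{Q})$-resolution $L_\ast \onto \coeffZ$ by finitely generated free $\coeffZ[Q]$-modules; it exists by \cref{thm: FP_n equivalent to FL_n}. We view it as a complex of $\coeffZ[G]$-modules via $\restriction^Q_G$. By \cref{prop: bornological restriction exact}, restriction along the surjection $G \onto Q$ preserves admissible sequences. Hence $\restriction^Q_G L_\ast \onto \coeffZ$ is still $(n-1)$-$\excat{G}$-connected; equivalently, the bounded partial contraction of \cref{prop: connected iff bounded contraction} remains bounded. Its chain modules in degrees $j\le n$ are finite sums of $\coeffZ[G/N] \cong \induction^G_N\coeffZ$.

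Second, we show that $\induction^G_N \coeffZ$ is of type $\FP_n(\excat{G})$. Since $\coeffZ$ is of type $\FP_n(\excat{N})$ and $N$ is finitely generated and at most polynomially distorted in $G$, this is exactly \cref{prop: FPn induction}. Finite direct sums preserve this property, for instance by \cref{prop:2 out of 3:extension} applied to split sequences, which are admissible. So each $\restriction^Q_G L_j$ for $j\le n$ is of type $\FP_n(\excat{G})$, and in particular of type $\FP_{n-j}(\excat{G})$.

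Third, apply \cref{thm: blow-up FPnE} to $M=\coeffZ$ with the resolution $C=\restriction^Q_G L_\ast$. This gives that $\coeffZ$ is of type $\FP_n(\excat{G})$. It also gives type $\FP_n(\coeffZ)$ for $G$, since the $\excat{G}$-resolution is in particular an ordinary projective resolution with finitely generated modules up to degree $n$. Then \cref{thm: polynomial fillings from FP_n(E)} finishes the proof.

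Two technical points need care. One is finite generation of $G$, which is implicit in the standing convention that extensions consist of finitely generated groups. The other, which I expect to be the main obstacle, concerns the degrees above $n$ in \cref{thm: blow-up FPnE}: we need $\restriction^Q_G L_\ast$ to be a genuine resolution in all degrees, with no finiteness required there. This can be arranged by extending $L_\ast$ arbitrarily by free modules above degree $n$; the type $\FP_{n-j}$ condition is void for $j>n$. Finally, we must double-check the hypothesis of \cref{thm: bornological induction} in the form used in \cref{prop: FPn induction}: the middle and left modules must be $\sfB_N$-boundedly finitely generated, and this holds because every module in an $\FP_n(\excat{N})$-resolution is finitely generated projective.
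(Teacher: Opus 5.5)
Your proposal is correct and follows the paper's proof essentially step for step. You translate via \cref{thm: polynomial fillings from FP_n(E)}, restrict a free $\coeffZ[Q]$-resolution that is finitely generated up to degree $n$ and $(n-1)$-$\excat{Q}$-connected to $G$ using \cref{prop: bornological restriction exact}, recognize its low-degree chain modules as finite sums of $\coeffZ[G/N]=\induction^G_N(\coeffZ)$, which is of type $\FP_n(\excat{G})$ by \cref{prop: FPn induction}, and conclude with \cref{thm: blow-up FPnE}. The extra details you supply (finite direct sums via the 2-out-of-3 property, deducing type $\FP_n(\coeffZ)$ for $G$ before invoking the equivalence, and taking $L_\ast$ to be a full resolution) are what the paper leaves implicit. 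Your closing check of the hypotheses of \cref{prop: FPn induction} is unnecessary, since you only quote that proposition, and it is slightly misphrased: the left terms of the relevant admissible sequences are kernels, not the projective modules themselves.
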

\begin{proof}
    By~\cref{thm: polynomial fillings from FP_n(E)}, $\coeffZ$ is both of type $\FP_n(\excat{Q})$ and $\FP_n(\excat{N})$.
    Then there exists a free $\coeffZ[Q]$\=/resolution $\varepsilon \colon L \onto \coeffZ$ such that $L_k$ is finitely generated
    for $k \leq n$ and $\varepsilon$ is $(n - 1)$\=/$\excat{Q}$\=/connected.
    Let $C = \restriction^Q_G(L)$. By~\cref{prop: bornological restriction exact}, $C$ is $(n-1)$\=/$\excat{G}$\=/connected.
    As $\coeffZ[G]$\=/modules $C_k = \bigoplus_{I_k} \coeffZ[G/N]$.
    For $k \leq n$, $L_k$ is finitely generated, so the index set $I_k$ is finite.
    It follows from~\cref{prop: FPn induction} that $\induction_N^G(\coeffZ) = \coeffZ[G/N]$ is of type $\FP_n(\excat{G})$.
    In particular, $C_k$ is of type $\FP_n(\excat{G})$ if $k \leq n$. Applying~\cref{thm: blow-up FPnE} completes the proof.
\end{proof}

\begin{thm}\label{thm: extension left to right}
    Let $N \hookrightarrow G \onto Q$ be a polynomial extension and $n \geq 0$. Assume
    \begin{condenum}
        \item $N$ is of type $\FP_{n-1}(\coeffZ)$ and $\weightedfilling^i_{\coeffZ,N}$ is polynomial for all $i \leq n-1$;
        \item $G$ is of type $\FP_n(\coeffZ)$ and $\weightedfilling^i_{\coeffZ,G}$ is polynomial for all $i \leq n$.
    \end{condenum}
    Then $Q$ is of type $\FP_n(\coeffZ)$ and $\weightedfilling^i_{\coeffZ,Q}$ is polynomial for all $i \leq n$.
\end{thm}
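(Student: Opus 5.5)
Following the pattern of \cref{thm: extension outer to inner}, the proof runs in the category of $\coeffZ[G]$\=/modules. The aim is to show that $\coeffZ[G/N]=\restriction^Q_G(\coeffZ[Q])$ sits in an admissible short exact sequence whose other terms have good finiteness properties, and then to transfer back to $Q$.

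\emph{Step 1: reformulate the hypotheses.} By \cref{thm: polynomial fillings from FP_n(E)}, the trivial module $\coeffZ$ is of type $\FP_{n-1}(\excat{N})$ and of type $\FP_n(\excat{G})$. By \cref{prop: FPn induction}, applied to the polynomially distorted subgroup $N$, the module $\induction^G_N(\coeffZ)=\coeffZ[G/N]$ is of type $\FP_{n-1}(\excat{G})$.

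\emph{Step 2: the augmentation sequence.} Consider the augmentation sequence $0\to I\to \coeffZ[G/N]\to \coeffZ\to 0$ of $\coeffZ[G]$\=/modules. It is admissible in $\excat{G}$: the section $\coeffZ\to\coeffZ[G/N]$, $1\mapsto eN$, is $\sfB_G$\=/bounded, so \cref{lem: mono with bounded retraction is admissible} (in its dual form, via $r=\id-s\circ\varepsilon$) applies. Since $\coeffZ[G/N]$ is of type $\FP_{n-1}(\excat{G})$ and $\coeffZ$ is of type $\FP_n(\excat{G})$, \cref{prop:2 out of 3:kernel} shows that $I$ is of type $\FP_{n-1}(\excat{G})$.

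\emph{Step 3: pass to $Q$.} The $\coeffZ[G]$\=/modules involved are all inflated from $\coeffZ[Q]$\=/modules: $I=\restriction^Q_G(I_Q)$ for the augmentation ideal $I_Q$ of $Q$. I will transfer the finiteness property to $Q$. A cleaner route is to take an $\FP_n(\excat{G})$\=/resolution $P\onto\coeffZ$ by finitely generated free $\coeffZ[G]$\=/modules and apply the coinvariants functor $\coeffZ[Q]\otimes_{\coeffZ[G]}-=(-)_N$. This produces finitely generated free $\coeffZ[Q]$\=/modules $P_N$, and the sequence $P_N\to\coeffZ$ is exact only up to the homology $H_\ast(N;P)$. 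Its failure is controlled by $N$, which is of type $\FP_{n-1}(\excat{N})$. Concretely, I will use \cref{thm: blow-up} in the reverse direction. View $P_N\cong \coeffZ[G/N]\otimes_{\coeffZ[G]}P$, and note that by \cref{prop: bornological restriction exact} a $\coeffZ$\=/linear map between $Q$\=/modules is $\sfB_Q$\=/bounded if and only if it is $\sfB_G$\=/bounded after restriction.

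The argument then goes as follows. The double complex $C_\ast(N)\otimes P$, where $C_\ast(N)=\induction^G_N$ of a good $N$\=/resolution, gives an $(n-1)$\=/$\excat{G}$\=/connected comparison $\induction^G_N(\widetilde L)\otimes P\to P_N$ up to degree $n$, with $\widetilde L$ a free resolution of $N$. Boundedness of all homotopies follows from \cref{thm: bornological induction}. \cref{prop: connected iff bounded contraction} then converts the $\sfB_G$\=/bounded contraction of $\coeffZ$ into a $\sfB_Q$\=/bounded partial contraction of $P_N\onto\coeffZ$ up to degree $n-1$. This is done by pushing the contraction through $\coeffZ[G]\to\coeffZ[Q]$ and correcting the defect with bounded fillings in $N$, which exist in degrees $\le n-1$ by the $N$ hypothesis.

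\emph{Step 4: conclude.} We obtain that $\coeffZ$ is of type $\FP_n(\excat{Q})$, so $Q$ is of type $\FP_n(\coeffZ)$ by forgetting boundedness, and \cref{thm: polynomial fillings from FP_n(E)} gives the weighted filling statement.

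The main obstacle is Step 3: converting $\excat{G}$\=/information about inflated modules into $\excat{Q}$\=/information while losing exactly one degree. This is where the degree shift between $N$ (type $n-1$) and $Q$ (type $n$) must be accounted for. If the direct route fails, I would instead show that $I_Q$ is of type $\FP_{n-1}(\excat{Q})$ by lifting, using \cref{prop: bornological restriction exact} for boundedness. Combined with the sequence $I_Q\injectarr\coeffZ[Q]\onto\coeffZ$ and \cref{prop:2 out of 3:quotient}, this gives $\FP_n(\excat{Q})$ for $\coeffZ$.
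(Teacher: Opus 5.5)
\emph{Gap: Step~3 fails as proposed, and the fallback omits the induction it needs.}

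\textbf{The coinvariants route cannot work.} Your Step~3, which you rightly single out as the heart of the matter, fails. Restricted to $N$, $P$ is a free $\coeffZ[N]$-resolution of $\coeffZ$. Hence $P_N=\coeffZ\otimes_{\coeffZ[N]}P$ has homology $H_\ast(N;\coeffZ)$, which is in general nonzero in some degree between $1$ and $n-1$. An example satisfying all hypotheses is $N=\bbZ$ central in the integral Heisenberg group: the extension is polynomial (quadratic distortion) and $H_1(N;\coeffZ)\cong\coeffZ$. So no partial contraction of $P_N\onto\coeffZ$ up to degree $n-1$ exists, bounded or not. ``Correcting the defect with fillings in $N$'' cannot remove homology, and no comparison map into $P_N$ can change this, including the one from $\induction^G_N(\widetilde L)\otimes P$. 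Your text in fact contradicts itself here: it notes that $P_N\to\coeffZ$ is not exact, then claims a contraction of it.

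One has to add new free $\coeffZ[Q]$-generators degree by degree, which is what the paper does. By induction on $k$, it:
\begin{enumerate}
\item restricts the partial $Q$-resolution $L^{(k)}$ to $G$;
\item blows it up via \cref{thm: blow-up}, using $\FP_{n-1}(\excat{G})$-resolutions of the modules $\coeffZ[G/N]^{m_j}$, into a finitely generated free $G$-complex $\widehat C$;
\item extends $\widehat C$ by one degree, using that $G$ is of type $\FP_n(\coeffZ)$ with polynomial fillings;
\item pushes the new generators down to $\coeffZ[Q]^m$;
\item transports the bounded contraction via $h_i=q_{i+1}\widehat h_i r_i-H_i$ and \cref{prop: bornological restriction exact}.
\end{enumerate}

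\textbf{The fallback is missing its induction.} ``By lifting'' is not an argument. An $\FP_{n-1}(\excat{G})$-resolution of $\restriction^Q_G(I_Q)$ consists of free $\coeffZ[G]$-modules that are not inflated from $Q$, so there is nothing to push down. The implication ``$\restriction^Q_G(M)$ of type $\FP_m(\excat{G})$ $\Rightarrow$ $M$ of type $\FP_m(\excat{Q})$'' is false without hypotheses on $N$: take $G$ free, $Q$ finitely generated but not of type $\FP_2$, and $M=\coeffZ=\bbZ$. It is a statement of the same nature as the theorem.

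The missing idea is to induct on $m\le n$ over \emph{arbitrary} $\coeffZ[Q]$-modules $M$, because at the next level $I_Q$ replaces $\coeffZ$.
\begin{itemize}
\item For $m=0$: an admissible $\coeffZ[G]^a\onto\restriction^Q_G(M)$ descends to $\coeffZ[Q]^a\onto M$. The bounded section composed with $\coeffZ[G]^a\to\coeffZ[Q]^a$ is $\sfB_Q$-bounded by \cref{prop: bornological restriction exact}.
\item For $m\ge 1$: take such an admissible $\coeffZ[Q]^a\onto M$ with kernel $K$; after restriction it stays admissible by \cref{prop: bornological restriction exact}. The module $\coeffZ[G/N]^a$ is of type $\FP_{m-1}(\excat{G})$ by \cref{prop: FPn induction}. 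So $\restriction^Q_G(K)$ is of type $\FP_{m-1}(\excat{G})$ by \cref{prop:2 out of 3:kernel}. By the induction hypothesis $K$ is of type $\FP_{m-1}(\excat{Q})$, and then $M$ is of type $\FP_m(\excat{Q})$ by \cref{prop:2 out of 3:quotient}.
\end{itemize}
Your Step~2 is exactly this inductive step for $M=\coeffZ$. With the induction set up over general modules, your argument is complete. It is a more formal alternative to the paper's explicit construction, with the blow-up hidden inside the proof of \cref{prop:2 out of 3:quotient}.
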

\begin{proof}
    We proceed inductively. For $n \leq 1$ the statement is trivially true, as $\weightedfilling^0_{\coeffZ,G}$ is always linear and $\weightedfilling^1_{\coeffZ,G}$
    is always bounded by a quadratic function.
    Now assume that the conclusion holds for some $k < n$.
    Then there exists a free $\coeffZ[Q]$\=/resolution $\varepsilon_Q \colon L \onto \coeffZ$, such that
    $L_i$ is finitely generated for $i \leq k$ and $\varepsilon_Q$ is $(k-1)$\=/$\excat{Q}$\=/connected.
    We consider the $\coeffZ[G]$\=/chain complex $C = \restriction^Q_G(L^{(k)})$.
    As in the proof of~\cref{thm: extension outer to inner}, $C_i$ is of type $\FP_{n-1}(\excat{G})$, hence $\FL_{n-1}(\excat{G})$, for $i \leq k$.
    Applying~\cref{thm: blow-up} yields a $\coeffZ[G]$\=/chain complex $\widehat{C}$ together with
    a weak equivalence $q \colon \widehat{C} \to C$ that is $k$\=/$\excat{G}$\=/connected.
    By construction, the complex $\widehat{C}$ consists of finitely generated free $\coeffZ[G]$\=/modules.
    From~\cref{cor: polynomial filling iff every free resolution bounded} it follows that $\widehat{C} \onto \coeffZ$ is $(k-1)$\=/$\excat{G}$\=/connected, due to the assumptions on
    $\weightedfilling_{\coeffZ,G}^i$.
    Thus, the partial resolution
    \[
        \widehat{C}_k \to \widehat{C}_{k-1} \to \cdots \to \widehat{C}_0 \onto \coeffZ
    \]
    is $\excat{G}$\=/exact.
    Because $G$ is of type $\FP_{n}(\coeffZ)$ and $k + 1 \leq n$, there exists a finitely generated $\coeffZ[G]$\=/module
    $\widehat{C}'_{k+1} = \coeffZ[G]^m$ and a surjection $\widehat{C}'_{k+1} \onto \widehat{K}_k \defq \ker(\partial^{\widehat{C}}_k)$
    extending the above partial resolution. Denote this extended partial resolution by $\widehat{C}'$.
    By~\cref{cor: polynomial filling iff every free resolution bounded},
    $\widehat{C}'$ is also $\excat{G}$\=/exact, so as a chain complex it is $k$\=/$\excat{G}$\=/connected.
    Set $L'_{k+1} \defq \coeffZ[Q]^m$ and define $K_k \defq \ker(\partial^L_k)$.
    Because $q_k$ is $G$-equivariant and $C_k = \restriction^Q_G(L_k)$,
    the composition $\widehat{C}'_{k+1} \onto \widehat{K}_k \onto K_k$ descends to a $\coeffZ[Q]$\=/linear surjection $f \colon L'_{k+1} \onto K_k$.
    Let $\widehat{h}_i \colon \widehat{C}_i \to \widehat{C}_{i+1}$ be the partial chain contraction obtained
    from~\cref{prop: connected iff bounded contraction} applied to the chain complex $\widehat{C}'$.
    Further, let $r \colon C \to \widehat{C}$ and $H_i \colon C_i \to C_{i+1}$, be the chain map and homotopy obtained from
    \cref{prop: partial inverse iff connected} applied to $q$. Consider the following diagram
    \[\begin{tikzcd}
        {\coeffZ[Q]^m} &[-1.75em] {L'_{k+1}} & {K_k} & {L_k} &[-1.75em] {C_k} \\
        {\coeffZ[G]^m} & {\widehat{C}'_{k+1}} & {\widehat{K}_k} & {\widehat{C}'_k} & {\widehat{C}_k}
        \arrow[two heads, from=1-2, to=1-3]
        \arrow[two heads, from=2-3, to=1-3]
        \arrow[hook, from=1-3, to=1-4]
        \arrow[from=1-4, to=1-5, Equal]
        \arrow[from=1-1, to=1-2, Equal]
        \arrow[from=2-1, to=2-2, Equal]
        \arrow[from=2-4, to=2-5, Equal]
        \arrow["r_k", shift left, from=1-5, to=2-5]
        \arrow["f", two heads, from=2-2, to=1-2]
        \arrow[two heads, from=2-2, to=2-3]
        \arrow[hook, from=2-3, to=2-4]
        \arrow["q_k", shift left, from=2-5, to=1-5]
        \arrow["{\widehat{h}_k}", curve={height=-13pt}, from=2-4, to=2-2]
    \end{tikzcd}\]
    For $i \leq k$ define
    $h_i \coloneqq q_{i+1} \circ \widehat{h}_i \circ r_i - H_i$, where $q_{k+1} \coloneqq f$.
    Note that $C_{k+1} = 0$, hence $H_k = 0$ and $h_k \colon C_{k} \to L'_{k+1}$ is well-defined.
    An easy computation shows that $\partial h_i + h_{i-1}\partial = \id$ for $i \leq k$ and
    \cref{prop: bornological restriction exact} implies that $h_i$ is $\sfB_Q$\=/bounded for $i \leq k$.
    The result now follows from \cref{prop: bounded contraction from polynomial filling} for $k + 1$.
\end{proof}

\begin{example}\label{exa: nilpotent groups}
    If $G$ is a finitely generated nilpotent group then $\weightedfilling^i_{\coeffZ,G}$ and hence $\filling^i_{\coeffZ,G}$ are polynomial for all $i \geq 2$.

    Recently, Gabriel Pallier provided a proof of this fact in~\cite{pallier}, which goes back to Gromov.
    Here, we give another proof using \cref{thm: extension outer to inner}, arguing by induction on the nilpotency class.

    This clearly holds for finitely generated abelian groups. Moreover, they all have polynomial first homotopical Dehn function $\delta_G$.
    Now let $G$ be a central extension of finitely generated
    nilpotent groups $N$ and $Q$ of strictly smaller nilpotency class. By induction,
    $\weightedfilling^i_{\coeffZ,N}$, $\weightedfilling^i_{\coeffZ,Q}$ and $\delta_Q$ are all polynomial.
    It is a general result that in the case of central extensions the distortion of $N$ in $G$
    is bounded by the Dehn function of $Q$ (see~\cite{kassabov+riley}*{Corollary 2.2}), hence the
    distortion of $N$ in $G$ is at most polynomial.
    We can therefore apply~\cref{thm: extension outer to inner} to conclude polynomiality of $\weightedfilling^i_{\coeffZ,G}$.
    By~\cite{conner}, $\delta_G$ is also polynomial.
\end{example}

\section{Application to graphs of groups}\label{sec: graphs of groups}

In this section we study higher filling functions of the fundamental group $\pi_1(\calG)$ of a graph of groups under suitable
constraints on the edge and vertex groups.
\Cref{cor: cellular chain complex exact} lets us apply~\cref{thm: geometric brown theorem} to the Bass-Serre tree $X$ of
an amalgamated product under the assumption that the tree is locally finite. However, this already implies that
the edge stabilizers have to be of finite index in the vertex stabilizers.
In this section we show that this restriction is not necessary, that is, $C_\ast(X;\coeffZ) \onto \coeffZ$
is always exact with respect to $\excat{\Gamma}$.
This gives a positive answer to \cref{question: polynomial fillings imply connected chain complex} for $1$\=/dimensional complexes.

We achieve this by constructing a $\sfB_\Gamma$\=/bounded chain contraction, which is almost the canonical contraction of
the Bass-Serre tree. We first recall the definition of graphs of groups and their fundamental group.

\begin{defn}\label{def: graph of groups}
    A \emph{graph of groups} $\calG = (Y, \{\Gamma_e\}, \{\Gamma_v\}, \{\iota_e\})$ consists of
    \begin{itemize}
        \item a (not necessarily simplicial) connected finite graph $Y = (V, E)$.
              We consider $Y$ to be unoriented in the sense that if $v$ and $w$ are connected by an
              edge then both $(v,w)$ and $(w,v)$ are contained in $E$. Let $E_{+}$ denote
              a choice of orientation for each edge such that $E = E_{+} \sqcup \overline{E}_{+}$;
        \item for each oriented edge $e \in E$, an \emph{edge-group} $\Gamma_e$, such that $\Gamma_{\overline{e}} = \Gamma_{e}$,
              where $\overline{e}$ is the edge $e$ with the opposite orientation;
        \item for each vertex $v \in V$, a \emph{vertex-group} $\Gamma_v$;
        \item for each edge $e \in E$, a monomorphism $\iota_e \colon \Gamma_e \to \Gamma_{t(e)}$,
              where $t(e)$ denotes the terminal vertex of the edge $e$.
              We also write $i(e)$ for the initial vertex of the edge $e$.
    \end{itemize}
    All vertex- and edge-groups are assumed to be finitely generated.
\end{defn}

A \emph{$\calG$-path} is a sequence $s = (\gamma_0,e_1,\gamma_1,e_2,\ldots,e_k,\gamma_k)$ such that $e_1,\dots,e_k$ is a path in $Y$
and $\gamma_i \in \Gamma_{v_i}$ where $v_0,\ldots,v_k$ are the vertices of this path.
Concatenation of $\calG$-paths with compatible start and end vertices is defined in the obvious way.
A $\calG$-path is called a \emph{$\calG$-loop} if it starts and ends at the same vertex, that is, $v_0 = v_k$.
On the set of $\calG$-paths we consider the equivalence relation generated by the relations
\begin{align*}
    (\gamma,e,\gamma')                     &\sim (\gamma \iota_{\overline{e}}(h),e,\iota_e(h^{-1})\gamma') \\
    (\gamma, e, 1, \overline{e}, \gamma'') &\sim (\gamma\gamma'')
\end{align*}
on subpaths for $e \in E$, $h \in \Gamma_e$, $\gamma,\gamma'' \in \Gamma_{i(e)}$ and $\gamma' \in \Gamma_{t(e)}$.
Given a $\calG$-path we write $[s]$ for the equivalence class of $s$ under this relation.

\begin{defn}
    The \emph{fundamental group $\pi_1(\calG, v_0)$} of the graph of groups $\calG$ is the group of equivalence classes
    of $\calG$-loops based at a fixed vertex $v_0 \in V$.

    The group operation is given by concatenation of loops
    with neutral element the single-vertex path $(1_{\Gamma_{v_0}})$. The inverse of an element $(\gamma_0,e_1,\ldots,e_k,\gamma_k)$ is given by
    $(\gamma_k^{-1},\overline{e}_k,\ldots,\overline{e}_1,\gamma_0^{-1})$.
\end{defn}

An \emph{elementary reduction} of a $\calG$-path is the action of replacing a subpath of the form
$(\gamma,e,\iota_e(h),\overline{e},\gamma')$ by the single-vertex path $(\gamma\iota_{\overline{e}}(h)\gamma')$.
A $\calG$-path is called \emph{reduced} if no further reductions are possible.

\begin{rem}[\cite{chatterji+gautero}*{Remark 2.3}]
    Denote by $\calG_Y$ the free product
    \[
        \calG_Y \defq \bigl(\ast_{v \in V} \Gamma_v \bigr) \ast F_{E_{+}},
    \]
    where $F_{E_{+}}$ is the free group on the set of unoriented edges of $Y$.
    Consider a maximal subtree $T = (V_T, E_T) \subseteq Y$. Then $\pi_1(\calG,v_0) \cong \calG_Y/\llangle R_1 \cup R_2 \rrangle$, where
    \begin{align*}
        R_1 &= \{e = 1, \iota_e(h) = \iota_{\overline{e}}(h) \mid e \in E_T,\, h \in \Gamma_e\} \\
        R_2 &= \{\iota_e(h) = e^{-1}\iota_{\overline{e}}(h)e \mid e \in E \setminus E_T,\, h \in \Gamma_e\}.
    \end{align*}
\end{rem}

Fix a generating set $S_v$ of $\Gamma_v$ for each $v \in V$. Then the union of the $S_v$
together with $E$ is a generating set for $\calG_Y$, hence induces a generating set on $\pi_1(\calG,v_0)$. We denote the resulting length functions
on $\calG_Y$ and $\pi_1(\calG, v_0)$ by $\ell_Y$ and $\ell_\calG$ respectively.
For any $\calG$-path $s = (\gamma_0,e_1,\ldots,e_k,\gamma_k)$ we have
\[
    \ell_Y(s) \leq k + \ell_{\Gamma_{i(e_1)}}(\gamma_0) + \sum_{i=1}^{k} \ell_{\Gamma_{t(e_i)}}(\gamma_i) \eqqcolon L_Y(s),
\]
with equality if $s$ is reduced. Then $\ell_{\Gamma_v}(\gamma) = \ell_Y(\gamma)$ for any $\gamma \in \Gamma_v$ and
\[
    \ell_\calG(\gamma) = \min\{ L_Y(s) \mid [s] = \gamma \}
\]
for all $\gamma \in \pi_1(\calG)$.

\begin{rem}
    Let $v \in V$ be a different base vertex.
    Then a choice of a path from $v_0$ to $v$ induces an isomorphism $f_v \colon \pi_1(\calG,v_0) \to \pi_1(\calG,v)$.
    Denote by $\ell_{\calG_v}$ the length function of $\pi_1(\calG,v)$
    with respect to the base vertex $v$.
    Then $\ell_{\calG_v}$ and $\ell_\calG \circ f_v^{-1}$ are equivalent up to multiplication with a constant.
\end{rem}

\begin{defn}\label{def:bass-serre tree}
    The \emph{Bass-Serre tree of $\calG$} is the tree $\calT = \widetilde{Y} = (\widetilde{V}, \widetilde{E})$
    defined by
    \begin{align*}
        \widetilde{V} = \coprod_{v \in V} \pi_1(\calG)/\Gamma_v,
        \quad
        \widetilde{E} = \coprod_{e \in E} \pi_1(\calG)/\Gamma_e,
    \end{align*}
    that is, the vertices are equivalence classes of $\calG$-paths from $v_0$ to $v$
    and two such vertices $\gamma\Gamma_v$, $\gamma'\Gamma_{v'}$ are connected by an edge
    if $e \defq (v,v')$ is an edge in $Y$ and there exists $h \in \Gamma_{v}$ such that
    $\gamma'\Gamma_{v'} = (\gamma,h,e)\Gamma_{v'}$.
    On $\widetilde{Y}$ we fix $1\Gamma_{v_0}$ as the base vertex.
    Such an edge is represented by the element $(\gamma,h)\Gamma_e$.

    There is a projection $\pi_\calG \colon \widetilde{Y} \to Y$ which maps the vertices $\gamma\Gamma_v$ to $v$ and the edges $\gamma\Gamma_e$ to $e$ for $v \in V$ and $e \in E$.
\end{defn}

\begin{lem}\label{lem: geodesic path in Bass-Serre tree}
    Let $\calG$ be a graph of groups with fundamental group $\Gamma = \pi_1(\calG,v_0)$
    and $\gamma\Gamma_{v_0}$ be a vertex in the corresponding Bass-Serre tree $\widetilde{Y}$,
    lying over the base vertex $v_0$.
    There exists a path $\widetilde{\gamma}$ in $\widetilde{Y}$ from $1\Gamma_{v_0}$ to $\gamma\Gamma_{v_0}$, such that
    \begin{equation}\label{eq: norm bound geodesic Bass-Serre tree}
        \norm{\widetilde{e}_i}^\Gamma_n \leq \norm{\gamma\Gamma_{v_0}}^\Gamma_n
    \end{equation}
    for every edge $\widetilde{e}_i$ in the path $\widetilde{\gamma}$.
\end{lem}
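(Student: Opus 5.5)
The plan is to take a $\calG$\=/loop $s=(\gamma_0,e_1,\gamma_1,\dots,e_k,\gamma_k)$ representing $\gamma$ that realizes $\ell_\calG(\gamma')$ for a well-chosen representative $\gamma'$ of the coset $\gamma\Gamma_{v_0}$, and to read off the path $\widetilde{\gamma}$ from its prefixes. Concretely, choose $\gamma'\in\gamma\Gamma_{v_0}$ with $\ell_\calG(\gamma')$ minimal, so that $\norm{\gamma\Gamma_{v_0}}^\Gamma_n=(1+\ell_\calG(\gamma'))^n$ by \cref{exa: bornology permutation modules}. Then choose a $\calG$\=/path $s$ with $[s]=\gamma'$ and $L_Y(s)=\ell_\calG(\gamma')$. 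We may take $s$ reduced, since elementary reductions do not increase $L_Y$. We may also take $\gamma_k=1$: the last entry lies in $\Gamma_{v_0}$, and deleting it changes neither the coset nor increases $L_Y$, so by minimality it contributes nothing. For $0\le i\le k$ let $p_i=(\gamma_0,e_1,\dots,e_i)$ denote the prefix of $s$ up to the vertex $v_i$, with trivial last vertex entry. The vertices of $\widetilde{\gamma}$ are then $[p_i]\Gamma_{v_i}$. Consecutive vertices are joined by the edge represented by $(p_{i-1},\gamma_{i-1})\Gamma_{e_i}$, in the notation of \cref{def:bass-serre tree}. The endpoints are $1\Gamma_{v_0}$ for $i=0$ (as $\gamma_0\in\Gamma_{v_0}$) and $[s]\Gamma_{v_0}=\gamma\Gamma_{v_0}$ for $i=k$.

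The key estimate is that the edge $\widetilde{e}_i$ has a representative in $\Gamma$, after fixing paths from $v_0$ to each vertex as in the base-change remark, whose word length is at most $L_Y$ of the prefix $(\gamma_0,e_1,\dots,\gamma_{i-1},e_i)$. This prefix length is at most $L_Y(s)=\ell_\calG(\gamma')$, because $L_Y$ is a sum of nonnegative terms and the prefix is a subsum. The norm of the edge is the minimum of $(1+\ell)^n$ over coset representatives, so it is bounded by $(1+\ell_\calG(\gamma'))^n=\norm{\gamma\Gamma_{v_0}}^\Gamma_n$, which is \eqref{eq: norm bound geodesic Bass-Serre tree}.

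The main obstacle is the identification of edges and vertices not lying over $v_0$ with elements of $\pi_1(\calG,v_0)$. Elements of $\widetilde{E}$ are classes of $\calG$\=/paths from $v_0$, which are not loops, so the norm on $\coeffZ[\Gamma/\Gamma_e]$ needs a convention. I would fix, using a maximal subtree $T$, canonical tree paths $t_v$ from each vertex $v$ back to $v_0$, which have no group labels and whose edges in $T$ are trivial by $R_1$. A path $p$ ending at $v$ then gives the loop $p\cdot t_v$, and $L_Y$ is unchanged in the group because edges of $T$ are trivial in $\pi_1$. With this convention, $\ell_\calG$ of the loop is at most $L_Y(p)$, with no additive constant. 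If the paper's convention instead adds a constant, the inequality would only hold up to a multiplicative constant, and I would check that the intended normalization makes it exact. Finally, I would verify that consecutive prefixes really differ by one edge of the tree, namely that $(p_{i-1},\gamma_{i-1},e_i)\Gamma_{v_i}=[p_i]\Gamma_{v_i}$. This holds by definition of adjacency.
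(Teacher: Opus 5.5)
Your argument is essentially the paper's proof. You take an $\ell_\calG$-minimal representative of $\gamma\Gamma_{v_0}$ and a $\calG$-loop $s$ with $L_Y(s)=\ell_\calG(\gamma)$, run the path in $\widetilde{Y}$ through the vertices given by the prefixes of $s$, and bound each edge norm by $(1+L_Y(\text{prefix}))^n\le(1+L_Y(s))^n$; the paper leaves the normalization of coset norms away from $v_0$ implicit, and your tree paths make it explicit. One correction: your aside that elementary reductions never increase $L_Y$ is false in general, since $\iota_e(h)$ and $\iota_{\overline{e}}(h)$ can have very different word lengths in their vertex groups; reducedness is never used, so just drop that step.
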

\begin{proof}
    We may assume that $\gamma$ is the $\ell_\calG$-minimizing coset-representative, such that $\norm{\gamma\Gamma_{v_0}}^\Gamma_n = (1 + \ell_\calG(\gamma))^n$.
    Let $\gamma = (\gamma_0,e_1,\ldots,e_k,\gamma_k)$ be a geodesic word for $\gamma$, that is,
    $\ell_\calG(\gamma) = L_Y((\gamma_0,e_1,\ldots,e_k,\gamma_k))$.
    Let $v_0,\ldots,v_k$ be the vertices of the corresponding path in $Y$.
    Note that $(\gamma_0,e_1,\ldots,e_{k-1},\gamma_{k-1})\Gamma_{e_k}$ is an edge in $\widetilde{Y}$
    from $(\gamma_0,e_1,\ldots,\gamma_{k-1})\Gamma_{v_{k-1}}$ to $(\gamma_0,e_1,\ldots,e_k,\gamma_k)\Gamma_{v_k}= \gamma\Gamma_{v_k}$.
    Inductively, we see that
    \begin{align*}
        \widetilde{v}_i     &\defq (\gamma_0,e_1,\ldots,e_i,\gamma_i)\Gamma_{v_i} \in \widetilde{Y} \\
        \widetilde{e}_{i+1} &\defq (\gamma_0,e_1,\ldots,e_{i},\gamma_{i})\Gamma_{e_{i+1}} \in \widetilde{E}
    \end{align*}
    define a path $\widetilde{v}_0,\widetilde{v}_1,\ldots,\widetilde{v}_k$
    in $\widetilde{Y}$ from $1\Gamma_{v_0}$ to $\gamma\Gamma_{v_0}$.
    Moreover,
    \begin{align*}
        \norm{\widetilde{e}_i}^\Gamma_n
         &\leq (1 + L_Y(\gamma_0,e_1,\ldots,e_{i-1},\gamma_{i-1}))^n \\
         &\leq (1 + L_Y(\gamma_0,e_1,\ldots,e_k,\gamma_k))^n \\
         &= (1 + \ell_\calG(\gamma))^n = \norm{\gamma\Gamma_{v_0}}^\Gamma_n
    \end{align*}
    for all $1 \leq i \leq k$.
\end{proof}

\begin{thm}\label{thm: bass serre tree exact}
    Let $\calG$ be a graph of groups with fundamental group $\Gamma = \pi_1(\calG,v_0)$.
    Let $\widetilde{Y}$ be the corresponding Bass-Serre tree.
    Then the augmented cellular chain complex $C_\ast(\widetilde{Y};\coeffZ) \onto \coeffZ$ is $\excat{\Gamma}$\=/exact.
\end{thm}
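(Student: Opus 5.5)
By \cref{prop: connected iff bounded contraction}, it suffices to construct a $\coeffZ$\=/linear chain contraction of the augmented complex
\[
    0 \to C_1(\widetilde{Y};\coeffZ) \xrightarrow{\partial} C_0(\widetilde{Y};\coeffZ) \xrightarrow{\varepsilon} \coeffZ \to 0
\]
whose components are $\sfB_\Gamma$\=/bounded. We regard $\coeffZ$ as sitting in degree $-1$. The idea is to use the canonical geodesic contraction of a tree, with geodesics chosen according to \cref{lem: geodesic path in Bass-Serre tree}.

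Set $h_{-1}(1) = 1\Gamma_{v_0}$, the base vertex.

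For a vertex $\widetilde v$, we define $h_0(\widetilde v)$ to be the sum of the oriented edges along the unique reduced edge path from the base vertex to $\widetilde v$. Each edge carries the sign $\pm1$ determined by its orientation, so that $\partial h_0(\widetilde v) = \widetilde v - 1\Gamma_{v_0}$. We then set $h_1 = 0$.

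The identities $\partial h_0 + h_{-1}\varepsilon = \id$ on $C_0$ and $\varepsilon h_{-1} = \id$ are immediate. The identity $h_0\partial = \id$ on $C_1$ holds because, for an edge $\widetilde e$ from $\widetilde v$ to $\widetilde w$, the geodesics to $\widetilde v$ and to $\widetilde w$ differ exactly by $\widetilde e$. This is where the tree property and uniqueness of reduced paths enter. Since \cref{prop: connected iff bounded contraction} is stated for complexes concentrated in degrees $\ge 0$ but applies verbatim to the augmented complex, the $\excat{\Gamma}$\=/exactness follows once boundedness is shown.

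\textbf{Boundedness.} By \cref{rem: integral boundedness permutation} we only have to check basis elements: we need $\norm{h_0(\widetilde v)}^\Gamma_1 \le C\norm{\widetilde v}^\Gamma_k$ for some $C$ and $k$. The map $h_{-1}$ is trivially bounded.

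First consider vertices $\widetilde v = \gamma\Gamma_{v_0}$ over the base vertex. The geodesic in the tree is the path $\widetilde{\gamma}$ from \cref{lem: geodesic path in Bass-Serre tree}, at least after cancelling backtracking. A reduced representative gives a path without backtracking, and cancelling backtracks only removes edges anyway. That path has $k \le \ell_\calG(\gamma)$ edges, each of weight at most $\norm{\gamma\Gamma_{v_0}}^\Gamma_1 = 1+\ell_\calG(\gamma)$. Hence
\[
    \norm{h_0(\widetilde v)}^\Gamma_1 \le \bigl(\norm{\widetilde v}^\Gamma_1\bigr)^2 = \norm{\widetilde v}^\Gamma_2 .
\]

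For vertices $\gamma\Gamma_v$ over another vertex $v$, I would run the same argument with $\calG$-paths from $v_0$ to $v$. The proof of \cref{lem: geodesic path in Bass-Serre tree} only used a geodesic $\calG$-path representing the coset representative. Alternatively, one can append a fixed path in $Y$ and use the change-of-base-vertex remark, which costs only a multiplicative constant.

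Note that the weights here are algebraic, namely minimal word length of coset representatives. This is exactly why local finiteness is not needed.

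\textbf{Main obstacle.} The main subtle step is the identification of the tree geodesic with the path built from a minimal-length $\calG$-path. I would handle it by arguing that the geodesic's edge set is contained in the edge set of any path between the endpoints. This holds in a tree, so the $\ell^1$-weighted sum over the geodesic is bounded by the sum over the path from \cref{lem: geodesic path in Bass-Serre tree}, even if that path backtracks.
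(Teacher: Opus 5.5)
Your proposal is correct and follows the paper's proof: your $h_0$ is exactly the paper's map $s$. It is bounded quadratically via \cref{lem: geodesic path in Bass-Serre tree} for vertices over $v_0$ and transported to the other vertices by a change of base vertex. The only differences are cosmetic: the paper feeds $s$ as a bounded retraction of $\partial$ into \cref{lem: mono with bounded retraction is admissible} rather than assembling a full contraction for \cref{prop: connected iff bounded contraction}, and you handle possible backtracking of the lemma's path more explicitly than the paper does.
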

\begin{proof}
    The cellular chain-complex $C_\ast(\widetilde{Y};\coeffZ)$ of $\widetilde{Y}$ is of the form
    \[
        \cdots \to 0 \longrightarrow \bigoplus_{e \in E_+} \coeffZ[\Gamma/\Gamma_e] \overset{\partial}{\longrightarrow} \bigoplus_{v \in V} \coeffZ[\Gamma/\Gamma_v] \longtwoheadrightarrow \coeffZ.
    \]
    By~\cref{lem: mono with bounded retraction is admissible} it suffices to produce a retraction $s \colon C_0(\widetilde{Y};\coeffZ) \to C_1(\widetilde{Y};\coeffZ)$
    that is $\sfB_\Gamma$\=/bounded.
    For each vertex $v \in V$ pick a path $p_v$ from $v_0$ to $v$ and hence an isomorphism $\pi_1(\calG,v) \cong \pi_1(\calG,v_0)$.

    We first consider vertices $\gamma\Gamma_{v_0}$ lying above the base vertex $v_0$.
    Let $\gamma$ be the $\ell_{\calG}$-minimizing coset-representative in $\gamma\Gamma_{v_0}$
    and let $\widetilde{\gamma} = (\gamma_0,e_1,\gamma_1,\ldots,e_k,\gamma_k)$ be the path in $\widetilde{Y}$ from~\cref{lem: geodesic path in Bass-Serre tree}.
    We define a map $\coeffZ[\Gamma/\Gamma_{v_0}] \to C_1(\widetilde{Y};\coeffZ)$ through
    \[
        s_{v_0}(\gamma\Gamma_{v_0}) \defq \sum_{i=1}^{k} \widetilde{e}_i = \sum_{i=1}^{k} (\gamma_0,e_1,\gamma_1,\ldots,e_{i-1},\gamma_{i-1})\Gamma_{e_i}.
    \]
    Because $\widetilde{\gamma}$ is a path from $1\Gamma_{v_0}$ to $\gamma\Gamma_{v_0}$, we have
    $\partial s_{v_0}(\gamma\Gamma_{v_0}) = \gamma\Gamma_{v_0} - 1\Gamma_{v_0}$.
    The length of the path $\widetilde{\gamma}$ is bounded by $\ell_\calG(\gamma)$, hence $k \leq (1 + \ell_\calG(\gamma))^n = \norm{\gamma\Gamma_{v_0}}^\Gamma_n$ for $n \geq 1$.
    By~\cref{eq: norm bound geodesic Bass-Serre tree}, it follows that
    \begin{align*}
        \norm{s_{v_0}(\gamma\Gamma_{v_0})}^\Gamma_n \leq \sum_{i=1}^k \norm{\widetilde{e}_i}^\Gamma_n
        \leq k \cdot \norm{\gamma\Gamma_{v_0}}^\Gamma_n
        \leq \bigl(\norm{\gamma\Gamma_{v_0}}^\Gamma_n\bigr)^2,
    \end{align*}
    hence $s_{v_0}$ is $\sfB_\Gamma$\=/bounded.

    Now consider the general case of a vertex $\gamma\Gamma_v$. Denote by $\widetilde{Y}_v$ the Bass-Serre tree associated
    to $\pi_1(\calG, v)$. The path $p_v$ induces isomorphisms $p_{v,0}$ and $p_{v,1}$ such that
    \[\begin{tikzcd}
        {C_1(\widetilde{Y};\coeffZ)} & {C_0(\widetilde{Y};\coeffZ)} \\
        {C_1(\widetilde{Y}_v;\coeffZ)} & {C_0(\widetilde{Y}_v;\coeffZ)}
        \arrow["\partial", from=1-1, to=1-2]
        \arrow["{p_{v,1}}", from=1-1, to=2-1]
        \arrow["\cong"', draw=none, from=1-1, to=2-1]
        \arrow["\commutes"{description}, draw=none, from=1-1, to=2-2]
        \arrow["{p_{v,0}}", from=1-2, to=2-2]
        \arrow["\cong"', draw=none, from=1-2, to=2-2]
        \arrow["{\partial_v}", from=2-1, to=2-2]
    \end{tikzcd}\]
    commutes. Denote by $s_v$ the map $C_0(\widetilde{Y}_v;\coeffZ) \to C_1(\widetilde{Y}_v;\coeffZ)$ constructed
    in the same manner as $s_{v_0}$. We now define
    $s'(\gamma\Gamma_v) \coloneqq (p_{v,1}^{-1} \circ s_v \circ p_{v,0})(\gamma\Gamma_v)$
    such that $\partial(s'(\gamma\Gamma_v)) = \gamma\Gamma_v - 1\Gamma_v$.
    Each path $p_v$ lifts to a path $\widetilde{p}_v$ from $1\Gamma_{v_0}$ to $1\Gamma_{v}$ in $\widetilde{Y}$.
    Then the map $s(\gamma\Gamma_v) \coloneqq \widetilde{p}_v + s'(\gamma\Gamma_v)$ satisfies
    $\partial s(\gamma\Gamma_v) = \gamma\Gamma_v - 1\Gamma_{v_0}$, hence is a retraction of $\partial$.
    Because there are only finitely many vertices in $Y$, the length functions
    of $\pi_1(\calG,v)$ for $v \in V$ are all equivalent to $\pi_1(\calG,v_0)$ up to constants independent of the vertex $v$.
    In particular, the $\sfB_\Gamma$\=/boundedness of $s_{v_0}$ implies that $s_v$ and therefore $s'$ and $s$ are $\sfB_\Gamma$\=/bounded.
\end{proof}

\section{Application to \texorpdfstring{$S$}{S}-arithmetic lattices}\label{sec: S-arithmetic lattices}

Let $k$ be a number field. Let $\mathbf{G}$ be an absolutely almost simple, $k$-isotropic $k$-algebraic group.
For every place $\nu$ of $k$ let $k_\nu$ be the corresponding completion and set $G_\nu=\mathbf{G}(k_\nu)$.
Denote by $S_\infty$ the set of archimedean places of $k$, and let $S$ be a finite set of places of $k$ that contains $S_\infty$. Let $S_f=S\setminus S_\infty$. For every subset $S'\subset S$ we let $G_{S'}$ denote the group $G_{S'}=\prod_{\nu \in S'} G_\nu$.
We have
\[ G_S=G_{S_\infty}\times G_{S_f},\]
where $G_{S_\infty}$ is a real semisimple Lie group.

We set $\rank_S (\mathbf{G})=\sum_{\nu\in S} \rank_{k_\nu} \mathbf{G}$.
We let $\mathcal{O}\subset k$ be the ring of integers and $\mathcal{O}_S\subset k$ be the $S$-integers, that is, the set of elements $x\in k$ such that $\abs{x}_\nu\le 1$ for every non-archimedean valuation $\nu$ of $k$ that is not in~$S$.
The subgroup $\mathbf{G}(\mathcal{O}_S)<G_S$ is well-defined only up to commensurability.
Every subgroup $\Gamma<G_S$ that is commensurable with $\mathbf{G}(\mathcal{O}_S)$ is called an \emph{$S$-arithmetic lattice} of~$G_S$. In case $S=S_\infty$ we say that $\Gamma$ is an \emph{arithmetic lattice}.
We have a precise conjectural picture regarding the filling rank of an $S$-arithmetic lattice.

\begin{conj}\label{conj:BEW}
    Let $\Gamma<G_S$ be an $S$-arithmetic lattice as above. Then the filling function $\filling_{\bbZ,\Gamma}^i$ is polynomial for every $2\le i\le \rank_{S}(\mathbf{G})-1$.
\end{conj}

This was conjectured by Bestvina-Eskin-Wortman in~\cite{bestvina+eskin+wortman}*{Conjecture 3} for the homotopical filling functions but it follows from the above conjecture. In fact, they conjecture that the homotopical filling functions are Euclidean. A breakthrough was the confirmation of the conjecture for irreducible arithmetic lattices in semisimple Lie groups by Leuzinger-Young.

\begin{thm}[Leuzinger-Young~{\cite{leuzinger+young}}]%
    \label{thm:LY}
    Conjecture~\ref{conj:BEW} holds provided $S=S_\infty$. That is, it holds for $\Gamma$ an irreducible lattice in a connected semisimple Lie group with finite center and no compact factors.
\end{thm}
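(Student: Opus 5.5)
This is a deep external result, and the paper invokes it as a black box. If I had to reprove it, I would follow the geometric route of Leuzinger--Young rather than the homological framework of this paper. The reason is that $\Gamma$ does not act cocompactly on the symmetric space $X=G/K$, so \cref{cor: CAT(0)} does not apply directly.

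\textbf{Step 1: replace $\Gamma$ by a cocompact model.} Reduction theory (Borel--Harish-Chandra, via Siegel sets) provides a $\Gamma$-invariant family of disjoint open horoballs in $X$, centred at the rational parabolic points. Removing them leaves a ``thick part'' $X_0$ on which $\Gamma$ acts cocompactly. By Lubotzky--Mozes--Raghunathan, $\Gamma$ is quasi-isometric to $X_0$ with its induced path metric. For $d=\rank_{\bbR} G$, $X_0$ is $(d-2)$-connected (Bux--Wortman, Grayson). After a $\Gamma$-equivariant triangulation, $X_0$ is a locally finite $(d-2)$-acyclic $\Gamma$-simplicial complex with cocompact skeleta. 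Its cellular filling functions $\filling^i_{\bbZ,X_0}$ for $i\le d-1$ agree with $\filling^i_{\bbZ,\Gamma}$ up to $\approx$. One can see this either by the usual Federer--Fleming comparison, or via \cref{cor: exactness from geometric filling} together with \cref{thm: geometric brown theorem}, since the stabilizers are finite. So it suffices to bound Lipschitz fillings in $X_0$.

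\textbf{Step 2: fill in $X$, then push out of the horoballs.} Let $z$ be an $(i-1)$-cycle in $X_0$ with $2\le i\le d-1$. Since $X$ is $\CAT{0}$, Gromov--Wenger give a filling $c$ of $z$ in $X$ with $\operatorname{mass}(c)\lesssim \operatorname{mass}(z)^{i/(i-1)}$. The chain $c$ may enter horoballs, and the task is to replace $c$ by a chain in $X_0$ with polynomially controlled mass.

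I would build an approximately Lipschitz retraction from a neighbourhood of $c$ onto $X_0$. Use the Busemann functions of the horoballs and the product structure of Siegel sets, which exhibit each horosphere as a bundle over a lower-rank boundary symmetric space times a nilpotent group. Push each point of $c$ inside a horoball radially onto its bounding horosphere, and then fill inside the horospheres. Fillings in horospheres are controlled polynomially because of two facts. First, the nilpotent fibres have polynomial filling functions; this is \cref{exa: nilpotent groups}, or Pallier's result. Second, the Levi (boundary symmetric space) directions have rank at least $d-1>i-1$, so cycles of dimension $i-1<d-1$ can be filled polynomially there. One then proceeds by induction on the rank of the parabolic, handling intersections of several horoballs through the intersection pattern of parabolics.

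\textbf{Main obstacle.} The hard step is the simultaneous treatment of overlapping neighbourhoods of horoballs belonging to different parabolics. The radial projection is not Lipschitz near these overlaps, and the distortion of horospheres can be exponential. For the overlaps, I would first try Leuzinger--Young's strategy: divide $c$ into pieces of controlled size adapted to a Whitney-type decomposition, and apply the retraction piecewise with scale-dependent constants. The exponential distortion is exactly where the bound $i\le d-1$ is used, and the top degree $i=d$ would genuinely be exponential. The reduction to thick parts in Step 1 and the comparison of filling functions are routine by comparison.
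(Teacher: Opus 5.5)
The paper gives no proof here. It quotes Leuzinger--Young as a black box, and insofar as your proposal does the same, it agrees with the paper. The sketch you offer as a reproof, however, has a genuine gap at its core. You claim that slices of the \CAT{0} filling can be refilled polynomially inside horospheres because the Levi factor has rank at least $d-1>i-1$. This is off by one.

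A slice of an $i$-chain is an $(i-1)$-cycle, and refilling it requires $\filling^{i}$. Once the $\bbQ$-rank is at least $2$, the horosphere is not contained in $X_0$, so the refilling must take place in $\partial HB_P\cap X_0$. The relevant group there is $\Gamma\cap P$. It acts cocompactly on $\partial HB_P\cap X_0$ and maps onto an arithmetic lattice of the Levi factor $M_P$, which has real rank $d-1$. The inductive hypothesis therefore gives polynomial bounds at best for $i\le d-2$, and in the top case $i=d-1$ polynomial bounds fail in general.

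A concrete instance is $\Gamma=\SL_4(\bbZ)$, $d=3$, $i=2$, with $P$ the stabiliser of a line. Then $\Gamma\cap P\cong\bbZ^3\rtimes\mathrm{GL}_3(\bbZ)$ retracts onto $\mathrm{GL}_3(\bbZ)$, whose Dehn function is exponential (Epstein--Thurston). For a loop in $\partial HB_P\cap X_0$, the geodesic cone filling lies entirely in the convex closed horoball. Your procedure would therefore have to refill the whole loop inside $\partial HB_P\cap X_0$ at exponential cost, whereas by the theorem the loop bounds polynomially in $X_0$.

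Reading ``fill inside the horospheres'' as the full horosphere does not help. The rank of the Levi factor then plays no role, since its symmetric space is \CAT{0} in all degrees. The new chain enters the neighbouring horoballs, and the entire difficulty moves into the overlap step, which you defer to Leuzinger--Young. Either way the sketch contains no independent argument. A correct proof must route fillings through $X_0$ away from individual horospheres.

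Several secondary points are also off.
\begin{enumerate}
\item Radial projection onto the bounding horosphere expands the $N_P$-directions exponentially in the depth. This already happens inside a single horoball, not only near overlaps.
\item Any procedure that pushes fillings into $X_0$ with polynomial loss, independently of their dimension, would also make $\filling^{d}$ polynomial. That contradicts the exponential top degree you yourself mention. So $i\le d-1$ must enter essentially, and in your sketch it enters only through the faulty horosphere estimate.
\item Even in the full horosphere, your two ``facts'' do not combine. The split part of $M_P$ distorts $N_P$ exponentially; compare $\mathrm{Sol}_3=\bbR^2\rtimes\bbR$, whose pieces have polynomial fillings but which has exponential Dehn function.
\item In Step~1, horoballs at distinct maximal rational parabolics are not disjoint once the $\bbQ$-rank is at least $2$. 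For $\SL_n(\bbZ)$, those at nested rational subspaces meet. This conflicts with your own discussion of overlaps.
\item $X_0$ is in fact contractible, so appealing to $(d-2)$-connectivity, a phenomenon from the Euclidean-building setting of Bux--Wortman, is beside the point.
\item For the second formulation of the statement, you need Margulis arithmeticity before reduction theory applies. The cocompact case should be handled directly by the \CAT{0} filling inequalities.
\end{enumerate}
With these repairs Step~1 is fine, but the theorem then rests entirely on the citation, exactly as in the paper.
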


In the sequel, $\Gamma<G_S$ denotes an $S$-arithmetic lattice.
For $\nu\in S_\infty$, the group $G_\nu$ acts on its associated symmetric space $X_\nu\cong G_\nu/K_\nu$, which is a non-positively curved Riemannian manifold. We set
\[X_f=\prod_{\nu\in S_f} X_\nu,~X_\infty=\prod_{\nu\in S_\infty}X_\nu,~\text{and}~X=X_\infty\times X_f.\]
For $\nu\in S_f$, the group $G_\nu$ acts on its associated Bruhat-Tits building $X_\nu$, which is a \CAT{0}-space. We obtain product actions of $G_{S_\infty}$ on $X_\infty$ and of $G_{S_f}$ on $X_f$ and of $G_S$ on $X$.
Further, $G_S$, thus $\Gamma$, also acts on $X_f$ via the projection $G_S\to G_{S_f}$ and the product action of $G_{S_f}$ on~$X_f$.

\begin{lem}\label{lem: filling function of stabilizer}
    Let $\sigma$ be a cell of $X_f$,
    and let $\Gamma_\sigma$ be the stabilizer of the $\Gamma$-action on~$X_f$.
    Then the filling function $\filling^i_{\bbZ,\Gamma_\sigma}$ is polynomial for $2\le i\le \rank_{S_\infty}\bfG-1$.
\end{lem}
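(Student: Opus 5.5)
The plan is to identify $\Gamma_\sigma$ with an arithmetic lattice in $G_{S_\infty}$, up to commensurability and a finite normal subgroup, and then invoke \cref{thm:LY}.

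First, the building $X_f$ is a locally finite simplicial complex. Hence the stabilizer $(G_{S_f})_\sigma$ of a cell $\sigma$ is a compact open subgroup $U_\sigma$ of $G_{S_f}$. Then $\Gamma_\sigma = \Gamma \cap (G_{S_\infty}\times U_\sigma)$. Since $U_\sigma$ is commensurable with $\prod_{\nu\in S_f}\bfG(\calO_\nu)$, the group $\Gamma_\sigma$ is commensurable with $\bfG(\calO_S)\cap \bigl(G_{S_\infty}\times \prod_{\nu\in S_f}\bfG(\calO_\nu)\bigr)=\bfG(\calO)$. Therefore $\Gamma_\sigma$ is commensurable with the arithmetic group $\bfG(\calO)$. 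Projecting to $G_{S_\infty}$ is injective on it, because $\Gamma$ is discrete in $G_S$ and $U_\sigma$ is compact, so the kernel $\Gamma\cap U_\sigma$ is finite. The image is therefore an arithmetic lattice in $G_{S_\infty}$. This lattice is irreducible because $\bfG$ is absolutely almost simple over $k$. Also $\rank_{S_\infty}\bfG=\rank_\bbR G_{S_\infty}$. One must pass to the connected component and remove compact factors (anisotropic archimedean places) and the center, to match the hypotheses of \cref{thm:LY}. These modifications change $\Gamma_\sigma$ only by finite kernels, passage to finite-index subgroups, and quotients by compact groups. If $\rank_{S_\infty}\bfG\le 2$, the statement is vacuous, so the isotropy ensuring noncompactness is only needed when the rank is at least $3$.

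Second, I will transfer polynomiality across these operations. Homological Dehn functions are quasi-isometry invariants (\cite{weis}*{Theorems 5.7, 5.10}), and finite-index subgroups and quotients by finite normal subgroups are quasi-isometric to the original group. So $\filling^i_{\bbZ,\Gamma_\sigma}$ is equivalent to the corresponding function of the irreducible lattice $\Lambda$ in the semisimple Lie group. Alternatively, one can use \cref{thm: extension left to right} and \cref{thm: extension outer to inner} with a finite kernel, which is trivially polynomially distorted. Then \cref{thm:LY} gives polynomiality of $\filling^i_{\bbZ,\Lambda}$ for $2\le i\le \rank_\bbR-1$, and the result follows. Arithmetic groups are of type $\F_\infty$ (Borel--Serre), so the finiteness hypotheses hold.

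I expect the main obstacle to be the bookkeeping that $\Gamma_\sigma$ really is, up to finite index and finite kernels, an irreducible lattice in a connected semisimple Lie group with finite center and no compact factors. Two points need care here. The first is removing compact archimedean factors, which uses quotienting by a compact normal subgroup and hence a quasi-isometry. The second is checking that the real rank of the resulting group equals $\rank_{S_\infty}\bfG$.
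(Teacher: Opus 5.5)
Your proposal is correct and follows the paper's route. The paper likewise shows that $\Gamma_\sigma$ is commensurable with $\bfG(\calO)$, using that compact-open stabilizers are commensurable with $\prod_{\nu\in S_f}\bfG(\calO_\nu)$, and then applies \cref{thm:LY}; it leaves implicit the bookkeeping you spell out (finite kernels, compact factors, identity component, quasi-isometry invariance of the filling functions). One small slip: for an arbitrary $\Gamma$ commensurable with $\bfG(\calO_S)$, the projection $\Gamma_\sigma\to G_{S_\infty}$ need not be injective, but you only use that its kernel $\Gamma\cap(\{1\}\times U_\sigma)$ is finite, and that is correct.
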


\begin{proof}
    The stabilizers of the $G_\nu$-action on $X_\nu$ for $\nu\in S_f$ are compact-open and hence commensurable with the compact-open subgroup $\bfG(\calO_\nu)$ of $G_\nu$.
    It follows that every stabilizer $\Gamma_\sigma$ of the $\Gamma$-action on~$X_f$ is commensurable with
    \[ \bfG(\calO_S)\cap \bigcap_{\nu\in S_f}\bfG(\calO_\nu)=\bfG(\calO).\]
    The subgroup $\bfG(\calO)$ is a lattice in the semisimple Lie group $G_{S_\infty}$, and we can apply~\cref{thm:LY} to it. Thus, the result follows.
\end{proof}

\begin{lem}\label{lem: stabilizer undistorted}
    Let $\sigma$ be a cell of $X_f$,
    and let $\Gamma_\sigma$ be the stabilizer of the $\Gamma$-action on~$X_f$. Let $\rank_{S_\infty}\bfG\ge 2$. Then the inclusion $\Gamma_\sigma\hookrightarrow\Gamma$ is undistorted.
\end{lem}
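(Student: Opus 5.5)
The plan is to reduce the statement to the theorem of Lubotzky--Mozes--Raghunathan (LMR), which says that for a higher-rank irreducible lattice the word metric is comparable to the Riemannian metric of the ambient symmetric space. By the proof of \cref{lem: filling function of stabilizer}, $\Gamma_\sigma$ is commensurable with $\bfG(\calO)$. Commensurable subgroups are undistorted in each other, and distortion is transitive up to linear constants. So it suffices to show that $\bfG(\calO)$ is undistorted in $\Gamma$. We may also replace $\Gamma$ by $\bfG(\calO_S)$, since finite-index subgroups are undistorted.

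Next I would compare both word metrics with metrics coming from the actions on $X_\infty$ and on $X=X_\infty\times X_f$. The group $\Gamma=\bfG(\calO_S)$ is an irreducible lattice in $G_S$ with $\rank_S\bfG\ge\rank_{S_\infty}\bfG\ge 2$. The LMR theorem, in its $S$-arithmetic form, then gives
\[\ell_\Gamma(\gamma)\approx d_{X}(x_0,\gamma x_0),\]
up to multiplicative and additive constants. Similarly $\bfG(\calO)$ is an irreducible lattice in $G_{S_\infty}$ of real rank at least $2$, so LMR gives
\[\ell_{\bfG(\calO)}(\gamma)\approx d_{X_\infty}(x_\infty,\gamma x_\infty).\]
If $\bfG(\calO)$ happens to be cocompact, this is just the Švarc--Milnor lemma.

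Now fix $\gamma\in\bfG(\calO)$ and choose the base point $x_0=(x_\infty,x_f)$ with $x_f$ fixed by $\bfG(\calO)$. This is possible because $\bfG(\calO)$ fixes the vertex of $X_f$ corresponding to $\prod_{\nu\in S_f}\bfG(\calO_\nu)$. With this choice,
\[d_X(x_0,\gamma x_0)=d_{X_\infty}(x_\infty,\gamma x_\infty).\]
Combining the two estimates gives $\ell_{\bfG(\calO)}(\gamma)\le C\,\ell_\Gamma(\gamma)+C$. The reverse inequality holds for any finitely generated subgroup. This proves the inclusion is undistorted.

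The main obstacle is justifying the $S$-arithmetic version of LMR. One must check that the hypotheses (irreducibility and $S$-rank at least $2$) are met, and that the comparison uses the product metric on $X_\infty\times X_f$. The hypothesis $\rank_{S_\infty}\bfG\ge 2$ is exactly what lets LMR apply to $\bfG(\calO)$ inside $G_{S_\infty}$. Without it, $\bfG(\calO)$ could be, for instance, $\SL_2(\bbZ)$, and the comparison fails for the arithmetic factor. I would simply cite LMR for both $\Gamma$ and $\bfG(\calO)$ rather than reprove anything.
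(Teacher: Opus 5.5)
Your proposal is correct and follows the paper's proof. You reduce via commensurability to $\bfG(\calO)<\bfG(\calO_S)$, apply Lubotzky--Mozes--Raghunathan to both groups (with the metrics from $X_\infty$ and $X$), and use the trivial reverse inequality for finitely generated subgroups. The only cosmetic difference is that you choose a $\bfG(\calO)$-fixed base point in $X_f$ to get $d_X(x_0,\gamma x_0)=d_{X_\infty}(x_\infty,\gamma x_\infty)$. The paper only uses the automatic inequality $d_{X_\infty}\le d_X$, which already suffices for the direction you need.
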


\begin{proof}
    It is enough to prove that $\bfG(\calO)\hookrightarrow\bfG(\calO_S)$ is undistorted because $\Gamma_\sigma$ is commensurable with $\bfG(\calO)$. See the proof of~\cref{lem: filling function of stabilizer}. For every archimedean valuation~$\nu$ of~$k$, let $d_\nu$ be
    a $G_\nu$-invariant Riemannian metric defined on the symmetric space~$X_\nu$. For every non-archimedean valuation~$\nu$ of~$k$, let $d_\nu$ be a $G_\nu$-invariant (combinatorial) metric on the Bruhat-Tits building. In both cases, a $G_\nu$-invariant metric is not unique, but it is unique up to quasi-isometry.

    We pick base points $x_\nu\in X_\nu$ for every $\nu\in S$. For $(g_\nu), (h_\nu)\in G_S$, we define a metric on $G_S$ by
    \[ d_X\bigl((g_\nu),(h_\nu)\bigr)=\sum_{\nu\in S}d_\nu(g_\nu x_\nu, h_\nu x_\nu).\]
    Similarly, by replacing $S$ with $S_\infty$ we obtain a metric $d_{X_\infty}$ on $G_{S_\infty}$.
    Apart from these metrics we consider the word metrics $d_{\bfG(\calO_S)}$ and $d_{\bfG(\calO)}$ of these groups. By a result of Lubotzky-Mozes-Raghunathan~\cite{lubotzky+mozes+raghunathan}*{Theorem~A}, the identity maps are quasi-isometries
    \begin{equation}\label{eq: QI arithmetic}
        \bigl(\bfG(\calO),d_{\bfG(\calO)}\bigr)\approx (\bfG(\calO), d_{X_\infty}\vert_{\bfG(\calO)\times \bfG(\calO)})
    \end{equation}
    and, similarly,
    \begin{equation}\label{eq: QI S-arithmetic}
        (\bfG(\calO_S),d_{\bfG(\calO_S)})\approx (\bfG(\calO_S), d_X\vert_{\bfG(\calO_S)\times \bfG(\calO_S)}).
    \end{equation}
    By extending a generating set of $\bfG(\calO)$ to $\bfG(\calO_S)$, we may and will assume $d_{\bfG(\calO_S)}\vert_{\bfG(\calO)\times\bfG(\calO)}\le d_{\bfG(\calO)}$. The reverse inequality up to constants follows from the quasi-isometries~\cref{eq: QI arithmetic,eq: QI S-arithmetic} and the obvious inequality $d_{X_\infty}\le d_{X}$.
\end{proof}

\begin{thm}\label{thm:filling functions for S-arithmetic groups}
    Let $\Gamma<G_S$ be an $S$-arithmetic lattice as above. Then the \Nth{i} filling function $\filling_{\bbZ,\Gamma}^i$ of $\Gamma$ is polynomial for every $i\in\{2,\dots, \rank_{S_\infty}\bfG-1\}$.
\end{thm}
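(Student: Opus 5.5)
We will apply \cref{thm: geometric brown theorem} (equivalently \cref{thm: intro thm a la Brown}) to the action of $\Gamma$ on the product of Bruhat-Tits buildings $X_f$. Set $r\defq\rank_{S_\infty}\bfG$; we may assume $r\ge 3$, since otherwise there is nothing to prove. We will take $d\defq r-1$.

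The first step is to verify the hypotheses on $X_f$. After passing to a suitable simplicial subdivision of the product of buildings (and, if needed, replacing $\Gamma$ by a finite index torsion-free-on-cells subgroup, using commensurability invariance of filling functions), $X_f$ is a locally finite \CAT{0}-simplicial complex. It is contractible, so it is $(d-1)$-$\bbZ$-acyclic. The action of $\Gamma$ on $X_f$ is cocompact: by strong approximation/reduction theory, $\Gamma$ is dense enough in $G_{S_f}$ so that $G_{S_f}$ has finitely many orbits and $\Gamma\backslash G_{S_f}/U$ is finite for compact open $U$. Equivalently, one can quote that $\bfG(\calO_S)$ acts on $X_f$ with finite quotient, since $\bfG(\calO_S)\backslash G_{S_f}/\prod\bfG(\calO_\nu)$ is the finite class set. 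By \cref{cor: CAT(0)}, $C_\ast(X_f;\bbZ)\onto\bbZ$ is then an $\excat{\Gamma}$-resolution, in particular $(d-1)$-$\excat{\Gamma}$-connected. There is one subtlety: to apply \cref{cor: CAT(0)} we need the action to be simplicial in the convention that a cell stabilizer fixes the cell pointwise. We arrange this by barycentric subdivision, which preserves the \CAT{0} property in the $M_0$ sense up to the usual care; this is where I would be most careful.

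The second step concerns the stabilizers. By \cref{lem: filling function of stabilizer}, each stabilizer $\Gamma_\sigma$ is commensurable with the arithmetic lattice $\bfG(\calO)$, hence of type $\F_\infty$ (Borel-Serre), so of type $\FP_{d-k}(\bbZ)$. Its filling functions $\filling^i_{\bbZ,\Gamma_\sigma}$ are polynomial for $2\le i\le r-1=d$, and hence in the range $2\le i\le d-k$. By \cref{lem: stabilizer undistorted}, which applies since $r\ge 2$, $\Gamma_\sigma$ is finitely generated and undistorted in $\Gamma$, in particular polynomially distorted. Finally, $\Gamma$ is finitely generated, for instance as a consequence of being $S$-arithmetic, or because $\Gamma$ acts cocompactly with finitely generated stabilizers on a connected complex.

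With these verifications, \cref{thm: geometric brown theorem} yields that $\filling^i_{\bbZ,\Gamma}$ is polynomial for $2\le i\le d=r-1$. The main obstacle I expect is the bookkeeping in the first step: cocompactness and the simplicial-structure convention for $X_f$ as a product of buildings. This is also where commensurability invariance (quasi-isometry invariance of filling functions) is used to move freely between $\Gamma$ and $\bfG(\calO_S)$.
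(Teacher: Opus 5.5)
Your proposal is correct and follows the paper's proof essentially step for step: apply \cref{thm: geometric brown theorem} with $d=\rank_{S_\infty}\bfG-1$ to the barycentrically subdivided product of Bruhat--Tits buildings $X_f$, obtain the $\excat{\Gamma}$-connectivity from \cref{cor: CAT(0)}, and handle the stabilizer conditions with \cref{lem: filling function of stabilizer,lem: stabilizer undistorted}. Your extra justifications fill in points the paper leaves implicit, namely type $\F_\infty$ of the stabilizers via Borel--Serre for condition (i) and cocompactness via finiteness of class numbers; the detour through a finite-index subgroup is unnecessary, since barycentric subdivision alone already makes each stabilizer fix its simplex pointwise.
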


\begin{proof}
    We may assume that $\rank_{S_\infty}\bfG>2$. Otherwise, the statement is void.
    We want to apply~\cref{thm: geometric brown theorem} to the $\Gamma$\=/action on the product of Bruhat-Tits buildings $X_f$. Upon passing to a barycentric subdivision, we may assume that $X_f$ is simplicial, which is required in~\cref{thm: geometric brown theorem}. It is well known that $X_f$ is a \CAT{0}-space~\cite{abramenko+brown}*{Theorem~11.16 on p.~555}, and $X_f$ satisfies the assumptions of~\cref{cor: CAT(0)}. Thus, its cellular chain complex $C_\ast(X_f;\bbZ)$ is an $\excat{\Gamma}$\=/resolution of the trivial module~$\bbZ$.

    By~\cref{lem: filling function of stabilizer}, the filling function $\filling_{\bbZ,\Gamma_\sigma}^i$ of every cell $\sigma$ in $X_f$ is polynomial in the range $2\le i\le \rank_{S_\infty}\bfG-1$. Furthermore, the inclusion $\Gamma_\sigma\hookrightarrow \Gamma$ is undistorted by~\cref{lem: stabilizer undistorted}. Therefore, the assumptions of~\cref{thm: geometric brown theorem} are satisfied, and its conclusion is that $\filling_{\bbZ,\Gamma}^i$ is polynomial for every $i\in\{2,\dots, \rank_{S_\infty}\bfG-1\}$.
\end{proof}

One should compare the theorem to the following theorem of Bestvina-Eskin-Wortman, which is proved in the same paper where they discuss~\cref{conj:BEW}.

\begin{thm}[\cite{bestvina+eskin+wortman}*{Corollary 5}]\label{thm: BEW}
    Let $\Gamma<G_S$ be an $S$-arithmetic lattice as above. Then the \Nth{i} filling function $\filling_{\bbZ,\Gamma}^i$ of $\Gamma$ is polynomial for every $i\in\{2,\dots, \size{S}-1\}$.
\end{thm}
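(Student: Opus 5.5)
The statement is the cited result of Bestvina--Eskin--Wortman, so the framework developed here does not yield it directly. I would reprove it along their geometric lines and use the machinery of this paper only at the end, to pass from a space to the group. A first idea is to proceed as in \cref{thm:filling functions for S-arithmetic groups}: induct on $\size{S_f}$ and let $\Gamma$ act on a single Bruhat-Tits building $X_\nu$, $\nu\in S_f$, using \cref{thm: geometric brown theorem}. The vertex stabilizers are commensurable with $\bfG(\calO_{S\setminus\{\nu\}})$, and these are undistorted by the argument of \cref{lem: stabilizer undistorted}. However, this loses one degree at every step: the induction only gives polynomiality up to $\size{S}-2$. So I would instead let $\Gamma$ act on the full product $X=X_\infty\times X_f$, on which the action is proper.

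\emph{Step 1 (a cocompact core).} By reduction theory for $S$-arithmetic groups, there is a $\Gamma$-invariant family of pairwise disjoint open horoballs $\mathcal{H}=\{H_j\}$ in $X$ whose complement $X_0=X\setminus\bigcup_j H_j$ is $\Gamma$-cocompact. Each $H_j$ is a superlevel set of a Busemann function attached to a rational parabolic. After triangulating, $X_0$ is a locally finite simplicial complex with a proper cocompact $\Gamma$-action. By \cref{lem: comparison geometric vs group-theoretic weights on cellular chains} and the Lubotzky--Mozes--Raghunathan quasi-isometry \cref{eq: QI S-arithmetic}, the filling functions of $\Gamma$ agree up to $\approx$ with the combinatorial filling functions of $X_0$. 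This holds provided $X_0$ is $(\size{S}-2)$-acyclic, which is part of Step 2.

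\emph{Step 2 (pushing fillings off horoballs).} Let $c$ be an $(i-1)$-cycle in $X_0$ with $i\le\size{S}-1$. First fill $c$ in the \CAT{0}-space $X$ by a chain $b$ of polynomial mass; this is possible by Gromov and Wenger, as in the proof of \cref{cor: CAT(0)}. Then replace the part $b\cap H_j$ inside each horoball by a chain lying on $\partial H_j$, with mass polynomially bounded in the mass of $b\cap H_j$ and of $c$. The key geometric input is that the Busemann function defining $H_j$ is a positive combination of Busemann functions on the $\size{S}$ factors $X_\nu$, each of which is non-constant. Consequently $\partial H_j$ behaves like a join of $\size{S}$ pieces, which makes it $(\size{S}-2)$-connected. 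One can then deform relative cycles of dimension at most $\size{S}-2$ onto $\partial H_j$ with polynomial control, for instance by a straight-line or coning retraction performed factorwise in the product.

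I expect this retraction step to be the main obstacle. The intrinsic geometry of $\partial H_j$ is distorted (exponentially in the symmetric-space factors), so a naive projection is not polynomially controlled. My first attempt would be to project along geodesic rays pointing to the base point at infinity, carried out separately in each factor. The idea is that a cycle of dimension at most $\size{S}-2$ always misses at least one factor direction, and in that factor the cycle can be pushed linearly, so the mass grows only polynomially.

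\emph{Step 3 (conclusion).} Steps 1 and 2 show that $\filling^i_{\bbZ,X_0}$ is polynomial for $2\le i\le\size{S}-1$. Then \cref{cor: exactness from geometric filling} shows that $C_\ast(X_0;\bbZ)\onto\bbZ$ is $(\size{S}-2)$-$\excat{\Gamma}$-connected. The cell stabilizers of $X_0$ are finite, so they have polynomial (indeed trivial) filling functions and are undistorted. Hence \cref{thm: geometric brown theorem} with $d=\size{S}-1$ gives the claimed polynomiality of $\filling^i_{\bbZ,\Gamma}$ for $2\le i\le\size{S}-1$.
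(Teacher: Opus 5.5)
The paper does not prove this statement. It quotes it from Bestvina--Eskin--Wortman only for comparison with \cref{thm:filling functions for S-arithmetic groups}, so your proposal has to stand as an independent reproof. Your outline follows the shape of their argument: a proper action on $X=X_\infty\times X_f$, a cocompact thick part, filling in the \CAT{0} space $X$, and pushing fillings out of the cusps. Step~3 is fine. With finite stabilizers it is just the standard comparison of filling functions for proper cocompact actions, and Lubotzky--Mozes--Raghunathan is not needed. But the two steps that carry the content are not established. Step~1 is false once $\rank_k\bfG\ge 2$, e.g.\ for $\SL_n(\bbZ[1/p])$ with $n\ge 3$. The horoballs attached to two maximal rational parabolics that are adjacent in the rational Tits building intersect at arbitrary depth: along a ray towards a point of the Tits boundary at angle $<\pi/2$ from both centres, both Busemann functions tend to $-\infty$. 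Reduction theory only gives a cocompact complement of a union of overlapping horoballs, so the pushing has to be organised compatibly on their intersections. Disjointness is a $k$-rank-one phenomenon.

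Step~2 is essentially the whole theorem, and the mechanism you sketch does not work as stated. Topological connectivity of $\partial H_j$ cannot be what drives it. For $\size{S}=2$ (e.g.\ a Hilbert modular group), the relevant horosphere in $\mathbb{H}^2\times\mathbb{H}^2$ is Sol, which is contractible but has exponential Dehn function. What you need is a polynomial filling inequality, in dimensions $\le\size{S}-2$, for $\partial H_j$ with its induced, exponentially distorted metric. Your claim that such a cycle ``misses at least one factor direction'' holds only piece by piece (simplex by simplex in the join picture), not for the cycle as a whole. For $\size{S}=3$, a loop on $\partial H_j$ can contain segments that are short only where factor~$1$ is deep, others only where factor~$2$ is deep, and others only where factor~$3$ is deep. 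So there is no single factor in which to push it. Moreover, in an archimedean factor, moving towards the centre $\xi_v$ shrinks the horospherical directions but goes deeper into $H_j$. Getting back to $\partial H_j$ forces moving away from $\xi_w$ in some other factor, which expands exponentially. You would have to cut the cycle according to which factor is deep, push each piece in a factor it misses, and fill the seams between pieces pushed in different factors, all with polynomial mass bounds. That is precisely BEW's polynomial filling theorem for the solvable groups arising from the parabolics (their generalisation of Bux's finiteness result), and it is missing from your proposal. (Incidentally, in your discarded first approach, the argument of \cref{lem: stabilizer undistorted} needs $\rank_{S\setminus\{\nu\}}\bfG\ge 2$; for instance, $\SL_2(\bbZ)$ is exponentially distorted in $\SL_2(\bbZ[1/p])$.)
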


Whether our~\cref{thm:filling functions for S-arithmetic groups} or that of Bestvina-Eskin-Wortman yields the stronger statement depends on how many primes are inverted in comparison with the rank. The following is a specific instance of~\cref{thm:filling functions for S-arithmetic groups}, where~\cref{thm: BEW} does not give any information.

\begin{thm}\label{thm:arith-special-application}
    For every prime~$p$,
    the \Nth{i} filling function $\filling_{\bbZ,\Gamma}^i$ of $\Gamma=\SL_n(\mathbb{Z}[1/p])$ is polynomial for every $i\in\{2,\dots, n-2\}$.
\end{thm}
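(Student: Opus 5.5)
The plan is to deduce this as a direct specialization of \cref{thm:filling functions for S-arithmetic groups}. Take $k=\bbQ$, $\bfG=\SL_n$, and $S=\{\infty,p\}$, so that $S_\infty=\{\infty\}$ and $S_f=\{p\}$. Then $\calO_S=\bbZ[1/p]$, and $\Gamma=\SL_n(\bbZ[1/p])=\bfG(\calO_S)$ is an $S$\=/arithmetic lattice in $G_S=\SL_n(\bbR)\times\SL_n(\bbQ_p)$.

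First I will verify the standing hypotheses of that section. The group $\SL_n$ is absolutely almost simple for $n\ge 2$, and it is $\bbQ$\=/isotropic because the diagonal torus is $\bbQ$\=/split of dimension $n-1\ge 1$. Next I compute $\rank_{S_\infty}\bfG=\rank_\bbR\SL_n=n-1$. Substituting this into \cref{thm:filling functions for S-arithmetic groups} shows that $\filling^i_{\bbZ,\Gamma}$ is polynomial for $2\le i\le (n-1)-1=n-2$, which is exactly the claim.

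For small $n$, namely $n\le 3$, the range $\{2,\dots,n-2\}$ is empty and nothing needs to be proved. This matches the reduction at the start of the proof of \cref{thm:filling functions for S-arithmetic groups}, which assumes $\rank_{S_\infty}\bfG>2$, i.e.\ $n\ge 4$. In that case \cref{lem: stabilizer undistorted} also applies, since it needs $\rank_{S_\infty}\bfG\ge 2$.

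There is no real obstacle here. The only point needing care is that the ingredients used inside \cref{thm:filling functions for S-arithmetic groups} are valid for $\SL_n$ over $\bbQ$. Concretely, $\bfG(\calO)=\SL_n(\bbZ)$ is an irreducible non-uniform lattice in $\SL_n(\bbR)$, so \cref{thm:LY} applies. The Bruhat--Tits building of $\SL_n(\bbQ_p)$ is a locally finite \CAT{0} simplicial complex on which $\Gamma$ acts cocompactly, since $\Gamma$ is dense in $\SL_n(\bbQ_p)$ by strong approximation. Finally, the Lubotzky--Mozes--Raghunathan theorem, which gives the undistortion of $\SL_n(\bbZ)$ in $\SL_n(\bbZ[1/p])$, holds because the real rank is at least $2$.
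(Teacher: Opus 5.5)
Your proposal is correct and follows the paper's own route: the paper records this statement as a specific instance of its theorem on $S$-arithmetic lattices, with $k=\bbQ$, $\bfG=\SL_n$, $S=\{\infty,p\}$ and $\rank_{\bbR}\SL_n=n-1$. Your extra checks are sound and make explicit what the paper leaves implicit: $\bbQ$-isotropy, Leuzinger--Young applied to $\SL_n(\bbZ)$, cocompactness of the action on the building via density, and Lubotzky--Mozes--Raghunathan for $n\ge 4$.
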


Note that~\cref{conj:BEW} predicts that $\filling_{\bbZ,\Gamma}^i$ is polynomial in the range $2\le i\le 2n-3$.

\end{document}